\newcommand\cA{\mathcal{A}}
\newcommand\cI{\mathcal{I}}
\newcommand\cO{\mathcal{O}}
\newcommand\cP{\mathcal{P}}
\newcommand\fri{\mathfrak{i}}
\newcommand\frp{\mathfrak{p}}
\newcommand\sX{\mathscr{X}}
\newcommand\sY{\mathscr{Y}}
\newcommand\sA{\mathscr{A}}
\newcommand\sN{\mathscr{N}}
\newcommand\sP{\mathscr{P}}
\newcommand\sT{\mathscr{T}}
\newcommand\sR{\mathscr{R}}
\newcommand\sU{\mathscr{U}}
\newcommand{\cont}{\mathit{cont}}
\date{\today} 
\tikzset{help lines/.style={step=#1cm,very thin, color=gray},
help lines/.default=.5} 
\theoremstyle{plain}
\newtheorem{theorem}{Theorem}[section]
\newtheorem{thm}[theorem]{Theorem}
\newtheorem{lemma}[theorem]{Lemma}
\newtheorem{proposition}[theorem]{Proposition}
\newtheorem{prop}[theorem]{Proposition}
\newtheorem{corollary}[theorem]{Corollary}
\newtheorem{cor}[theorem]{Corollary}
\theoremstyle{definition}
\newtheorem{definition}[theorem]{Definition}
\newtheorem{inductive step}[theorem]{Inductive step}
\newtheorem{inductive lemma}[theorem]{Inductive Lemma}
\theoremstyle{remark}
\newtheorem{example}[theorem]{Example}
\newtheorem{remark}[theorem]{Remark}
\newtheorem*{remark*}{Remark}
\newtheorem*{example*}{Example}
\newcommand{\wt}{\widetilde}
\newcommand{\wh}{\widehat}
\newcommand{\ol}{\overline}
\newcommand{\oL}{\overline{L}}
\newcommand{\oC}{\overline{C}}
\newcommand{\ul}{\underline}
\newcommand{\p}{\partial}
\newcommand{\om}{\omega}
\newcommand{\Om}{\Omega}
\newcommand{\eps}{\varepsilon}
\newcommand{\Skel}{\mathrm {Skel}}
\newcommand{\Cone}{\mathrm {Cone}}
\newcommand{\Z}{{\mathbb{Z}}}
\newcommand{\R}{{\mathbb{R}}}
\newcommand{\C}{{\mathbb{C}}}
\newcommand{\st}{{\rm st}}
\newcommand{\Int}{{\rm Int\,}} 
\renewcommand{\min}{{\rm min}}
\renewcommand{\max}{{\rm max}}
\newcommand{\Id}{\mathrm {Id}}
\newcommand{\Span}{\mathrm{Span}}
\newcommand{\So}{\mathrm{Soul}}
\newcommand{\Pos}{\mathrm{Pos}} 
\newcommand{\II}{\mathcal{I}}
\newcommand{\cL}{\mathcal{L}}
\newcommand{\cT}{\mathcal{T}}
\newcommand{\cS}{\mathcal{S}}
\def\Op{{\mathcal O}{\it p}\,}
\newcommand{\h}{{\mathfrak h}}
\renewcommand{\t}{{\mathfrak t}}
\newcommand{\mM}{\mathring{M}}
\newcommand{\mB}{\mathring{B}}
\newcommand{\pprec}{\mathop{\prec}\limits}
\newcommand{\psucc}{\mathop{\succ}\limits}
\newcommand{\Pol}{\mathrm{Pol}}
\numberwithin{figure}{section}
\newcommand{\symp}{\mathit{symp}}
 \newcommand{\Arb}{\mathrm{Arb}} 
\title{Positive arborealization of polarized Weinstein manifolds}
\author{Daniel  \'Alvarez-Gavela}
\address{Department of Mathematics \\ Massachusetts Institute of Technology \\ Cambridge, MA, 02139}
\email{dgavela@mit.edu}
\thanks{DA was partially supported by NSF grant DMS-1638352 and the Simons Foundation}
\author{Yakov Eliashberg }
\address{Department of Mathematics\\Stanford University \\ Stanford, CA 94305}
\email{eliash@stanford.edu}
\thanks{YE was partially supported by NSF grant DMS-1807270. }
\author{David Nadler}
\address{Department of Mathematics\\University of California, Berkeley\\Berkeley, CA  94720-3840}
\email{nadler@math.berkeley.edu}
\thanks{DN was partially supported by NSF grant DMS-1802373.}
\begin{document}
\begin{abstract}

Let $X$ be a Weinstein manifold. We show that the existence of a global field of Lagrangian planes in $TX$  is equivalent to the existence of a positive arboreal skeleton for the Weinstein homotopy class of $X$.

 \end{abstract}

\maketitle

 \onehalfspacing
 \tableofcontents

 \section{Introduction}\label{sec:intro}
  
\subsection{Arborealization program}
The symplectic topology of the cotangent bundle $T^*M$ of a smooth manifold is determined by the smooth topology of its Lagrangian zero-section $M$. A general {\em Weinstein symplectic manifold} $X$ (see precise definitions  below), which is the symplectic counterpart of a Stein complex manifold, can be viewed as a symplectic thickening of its {\em skeleton}, which is a singular isotropic subcomplex $\Skel\,X\subset X$. By analogy with the special case $X=T^*M$, $\Skel \, X = M$, one would like to view $X$ as the ``cotangent bundle" of  $\Skel\, X$, and  characterize the smooth symplectic topology of $X$ in terms of the differential topology of $\Skel\, X$. However, in general the singularities of $\Skel\, X$ are too complicated to be amenable to a differential topological treatment.

In the paper~\cite{N13}, the third author  introduced a class of Lagrangian singularities, called {\em arboreal}. For arboreal singularities, the smooth topology of the singularity determines the symplectic topology of its neighborhood.  
In particular,  it is possible to calculate the local symplectic invariants  of a neighborhood of an arboreal singularity in terms of the singularity.   This was shown in~\cite{N13} for microlocal sheaves; 
  for Fukaya categories, one can apply Lefschetz fibration calculations~\cite{Se08} to the plumbing  characterization of~\cite{Sh18}. Going further, if a Weinstein manifold $W$ has a skeleton with arboreal singularities, then its global symplectic invariants can be effectively computed knowing the smooth  topology of the skeleton, see~\cite{N16} for microlocal sheaves and~\cite{GPS17} for Fukaya categories.

  The paper  ~\cite{N13} initiated a program to determine which Weinstein manifolds admit arboreal skeleta, or as we say, {\em which Weinstein manifolds could be arborealized}. As evidence that this program might achieve the arborealization of a large class of Weinstein manifolds, it was shown in \cite{N15} that germs of  Whitney stratified Lagrangians can always be deformed  to arboreal Lagrangians in a non-characteristic fashion, i.e.~without changing their microlocal invariants. 
  
  The question of whether a Weinstein manifold can be arborealized via a homotopy of its Weinstein structure is more subtle. In two dimensions, the story is classical: generic ribbon graphs provide arboreal skeleta. In four dimensions, Starkston proved that arboreal skeleta always exist \cite{St18}.
 In this paper,  we establish that  any  Weinstein manifold admitting a polarization,
  i.e.~a Lagrangian plane field, or equivalently a  reduction of structure group of the tangent bundle from $Sp(2n)$ to $GL(n)$,
    can be arborealized,   see Theorem \ref{intro:main thm} below.
Moreover,  our proof yields skeleta with the more specific class of  {\em positive} arboreal singularities. Conversely,
Weinstein manifolds with  positive arboreal skeleta admit polarizations.
On the other hand  it turns out not all Weinstein manifolds can be arborealized, see the discussion in Section \ref{sec:arb-progr} below. Perhaps this is not surprising: there are homotopical obstructions to defining many symplectic invariants, and so any combinatorial route to realizing them must also encounter these obstructions.

\subsection{Main results}\label{sec:main res}

   Let $(W, \lambda)$ be a $2n$-dimensional Liouville  domain. We recall the definition: $W$ is a compact $2n$-manifold with boundary;
    $\lambda$ is a 1-form, called the Liouville form, such that $\omega = d\lambda$ is a symplectic form; and the corresponding  vector field $Z = \omega^{-1}(\lambda)$, called the  Liouville vector field, is 
    outward transverse to $\partial W$. 
  
%
   
  By definition, the skeleton of a Liouville domain $(W, \lambda)$ is the 
    attractor of the negative flow of the Liouville vector field:
   $$
   \Skel(W, \lambda) = \bigcap_{t>0} Z^{-t}(W).
   $$
While the $2n$-dimensional Lebesgue measure of $\Skel(W, \lambda)$  is  equal to $0$, in general $\Skel(W, \lambda)$  can be quite large if no additional conditions are imposed.

   A Weinstein domain is a  Liouville domain $(W, \lambda)$ which admits a Morse Lyapunov function $\phi:W\to \R$, i.e.~the critical points of $\phi$ are non-degenerate and   $Z$ is gradient-like for $\phi$. Sometimes it is convenient to relax the Morse condition to generalized Morse or Morse-Bott. We do not consider the Lyapunov function as part of the defining data of a Weinstein domain, we merely require its existence.
    
The skeleton of a  Weinstein domain $(W, \lambda)$   is known to be the union of  stable manifolds 
 $$
\xymatrix{
   \Skel(W, \lambda) = \bigcup_{\lambda_p=0} S_p, & S_p = \{x \in W \, |\, \lim_{t\to \infty} Z^t(x) = p\}.
}   $$

Each stable manifold is $\lambda$-isotropic, hence $\omega$-isotropic and so at most half-dimensional. A result of F. Laudenbach \cite{L92} states that if $Z$ is Morse-Smale, i.e.~its stable and unstable manifolds intersect transversely, and if moreover
 $Z$ is the Euclidean gradient near critical points with respect to the coordinates which give the Morse normal form, then $\Skel(W, \lambda)$ 
can be Whitney stratified. 

However, even if $Z$ satisfies the above conditions, in general the singularities of $\Skel(W, \lambda)$ are quite complicated, and their smooth topology does not determine the symplectic topology of their neighborhoods. For example, in the simplest case when $W$ is obtained by attaching a handle to a ball along a Legendrian sphere, the skeleton has a unique conical singular point and the  symplectic topology depends on the Legendrian isotopy class of the link, not its smooth isotopy class (which is always trivial for manifolds of dimension $>4$).
However, for the class of arboreal singularities described in Definition \ref{def:arb intr} below the situation is different.

First, we introduce some auxilliary notions. A closed 
subset of a symplectic  or  contact  manifold is called {\em isotropic} 
 if it is stratified by  isotropic submanifolds. It is called Lagrangian or Legendrian if it is isotropic and purely of the maximal possible dimension. The germ at the origin of a  locally simply-connected isotropic subset
   $L\subset T^*\R^n$ of the cotangent bundle with its standard Liouville structure $\lambda = pdq$ admits a unique   lift to an isotropic germ at the origin $\wh L\subset J^1\R^n=T^*\R^n\times\R$ of the 1-jet bundle. 
   Given an isotropic subset $\Lambda\subset S^*\R^n$ of the cosphere bundle, its Liouville cone $C(\Lambda) \subset T^*\R$, i.e.~the closure of its saturation by trajectories of the Liouville vector field $Z = p\frac{\p}{\p p}$,
   is an isotropic subset.
    
    \begin{definition}\label{def:arb intr}
  {\it Arboreal  Lagrangian (resp.~Legendrian) singularities}  form the smallest class $\Arb^{\symp}_n$  (resp.~$\Arb^{\cont}_n$) of germs of    closed isotropic subsets in  $2n$-dimensional symplectic (resp.~$(2n+1)$-dimensional   contact) manifolds such that the following properties are satisfied:
    \begin{enumerate}
    \item(Invariance) $\Arb^\symp_n$ is invariant with respect to symplectomorphisms and $\Arb^\cont_n$ is invariant with respect to contactomorphisms.
    \item (Base case) $\Arb^\symp_0$ contains $pt = \R^0 \subset T^*\R^0 = pt$.
    
    \item (Stabilizations) If  $L \subset (X, \omega)$ is  in $\Arb^\symp_n$,  then  the  product $L \times \R \subset (X \times T^*\R, \omega + dp\wedge dq)$
    is in $Arb^\symp_{n+1}$. 
    \item (Legendrian lifts) If $L\subset T^*\R^n$ is in $\Arb^\symp_n$, then its Legendrian lift $\wh L\subset J^1\R^n$ is in $\Arb^\cont_n$.
    \item (Liouville cones) Let $\Lambda_1,\dots, \Lambda_k \subset  S^*\R^n$ be a  finite disjoint  union of arboreal Legendrian germs from $Arb^\cont_{n-1}$ centered at points $z_1,\dots, z_k \in S^* \R^n$. Let $\pi:S^*\R^n\to \R^n$ be   the front projection. Suppose
    \begin{itemize}\item[-] $\pi(z_1)=\dots=\pi(z_k)$.
    \item[-] For any $i$, and smooth submanifold $Y \subset \Lambda_i$, the restriction  $\pi|_Y:Y\to\R^n$ is an embedding (or equivalently, an immersion, since we only consider germs).
     \item[-] For any distinct $i_1, \ldots, i_\ell$, and any  smooth submanifolds $Y_{i_1}\subset \Lambda_{i_1}, \dots,Y_{i_\ell}\subset \Lambda_{i_\ell}$,  the restriction 
    $\pi|_{Y_{i_1}\cup \dots\cup Y_{i_\ell}}: Y_{i_1}\cup \dots\cup Y_{i_\ell} \to \R^n$ is self-transverse.
    \end{itemize} 
    Then the union $\R^n \cup C(\Lambda_1)\cup\dots\cup C(\Lambda_k)$ of the Liouville cones  with the zero-section form an arboreal Lagrangian germ from    $\Arb^\symp_n$.
    \end{enumerate}

 With the above classes defined, we can also allow boundary by additionally taking the product  $L \times \R_{\geq 0} \subset (X \times T^*\R, \omega + dp\land  dq)$
    for any arboreal Lagrangian $L \subset (X, \omega)$, and similarly  for arboreal Legendrians.  

    \end{definition}
  
 \begin{figure}[h]
\includegraphics[scale=0.5]{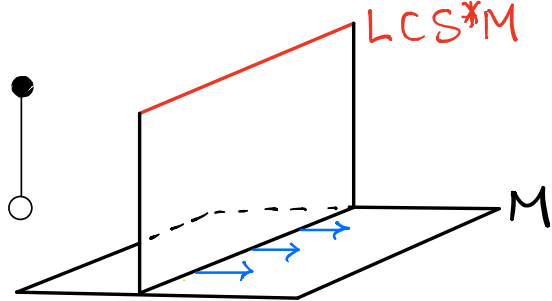}
\caption{The simplest non-smooth arboreal singularity is the zero section union the positive conormal of a smooth co-oriented hypersurface.}
\label{fig:A1stab}
\end{figure}
 \begin{figure}[h]
\includegraphics[scale=0.42]{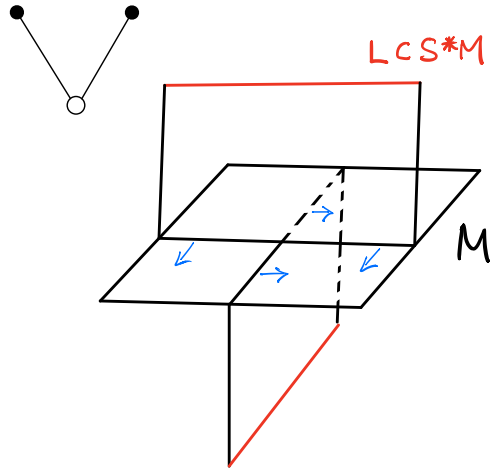}
\caption{This arboreal singularity consists of the zero section union the positive conormal of two smoothly intersecting co-oriented hypersurfaces.}
\label{fig:NotAroot}
\end{figure}
 \begin{figure}[h]
\includegraphics[scale=0.5]{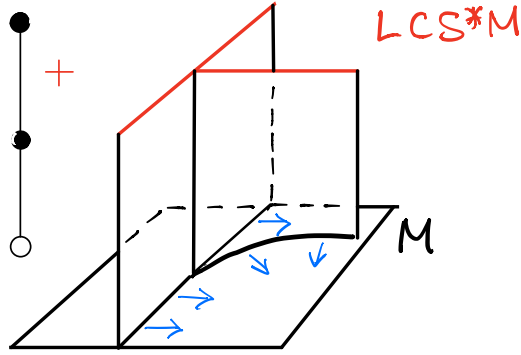}
\caption{This arboreal singularity consists of the zero section union the positive conormal of a singular co-oriented hypersurface (namely the front of the arboreal singularity of Figure \ref{fig:A1stab}).}
\label{fig:A2}
\end{figure}
    
In our paper \cite{AGEN20a} we proved that for fixed dimension $n$ the  class of arboreal singularities contains only finitely many local models up to ambient symplectomorphism or contactomorphism.  More precisely, to each member of the class  $\Arb^{\symp}_n$ one can assign a signed rooted tree $\cT= (T, \rho, \eps)$ 
    with at most $n+1$ vertices.
    Here $T$ is a  finite acyclic graph, $\rho$
      is a distinguished root vertex, and  $\eps$ 
           is a sign function on the edges of $T$ not adjacent to $\rho$. This discrete data completely determines the germ:
         
         \begin{theorem}[ \cite{AGEN20a}]
         If two arboreal singularities $L \subset (X, \omega)$, $L' \subset (X', \omega')$
         have the same dimension and  signed rooted tree $\cT$, then there is (the germ of) a symplectomorphism 
         $(X, \omega) \simeq (X', \omega')$ identifying $L$ and $L'$.

         \end{theorem}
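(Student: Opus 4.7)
The plan is to prove the Lagrangian classification by induction on $n$, simultaneously with the analogous statement in the contact category: that germs in $\Arb^\cont_n$ are classified up to ambient contactomorphism by their signed rooted trees. The base case $n=0$ is immediate from property (ii) of Definition \ref{def:arb intr}, and the two inductive statements feed into each other exactly because the definition of $\Arb^\symp_n$ is built from $\Arb^\symp_{n-1}$ (via stabilization) and $\Arb^\cont_{n-1}$ (via Liouville cones), while $\Arb^\cont_n$ is built from $\Arb^\symp_n$ via Legendrian lifts.

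The Legendrian step is the easier of the two inductions. A germ in $\Arb^\cont_n$ is, by construction, the Legendrian lift $\wh L \subset J^1\R^n$ of a germ $L \subset T^*\R^n$ in $\Arb^\symp_n$. Because $L$ is locally simply connected and isotropic, the primitive $f$ with $df = \lambda|_L$ is unique up to an additive constant, normalized by demanding $f(0)=0$, and hence $\wh L$ is canonically determined by $L$. A germ of symplectomorphism $(T^*\R^n,L) \simeq (T^*\R^n,L')$ produced by the Lagrangian inductive hypothesis therefore lifts uniquely to a germ of contactomorphism of $J^1\R^n$ identifying $\wh L$ with $\wh L'$, which proves the contact classification in dimension $n$.

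For the Lagrangian step, the signed rooted tree $\cT$ is read as a recipe for the germ: a stabilization vertex at the root records a splitting $L = L_0 \times \R \subset (X_0 \times T^*\R, \omega_0 + dp\wedge dq)$, while a non-stabilization root with children $v_1, \ldots, v_k$ records a Liouville cone construction $\R^n \cup C(\Lambda_1) \cup \ldots \cup C(\Lambda_k)$, with each $\Lambda_i \in \Arb^\cont_{n-1}$ corresponding to the rooted subtree $\cT_i$ hanging off $v_i$, and the sign on the edge from $\rho$ to $v_i$ being absent (hence no information is lost in property that $\eps$ is defined only on edges not adjacent to $\rho$). In the stabilization case, a relative Weinstein normal form for the symplectic neighborhood of $L_0 \subset X_0$ reduces the matter directly to the inductive hypothesis in dimension $n-1$. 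In the Liouville cone case, I would first apply the Legendrian inductive hypothesis to each $\Lambda_i$ individually, then use the front projection conditions (the embedding condition and the self-transversality condition) to align the collection $\{\Lambda_i\}$ with the corresponding collection $\{\Lambda_i'\}$ via a germ of contactomorphism of $S^*\R^n$ fixing the common front projection point; finally, taking Liouville cones and adjoining the zero section promotes this contactomorphism to a symplectomorphism of germs in $T^*\R^n$ identifying $L$ with $L'$.

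The main obstacle is the simultaneous alignment step in the Liouville cone case: individual normal forms for each $\Lambda_i$ produced by the inductive hypothesis need not be compatible, since they share the common front projection point $\pi(z_1) = \cdots = \pi(z_k)$ and must respect the joint self-transversality. The technical heart of the argument is therefore a relative contact normal form for a finite collection of arboreal Legendrian germs at distinct points of a common cosphere fiber which project to the same point in the base and whose smooth strata meet pairwise transversely under $\pi$. I expect this to be handled by a Moser-type deformation argument: one produces a family of contactomorphisms between the two configurations stratum by stratum, starting from the top-dimensional strata, and uses the transversality hypotheses to ensure that the infinitesimal extension problem on each stratum is unobstructed, so that the normal form can be extended inductively up the stratification.
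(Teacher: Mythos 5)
There is a genuine gap, and it sits exactly at the step you flag as the ``technical heart.'' In the Liouville cone case your inductive hypothesis classifies each link $\Lambda_i$ only up to abstract contactomorphism, i.e.\ by its own signed rooted tree $\cT_i$ rooted at $v_i$, in which the edges adjacent to $v_i$ carry \emph{no} signs. But in the assembled tree $\cT$ those same edges are not adjacent to $\rho$, so they do carry signs, and these signs are not determined by the contactomorphism type of $\Lambda_i$: they record how $\Lambda_i$ sits inside $S^*\R^n$ relative to the fibration $\pi$, i.e.\ relative to the vertical distribution $\ker(d\pi)$ (this is precisely the phenomenon of Figure \ref{fig:POSneg}, where one and the same abstract singularity embedded in two different positions relative to $\ker(d\pi)$ produces two non-symplectomorphic germs $\R^n\cup C(\Lambda)$). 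Your remark that no information is lost concerns only the unsigned edges from $\rho$ to the $v_i$; the problematic data are the signs on the edges from $v_i$ to its children. Consequently, applying the contact inductive hypothesis to each $\Lambda_i$ separately and then trying to ``align'' the collections by an ambient contactomorphism of $S^*\R^n$ cannot work as stated: an abstract contactomorphism $\Lambda_i\simeq\Lambda_i'$ need not be realizable by one that respects the front projection, and when the relative signs disagree it is not realizable at all, even though your hypotheses are satisfied. What is actually needed is a normal form for the pair (arboreal Legendrian germ, its position relative to $\ker(d\pi)$) --- equivalently a classification of front germs --- and that is strictly stronger than the statement your induction propagates. The closing appeal to a stratum-by-stratum Moser argument being ``unobstructed'' is an assertion of exactly the nontrivial stability statement, not a proof of it; for singular Lagrangians the Moser/Darboux-type step requires controlling additional discrete data (compare Theorem \ref{thm:arb-Darboux} in this paper, where even after fixing the underlying arboreal space one must match orientation structures before any Moser argument applies).

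For calibration: the present paper does not prove this theorem at all --- it is imported from \cite{AGEN20a} --- so there is no in-paper argument to compare against line by line. The proof there proceeds in the opposite logical order from your proposal: one first constructs explicit canonical models $L_\cT\subset T^*\R^{n(\cT)}$ and then proves a stability theorem showing that any germ built by the inductive operations, \emph{together with} the co-orientation/front data relative to $\ker(d\pi)$, is ambiently symplectomorphic to the model; the extra signs on edges adjacent to the subtree roots are exactly the invariants of that relative data. If you want to salvage your scheme, the fix is to strengthen the contact half of the induction so that it classifies arboreal Legendrian germs in $S^*\R^{n}$ \emph{relative to the vertical distribution}, with the full sign data of $\cT$ as the invariant; with that stronger hypothesis the assembly of the cones does reduce to a (still nontrivial) relative normal form, but as written your induction discards precisely the information the theorem is about.
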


           Within all arboreal singularities, there is the distinguished class with positive sign $\eps \equiv +1$. 
           
       \begin{figure}[h]
\includegraphics[scale=0.55]{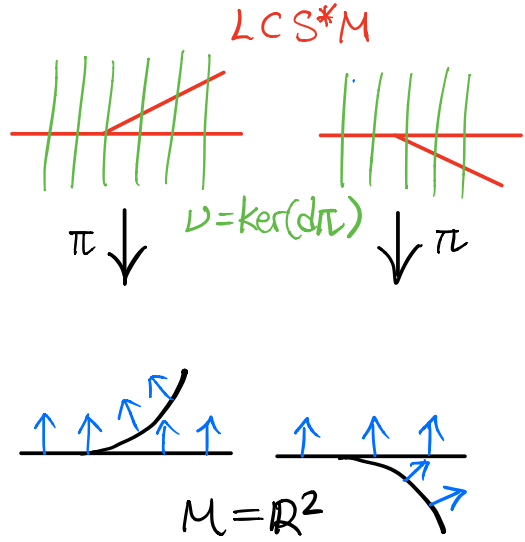}
\caption{One can obtain a new arboreal singularity from the arboreal singularity of Figure \ref{fig:A1stab} in two different ways depending on how it is embedded in $S^*M$ relative to the vertical distribution $\nu= \ker(d \pi)$. One yields a positive arboreal singularity and the other does not.}
\label{fig:POSneg}
\end{figure}

    \begin{definition}

    An {\it arboreal Lagrangian} $L$ (with boundary) in a symplectic manifold  $(X, \omega)$ is a piecewise smooth Lagrangian with arboreal singularities (with boundary), i.e.~locally modeled on the above class (with boundary). When $L$ is an arboreal Lagrangian whose boundary $\p L$ is a smooth manifold, we will say that $L$ has {\it smooth boundary}.
    
        A {\it positive arboreal Lagrangian} $L$ (with boundary) in a symplectic manifold  $(X, \omega)$ is a Lagrangian with positive arboreal singularities (with boundary), i.e.~locally modeled on the  distinguished class  with positive sign $\eps \equiv +1$. 
\end{definition}

 Given any positive arboreal Lagrangian $L\subset (X, \omega)$, possibly with boundary, its neighborhood $U \subset X$ admits a canonical, up to homotopy,  Lagrangian plane field $\xi \subset TU$. This homotopy class admits a representative $\xi$ which is transverse to $L$, i.e.~transverse to the closure of each smooth piece. 
 
    \begin{definition}
   A {\it polarization} of a  symplectic  manifold $(X, \omega)$ is the choice of a Lagrangian plane field $\xi \subset TX$. 
    \end{definition}

If $(W, \lambda)$ is a Weinstein domain with positive arboreal skeleton, then $(W, \lambda)$, which retracts onto an arbitrarily small neighborhood of its skeleton, admits a polarization. The main result of this paper is that the converse also holds:    

          \begin{figure}[h]
\includegraphics[scale=0.5]{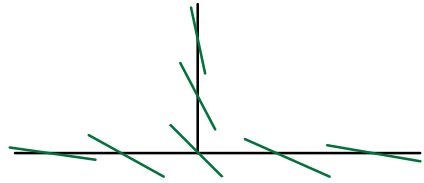}
\caption{The canonical polarization in the neighborhood of an $A_2$ singularity, where the positivity condition is vacuous. One could also turn the tangent field to the zero section counter-clockwise instead of clockwise. The choice is determined by what we call an orientation structure, which in this case is equivalent to a co-orientation of the origin inside the zero-section.}
\label{fig:POSdusta2}
\end{figure}

          \begin{figure}[h]
\includegraphics[scale=0.7]{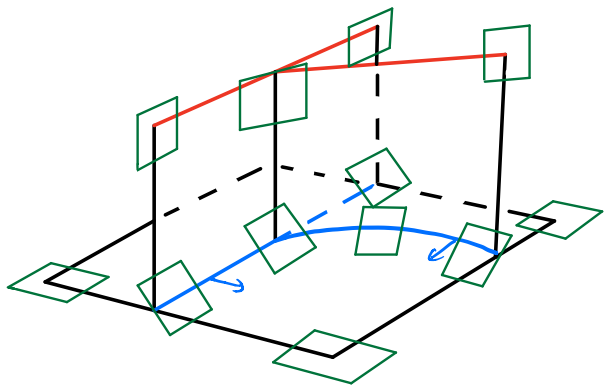}
\caption{The canonical polarization in the neighborhood of a positive $A_3$ singularity, where the orientation structure is given by the co-orientation of the $A_3$ front.}
\label{fig:Polara3}
\end{figure}

  \begin{theorem}\label{intro:main thm}
  A Weinstein domain $(W, \lambda)$ is homotopic to a Weinstein domain whose skeleton is positive arboreal with smooth boundary if and only if $(W, \lambda)$ admits a polarization.
  \end{theorem}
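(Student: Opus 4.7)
The ``only if'' direction is essentially contained in the remark preceding the theorem: a positive arboreal Lagrangian $L\subset (X,\omega)$ carries a canonical (up to homotopy) transverse Lagrangian plane field $\xi$ on a neighborhood $U$ of $L$, obtained by patching the local models dictated by each signed rooted tree with $\eps\equiv +1$ (see Figures \ref{fig:POSdusta2}, \ref{fig:Polara3}). Since $W$ deformation retracts onto $\Skel(W,\lambda)$, such a field $\xi$ extends uniquely up to homotopy to a polarization of $W$. So the content of the theorem is the converse, and the plan is to build, from a polarization $\xi$ of $(W,\lambda)$, a Weinstein homotopy of $(W,\lambda)$ to a Weinstein structure whose skeleton is positive arboreal with smooth boundary.

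The starting point will be a Weinstein homotopy that puts $(W,\lambda)$ in a Morse--Smale, Euclidean-near-critical-points normal form, so that by Laudenbach's theorem $\Skel(W,\lambda)$ is Whitney stratified by the stable manifolds $S_p$. I would then proceed by induction on the index of the critical points, working from the top stratum downwards. At each step, the skeleton in a neighborhood of a critical point $p$ is the cone on a stratified Legendrian $\Lambda_p\subset S^*S_p$ (the attaching locus of the ascending flow lines from lower-index critical points), sitting inside a Weinstein handle modeled on $T^*S_p$. The stratified Whitney singularities of $\Lambda_p$ are in general arbitrary, and the first technical job is to deform $\Lambda_p$, through Weinstein isotopies of the handle, to a Legendrian whose front projection to $S_p$ consists of smooth immersed pieces meeting in general position---in other words, to arrange the transversality and immersion conditions in the (Liouville cones) clause of Definition~\ref{def:arb intr}. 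This is an $h$-principle-type move, and the polarization $\xi$ enters as the formal data needed to trivialize the obstructions, using the reduction of the structure group from $Sp(2n)$ to $GL(n)$ to produce compatible Lagrangian framings along each stratum that extend over the cone.

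With the local combinatorial form of the singularities now arboreal, the remaining task is to realize the positivity condition $\eps\equiv +1$. Each edge sign in the tree $\cT$ associated to a singularity records on which side of the front of a lower-depth component a higher-depth cone is attached; the polarization $\xi$, transverse to the smooth top stratum by construction, picks out a preferred co-orientation at each level and thereby a preferred sign (as illustrated in Figure~\ref{fig:POSneg}). The idea is to perform further Weinstein homotopies supported in arbitrarily small neighborhoods of the arboreal locus that ``flip'' the attaching side of each cone so that it is consistent with $\xi$; concretely, each flip is a contact isotopy in the sphere bundle $S^*S_p$ pushing a Legendrian branch across the front, covered by a Liouville isotopy of the cone. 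The ability to arrange all signs simultaneously to be $+1$ is exactly the statement that the formal polarization $\xi$ is homotopic to the canonical polarization of a positive arboreal model, which we have produced.

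The main obstacle, and where the bulk of the work will lie, is the inductive step that joins local positive arboreal models across handles while preserving positivity globally. Each downward move in the induction alters the attaching Legendrian of the next handle, and one must ensure that (i) the arboreal/general position property of Definition~\ref{def:arb intr}(v) is preserved under these deformations, (ii) the orientation/co-orientation data of the edges built so far is not destroyed when new cones are attached, and (iii) the resulting boundary of the skeleton remains smooth. Points (i) and (ii) are controlled by the rigidity theorem from \cite{AGEN20a}: once a singularity has the correct signed tree $\cT$, it is symplectomorphic to the standard model, so the flexibility of Weinstein homotopy can be used to glue models chart by chart, while the polarization supplies the homotopical bookkeeping that prevents the emergence of negative signs. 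Point (iii) can be achieved by a final tidying-up isotopy, pushing the boundary off the arboreal strata using the Liouville flow.
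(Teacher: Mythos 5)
Your ``only if'' direction is fine and matches the paper. The converse, however, hinges on a step that the paper explicitly identifies as unworkable: you propose to deform each attaching Legendrian $\Lambda_p$ by a Legendrian isotopy so that its front projection becomes an immersion with general-position self-intersections, claiming the polarization $\xi$ ``trivializes the obstructions'' to this caustic-removal h-principle. That is precisely the approach discussed (and rejected) in the subsection on flexibility of caustics: the h-principle for simplification of caustics has genuine homotopical hypotheses, and a global polarization of $W$ does not control the obstruction handle-by-handle --- already in the toy example of a single critical handle with $\dim W \equiv 0 \bmod 4$ there is an unresolved subtlety, and for a general handlebody it is unclear how to arrange vanishing of the obstruction for every attaching Legendrian. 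The paper's essential new idea, which your proposal omits, is to abandon smooth fronts and instead allow the skeleton itself to develop \emph{ridgy} singularities along the deformation: the ridgification theorem (Theorem \ref{thm:ridgification}) is an h-principle \emph{without pre-conditions}, so transversality to the vertical distributions of the previously attached blocks can always be achieved, and the resulting ridges are then arborealized explicitly (Proposition \ref{prop:slanted-ridgy-arb2} and Section \ref{sec:corner-arboreal}). Without this (or some substitute mechanism for killing the caustic obstruction), your induction does not get off the ground.

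A second, related gap is your treatment of positivity. Realizing $\eps\equiv +1$ is not a matter of locally ``flipping'' the side on which each cone is attached by pushing a Legendrian branch across the front --- such a move is not a Legendrian isotopy in general and changes the singularity type --- and, more importantly, the real difficulty is global compatibility where three or more strata interact, which your appeal to the rigidity theorem of \cite{AGEN20a} plus ``homotopical bookkeeping'' does not address. In the paper, positivity is exactly the mechanism that makes multi-strata transversality automatic: a distribution given by positive definite quadratic forms is transverse to the conormal of \emph{any} immersed hypersurface near the zero section (Lemma \ref{lm:pos-quadr-form}, Lemma \ref{cor:cone-over-regular}), and this is organized globally through the notion of a positive cotangent building (Section \ref{sec:positivity}), with the polarization serving as the distribution $\nu_{-1}$ guiding the cyclic-order conditions; the building presentation of $(W,\lambda)$ (Proposition \ref{prop:Wctocot}), rather than a Morse--Smale handle decomposition with Laudenbach stratification, is what makes this bookkeeping possible. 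Your outline would need to be rebuilt around these two ingredients to yield a proof.
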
  
  
  \begin{remark} The conclusion of Theorem \ref{intro:main thm} can be refined in several ways, some of which we sketch below. For precise formulations see Theorem \ref{thm:existence-arb-pos} and its corollaries, which use the language of Wc-manifolds, see Definition \ref{def:Wc-man}.
  \begin{enumerate}
    \item In fact, for $\xi$ a polarization of $(W, \lambda)$ we can arrange it so that $\xi$ is the canonical polarization in the neighborhood of the positive arboreal skeleton, so in particular $\xi$ is transverse to the skeleton.
  \item  If so desired, the smooth boundary of the positive arboreal skeleton can be pushed out to the boundary of $W$, in which case it serves as a skeleton for a {\it Weinstein pair}, see Section \ref{sec:W-hyp}. The Weinstein hypersurface in the pair is thus the ribbon of a smooth Legendrian, and can also be thought of as a stop.
  \item Theorem \ref{intro:main thm} also holds with input a Weinstein pair $(W,A)$ instead of a Weinstein domain. In this case the conclusion is that the Weinstein hypersurface $A$ can be enlarged by the ribbon of a smooth Legendrian so that the resulting Weinstein pair has a positive arboreal skeleton. 
    \item Moreover, the version of Theorem \ref{intro:main thm} for Weinstein pairs also holds in relative form if the Weinstein hypersurface in the Weinstein pair already has a positive arboreal skeleton and the polarization restricts on it to the canonical polarization. In this case the Weinstein structure can be kept fixed near the Weinstein hypersurface.
    \item Theorem \ref{intro:main thm} and its refinements hold more generally for arbitrary Weinstein manifolds, not necessarily of finite type (i.e. not completions of Weinstein domains), and the result is proved in the same way since the inductive argument stabilizes. We restrict the discussion to the finite type case for simplicity of exposition.
  \end{enumerate}

  \end{remark}


 Existence of a polarization for a symplectic manifold $(X^{2n},\omega)$ is equivalent to asking that the classifying map $X \to BU_n$ of the tangent bundle lifts to $BO_n$, where we endow $TX$ with the homotopically unique almost complex structure compatible with $\omega$.   

\begin{definition} A {\it stable polarization}  of a  symplectic  manifold $(X, \omega)$ is the choice of a  Lagrangian plane field $\xi \subset TX \oplus \C^d$ for some $d \geq 1$. \end{definition}

The existence of a stable polarization is equivalent to asking that the classifying map $W \to BU$ of the stable tangent bundle lifts to $BO$. Recall that a Weinstein  domain $(W, \lambda)$ has the homotopy type of a half-dimensional CW complex. Recall also that $BO_k \to B O_{k+1}$ is $k$-connected and $BU_k \to BU_{k+1}$ is $(2k+1)$-connected. Therefore, it follows that for Weinstein domains the existence of a polarization is equivalent to the existence of a stable polarization.   
  From the above, one can check that many   Weinstein  domains\ admit polarizations, notably smooth complete intersections in complex affine space. Indeed, if $X \subset \C^N$ is a smooth complete intersection, then its normal bundle is trivial, hence its tangent bundle is stably trivial and in particular $X$ admits a stable polarization. This proves:

\begin{corollary}
Let $X \subset \C^N$ be a complete intersection. Then $X$ is Weinstein homotopic to a Weinstein manifold whose skeleton is positive arboreal with smooth boundary.
\end{corollary}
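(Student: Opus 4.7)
The plan is to verify the hypothesis of Theorem~\ref{intro:main thm}, namely that $X$ admits a polarization, and then invoke the theorem. First I would note that a smooth affine complete intersection $X \subset \C^N$ is Stein, hence carries a canonical Weinstein structure (up to homotopy) obtained from a generic plurisubharmonic Morse exhaustion function such as the restriction of $|z|^2$. So Theorem~\ref{intro:main thm} applies once we produce the polarization.

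Next, because $X$ is cut out of $\C^N$ by a regular sequence $f_1,\dots,f_{N-n}$ of holomorphic functions, the holomorphic normal bundle $\nu_{X/\C^N}$ is trivialized by the differentials $df_1,\dots,df_{N-n}$. Therefore
\[
TX \oplus \C^{N-n} \;\cong\; T\C^N|_X \;\cong\; X \times \C^N
\]
is a trivial complex vector bundle, so $TX$ is stably trivial as a complex bundle. In particular it admits a stable Lagrangian reduction, and hence $X$ carries a stable polarization in the sense of the definition above.

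Now I would invoke the standard obstruction-theoretic argument already sketched in the paragraph preceding the corollary: the Weinstein domain underlying $X$ (a compact sublevel set of the exhaustion function) has the homotopy type of an $n$-dimensional CW complex, the map $BO_n \to BO$ is $n$-connected, and the map $BU_n \to BU$ is $(2n+1)$-connected, so the pair $(BU_n, BO_n) \to (BU, BO)$ is also $n$-connected. Thus the classifying map $X \to BU_n$ of the tangent bundle admits a lift to $BO_n$ if and only if its stabilization to $BU$ admits a lift to $BO$. The previous step gives the latter, so we obtain an honest (unstable) polarization of $X$.

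The only non-routine step is the stable-to-unstable reduction, and that is already a standard connectivity argument using the CW dimension of a Weinstein domain; nothing beyond that is needed. Once the polarization is in hand, Theorem~\ref{intro:main thm} immediately provides a Weinstein-homotopic structure on $X$ whose skeleton is positive arboreal with smooth boundary, proving the corollary.
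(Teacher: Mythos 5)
Your proposal is correct and follows essentially the same route as the paper: triviality of the normal bundle gives stable triviality of $TX$, hence a stable polarization, and the connectivity of $BO_n \to BO$ and $BU_n \to BU$ together with the half-dimensional CW type of a Weinstein domain upgrades this to an honest polarization, after which Theorem \ref{intro:main thm} applies. The paper's argument is exactly this, only stated more briefly in the paragraph preceding the corollary.
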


   An arboreal skeleton with boundary comes with a natural additional orientation structure, induced from the ambient symplectic structure. We formalize this  in the notion of an arboreal space and prove the following counterpart to Theorem \ref{intro:main thm}.
   
  \begin{theorem}\label{intro:weinst thick}
  Any compact  arboreal space with boundary arises as the skeleton of a  Weinstein domain $(W, \lambda)$,
  unique up to  Weinstein homotopy.
  \end{theorem}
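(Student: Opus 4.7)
The strategy is an inductive construction over the stratification of the arboreal space, exploiting the uniqueness of local models established in \cite{AGEN20a}. Let $L$ be a compact arboreal space with boundary. By definition $L$ carries a canonical stratification whose strata are smooth manifolds, and to each point $x\in L$ the arboreal structure (together with the orientation data) assigns a signed rooted tree $\cT_x$ that, by the classification theorem cited above, determines a unique germ of symplectic neighborhood. The goal is to glue these canonical germs into a global Weinstein domain, and to show that the gluing is unique up to Weinstein homotopy.

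\textbf{Existence.} I would build the Weinstein thickening stratum by stratum, starting with the deepest (lowest-dimensional) strata and working outward. At a point of the deepest stratum, the signed rooted tree together with the orientation structure prescribes a ball neighborhood $U_x$ together with the germ of Liouville form $\lambda_x$ whose skeleton is the arboreal germ. Along each stratum, the symplectic germs fit together into a bundle of standard models, trivializable over any contractible piece of the stratum. Moving to the next stratum up corresponds to enlarging the arboreal germ by a stabilization or a Liouville cone operation in the sense of Definition \ref{def:arb intr}, which on the symplectic side amounts to attaching a standard handle-type neighborhood along the common Legendrian link in the cosphere bundle of the lower stratum. The orientation structure, pulled back from the canonical polarization on any positive arboreal thickening, is precisely what selects the correct attaching framing. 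Iterating this until one has exhausted all strata of $L$ yields a compact Liouville domain $(W,\lambda)$ with $\Skel(W,\lambda)=L$, and the Morse Lyapunov function can be assembled in parallel with the construction by patching local quadratic models.

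\textbf{Uniqueness.} Given two Weinstein thickenings $(W,\lambda)$, $(W',\lambda')$ with $\Skel=L$, I would construct a Weinstein homotopy by the same inductive scheme. On a neighborhood of the deepest stratum, the local classification from \cite{AGEN20a} furnishes a germ of symplectomorphism identifying $(W,\lambda)$ with $(W',\lambda')$ and carrying $L$ to $L$. A stratified Moser argument extends this identification along each stratum, using that along a contractible open piece the space of Liouville germs with fixed arboreal skeleton is contractible. Inductively extending across the gluing strata reduces to a parametric (relative) version of the same local uniqueness statement, which holds because the signed rooted tree and orientation structure are constant along a stratum. The end result is a symplectomorphism $W\to W'$ supported in an arbitrarily small neighborhood of $L$, which together with the standard uniqueness of Liouville completions beyond $L$ promotes to a Weinstein homotopy.

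\textbf{Main obstacle.} The hard step is ensuring coherence of the gluing across strata of different depths: when the local model of a deeper stratum is enlarged by attaching the cone over a Legendrian link coming from a neighboring shallower stratum, one must verify that the attaching data prescribed by the arboreal space (via its orientation structure) matches the ambient data coming from the already-built piece of $W$. This is ultimately a compatibility check between the canonical polarization carried by positive arboreal neighborhoods and the Lagrangian tangent data along each stratum, and it is precisely the reason the orientation structure must be built into the definition of an arboreal space. The same compatibility issue is the one delicate point in the uniqueness half of the argument, where it is handled by a relative parametric version of the local classification theorem.
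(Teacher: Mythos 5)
There is a genuine gap, and it is concentrated in the uniqueness half. Your argument treats uniqueness as a statement about the symplectic form: you identify germs by the classification of \cite{AGEN20a}, extend by a stratified Moser argument, and invoke ``the space of Liouville germs with fixed arboreal skeleton is contractible'' along a stratum. But that contractibility (even connectedness) is essentially the statement being proved, and neither \cite{AGEN20a} nor a Moser argument supplies it: a symplectomorphism germ preserving $L$ (which the paper does establish, as Theorem \ref{thm:arb-Darboux}) does not carry one Liouville form to the other, and the Weinstein structure is data beyond the symplectic form --- the dynamics of the Liouville field matter. The paper's uniqueness proof (Theorem \ref{thm:unique-W-for-buildings}(ii)) instead Weinstein-homotopes an arbitrary structure with skeleton $L$ to a canonical cotangent-building structure, block by block, and this requires genuinely dynamical input with no Moser-type analogue: the bound $\mathrm{Re}\,\lambda<1$ on the eigenvalues of the linearized Liouville field along the root piece (Lemma \ref{lm:arb-core-eigenvalue}, which uses that an arboreal cannot contain a two-sided curve transverse to the root piece), the taming estimate $Z\cdot|p|^2\geq\eps|p|^2$ and the construction of adapted Lyapunov functions (Lemmas \ref{lm:taming}, \ref{lm:taming2}), followed by an explicit interpolation of the Liouville form to $p\,dq$ near the bottom block, a splitting along $\{|p|=\mathrm{const}\}$, and induction. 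Your proposal contains no substitute for this step, and the ``relative parametric version of the local classification'' you appeal to does not exist in the form you need (the classification is of Lagrangian germs up to ambient symplectomorphism, not of Weinstein structures with prescribed skeleton up to homotopy).

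The existence half is closer in spirit to the paper but also skips the delicate point. Gluing germs of local Liouville models stratum by stratum does not obviously produce a global Liouville form whose skeleton is \emph{exactly} $L$: Liouville forms do not patch convexly while preserving skeleta, and you do not explain why the attractor of the glued field is $L$ rather than something larger. The paper avoids this by separating the two issues: the symplectic neighborhood and its uniqueness come from the arboreal Darboux--Weinstein theorem (Theorem \ref{thm:arb-Darboux}, proved via Propositions \ref{prop:symplectic-normal} and \ref{prop:hom-to-Moser}), while the Weinstein structure is built globally as a cotangent building, vertically gluing the blocks $\sT^*L_j$ of the smooth pieces of the Ac-building along arboreal Legendrian lifts of the attaching maps (Lemma \ref{lem:lifts-Ac}, where the orientation structure supplies the coorientations of the fronts); this has skeleton $L$ by construction, and is then transported to $\Op L$ by Theorem \ref{thm:arb-Darboux}. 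Note also that the orientation structure is not ``pulled back from the canonical polarization on a positive thickening'' --- it is defined for all arboreal spaces, positive or not, and is the datum that pins down the coorientations; and the paper in fact carries out the argument under the (simplifying, per the authors not strictly necessary) assumption that $L$ is an Ac-building, an organizational device your stratum-by-stratum scheme would need to replace with an actual induction that terminates.
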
   
    \begin{figure}[h]
\includegraphics[scale=0.43]{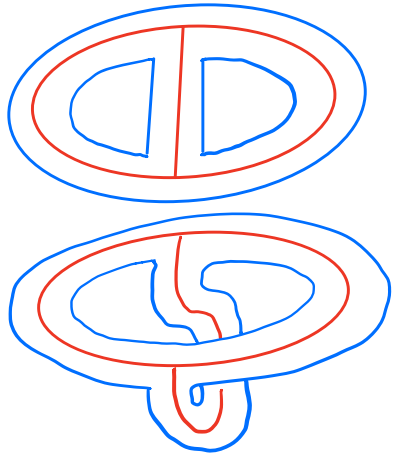}
\caption{The simplest example of two non-equivalent arboreal spaces with diffeomorphic underlying arboreal Lagrangians. These are skeleta for the pair of pants and the once-punctured torus repsectively. The arboreal spaces are distinguished by the orientation structure, which is additional data, and in this dimension reduces to the usual cyclic structure on ribbon graphs. We thank A. Oancea for pointing out this example. }
\label{fig:orientationstr}
\end{figure}

 Hence, up to Weinstein homotopy, those Weinstein domains which admit polarizations are precisely the Weinstein thickenings of positive arboreal spaces. Moreover, the relative form of Theorem \ref{intro:main thm} implies a uniqueness result for positive arboreal skeleta. To state it we need the notion of a Weinstein concordance, which is best defined in the language of Wc-manifolds, see Definition \ref{def:Wc-man}. Briefly, if $\lambda_1$ and $\lambda_2$ are two Weinstein structures on the same symplectic domain $(W,\omega)$ with Liouville fields $Z_1,Z_2$, respectively, then a Weinstein concordance between them is a Weinstein-type primitive $\lambda$ for $\omega + du \land dt$ on $W \times T^*[0,1]$ such that the Liouville field $Z$ of $\lambda$ is tangent to the boundary $W \times T_0^*[0,1] \cup W \times T_1^*[0,1]$ and restricts to $Z_1+ u\p/\p u$ and $Z_2 + u \p / \p u$ over $W \times T^*_0[0,1]$ and $W \times T_1^*[0,1]$ respectively. 
 
 \begin{definition} A {\it positive arboreal concordance} is a Weinstein concordance $(W \times T^*[0,1],\lambda)$ whose skeleton is a positive arboreal Lagrangian with boundary. We assume that the boundary of the skeleton is smooth in the interior of $W \times T^*[0,1]$ but we allow simple corners with one face contained in $W \times T^*_0[0,1]$ or $W \times T^*_1[0,1]$ and the other face transverse to it. \end{definition}
 
 \begin{remark}
 We must allow simple corners so that for example if $(W,\lambda)$ is a Weinstein manifold whose skeleton is a positive arboreal with nonempty smooth boundary, then the trivial product $(W \times T^*[0,1], \lambda + udt)$ is a positive arboreal concordance. \end{remark}
%
%

 The uniqueness statement then reads as follows:
 
 \begin{theorem}\label{intro:conc} Suppose that $\lambda_1$ and $\lambda_2$ are two homotopic Weinstein structures on $W$ whose skeleta are positive arboreal Lagrangians with smooth boundary, transverse to polarizations $\xi_1$ and $\xi_2$ respectively. Then $\xi_1$ and $\xi_2$ are stably homotopic if and only if there is a positive arboreal concordance between $\lambda_1$ and $ \lambda_2$.
 \end{theorem}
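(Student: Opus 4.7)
The forward direction is essentially restriction. Given a positive arboreal concordance $(W \times T^*[0,1], \lambda)$, viewed as a Weinstein domain in its own right, Theorem~\ref{intro:main thm} furnishes a polarization $\Xi$ of $W \times T^*[0,1]$ which near the skeleton coincides with the canonical polarization transverse to it. Under the compatible almost complex structure there is an identification $T(W \times T^*[0,1]) \cong TW \oplus \C$, so $\Xi$ is precisely a Lagrangian plane field in $TW \oplus \C$. Its restrictions to $t = 0$ and $t = 1$ are homotopic, through the canonical polarizations near the positive arboreal skeleta of $\lambda_1, \lambda_2$, to $\xi_1 \oplus \R$ and $\xi_2 \oplus \R$ respectively, and the parameter $t \in [0,1]$ supplies the desired stable homotopy.

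For the reverse direction, I would first realize the given Weinstein homotopy between $\lambda_1$ and $\lambda_2$ by an (a priori non-arboreal) Weinstein concordance $(W \times T^*[0,1], \lambda_0)$; the equivalence between Weinstein homotopy and Weinstein concordance is built into the Wc-manifold formalism of Definition~\ref{def:Wc-man}. The assumed stable homotopy $\Xi$ from $\xi_1$ to $\xi_2$ is, under the identification above, literally a Lagrangian plane field in $T(W \times T^*[0,1])$, i.e.~a polarization of $(W \times T^*[0,1], \omega + du \wedge dt)$. By a preliminary homotopy of $\Xi$ supported away from $t = 0, 1$, I arrange that near each end $\Xi$ coincides with the canonical polarization of the prescribed positive arboreal skeleton of $\lambda_i$, stabilized by the $\C$-factor.

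Next, I invoke the relative version of Theorem~\ref{intro:main thm} (item (iv) of the Remark following its statement) applied to this concordance, with the two ends $t = 0, 1$ playing the role of the Weinstein hypersurfaces in the relative setup. By construction those hypersurfaces already carry positive arboreal skeleta to which $\Xi$ restricts as the canonical polarization, so the relative theorem yields a Weinstein homotopy of $\lambda_0$ (fixed near the ends) to a Weinstein structure $\lambda$ whose skeleton is positive arboreal, smooth in the interior of $W \times T^*[0,1]$, transverse to $\Xi$, and equal to the prescribed arboreal skeleta at $t = 0, 1$. The resulting $(W \times T^*[0,1], \lambda)$ is the desired positive arboreal concordance; the simple corners appear exactly where the interior-smooth boundary of the new skeleton meets the two ends.

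The main obstacle is ensuring that the relative arborealization respects the corner structure at $t = 0, 1$: one must check that item (iv) of the Remark does not disturb the prescribed arboreal skeleta on the ends, and that the boundary of the newly constructed arboreal Lagrangian in the interior of the concordance meets the two ends transversely, producing simple corners rather than more exotic singularities. This bookkeeping is the principal technical point, but it is accommodated cleanly by the Wc-manifold language and by the full existence statement Theorem~\ref{thm:existence-arb-pos}, which are formulated precisely to handle such corner configurations.
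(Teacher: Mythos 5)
Your argument is the paper's own: the paper proves the concordance statement by applying the relative arborealization theorem (Theorem \ref{thm:existence-arb-pos-pair-rel}) to the Wc-pair obtained from $W \times T^*[0,1]$ with Liouville form $\lambda_t + u\,dt$, the two ends converted to Wc-hypersurfaces carrying the given positive arboreal skeleta, exactly as in your reverse direction, while your forward direction (restricting the canonical stable polarization of the concordance) is the intended, straightforward half. The small points you defer---passing from a $d$-fold stable homotopy to a single stabilization, and adjusting $\Xi$ near the ends to the canonical polarizations---are treated at the same level of brevity in the paper itself.
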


We remark that the classification of polarizations on a Weinstein manifold up to stable homotopy is in general weaker than the classification up to homotopy.

%
%
%
%
%

\subsection{ Further development of the arborealization program}\label{sec:arb-progr}
 
Work in progress of the authors extends the results of this paper in two directions. First, in our forthcoming work  \cite{AGEN21}  we consider  a $1$-parametric version of the  problem, by introducing a suitable notion of ``arboreal homotopy of positive arboreal spaces", i.e.~the minimal sequence of combinatoral Reidemeister type moves  necessary to connect  two homotopic   Weinstein structures with positive arboreal skeleta that have homotopic canonical polarizations. Note that substantial progress in this direction appeared in Zorn's thesis \cite{Z18}, where local mutations of arboreal skeleta are classified.  

The second direction is an extension of  the arborealization program  to  a more general class of Weinstein manifolds. While most Weinstein manifolds considered in applications such as mirror symmetry are complete intersections and hence polarized, not all Weinstein manifolds admit polarizations. For example a cohomological obstruction to existence of a polarization is that all odd Chern classes are 2-torsion. However, this cohomological condition does not obstruct existence of a not necessarily positive arboreal skeleton.

But even allowing for non-positive arboreal skeleta, there are still obstructions to their existence.   Indeed, while existence of a polarization is not a necessary condition for   existence of a non-positive arboreal skeleton, there is a closely related necessary condition: existence of an {\em $(n,n-1)$-polarization}. This is, roughly speaking, a polarization which is allowed to degenerate to an $(n-1)$-dimensional isotropic subspace.   More precisely, it is defined as follows. 

Given a symplectic vector space $(E,\om)$ let $L_n(E)$ denote its Lagrangian Grassmanian of non-oriented Lagrangian planes,   and $I_{n-1}(E)$ denote the Grassmanian of non-oriented isotropic (n-1)-dimensional subspaces. We further denote  by $F_{n,n-1}(E)$ the flag manifold $\{(\lambda, \mu); \lambda\in L_n(E), \mu\in I_{n-1}(E), \mu\subset\lambda\}$. Let $\pi_L$ and $\pi_I$ be  the  tautological projections $\pi_L:F_{n,n-1}(E)\to L_n(E)$ and $\pi_I:F_{n,n-1}(E)\to I_{n-1}(E)$.   Consider the double cone $C_n(E):=(F_{n,n-1}\times[0,1]\cup L_n(E)\cup I_{n-1}(E))/\{f\times0\sim
\pi_L(f),f\times1\sim\pi_I(f)\}.$ Given any symplectic vector {\em bundle} $E$,  we will use the notation $C_n(E)$ for the associated cone bundle.
An $(n,n-1)$-polarization on a symplectic manifold $V$ is by definition a {\em section} of the cone bundle $C_n(T(V))$. 

One can show   that for $n\leq 5 $ any $2n$-dimensional Weinstein manifold admits a  $(n,n-1)$-polarization, while for $n=6$ there are obstructions (e.g. one must have $c_3(V)=c_1(V)c_2(V)$).\footnote{We thank John Pardon for suggesting to  us   the notion of a $(n,n-1)$-polarization, and to S{\o}ren Galatius  for computations of obstructions.}
It is possible that this condition is also sufficient for existence of an arboreal skeleton. Note in particular that a Weinstein manifold of dimension four always admits an $(n,n-1)$-polarization, so this discussion is consistent with Starkston's result \cite{St18} that four-dimensional Weinstein manifolds always admit arboreal skeleta.

For completely arbitrary Weinstein manifolds it is not clear whether one should expect there to be  a simple class of singularities which the skeleton can always be arranged to have, but if it exists such a class would have to be more general than the arboreal class.  In any case, even the most optimistic hopes of constructing a reasonable skeleton for a Weinstein manifold will in all likelihood require the existence of ``Maslov data", i.e. the homotopical trivialization required to define Fukaya categories or microlocal sheaves. 

\subsection{Flexibility of caustics and the ridgification theorem}

We discuss a toy example to illustrate our strategy of proof. Consider a Weinstein domain $(W^{2n},\lambda)$ which is obtained from the standard Darboux ball $B \subset (\R^{2n}, pdq-qdp)$ by attaching an index $n$ handle, i.e. the neighborhood of a Lagrangian disk, along a Legendrian sphere $\Lambda \subset \p B = (S^{2n-1} ,\xi_{\text{std}})$. The skeleton of $W$ is homeomorphic to the $n$-sphere: it consists of the Lagrangian disk union the Liouville cone on $\Lambda$ with respect to the radial Liouville field on $\R^{2n}$, union the origin.  

Replace the radial Liouville structure $pdq-qdp$ on $\R^{2n}$ with the Morse-Bott canonical Liouville structure $pdq$ on the cotangent bundle $T^*\R^n$, or rather on $T^*D$ for $D \subset B$ a Lagrangian $n$-disk bounding a Legendrian unknot $\p D \subset S^{2n-1}$ which we assume disjoint from $\Lambda$. Then the radial Liouville cone on $\Lambda \subset S^{2n-1}$ in $B$ gets replaced with the fibrewise Liouville cone on $\Lambda \subset S^*D$ in $T^*D$, where $S^*D$ denotes the cosphere bundle, see Figure \ref{fig:Blowingup}. 

Observe that as a result, the singularity gets spread out over the disk, i.e. instead of a cone on a point it is now a fibrewise cone over the disk. The skeleton of the new Weinstein structure on $W$ consists of the Lagrangian handle, union the Liouville cone on $\Lambda$ with respect to the canonical Liouville structure on $T^*D$, union the zero section $D$. The singularities of this new Lagrangian skeleton are thus related to the singularities of the map $\pi|_\Lambda: \Lambda \to D$, where $\pi: S^*D \to D$ is the front projection. These singularities correspond to the tangencies of $\Lambda$ with the distribution $\nu= \ker(d \pi) \subset T (S^*D)$ and are also known as caustics in the literature. For example, if $\pi|_{\Lambda}: \Lambda \to D$ has no caustics, then it is an immersion and so by axiom (v) of Definition \ref{def:arb intr}, after a generic perturbation (to ensure self-transversality), the skeleton of $W$ is arboreal.
\begin{figure}[h]
\includegraphics[scale=0.5]{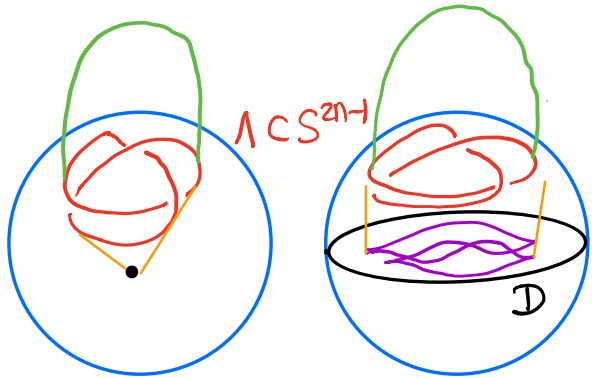}
\caption{The blow-up procedure for a Weinstein manifold $W^{2n}$ with two critical points: one of index 0 and one of index $n$.}
\label{fig:Blowingup}
\end{figure}

The idea of blowing up conical Lagrangian singularities was already present at the inception of the arborealization program, but for our purposes it is necessary to perform the blowing up procedure globally, at the level of Weinstein structures. This strategy was successfully implemented by Starkston in~\cite{St18} for the $4$-dimensional case. Starkston moreover showed how to explicitly arborealize the semi-cubical cusp singularities which generically appear in the front projections of 1-dimensional Legendrians, thus leading to her result that 4-dimensional Weinstein manifolds always admit arboreal skeleta. In higher dimensions this strategy encounters the difficulty that there is no classification theorem for the generic singularities of front projections, nor can anything remotely close to a satisfactory classification be hoped for.

Nevertheless, there holds an h-principle for the simplification of caustics. The h-principle states that if there is no homotopy theoretic obstruction to the simplification of caustics, then the simplification can be achieved by means of a Legendrian isotopy (or for Lagrangians, by means of a Hamiltonian isotopy). Results in this direction first appeared in work of Entov \cite{En99}, and the full h-principle was established in work of the first author \cite{AG18a}, \cite{AG18b}. Returning to our toy example, if we know that $W$ admits a polarization as in the hypothesis of our main Theorem \ref{intro:main thm} and we further assume that the dimension of $W$ is congruent to $2$ modulo $4$, then it follows from the h-principle that there exists a Legendrian isotopy which deforms $\Lambda$ to a Legendrian whose front projection $\pi|_\Lambda : \Lambda \to D$ only has cusp singularities. In this case we can proceed as in~\cite{St18} to conclude that the skeleton of $W$ can be arborealized by a Weinstein homotopy. When the dimension of $W$ is congruent to $0$ modulo $4$ there is an additional homotopical subtlety to consider, even in this toy example, but we do not pursue this further since one also encounters additional and more serious difficulties when attempting to implement the above strategy on a Weinstein manifold with a more complicated handlebody presentation. For example, one would need to know that the relevant homotopical obstruction to applying the h-principle vanishes for every handle, yet it is unclear how to arrange for this even under the global assumption of existence of a polarization.

So instead of attempting a direct application of the h-principle, we will exploit the fact that for the purposes of arborealization we can allow deformations which are more general than a Hamiltonian isotopy, namely we can deform the skeleton of a Weinstein manifold by any homotopy of the Weinstein structure. This allows the skeleton to develop genuine singularities (i.e. non-smooth points) along which its field of tangent planes jumps discontinuously. By introducing these jumps we destroy the homotopical obstruction to simplifying the singularities with the distribution $\nu = \ker( d \pi)$, and in fact we can always completely eliminate these tangencies. 

This strategy was made precise in \cite{AGEN19}, where we introduced a class of singular Lagrangians, called ridgy, and proved an h-principle {\it without pre-conditions}, which allows one to deform any Lagrangian so that it becomes transverse to any given Lagrangian distribution, at the expense of developing ridgy singularities. This result, which we refer to as the {\it ridgification theorem}, uses the h-principle for the simplification of caustics as one of the key ingredients in its proof, but is better suited for the task at hand since there are no hypotheses needed for its application. 
    \begin{figure}[h]
\includegraphics[scale=0.4]{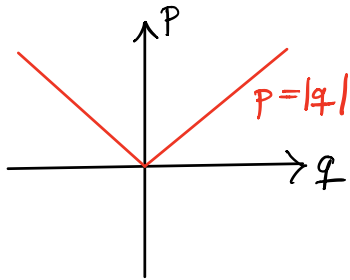}
\caption{The ridge singularity in one dimension.}
\label{fig:ridgy1D}
\end{figure}
         \begin{figure}[h]
\includegraphics[scale=0.5]{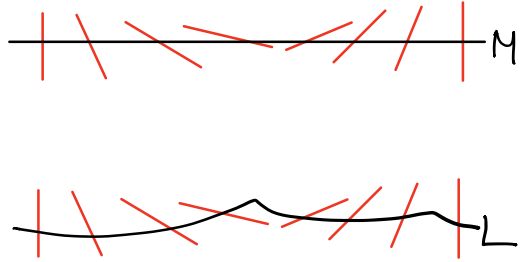}
\caption{The ridgification theorem in action: the zero section $M \subset T^*M$ gets deformed to a ridgy Lagrangian $L \subset T^*M$ which is transverse to the red distribution. Of course in this simple example a single ridge point would suffice, but the proof always produces a ridge locus which is the union of homologically trivial hypersurfaces, in this case two ridge points.  }
\label{fig:ridgy}
\end{figure}

Ridgy singularities are extremely simple: they consist of the corner $\{p=|q| \} \subset \R^2$ together with its products and stabilizations. Moreover, ridgy singularities can be explicitly arborealized while maintaining transversality to any given Lagrangian distribution, see Figure \ref{fig:arb rid}. So our revised strategy to arborealize a Weinstein manifold $W$ is the following: (1) blow up the singularities as before to replace Liouville cones on a point with Liouville cones on Lagrangian disks, (2) apply the ridgification theorem to get rid of all singularities of the resulting front projections, and finally (3) arborealize the ridgy Lagrangians produced by the ridgification theorem, while maintaining transversality to the relevant distributions, to obtain an arboreal skeleton for $W$.

\begin{figure}[h]
\includegraphics[scale=0.5]{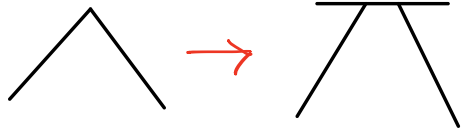}
\caption{Arborealization of an order 1 ridge.}
\label{fig:arb rid}
\end{figure}

         \begin{figure}[h]
\includegraphics[scale=0.5]{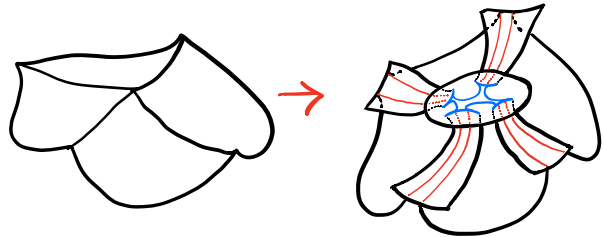}
\caption{Arborealization of an order 2 ridge. }
\label{fig:arborealizationridge2}
\end{figure}

Finally, a word on how we deal with the issue of compatibility at the interaction of three or more strata of the skeleton: this is where the notion of positivity comes up in an essential way. The key fact is that if a Lagrangian distribution $\eta$ on $T^*\R^n$ corresponds to a family of positive definite quadratic forms, then it is transverse to the conormal $T^*_H\R^n \subset T^*\R^n$ of {\it any} immersed hypersurface $H \subset \R^n$ in a neighborhood of the zero section. For the interaction of say three strata, we use the ridgification theorem to achieve transversality for two strata at a time and then conclude that transversality also holds for the third stratum {\it for free} using the above observation on positivity. This yields the desired compatibility. The existence of a global polarization for $W$ will play a key role in guiding the positivity condition globally, thus enabling the successful implementation of the above strategy.

\subsection{Structure of the article}

The paper is organized as follows:

In Section \ref{sec:W-prelim} we give the necessary preliminaries regarding Weinstein manifolds. This includes a review of notions in the literature as well as the introduction of a new framework for working with Weinstein structures. The key notions are that of a Weinstein manifold with boundary and corners, which we call a Wc-manifold, and that of a cotangent building, which provides a way of presenting a Wc-manifold in terms of successive attachments of cotangent bundles. The concept of a cotangent building gives an alternative to the standard Weinstein handlebody presentation for Weinstein manifolds, and is better suited for our purposes.

In Section \ref{sec: nbhd} we prove the existence and uniqueness of symplectic neighborhoods of arboreal spaces. The existence part is an easy consequence of the stability theorem for arboreal singularities, which we proved in \cite{AGEN20a}. The uniqueness part consists of a Darboux-Weinstein theorem for the symplectic neighborhoods of arboreal Lagrangians.

In Section \ref{sec:positivity}  we discuss the notion of positivity, which is the key ingredient that allows us to control the interaction of three or more strata in the Lagrangian skeleta under consideration. We first discuss the notion of positivity for ordered tuples of Lagrangian planes, then discuss positivity for cotangent buildings and finally we relate this to the notion of positivity for arboreal spaces. We show that existence of a positive arboreal skeleton for the homotopy class of a Weinstein structure implies the existence of a polarization, proving one half of our main theorem \ref{intro:main thm}.

In Section \ref{sec: ridgy} we recall the notion of a ridgy Lagrangian from our previous work \cite{AGEN19}  and state the main result of that paper, which we call the ridgification theorem, as well as its non-integrable counterpart, which we call the formal ridgification theorem. The formal ridgification theorem also plays an important role since it holds in extension form. The main point is that by allowing for ridgy singularities we can deform any Lagrangian submanifold so that it becomes transverse to any given Lagrangian distribution, and moreover the deformation can be achieved by means of a Weinstein homotopy.

Finally, in Section \ref{sec:corner-arboreal} we show how the ridgy singularities produced by the ridgification theorem can be further deformed into arboreal singularities. Furthermore, the deformation is realized by a Weinstein homotopy and the end result is a positive arboreal Lagrangian which serves as skeleton for the deformed Weinstein structure. This proves the other half of our main theorem \ref{intro:main thm}.

\subsection{Acknowledgements}
We are very grateful to Laura Starkston who collaborated with us on the initial stages of this project. We are also grateful to  John Pardon who helped us to crystalize  the notion of an $(n, n-1)$-polarization and to S{\o}ren Galatius who explained to us how to compute obstructions to its existence.

The first author thanks the Institute for Advanced Study and Princeton University for a great research environment and thanks CRM Montreal for its hospitality. The second author  thanks RIMS Kyoto and ITS ETH Zurich for their hospitality. The third author thanks MSRI for its hospitality.

Finally, we are very grateful for the support of the American Institute of Mathematics, which hosted a workshop on the arborealization program in 2018 from which this project has greatly benefited.

\section{Wc-manifolds and cotangent buildings}\label{sec:W-prelim}

In this section we recall the basic definitions concerning Liouville and Weinstein manifolds, and introduce the language of Wc-manifolds and cotangent buildings, which will be used throughout the text.
         
 \subsection{Liouville manifolds}\label{sec:L-man} 
 
 \subsubsection{Liouville domains}
 
An exact symplectic manifold $(W, \lambda)$ is a compact manifold with boundary equipped with an exact symplectic form $\omega$, together with a choice of primitive $\lambda$, i.e. $\omega=d \lambda$. Since $\omega$ is non-degenerate, the 1-form  $\lambda$ corresponds to a vector field  $Z$ under the contraction $\iota_Z \omega = \lambda$.

\begin{definition}
A {\em Liouville domain} is an exact symplectic manifold $(W,\lambda)$ such that $Z$ is outwards transverse to
 $\p W$.
\end{definition}
Equivalently, $(W, \lambda)$ is Liouville if $\lambda|_{\p W}$ is a contact form and the orientation defined on $\p W$ by the volume form $\lambda\wedge (d\lambda)^{n-1}|_{\p W} $ coincides with the orientation of $\p W$ as the boundary of $W$, where $W$ is itself oriented by the volume form $\omega^{\land n}$.

We call $\lambda$ the {\em Liouville form} and $Z$ the {\em Liouville field}. By Cartan's formula for the Lie derivative, the condition $ \omega = d \lambda$ is equivalent to $\cL_{Z} \omega = \omega$. This means that the vector field $Z$ is symplectically conformally expanding, i.e.  $(Z^{t})_*\omega=e^{t} \omega$, $t\geq 0$, for $Z^t$ the flow of $Z$. Equivalently, the vector field $-Z$ is symplectically conformally contracting and this viewpoint will sometimes be more natural in what follows.
Note that we also have $(Z^{t})_*\lambda=e^{t} \lambda$. The object of interest in this paper is the following.

\begin{definition}  The {\em skeleton} of a Liouville domain $(W,\lambda)$ is the subset  $$\Skel(W,\lambda)=\bigcap\limits_{t>0}Z^{-t}(W).$$   \end{definition}


In other words, $\Skel(W,\lambda)$ is the attractor of the 
positive flow of the contracting field $-Z$. We will be going back and forth between Liouville domains and the closely related notion of a Liouville manifold, whose definition we recall next. 

\subsubsection{Liouville manifolds}

Recall that the {\em symplectization} $S(Y,\xi)$ of a contact manifold $(Y, \xi)$ equipped with a contact form $\alpha$ for $\xi$ is the exact symplectic manifold $Y \times \R$ equipped with the primitive $\lambda=e^t \alpha$, where $t \in \R$. We can equivalently consider $Y \times \R^+$ with $\lambda = s \alpha$ for $s=e^t \in \R^+$ the multiplicative coordinate.

The symplectization of a contact manifold $(Y, \xi)$  can be defined invariantly without choosing a contact form. Assuming that $Y$ is connected and $\xi$ is co-oriented, we define $S(Y,\xi)$ as follows. Let $N(\xi)\subset T^*Y$ be the total space of the conormal line bundle to $\xi$ and $N_+(\xi)$ the component of $N(\xi)\setminus Y$ consisting of $1$-forms defining the given co-orientation of $\xi$.  Then $\lambda_\xi=pdq|_{N_+(\xi)}$ is a primitive of the exact symplectic form $\omega= d \lambda_\xi$ and we define $S(Y,\xi)$ as $N_+(\xi)$ endowed with this structure. A choice of a contact form $\alpha$ for $\xi$ identifies  $N_+(\xi)$ with $Y \times \R^+$ and
 $\lambda_\xi$ with $s\alpha$. For example, the symplectization of the cosphere bundle $S^*M$ is the complement of the zero section in $T^*M$. 
 
 In the other direction, the {\em contactization} of an exact symplectic manifold $(N, \mu)$ is the manifold $M \times \R$ equipped with the contact form $\mu-du$, where $u \in \R$. For example, the contactization of the cotangent bundle $T^*M$ is the 1-jet bundle $J^1M$. 


\begin{definition} A {\em finite type Liouville manifold} $(X, \lambda)$ consists of a (non-compact) boundaryless exact symplectic manifold such that the following conditions hold.
\begin{itemize}
\item[(L1)]  The vector field $Z$ which is $\omega$-dual to $\lambda$  is complete for $t\to\pm\infty$. 
\item[(L2)] There exists a compact domain $W\subset X$ such that $Z$ is outward  transverse  to the boundary $\p W$, i.e. $(W, \lambda|_W)$ is a Liouville domain, and such that the union of forward trajectories of $Z$ starting at $\p W$ is equal to $X\setminus\Int W$. \end{itemize}
\end{definition}

 We call Liouville domains as     in (L2)  {\em defining}. 

\begin{definition} The {\em skeleton} of a finite type Liouville manifold $(X,\lambda)$ is defined to be 
$$\Skel(X,\lambda)=\bigcup\limits_{C} \mathop{\bigcap}\limits_{t>0}Z^{-t}( C), \qquad \text{$C \subset X$ is compact}. $$ \end{definition}

Clearly, $\Skel(X,\lambda)=\Skel(W,\lambda)$ for any defining Liouville domain $W\subset X$. 
Hence the skeleton of a finite type Liouville manifold is compact. Note that for any such $W$ we may use the Liouville flow starting at $\p W$ to parametrize $X \setminus \text{int} W$ as  $\partial W \times [0,\infty)$. Since the Liouville field conformally expands $\lambda$ we have $\lambda|_{\p W \times [0,\infty)} = e^t (\lambda|_{\p W})$ for $t \in [0,\infty)$ the coordinate parametrizing the Liouville flow, so $X \setminus \text{int} W$ is the positive part of the symplectization of the contact manifold $(\p W , \lambda|_{\p W})$.

Conversely,  any Liouville domain $(W,\lambda)$ can be completed to a Liouville manifold
by attaching  to it the positive part of the symplectization of  $(\p W,\lambda|_{\partial W} )$. Explicitly:
$$X=W\mathop{\bigcup}\limits_{\p W\sim\p W\times 0} \p W\times[0,\infty) ,$$  where the Liouville form $\lambda$ on $W$ extends to to $\p W\times[0,\infty)$ as $e^t\left(\lambda|_{\p W}\right)$.

\begin{definition} The {\em ideal boundary}  of a finite type Liouville manifold $(X,\lambda)$ is defined to be $\partial_{\infty}X = \big(X \setminus \Skel(X, \lambda) \big) / \R^+$, where the $\R^+$ action is given by the positive flow of $Z$. 
\end{definition}
Since $\lambda(Z)=0$, the form $\lambda$ descends to  $\partial_\infty X$  as  a contact structure $\xi_\infty$. A choice of a contact form $\alpha$ for
$\xi_\infty$ yields an isomorphism $j_\alpha:S(\p_\infty X,\xi_\infty)=(\p_\infty X\times\R, e^t\alpha)\to (X\setminus \Skel(X,\lambda),\lambda)$ which maps $\p_\infty X\times 0$ contactomorphically onto the boundary of a defining Liouville domain.
We denote by $\pi_\infty : X\setminus \Skel(X,\lambda)\to\p_\infty X$  the projection along the trajectories of $Z$.

\subsubsection{Deformations of Liouville structures}
 
\begin{definition} A {\it homotopy of finite type Liouville structures} $(X,\lambda_t)$ is a family $\lambda_t$, $t\in[0,1]$, of finite type Liouville structures on a manifold $X$  admitting a   smooth  family $W_t \subset X$ of defining Liouville domains. \end{definition}
Given such a homotopy  $(X,  \lambda_t)$ there exists an isotopy $\phi_t:\ X\to  X$ such that $\phi_t^*\om_t=\om_0$, where $\omega_t=d\lambda_t$. Moreover, one can always arrange that $\phi_t^*\lambda_t=\lambda_0+dH_t$ for uniformly compactly supported  functions $H_t$, see \cite{CE12}, Sections 11.1 and 11.2. In particular, it is always sufficient to consider homotopies fixing the symplectic form, and moreover changing the Liouville form by adding a compactly supported exact form. Note that for any fixed $\omega$ the space of Liouville structures on $(X,\omega)$ which are fixed at infinity is convex, hence any two such Liouville structures are canonically homotopic. 

\begin{remark} Unless explicitly stated otherwise, all Liouville structures under consideration in this paper are of finite type and hence we will stop mentioning this below. \end{remark}

We will often abuse terminology and say that two Liouville manifolds $(X_1,\lambda_1)$ and $(X_2,\lambda_2)$ are Liouville homotopic if there exists a symplectomorphism $F:(X_1,d\lambda_1) \to (X_2, d\lambda_2)$ such that $(X_1,F^*\lambda_2)$ is a Liouville manifold which is Liouville homotopic to $(X_1,\lambda_1)$. In all cases where such terminology will be abused, the symplectomorphism will be obvious and unique up to contractible choice.

If $F^*\lambda_2=\lambda_1$ on the nose, we say that $(X_1,\lambda_1)$ and $(X_2,\lambda_2)$ are {\it Liouville isomorphic}. For $(X, \lambda)$ and $(Y, \mu)$ Liouville domains or manifolds we call a smooth, proper embedding $F:(X, \lambda) \to (Y, \mu)$ a {\it Liouville embedding} if $F^* \mu=\lambda$.

\subsubsection{Liouville germs}

It will often be convenient to express our constructions in the language of germs.

\begin{definition}  A {\em Liouville germ }  $(\sX,\lambda)$ is the equivalence class of defining Liouville domains for a fixed Liouville manifold $ (X,\lambda)$. 
\end{definition}

\begin{definition} A Liouville embedding of a Liouville germ $(\sX, \lambda)$ into a Liouville domain or manifold $(Y, \mu)$ is an equivalence class of Liouville embeddings $F_W : (W, \lambda|_W) \to (Y, \mu)$, where $W \subset X$ is a defining domain for $(X, \lambda)$ and we identify $F_{W_0}$ with $F_{W_1}$ if they agree on a defining domain $W_2$ for $(X, \lambda)$ contained in both $W_0$ and $W_1$.  \end{definition}



 



\begin{example}
The cotangent bundle $X=T^*M$ of a closed manfiold $M$ is a Liouville manifold with $\lambda=pdq$ the canonical 1-form. The Liouville field is the fibrewise radial vector field $Z=p \p_p$ and the skeleton is the zero section $M \subset T^*M$. As a Liouville domain we can take the unit disk bundle $W=\{ \| p \|  \leq 1 \}$ relative to any Riemannian metric on $M$. The ideal boundary is the cosphere bundle $S^*M$, i.e the (positively) projectivized cotangent bundle, which is contactomorphic to the unit sphere bundle $\partial W = \{ \|p \|=1 \}$ for any choice of Riemannian metric. We will use the notation $\sT^*M$ for the germ of the cotangent bundle $T^*M$.
\end{example}

\subsection{Weinstein manifolds}\label{sec:W-man} 
The notion of {\em Weinstein manifold} was first  introduced in \cite{EG91}  building on \cite{E90} and \cite{W91}. 
\subsubsection{Lyapunov functions}

Let $(X, \lambda)$ be a Liouville manifold. While $\Skel(X,\lambda)$ always has its $2n$-dimensional Lebesgue  measure equal to $0$, its dimension can nonetheless be quite  large if no  extra conditions are imposed on the Liouville structure. For example, McDuff constructed in  \cite{MD91}   a Liouville structure on $T^*\Sigma_g\setminus \Sigma_g$ for  $\Sigma_g$ a closed surface of genus $g>1$ whose skeleton has   codimension 1. To tame the dynamics of the Liouville flow we require existence of a Lyapunov function.
\begin{definition} We say that $\phi:X \to \R$ is a {\em Lyapunov function} for the Liouville manifold $(X, \lambda)$ if $Z$ is gradient-like for $\phi$, i.e. if there holds
\begin{itemize}
\item[(W1)] $d\phi(Z)\geq \delta (\|Z\|^2+ \|d \phi \|^2)$ for some Riemannian metric on $X$ and some constant $\delta >0$.
\end{itemize} 
\end{definition}
Note that the space of Lyapunov functions for a given Liouville structure is convex, hence contractible as soon as it is nonempty. 
Consider the set of critical points $\text{Crit}(\phi)= \{ d \phi = 0 \}$  of the Lyapunov function, which by (W1) is the same as the zero set  $\{ \lambda = 0\}$ of the Liouville form. The conditions (L1) and (W1) imply that $\Skel( X,\lambda)$ is the union of the $Z$-stable manifolds of the critical points of $\phi$, i.e. points converging to $\text{Crit}(\phi)$ in forward time. However, as far as we know, we have to add some further constraints  on the Lyapunov function $\phi$  in order to deduce any meaningful properties.

\begin{definition}
A {\em Morse-Weinstein manifold} is a Liouville manifold $(X, \lambda)$ for which there exists a
 Morse  Lyapunov function $\phi:X \to \R$. 
\end{definition}
Under this assumption it was shown in \cite{CE12}, see also \cite{EG91, E95}, that
  \begin{itemize}
\item[(W2)] $\Skel(X, \lambda)$ is the union of submanifolds which are isotropic for $\lambda$, and hence for $\om$.
\end{itemize}  
   \begin{figure}[h]
\includegraphics[scale=0.5]{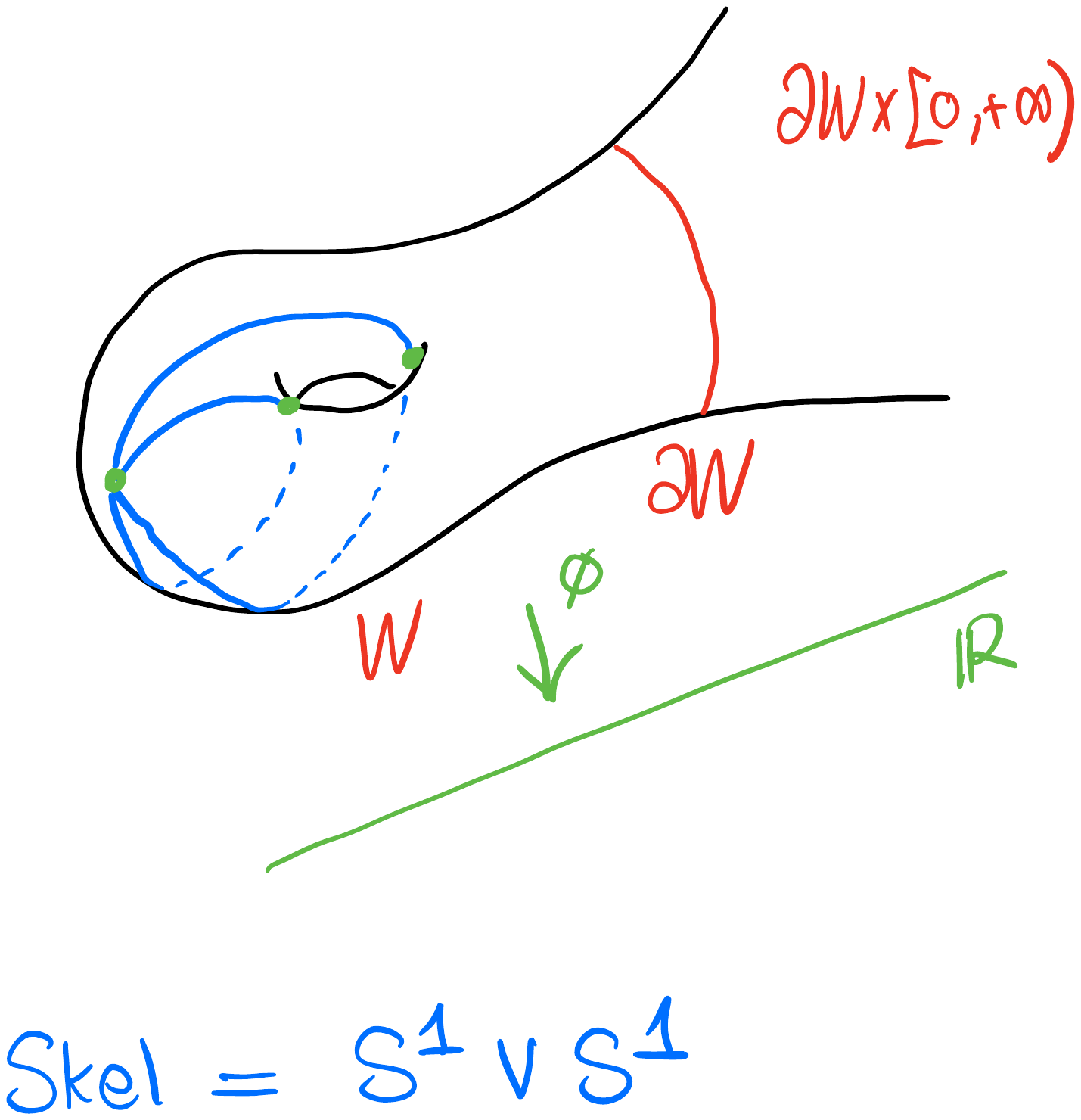}
\caption{The skeleton of an open Riemann surface with Morse Lypapunov function.}
\label{OpenRS}
\end{figure}

\begin{remark}
Note that $\lambda$-isotropic is equivalent to $\omega$-isotropic and $Z$-conic.
\end{remark} 
Without any assumptions on $\phi$, is not known whether a triple $(X,\lambda,\phi)$ which satisfies (L1), (L2) and (W1) is homotopic in the class of such triples to one with a Morse Lyapunov function.
However, condition (W2) holds for a larger class  of taming functions, for example when  $\phi$ is generalized Morse, see \cite{CE12}. Moreover, it is often most natural to consider a Lyapunov function which is not Morse or even generalized Morse, such as the Morse-Bott function $\phi=\|p\|^2$ on $(T^*M,pdq)$. 

\subsubsection{Morse-Bott Lyapunov functions}

In this paper we will consider a version of Morse-Bott Weinstein structures which allows the critical set of the Lyapnuov function to have nonempty boundary, and even corners. Our definition is a slight variation of Starkston's definition in \cite{St18} of a Weinstein structure which is Morse-Bott with boundary. 

Let $E^+ \oplus E^0 \oplus E^-$ be the decomposition of the tangent space to $X$ at a zero of $Z$ into generalized eigenspaces of the differential $dZ$ with eigenvalue having positive, zero or negative real part respectively. Suppose first that a connected component $C$ of $\text{Crit}(\phi)$ is a smooth, proper submanifold of $X$ without boundary. Then the Morse-Bott condition on a triple $(X, \lambda, \phi)$ along the component $C$ is the following:
\begin{itemize}
\item[(MB1)] $Z$ is non-degenerate in the directions normal to $C$, i.e. $T C = E^0|_{C}$. 
\end{itemize}

When $\p C \neq \varnothing$ we moreover demand that in a neighborhood of $\p C$ the triple $(X, \lambda,  \phi)$ takes a special form, which we now describe. Recall the Weinstein normal form for Morse critical points in a Weinstein manifold of dimension $2$. There are two possibilities depending on the index: $k=0$ or $k=1$.


\begin{itemize}


\item[($k=1$)] $\lambda_1=2pdq+ qdp$, $\phi_1=p^2-\frac{1}{2}q^2$.


\item[$(k=0$)] $\lambda_0=\frac{1}{2}(p dq - qdp)  $, $ \phi_0 = \frac{1}{4}(p^2 + q^2)$.

\end{itemize}

We also recall the standard Morse-Bott normal form corresponding to the canonical Liouville structure on $T^*\R$.

\begin{itemize}
\item[(std)] $\lambda_{\text{std}} = p dq$,  $\phi =\frac{1}{2}p^2$.
\end{itemize}

We construct local models for {\it Morse-Bott with boundary} Weinstein structures on $T^*\R$ with the half line $\{p=0 , q \geq 0 \}$ as the critical set. The construction consists of patching up the above Morse normal forms on the half plane $\{ q \leq 0 \}$ with the standard Morse-Bott normal form on the half plane $\{ q \geq 0 \}$. Take any smooth function $\psi: \R \to [0,1]$ satisfying the following properties:
\begin{itemize}
\item [(i)] $\psi =0 $ on $[0, \infty)$
\item [(ii)] $\psi = 1$ on $(-\infty, -\eps]$ for some $\eps>0$.
\item [(iii)] $\psi  < 1$ and $\psi'<0$ on $(-\eps, 0)$ for that same $\eps>0$.
\end{itemize}

In the case $k=1$ we set $f_1(q,p)=\psi(q)pq$ and in the case $k=0$ we set $f_0(q,p)=-\frac{1}{2}\psi(q)pq$. In both cases the patched up model is then given by $\lambda_k^{\text{MB}} = \lambda_{\text{std}}+ df_k$, which has Liouville field $Z_k^{\text{MB}}=Z_{\text{std}}+X_{f_k}$ for $X_{f_k}$ the Hamiltonian vector field of $f_k$. Explicitly, when $k=1$ we have
\[ Z_1^{\text{MB}} = \left(1 + \psi(q) + \psi'(q) q \right) p \p_p - \psi(q) q \p_q.\]
Since the coefficient which multiplies $p \p_p$ is positive, we deduce that $Z_1^{\text{MB}}$ admits a Lyapunov function. For example, if we set $\phi= p^2-h(q)$ for
\[ h(q) =  \int_0^q \psi(t) t dt, \]
then $Z_1^{\text{MB}}$ is the Euclidean gradient of $\phi$ along $p=0$ and we have $d\phi(Z_1^{\text{MB}})>0$ on all of $T^*\R$, from which the claim follows. Similarly, one verifies that $Z_0^{\text{MB}}$ also admits a Lyapunov function.

\begin{definition}
We call $(T^*\R , \lambda_k^{\text{MB}})$ the {\it Weinstein normal form for Morse-Bott boundary} of index $k=0,1$.  \end{definition}
\begin{figure}[h]
\includegraphics[scale=0.55]{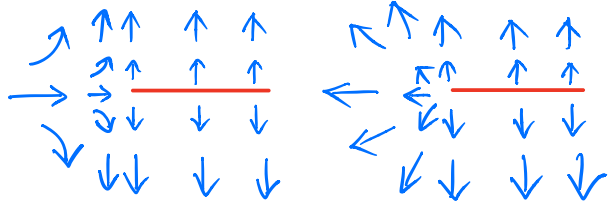}
\caption{The Weinstein normal form for Morse-Bott boundary of index 1 and 0 respectively.}
\label{MorseBott}
\end{figure}

The space of permissible $\psi$ is convex, hence the normal form is well defined up to a contractible choice. Note that the behavior of the Liouville flow along the half line $\{p=0, q \leq 0\}$, which shares its boundary point $\{p=0,q=0\}$ with the critical set $\{p=0, q \geq 0 \}$, can be either repelling or attracting depending on whether $k=0$ or $k=1$. We also consider products of these normal forms. Thus we obtain Weinstein structures on $T^*\R^n$ where the critical set is the positive quadrant $\{ p_j = 0, q_j \geq 0 \}$, which not only has boundary but also has corners of all orders $\leq n$.

\begin{definition} The {\it Weinstein normal form for Morse-Bott $n$-fold corners $(T^*\R^n,\lambda_r^{\text{MB}})$ of index $0 \leq r \leq n$} is the product of $n-r$ copies of $(T^*\R, \lambda_0^{\text{MB}})$ and $r$ copies of $(T^*\R, \lambda_1^{\text{MB}})$. \end{definition}


Going back to a triple $(X,\lambda, \phi)$ as before, consider a component $C \subset X$ of the critical set of $\phi$ such that $C$ is a smooth, proper submanifold of $X$ with nonempty boundary $\p C$ and possibly with higher order corners. 
By definition, if $x \in \p_k C \subset \p C$ is a corner of order $k$, then there is a collar neighborhood $U \times \cI^k$ of $x$ in $C$, where $\cI$ is the germ of the interval $[0,1)$ at $0$ and $U \subset \p_k C$ is a neighborhood of $x$ in the order $k$ corner locus. For such a collar neighborhood we have a local symplectomorphism near $x$ between $X$ and $T^*U \times T^*\R^k \times  \C^{m}$, with $C$ corresponding to $U \times \cI^k \times 0$.
The condition we require is that for each such $k$-fold point $x \in \p_kC$ there exists a collar neighborhood $U \times \cI^k$ as above such that with respect to the splitting $T^*U \times T^*\R^k \times \C^m$ there holds:

\begin{itemize}
\item [(MB2)] $\lambda$ splits as a direct sum with the summand corresponding to $T^*U$ given by the canonical Liouville form and the summand corresponding to $T^*\R^k$ given by the Weinstein normal form for Morse-Bott $k$-fold corners $\lambda_r^{\text{MB}}$ for some $r \leq k$.
\end{itemize}


\begin{definition}
A  {\em generalized Morse-Bott} Weinstein manifold is a Liouville manifold $(X, \lambda)$ which admits a Lyapnuov function $\phi$ whose critical set is a union of smooth, proper submanifolds with corners $C \subset X$  satisfying (MB1) and (MB2).
\end{definition}

\begin{remark} It was shown by Starkston \cite{St18} that in the boundary repelling Morse-Bott case the intersection of the $Z$-stable manifold of a component $C$ of the critical set of $\phi$ with a neighborhood of $C$ is a smoothly embedded isotropic submanifold of $X$, even without requiring standard local models. To check that this property also holds near the boundary attracting or mixed behavior points of the Morse-Bott structures considered above is straightforward,  since it suffices to consider the explicit local model.  \end{remark}

\begin{lemma}\label{lm:to-Morse}
Suppose that $(X,\lambda_0, \phi_0)$ is a generalized Morse-Bott Weinstein manifold. Then there exists a homotopy $(\lambda_t,\phi_t)$, fixed outside of a compact subset, such that $(\lambda_t,\phi_t)$ is Morse-Weinstein for $t>0$.
\end{lemma}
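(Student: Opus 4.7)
The plan is to deform $(\lambda_0, \phi_0)$ separately near each connected component of $\Crit(\phi_0)$, using a Morse function on that component to break its Morse-Bott critical locus into isolated nondegenerate critical points. By hypothesis the critical set is a finite disjoint union of smooth submanifolds with corners $C_i$, so it suffices to carry out the construction near one $C_i$ at a time with disjoint compactly supported perturbations and then superimpose the results.

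Fix a component $C = C_i$ and choose a Morse function $f : C \to \R$ on this manifold with corners, having no critical points in a collar of $\p C$ and with $\nabla f$ tangent to every boundary stratum. Extend $f$ to a function $\tilde f$ on a tubular neighborhood $N(C) \subset X$ as follows: in the Morse-Bott interior of $C$, extend by pullback along a Weinstein tubular projection $N(C) \to C$; near each $k$-fold corner $x \in \p_k C$, use the splitting $T^*U \times T^*\R^k \times \C^m$ provided by (MB2) and let $\tilde f$ be the pullback of $f|_{U \times \cI^k}$ under the projection onto the first two factors. After multiplying by a bump function $\rho$ supported in $N(C)$ and equal to $1$ on $C$, define
\[ \lambda_t = \lambda_0 + t\, d(\rho \tilde f), \qquad \phi_t = \phi_0 + t\, \rho \tilde f. \]

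For $t > 0$ sufficiently small, three verifications then yield the Morse-Weinstein property. First, the new Liouville field $Z_t = Z_0 + t X_{\rho \tilde f}$ vanishes precisely at the interior critical points of $f$: on $C$ itself $Z_0 \equiv 0$ and, using the product structure of the local normal form, $X_{\rho \tilde f}|_C$ coincides with the symplectic gradient of $f$, whose zero set is $\Crit(f)$. Second, $\phi_t$ is Morse at each such point because its Hessian splits as the Morse-Bott Hessian of $\phi_0$ normal to $C$ plus the Morse Hessian of $f$ tangent to $C$. Third, $d\phi_t(Z_t) > 0$ away from the new critical set follows from $d\phi_0(Z_0) > 0$ off $C$ by continuity for small $t$, combined with an explicit positivity check of the leading-order $t$ contribution along $C$ controlled by $\|\nabla f\|^2$.

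The main obstacle I anticipate is in the corner regions, where the normal form $\lambda_r^{\text{MB}}$ already exhibits nontrivial dynamics in both the critical half-line $\{q_j \ge 0\}$ and the transition region $\{q_j < 0\}$, the latter being Morse-like with trajectories attracted to or repelled from the corner of the critical set. One must ensure that adding $t\, d(\rho \tilde f)$ neither spawns spurious zeros of $\lambda_t$ in the $\{q_j < 0\}$ region nor fails to resolve the critical half-line into finitely many isolated nondegenerate points. This is handled by taking $\rho$ supported where $q_j \ge -\eps$ in the normal form coordinates and requiring $f$ to vanish along $\p C$ at appropriate order, so that the leading-order perturbation of $Z_0$ near $\p C$ is dominated by the already nonzero $Z_0$ nearby. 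Once this is coherently arranged at every corner stratum, a standard Moser argument as in \cite{CE12} promotes the explicit family $(\lambda_t, \phi_t)$ to the desired Weinstein homotopy, Morse for all $t > 0$ and fixed outside a compact subset of $X$.
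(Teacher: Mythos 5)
Your construction has a fatal flaw at its core step: the perturbing Hamiltonian is the wrong kind of function. You take $\tilde f=\pi^*f$, constant along the fibers of the tubular projection, and set $\lambda_t=\lambda_0+t\,d(\rho\tilde f)$. But in the Weinstein model $(T^*C,pdq)$ the Hamiltonian vector field of a basic function $\pi^*f$ is \emph{vertical} along $C$ (in canonical coordinates it is $\pm\sum \p_{q_i}f\,\p_{p_i}$), not the gradient $\nabla f$ tangent to $C$ as you claim. Concretely, on the region where $\rho\equiv 1$ one has $\lambda_t=pdq+t\,df(q)$, whose Liouville field is $Z_t=(p+tf'(q))\p_p$; its zero set is the whole Lagrangian graph $\{p=-t\,df\}$, a translate of $C$. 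So your deformation merely slides the skeleton (indeed $(T^*C,\lambda_0+t\,d\pi^*f)$ is Liouville isomorphic to $(T^*C,\lambda_0)$ by a fiberwise translation): the critical locus stays $n$-dimensional, no function can be a Morse Lyapunov function for $Z_t$, and in fact $d\phi_t(Z_t)=2p\bigl(p+tf'(q)\bigr)$ is negative between $p=0$ and $p=-tf'(q)$, so your pair $(\lambda_t,\phi_t)$ is not even Weinstein, let alone Morse--Weinstein. The paper's fix is to perturb by a \emph{fiberwise linear} Hamiltonian, $h=\eta\,\lambda_{\mathrm{std}}(\nabla f)$, which vanishes on $C$ and whose Hamiltonian field restricts to $\nabla f$ along $C$; then $Z_t|_C=t\nabla f$ has exactly the isolated nondegenerate zeros $\Crit(f)$, and $\phi_0+t\eta f$ is Lyapunov for $\lambda_0+t\,dh$ once $f$ is $C^1$-small.

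Your treatment of $\p C$ also diverges from what is needed. You ask that $\nabla f$ be tangent to the boundary strata and that $f$ vanish along $\p C$ to high order, arguing the perturbation is then dominated by ``the already nonzero $Z_0$ nearby''; but $Z_0$ vanishes on the closure of the critical set, including $\p C$, so a perturbation that also degenerates along $\p C$ leaves $Z_t$ vanishing on a positive-dimensional set there. The paper instead enlarges $C$ to $\widehat C$ by an isotropic collar tangent to $Z_0$ and requires $f$ to have \emph{no} critical points on $\p C$ with $df$ of a definite sign on normal vectors, matching the attracting ($k=1$) or repelling ($k=0$) behavior of $Z_0$ along $\widehat C\setminus C$, so that $Z_0+tX_h$ is nowhere zero near $\p C$ and all critical points are pushed into the interior of $C$. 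Once the Hamiltonian is corrected as above, this is the boundary condition you need; the rest of your outline (componentwise superposition, Hessian splitting at interior zeros, smallness of $t$) then goes through essentially as in the paper.
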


\begin{proof}
We give the proof in the case where each component $C \subset \Sigma$ of the critical set of $\phi$ is of the critical dimension $n=\frac{1}{2}\dim(X)$, the general case is similar and in any case will not be needed for our purposes.
Suppose first that  $C \subset \Sigma$ has empty boundary. Since $C$ is Lagrangian, by the Weinstein neighborhood theorem it has a neighborhood in $X$ symplectomorphic to a neighborhood of the zero section in $(T^*C,\lambda_{\text{std}}=pdq)$. Therefore we may from the onset replace $X$ by $T^*C$ with the pulled back Liouville form which we still denote by $\lambda_0$. 

Let $f:C \to \R$ be a Morse function and let $\eta: T^*C \to [0,1]$ be a cutoff function such that $\eta=1$ on $\Op(C)$ and $\eta=0$ outside of a slightly bigger neighborhood. 
Fix an auxiliary Riemannian metric on $C$ and put $h=\eta  \lambda_{\text{std}}(\nabla f)$, a function on $T^*C$. The restriction of the Hamiltonian vector field of the function $h$ to $C$ is $\nabla f$. Hence $\phi_0+\eta f$ is a Lyapunov function for $\lambda_0 + dh$ in a neighborhood of $C$ and therefore on all of $T^*C$ if $f$ is taken sufficiently $C^1$ small. Moreover, for such $h$ the Liouville form $\lambda_0 + t dh$ is Morse-Weinstein for any $t>0$.


If $\partial C$ is non-empty, then we need to modify the construction somewhat. Consider a slight enlargement $\widehat{C}$ of $C$ in $X$ obtained by attaching an isotropic collar along $\partial C$ so that the Liouville field $Z_0$ is tangent to $\widehat{C}$. Then we should require that our function $f: \widehat{C} \to \R$ is Morse on $C$, compactly supported in $\widehat{C}$, has no critical points on $\p C$ and moreover has the following behavior on $\p C$. Let $P$ be the closure of a component of the order 1 corner locus $\p_1C=\p C \setminus \bigcup_{k > 1} \p_k C$. If the Liouville structure is boundary attracting $(k=1)$ along $P$, then we demand that $df(v)>0$ for any inwards pointing vector $v \in TC|_{\p C}$. If the Liouville structure is boundary repelling ($k=0$) along $P$, then we demand that $df(v)>0$ for any outwards pointing vector $v \in TC|_{\p C}$. With this restriction on $f$ the proof proceeds just as before.  \end{proof}

  \begin{remark}
  As we will see, if a Morse-Bott Weinstein manifold has arboreal skeleton, then one can choose the Morsifying homotopy such that  $\Skel(X,\lambda_t)=\Skel(X,\lambda_0)$ for all $t\geq 0$.  
  \end{remark}
  
  \subsubsection{Weinstein manifolds}

It is easy to check that Lemma \ref{lm:to-Morse} also holds for the generalized Morse condition instead of Morse-Bott, see Proposition 9.13 in \cite{CE12}. Therefore we will take the following as our working definition, which includes Morse, generalized Morse and Morse-Bott Weinstein structures.

\begin{definition}\label{def: weinstein mfld} A Liouville manifold $(X, \lambda_0)$ is  called {\em Weinstein} if there exists a Liouville homotopy $\lambda_t$ and a family of Lyapunov functions $\phi_t$ for  $Z_t$ such that $(X,\lambda_t,\phi_t)$ is Morse-Weinstein for $t>0$.
\end{definition}

The notions of  a Weinstein domain,  defining Weinstein domain  for a Weinstein manifold and a Weinstein germ are entirely analogous to the corresponding notions in the Liouville case. We emphasize that the Lyapunov function $\phi$ is not part of the structure of a Weinstein manifold, we merely require its existence. For example, Weinstein embeddings are just Liouville embeddings of Weinstein type Liouville manifolds (or domains, or germs). We do not require the embedding to pull back one Lyapunov function to the other. Similarly, a Weinstein isomorphism (resp. homotopy) is a Liouville isomorphism (resp. homotopy) of Liouville manifolds of Weinstein type.

\begin{example}
The standard Liouville structure $\lambda=pdq$ on the cotangent bundle $T^*M$ of a closed manifold $M$ is  Weinstein. Given a Riemannian metric on $M$, the function $\phi=\|p\|^2$ is a Morse-Bott Lyapunov function for $\lambda$.  By the procedure explained in Lemma \ref{lm:to-Morse}, we can Morsify the Lyapunov function $\phi$ using a Morse function on $M$. By inspection, this procedure does not change the skeleton of $T^*M$, which is the zero section.
  \end{example}
  
  \begin{remark} Although Definition \ref{def: weinstein mfld} allows for rather degenerate Weinstein structures, in the Weinstein homotopies constructed in this paper for the purposes of arborealization of a Morse-Weinstein manifold the Weinstein structures are always of Morse-Bott type except for a finite number of times at which there is a birth/death of Morse-Bott critical components.
\end{remark}

 \subsection{Wc-manifolds}\label{sec:Wbc}
 
 \subsubsection{Manifolds with boundary and corners}


Recall that an $n$-dimensional smooth manifold $M$ has a corner of order $k \leq n$ at $x \in M$ if there is a neighborhood of $x$ in $M$ diffeomorphic to a neighborhood of the origin in $\cI^k \times \R^{n-k}$, where $\cI$ denotes the germ of the interval $[0,1)$ at $0$. We denote the locus of order $k$ corners by $\partial_k M$. 
The closure $P$ of a connected component of $\p_kM$ is called a {\it boundary $k$-face.} For $k=1$ we will more simply call $P$ a {\it boundary face.} Sometimes we will call a manifold with boundary and corners (of any order) a {\em bc-manifold} for short.

We always assume that each component of $\p_k M$ is regularly embedded, so that each $k$-face is itself a manifold with corners. Under this assumption, if $Q$ is a $k$-face, then there is an embedded collar neighborhood $Q \times \cI^k \subset M$. We consider the germs of these collars as part of the structure. In particular, near each point $x \in \partial_k M$ we have canonical collar coordinates $x=(y,t)$, where $y \in \partial_k M$ and $t=(t_1, \ldots , t_k) \in \cI^k$. Note that in a neighborhood of $x$ we have $\partial_k M$ cut out by $t_1 = \cdots = t_k=0$. More generally, for $j\leq k$ the components of $\partial_j M$ whose closure contains $x$ are given by setting exactly $j$ of the coordinates $t_i$ equal to zero. 

We demand compatibility of these collars in the sense that the remaining $n-j$ coordinates $t_i$ give the collar structure for $\partial_j M \times \cI^{n-j}$ near $x$. We call such a compatible system of collar germs a {\it corner structure on $M$}. Henceforth all manifolds with corners will be implicitly equipped with a fixed but otherwise arbitrary corner structure. 

\begin{figure}[h]
\includegraphics[scale=0.5]{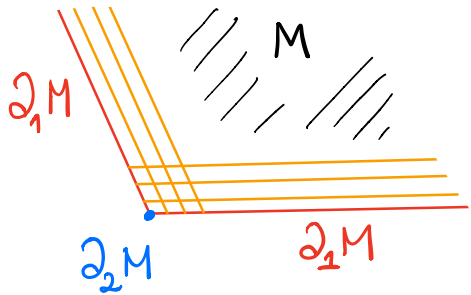}
\caption{A corner structure on $M$.}
\label{fig:collarstr}
\end{figure}


 Let us choose a partial ordering of the boundary faces of a manifold $M$ with corners such that the faces at all corners are ordered in a compatible way, i.e. the order is that induced by the order of the coordinates $t_i$ in the collar structure. We will assume this order to be part of the structure of a manifold with corners.
 
 

Given manifolds with corners $W$ and $X$, we consider embeddings of $W$ into $X$ which satisfy the property that each $j$-face of $W$ is properly embedded into some $k$-face of $X$, where $k \geq j$. 

\subsubsection{Liouville manifolds with boundary and corners}

We now give an inductive definition of the notion of a Liouville manifold with corners of order $k \leq  \frac{1}{2} \dim X$, starting with the base case $k=0$ which is just our previous definition.

 \begin{definition}
A \emph{Liouville manifold with corners of order $k$} consists of an exact symplectic manifold $(X^{2n},\lambda)$ with smooth corners of order $k \leq n$ such that the following properties hold.

\begin{figure}[h]
\includegraphics[scale=0.5]{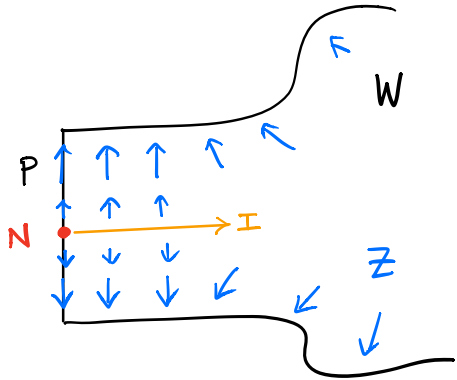}
\caption{A Liouville manifold with coners $W$ near one of its faces $P$ with nucleus $N$.}
\label{fig:Lbcmfld}
\end{figure}

\begin{itemize}
\item[(L1)] The vector field $Z$ which is $\omega$-dual to $\lambda$  is complete for $t \to \pm \infty$.
\item[(L2)] There exists a compact domain with corners $W \subset X$ such that   such that $Z$ is outwards transverse to its {\em vertical boundary}  $\partial_v W$, which is defined to be the closure of $ \p W \cap \text{int} X$, and such that the union of forward trajectories of $Z$ starting at $\partial_v W$ is  equal to $\partial_v W \cup (X \setminus W)$.
\item[(L3)] For $r=1, \ldots, k$, each $r$-face $P$ of the {\em horizontal boundary} $\p_h W$, which is defined to be $\p W \cap \p X$, contains a Liouville submanifold with corners $N \subset P$, called  the {\em nucleus} of $P$, such that the collar neighborhood of $P$ in $X$ is of the form $N \times T^*\II^r$, where $P$ is the product of $N$ with the cotangent fibre over $0 \in \II^r$, and in this neighborhood the form $\lambda$ is the direct sum of a Liouville form on $N$ and the canonical 1-form on $T^*\II^r$.
\end{itemize}
\end{definition}
  
It follows from the definition that the Liouville field $Z$ is tangent to every component of $\partial_k X$, $k= 1, \ldots , n$. A Liouville manifold with corners has a {\em horizontal boundary} $\partial_h X = \partial X$ and an  {\em ideal vertical boundary } $\partial_\infty X = \big(X \setminus \Skel(X, \lambda)\big)/\R^+$, where just as before the action of $\R^+$ is defined by the flow of $Z$ and the skeleton $\Skel(X,\lambda)$ is defined to be the attractor of the negative flow of $Z$. 
 
 We call $(W, \lambda|_W)$ as in (L2) a defining Liouville domain with corners for the Liouville manifold  with corners $(X,\lambda)$. Note that the horizontal boundary of $W$ is $\partial_h W = W \cap \partial_h X$ and its vertical boundary $\partial_v W$, the closure of $\p W \setminus \p_hW$,  is a contact manifold with corners which is naturally contactomorphic to $\partial_\infty X$ under the projection $\pi_\infty: \partial_v W \to \partial_\infty X$. Just as before, the equivalence class of defining domains for Liouville manifold with corners is called the Liouville germ of $(X,\lambda)$ and is denoted by $(\sX,\lambda)$. Notions of isomorphism and homotopy of Liouville manifolds with corners are defined exactly the same way as without corners. 

 \subsubsection{Wc-manifolds}
We now introduce Weinstein structures for Liouville manifolds with corners, which we call {\it Wc-manifolds}. We restrict our discussion to the Morse-Bott category since this is the framework in which we will work throughout this paper. 
 
 \begin{definition}\label{def:Wc-man}
 A {\it Wc-manifold} is a Liouville manifold with corners $(X, \lambda)$ whose Liouville vector field admits a Morse-Bott Lyapunov function $\phi:X \to \R$, i.e. satisfying (MB1) and (MB2) in the interior of $X$, and such that on the collars $N \times T^*\II^k$ of (L3) $\phi$  has the   form $\phi_N + \sum_{i=1}^k u_i^2$ for $\phi_N$ a Morse-Bott Lyapunov function on $(N, \lambda|_N)$ and $u_1, \ldots , u_k$ the momentum coordinates on $T^*\II^k$. 
 \end{definition}
 
  \begin{figure}[h]
\includegraphics[scale=0.55]{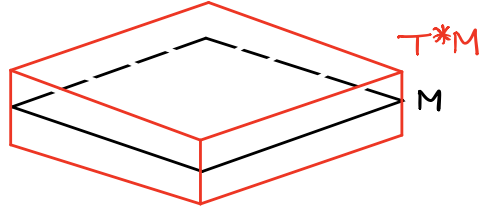}
\caption{The prototypical example of a Wc-manifold is the cotangent bundle of a manifold $M$ with nonempty boundary, or even corners of any order.}
\label{WCMfld}
\end{figure}
 

Defining Liouville domains for Wc-manifolds will be called defining Wc-domains, and the equivalence class of such will be called a Wc-germ. Notions of Wc-isomorphism, embedding and homotopy are the same as the corresponding notions for Liouville manifolds with corners, but demanding that all objects in sight are of Weinstein type.
  
\begin{example}
 Let $M$ be a smooth $n$-dimensional manifold with corners. Then $T^*M$ is a Liouville manifold with corners, where the structure (L3) is induced by a corner structure on $M$.
 We call   the  germ $\sT^*M$ of $T^*M$ 
 a  {\em cotangent   block}.     The corresponding Lagrangian distribution tangent to cotangent  fibers    will be  denoted by  $\nu_M$. We denote by $\lambda_M$ and $Z_M$ the corresponding Liouville form and the Liouville vector field.
The notation $S^*M$ will be used for the positively projectivized cotangent bundle of $M$, which is the ideal boundary of $\sT^*M$. 
 For $Q$ a $k$-face of $M$ we have $P=T^*M|_{Q}\subset\p_k T^*M$ a $k$-face of $T^*M$ and $N=T^*Q$ is the nucleus  of $P$. The collar coordinates near $Q$ yield the decomposition $\Op P= N \times T^*\II^k$. A choice of Riemannian metric on $M$ yields  defining Liouville domain with corners $W=\{ \| p \| \leq \varepsilon \}$ and a Morse-Bott Lyapunov function $\phi=\|p\|^2$.
   \end{example}
 
 The rest of this section can be equally discussed in the Liouville and Weinstein categories. We restrict the discussion only to the Weinstein case as this is the one needed for our further applications.

\subsection{Wc-hypersurfaces}\label{sec:W-hyp}

\subsubsection{Weinstein hypersurfaces}

We now turn our attention to Weinstein hypersurfaces with corners, or Wc-hypersurfaces. They   were introduced  (without corners) in \cite{E18}. These are special cases of the Liouville hypersurfaces introduced by Avdek in \cite{A12}. This notion is   similar  to the ``stops" of Sylvan, \cite{Sy16} and  the Liouville sectors of Ganatra-Pardon-Shende, \cite{GPS17}. Related constructions are also considered in    Ekholm-Lekili's paper \cite{EL17}. We first recall the definition without corners.

 \begin{definition} A  {Weinstein hypersurface} in a Weinstein manifold  $(X^{2n},\lambda)$ is a Weinstein embedding $(A^{2n-2},\lambda'=\lambda|_A) \hookrightarrow
  (X\setminus\Skel(X,\lambda),\lambda)$ such that $\pi_\infty|_{A}:A\to \p_\infty X$ is an embedding.  
 \end{definition}
 
Note that any Weinstein hypersurface is contained in a boundary $\p W$ of some defining domain $W$. Note also that the Reeb vector field of the contact form  $\lambda|_{\p W}$ is  transverse to $A$. Note also that the boundary $\p A$ is a codimension 2 contact submanifold of $(\p W,\lambda|_{\p W})$.  By the assumption that $(A, \lambda|_A)$ is Weinstein, the skeleton $\Skel(A,\lambda|_A)$ can be decomposed as a union of isotropic submanifolds, which in the top dimension $n-1$ are Legendrian for the contact structure $\xi=\ker(\lambda|_{\p W})$. 

 \begin{lemma}  The skeleton   $\Skel(A, \lambda|_{A})$ depends only on the projection $\pi_\infty(A)\subset \p_\infty X$. \end{lemma}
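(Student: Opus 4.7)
The plan is to describe $\pi_\infty(\Skel(A,\lambda|_A))\subset\Lambda:=\pi_\infty(A)$ intrinsically in terms of the hypersurface $\Lambda\subset\p_\infty X$, independent of the choice of Weinstein lift $A$. Fix a contact form $\alpha$ for $\xi_\infty$; this identifies $X\setminus\Skel(X,\lambda)$ with $\p_\infty X\times\R$ carrying the Liouville form $e^t\alpha$. Because $\pi_\infty|_A$ is an embedding and the fibres of $\pi_\infty$ are the Liouville trajectories, $A$ meets each trajectory over $\Lambda$ in exactly one point and is therefore the graph of a smooth function $h\colon\Lambda\to\R$. The diffeomorphism $(\pi_\infty|_A)^{-1}\colon\Lambda\to A$ pulls $\lambda|_A$ back to $\lambda_h:=e^h(\alpha|_\Lambda)$, so it suffices to prove that $\Skel(\Lambda,\lambda_h)\subset\Lambda$ is independent of $h$.

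Set $\beta:=\alpha|_\Lambda$. The zero locus of $\lambda_h$ equals the intrinsic subset $\Sigma:=\{y\in\Lambda : T_y\Lambda\subset(\xi_\infty)_y\}$, which does not involve $h$. On $\Lambda\setminus\Sigma$ the Liouville field $Z_h$ of $\lambda_h$ satisfies $\lambda_h(Z_h)=0$, hence $\alpha(Z_h)=0$ and $Z_h\in\xi_\infty\cap T\Lambda$. Using $d\lambda_h=e^h(dh\wedge\beta+d\beta)$, the defining relation $\iota_{Z_h}d\lambda_h=\lambda_h$ reduces, after invoking $\alpha(Z_h)=0$, to $\iota_{Z_h}d\beta=(1-dh(Z_h))\beta$; evaluating this identity on any $w\in\xi_\infty\cap T\Lambda$ kills the right-hand side and forces $d\alpha(Z_h,w)=0$. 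Therefore $Z_h$ lies in the $d\alpha$-orthogonal of $\xi_\infty\cap T\Lambda$ inside the symplectic space $(\xi_\infty,d\alpha)$, which is precisely the canonical characteristic line $\ell_y\subset T_y\Lambda$ of $\Lambda$ as a codimension-$1$ hypersurface of the contact manifold $(\p_\infty X,\xi_\infty)$. The line field $\ell$ is intrinsic to $\Lambda$.

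The remaining component of the Liouville equation, tested against any $v\in T_y\Lambda$ with $\alpha(v)\neq 0$, determines only the magnitude of $Z_h=c_h R$ (for $R$ a local generator of $\ell$) via the formula $c_h=1/(d\alpha(R,v)+dh(R))$; magnitudes and positive time reparametrizations do not affect the skeleton. The sign of $c_h$, meanwhile, is fixed by the Weinstein condition: $(\pi_\infty|_A)^{-1}$ transports the symplectic orientation of $(A,\omega|_A)$ inherited from the ambient $(X,\omega)$ to a specific orientation of $\Lambda$, which matches the canonical orientation induced from the embedding $\Lambda\hookrightarrow(\p_\infty X,\xi_\infty)$ together with the Reeb-transverse co-orientation picked out by $A$. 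This pins down the direction of $Z_h$ along $\ell$ uniformly in $h$. Consequently, the oriented unparametrized trajectories of $Z_h$ on $\Lambda\setminus\Sigma$ are exactly the leaves of the intrinsic characteristic foliation of $\Lambda$, and the skeleton, namely $\Sigma$ together with the set of points whose backward characteristic trajectory has compact closure in $\Lambda$, depends only on $\Lambda$.

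The main point requiring care is the sign assertion of the previous paragraph: that for every Weinstein lift $A$ of $\Lambda$ the scalar $c_h$ has a common sign dictated by $\Lambda$ alone. Verifying this reduces to matching the symplectic orientation of $(A,\omega|_A)$ transported through $(\pi_\infty|_A)^{-1}$ with the intrinsic co-orientation of $\Lambda\subset\p_\infty X$, a comparison which does not involve $h$.
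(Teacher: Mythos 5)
Your reduction is sound up to the final sign question, and up to that point it is a legitimate, more geometric substitute for the paper's argument: writing $A$ as the graph of $h$ over $\Lambda=\pi_\infty(A)$, pulling $\lambda|_A$ back to $\lambda_h=e^h\beta$ with $\beta=\alpha|_\Lambda$, observing that the zero locus $\Sigma=\{T_y\Lambda\subset(\xi_\infty)_y\}$ is independent of $h$, and computing that away from $\Sigma$ the Liouville field $Z_h$ spans the characteristic line field $\ell=(\xi_\infty\cap T\Lambda)^{\perp_{d\alpha}}$ are all correct, and reparametrization-invariance of the skeleton does the rest --- \emph{provided} the direction of $Z_h$ along $\ell$ does not depend on $h$. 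That is precisely the step you flag as ``the main point requiring care,'' and it is not proved. You appeal to ``the intrinsic co-orientation of $\Lambda\subset\p_\infty X$'' and to ``the Reeb-transverse co-orientation picked out by $A$'': a hypersurface in a contact manifold carries no canonical co-orientation, and a co-orientation ``picked out by $A$'' depends on the very lift whose irrelevance you are trying to establish, so as written this part is circular. It is also exactly where the content of the lemma sits: for two lifts one has $\lambda_{h_1}=f\lambda_{h_0}$ with $f=e^{h_1-h_0}>0$, and $\iota_{Z_{h_0}}d\lambda_{h_1}=(f+df(Z_{h_0}))\,\lambda_{h_0}$, so $Z_{h_1}=\tfrac{f}{f+df(Z_{h_0})}Z_{h_0}$; the sign of the proportionality factor is the sign of $f+df(Z_{h_0})$, and ruling out the negative sign is the whole issue (with the opposite sign on a component the attractor would be replaced by a repeller and the skeleta would differ).

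The paper closes this by the computation quoted from Lemma 12.1 of \cite{CE12}: symplecticity of $d(f\lambda)$ forces $f+df(Z)>0$, so the two Liouville fields are \emph{positively} proportional and have the same trajectories, hence the same skeleton. Concretely: nondegeneracy of $d\lambda_{h_1}$ makes $f+df(Z_{h_0})$ nonvanishing on $\Lambda\setminus\Sigma$; on $\Sigma$ it equals $f>0$; and on a component of $\Lambda$ missing $\Sigma$ one can instead use that both Liouville fields are outward transverse along its (necessarily nonempty) boundary, which is incompatible with negative proportionality. If you prefer to keep your orientation language, the correct $h$-independent anchor is the symplectic orientation at points of $\Sigma$, where $d\lambda_h|_{T_y\Lambda}=e^{h(y)}\,d\alpha|_{T_y\Lambda}$ up to a positive factor, propagated by connectedness --- not a co-orientation of $\Lambda$ in $\p_\infty X$. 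With such an argument inserted your proof closes. A minor additional fix: your description of the skeleton as the points ``whose backward characteristic trajectory has compact closure in $\Lambda$'' is vacuous on the compact domain; it should read that the forward $Z_h$-trajectory never leaves $\Lambda$ (equivalently, work in the completion and ask that it stays in a compact set), which is again invariant under positive reparametrization.
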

 \begin{proof} Indeed, the Liouville fields corresponding to the Liouville  structures $\lambda_{A}$ and $f \lambda|_{A} $ for   a positive function  $f$  are proportional. In fact, as computed in Lemma 12.1 of \cite{CE12} the form $d(f\lambda)|_A$ is symplectic if and only if $k:=\inf(f+df(Z))>0$, where $Z$ is the expanding field for $\lambda|_A$, and in that case the restriction of the form $f \lambda$ to $A$ is automatically Liouville, with the expanding vector field for $f\lambda|_A$ equal to $\frac1kZ$. Therefore $\Skel(A,\lambda|_{A})$ only depends on the contact structure $\xi_\infty$.   \end{proof} 
 
Note that the space of  functions  $f>0$ for which $f\lambda$ is  Liouville (and hence in the considered case Weinstein) is convex, therefore contractible. 

\begin{definition}
We say that two Weinstein hypersurfaces $A_0,A_1$ in a Weinstein manifold $(X, \lambda)$ are related by a {\it translation} if $A_1=Z^f (A_0)$ for some function $f:A_0 \to \R$. \end{definition}

Note that given a defining domain $W \subset X$, any Weinstein hypersurface $A$ of $X$ can be translated to a Weinstein hypersurface contained in $W$. However, it may not be possible to translate it to a Weinstein hypersurface contained in $\p W$. Indeed, this is equivalent to the condition $\text{inf}(f+df(Z) )>0$ for $f$ the conformal factor between the contact forms corresponding to $A$ and $\p W$.

\begin{definition} A Weinstein hypersurface germ $\sA \subset X \setminus \Skel(X, \lambda)$ is an equivalence class of Weinstein hypersurfaces $(A, \lambda'= \lambda|_A) \subset (X \setminus \Skel(X, \lambda) , \lambda)$ for $(A, \lambda')$ defining domains for a fixed Liouville manifold, where two embeddings are equivalent if they agree on a smaller defining domain, possibly after translation. \end{definition}

One can think of $\sA$ as living in the ideal boundary $\p_\infty X$, or one can think of $\sA$ as the germ of a Weinstein embedding $\sA \to \sX \setminus \Skel(\sX,\lambda)$. The two viewpoints are equivalent and we will go back and forth between them.

  \begin{example} An important example of a Weinstein hypersurface is the {\em ribbon} of  a Legendrian submanifold.  Let $\Lambda$ be a Legendrian submanifold in the contact boundary of a Liouville domain $(W, \lambda)$. Then it admits a Darboux neighborhood $U(\Lambda) \subset \p W$ contactomorphic to  $\{ \|p\|^2+ z^2 \leq\eps^2
\} \subset J^1(\Lambda)$ for some Riemannian metric on $\Lambda$. The strip $\Sigma(\Lambda):=U(\Lambda)\cap\{z=0\}$ is a Weinstein hypersurface symplectomorphic
  to the cotangent disk bundle of $\Lambda$, whose skeleton is precisely $\Lambda$. The space of all ribbon extensions of a  given Legendrian $\Lambda$ is contractible. An important example to keep in mind is the case where$ \sX=\sT^*M$ for $M$ a smooth manifold, $W=\{ \|p\|\leq 1\}$ with respect to any Riemannian metric on $M$ and $\Lambda$ is the unit conormal to a co-oriented hypersurface $H \subset M$.
     \end{example}
 
        \begin{figure}[h]
\includegraphics[scale=0.5]{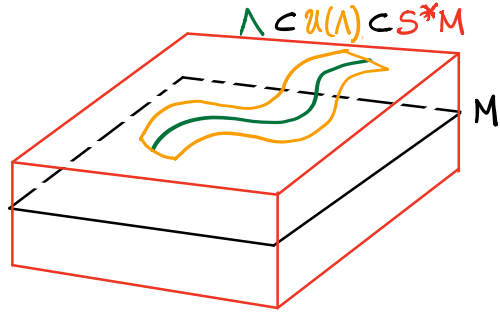}
\caption{A Legendrian $ \Lambda \subset S^*M$ and its ribbon $U(\Lambda)$.}
\label{fig:ribbon}
\end{figure}
 
\subsubsection{Weinstein pairs}
 
\begin{definition} A  {\it Weinstein pair} $(X,A,\lambda)$ consists of a Weinstein manifold  $(X, \lambda)$ together a Weinstein hypersurface $ A \subset X\setminus\Skel(X,\lambda)$.  
\end{definition}

We will also call $(X,\sA,\lambda)$ a Weinstein pair when $\sA$ is the germ of a Weinstein hypersurface $ \sA \subset X\setminus\Skel(X,\lambda)$.

 \begin{definition}
Let $(X, \lambda)$ be a Liouville manifold  and $Y \subset X\setminus\Skel(X,\lambda)$ any subset. The {\em Liouville cone} on $Y$ is the saturation of $A$ by the Liouville trajectories:
\[ \Cone(Y, \lambda)= \bigcup_{t \in \R } Z^{-t}(Y) \subset X.\]
The {\em positive Liouville cone} on $A$ is the saturation of $A$ by the backwards Liouville trajectories:
\[ \Cone^+(Y, \lambda)= \bigcup_{t \geq 0 } Z^{-t}(Y) \subset X.\]
\end{definition}

Note that if $Y$ is compact, then $\text{Cone}^+(Y,\lambda)$ has compact closure while $\text{Cone}(Y, \lambda)$ does not. The skeleton of a Weinstein pair $(X,A,\lambda)$ is defined as the compact subset:
$$\Skel(X, A, \lambda) = \Skel(X, \lambda) \cup \Cone^+\left(\Skel(A,\lambda|_A), \lambda \right).$$ 

Note that $\Skel(X,A,\lambda) \subset W$ for any defining domain $W$ such that $A \subset \p W$, moreover $\p W \cap \Skel(X,A, \lambda) = \Skel(A, \lambda|_A)$. The notion of a pair at the level of germs is defined as follows.

\begin{definition} A Weinstein (germ) pair $(\sX,\sA,\lambda)$ consists of a Weinstein germ $(\sX,\lambda)$ together with a Weinstein hypersurface germ $\sA \subset \sX \setminus \Skel(\sX,\lambda)$. \end{definition}

Note that for any representative $(W,\lambda)$ of $(\sX,\lambda)$ we may take a representative $A$ for $\sA$ contained in $W$. The skeleton of a Weinstein pair $(\sX,\sA,\lambda)$ is defined as the non-compact subset: $$\Skel(X, \sA, \lambda) = \Skel(X, \lambda) \cup \Cone\left(\Skel(\sA,\lambda|_\sA), \lambda \right).$$

Equivalently in terms of germs, we can think of $\Skel(X,\sA,\lambda)$ as the equivalence class of the compact subsets $\Skel(X,A,\lambda) \subset W$ for all defining domains $(W,\lambda|_W)$ and representatives $A$ for $\sA$ such that $A \subset W$. 
 
 \subsubsection{Wc-hypersurfaces}
 
 Finally, we extend the above to include boundary and corners.
 
 \begin{definition} A  {Wc-hypersurface} in a Wc-manifold $(X^{2n},\lambda)$ is an equivalence class of embeddings of Wc-germs $(\sA^{2n-2},\lambda'=\lambda|_{\sA})\hookrightarrow
  (X\setminus\Skel(X,\lambda),\lambda)$ such that $\pi_\infty|_{\sA};\sA\to \p X_\infty$ is an embedding, with the equivalence given by translation. 
 \end{definition}
 
All the previous discussion carried over word by word to the Wc-setting. The notion of a Weinstein pair in the Wc-category is called a Wc-pair.  

\begin{definition} A Wc-pair $(\sX,\sA,\lambda)$ consists of a Wc-germ $(\sX,\lambda)$ together with a Wc-hypersurface germ $\sA \subset \sX \setminus \Skel(\sX,\lambda)$. \end{definition}

\subsection{Conversion between Wc-hypersurfaces and face nuclei}\label{sec:conversion}

\subsubsection{Nucleus-to-hypersurface conversion}

We start with a Wc-manifold $(X, \lambda)$. Let $P$ be a boundary face of $X$ and $N \subset P$ its nucleus. Let $U$ be a collar neighborhood of $P$ in $X$ as in the definition of a Wc-manifold. So $U$ is of the form $N \times T^*\cI$, with $\lambda = \lambda|_N + udt$ where $t$ is the defining coordinate for $P$. Attach to $N \times T^*\cI$ the half-infinite collar $N \times T^*(-\infty, 0]$ along their common intersection and on the union consider the Weinstein structure given by the direct sum of $\lambda|_N$ and the Weinstein normal form for Morse-Bott boundary of index $k=0$, which agrees with $\lambda_N + udt$ on $N \times T^*\cI$. The existence of a Morse-Bott Lyapnuov function is immediate from the local model. Hence the result is a Wc-manifold $(X^{N}, \lambda^N)$. Let $\sN$ be the germ of $N$. For $\eps>0$ note that $\sN \times \{t=-\eps \} \subset \sN \times T^*(-\infty, 0]$ is a Wc-hypersurface for $\lambda^N$. Any two differ by a translation and all are canonically identified with $\sN$. We will denote this Wc-hypersurface by $\sA^N \subset X^N \setminus \Skel(X^N, \lambda^N)$. 

Note that by construction we have $\Skel(X^N,\sA^N,\lambda^{N})=\Skel(X,\lambda)$.
The operation $(X,N) \mapsto (X^{N}, \sA^N)$ only depends on the contractible choice of a collar neighborhood and the contractible choice of the Weinstein normal form for Morse-Bott boundary. Hence it produces a Wc-germ $(\sX^N, \lambda^N)$ which is well defined up to Wc-homotopy.

\begin{definition}
The Wc-pair $(\sX^{N}, \sA^{N}, \lambda^{N})$ is called the {\em nucleus-to-hypersurface} conversion of the Wc-germ $(\sX,\lambda)$ and the boundary nucleus $N$.
\end{definition}
  \begin{figure}[h]
\includegraphics[scale=0.55]{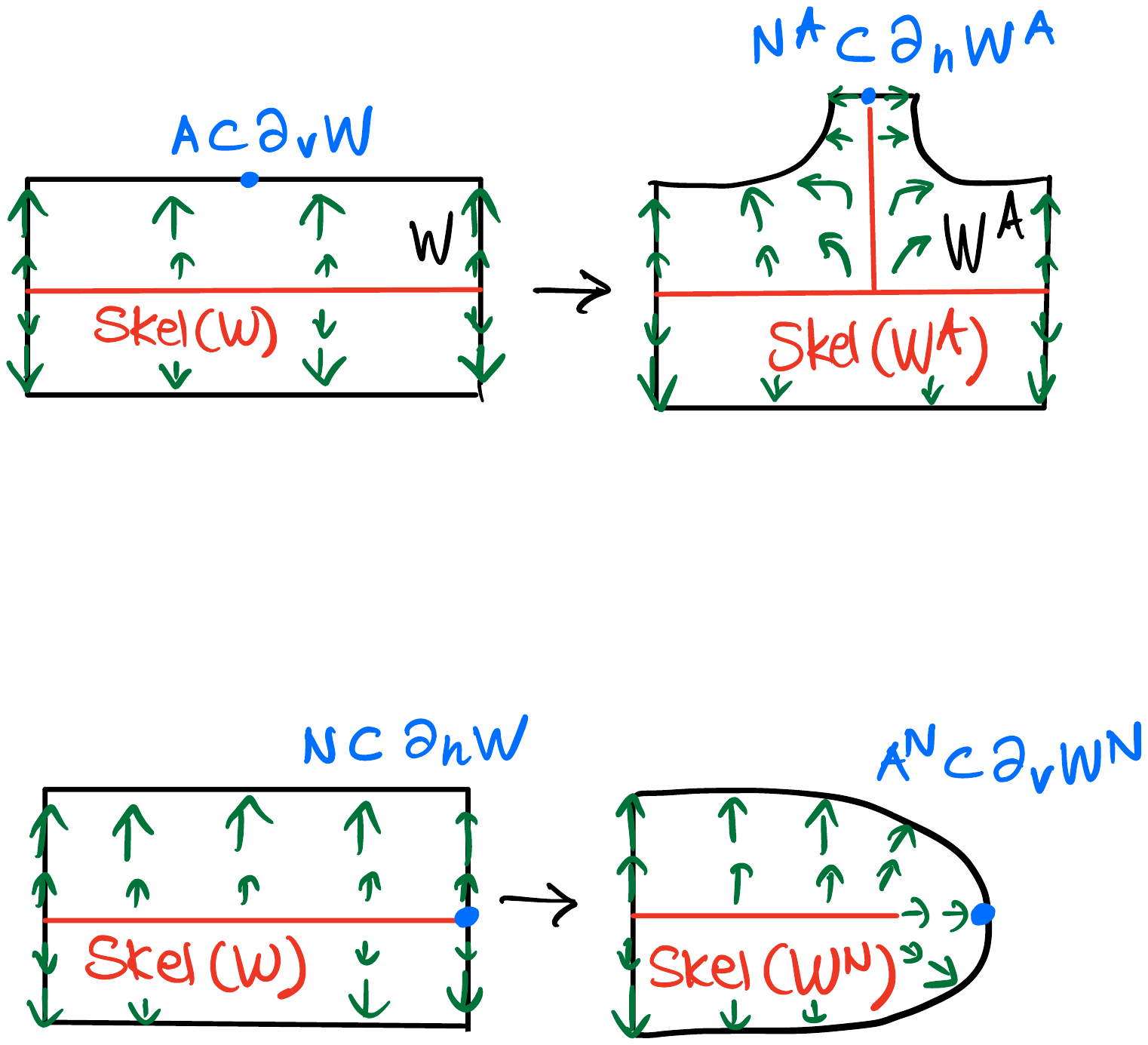}
\caption{Converting a boundary nucleus to a Wc-hypersurface.}
\label{fig:hyptonuc}
\end{figure}

\subsubsection{Underlying Weinstein manifold of a Wc-manifold}

Let $X$ be a Wc-manifold and let $P$ be a boundary face. The resulting of converting the nucleus $N$ of $P$ to a Wc-hypersurface is a Wc-manifold which has one less boundary face than $X$. We can continue this process inductively over all faces. After forgetting the resulting Wc-hypersurfaces, we end up with a Weinstein manifold without boundary which we denote $\wh X$. It is clear that the Weinstein homotopy class of the Weinstein manifold $\wh X$ does not depend on the order in which the boundary faces are chosen. If one wishes to, one can further deform the Weinstein structure on $\wh X$, which at this point is generalized Morse-Bott, so that it becomes Morse-Weinstein.

\begin{definition} The Weinstein manifold $\wh X$ obtained from a Wc-manifold $X$ by converting all of its boundary nuclei to Wc-hypersurfaces is called the {\it underlying Weinstein manifold}. \end{definition}

\begin{example} the underlying Weinstein manifold of the Wc-manifold $(T^*[0,1]^n,pdq)$ is Weinstein homotopic to $(\R^{2n},pdq-qdp)$.  \end{example}

\begin{remark}
A Wc-manifold may be thought of as a Weinstein manifold with `stops'. From this viewpoint, taking the underlying Weinstein manifold of a Wc-manifold is simply removing the stops.
\end{remark}

\subsubsection{Hypersurface-to-nucleus conversion}

 The converse operation to the nucleus-to-hypersurface conversion is the {\em hypersurface-to-nucleus} conversion, which we discuss next. Here and below we denote $U_{a,b}=\{0<u \leq a , \, -b < t < b \} \subset T^*\R$. 
\begin{lemma}\label{lem:embedding extension}
The inclusion $(A,\lambda'=\lambda|_{A})\hookrightarrow (X,\lambda)$ of a Wc-hypersurface extends, uniquely up to contractible choice, to a Liouville embedding
$(A\times U_{1,\eps},\lambda' + udt)\to (X,\lambda)$, where $A$ is identified with $A\times (u=1,t=0)\subset A\times U_{1,\eps}$.
\end{lemma}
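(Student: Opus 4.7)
The plan is to combine the symplectic neighborhood theorem with a Moser-type adjustment, using a canonical framing of the symplectic normal bundle of $A$ in $X$ coming from the Liouville and Reeb directions.

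First, I would observe that since $\omega|_A = d\lambda'$ is non-degenerate, $A$ is a codimension-two symplectic submanifold of $(X,\omega)$. After translating $A$ to lie in the boundary $\partial W$ of a defining Wc-domain, the symplectic normal bundle $\nu A$ receives a canonical framing: let $\partial_u$ correspond to the Liouville field $Z$ of $X$ (normal to $\partial W$, hence to $A$), and let $\partial_t$ correspond to the Reeb field $R$ of the contact form $\alpha = \lambda|_{\partial W}$, normalized so that $\alpha(R)=1$. The Reeb field is transverse to $A$ because the Wc-hypersurface condition forces $d\lambda'|_A = d\alpha|_A$ to be non-degenerate, and one computes $\omega(Z,R) = \lambda(R) = 1$, so this framing identifies $\nu A$ symplectically with the trivial rank-two bundle carrying $du \wedge dt$. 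This yields the local model $(A \times \R^2, d\lambda' + du\wedge dt)$ in which $A$ sits as $A \times \{(u=1,t=0)\}$.

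Next I would invoke the symplectic neighborhood theorem (adapted to submanifolds with corners) to produce a symplectomorphism $F$ from a germ neighborhood of $A \times \{(u=1,t=0)\}$ in the model onto a germ neighborhood of $A$ in $X$, extending $\iota$ and realizing the chosen framing. Set $\lambda_0 := \lambda' + u\,dt$ and $\tilde\lambda := F^*\lambda$; both are primitives of the model symplectic form. By the choice of framing they agree as one-forms on the ambient tangent space along $A \times \{(u=1,t=0)\}$: both restrict to $\lambda'$ on $TA$, vanish on $\partial_u$ since $\lambda(Z)=0$, and take value $1$ on $\partial_t$ since $\lambda(R)=1$. Consequently $\tilde\lambda - \lambda_0$ is a closed one-form vanishing on the ambient tangent space at $A \times \{(u=1,t=0)\}$; since $A \times U_{1,\eps}$ deformation retracts onto $A \times \{(u=1,t=0)\}$, this difference is exact, $\tilde\lambda - \lambda_0 = dh$, and one may normalize so that both $h$ and $dh$ vanish along the Wc-hypersurface. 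Moser's trick applied to the path $\lambda_s = \lambda_0 + s\,dh$ via the Hamiltonian vector field $X_h$ defined by $\iota_{X_h}\omega_0 = -dh$ (which vanishes on $A \times \{(u=1,t=0)\}$ because $dh$ does) produces an isotopy fixing $A \times \{(u=1,t=0)\}$ pointwise whose time-one map $\psi$ satisfies $\psi^*\tilde\lambda = \lambda_0$, so that $\Psi_0 := F \circ \psi$ is a Liouville embedding of a germ neighborhood of $A \times \{(u=1,t=0)\}$ into $X$ extending $\iota$.

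To enlarge the domain to all of $A \times U_{1,\eps}$, I would use the Liouville flows on both sides: the Liouville field of $(A \times T^*\R, \lambda' + u\,dt)$ is $Z_A + u\partial_u$, whose backward flow preserves $A \times U_{1,\eps}$ and is defined there for all negative time, while $Z$ is complete on $X \setminus \Skel$; since $\Psi_0$ intertwines these two Liouville fields on its domain of definition, conjugation by them extends $\Psi_0$ uniquely to the required Liouville embedding $\Psi: A \times U_{1,\eps} \to X$. For uniqueness up to contractible choice, each step involves only contractible data (the defining domain $W$ up to translation, the framing of $\nu A$ compatible with $(Z,R)$, the symplectic neighborhood theorem map $F$, and the primitive $h$), and any two Liouville extensions of $\iota$ differ by a germ Liouville automorphism of $(A \times U_{1,\eps}, \lambda' + u\,dt)$ fixing $A \times \{(u=1,t=0)\}$ pointwise, which forms a contractible space by parametric Moser. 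The main technical subtlety will be engineering the framing of the symplectic normal bundle precisely enough that $\tilde\lambda$ and $\lambda_0$ already agree as ambient one-forms at the Wc-hypersurface, which is what makes the Moser isotopy fix $A$ pointwise rather than merely setwise.
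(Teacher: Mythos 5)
Your framing step contains a concrete error that breaks the key computation. You want the symplectic neighborhood theorem to produce a symplectomorphism $F$ with $F_*\partial_u = Z$ and $F_*\partial_t = R$ along $A$, on the grounds that $(Z,R)$ frames the symplectic normal bundle of $A$. But $\mathrm{span}(Z,R)$ is not $\omega$-orthogonal to $TA$: for $v\in TA$ one has $\omega(Z,v)=\lambda(v)=\lambda'(v)$, which is not identically zero, whereas in the model $\omega_0(\partial_u,v)=0$. So the bundle map you prescribe (identity on $TA$, $\partial_u\mapsto Z$, $\partial_t\mapsto R$) is not symplectic, and no symplectomorphism $F$ can have this differential along $A$; in particular your verification that $\tilde\lambda(\partial_u)=\lambda(Z)=0$ rests on a normalization that cannot be achieved. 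The repair is to replace $Z$ by $Z-Z_A$, where $Z_A\in TA$ is the Liouville field of $(A,\lambda')$: then $\omega(Z-Z_A,v)=\lambda'(v)-d\lambda'(Z_A,v)=0$ and $\omega(R,v)=d\alpha(R,v)=0$ for $v\in TA$, while $\omega(Z-Z_A,R)=\lambda(R)=1$, so $(Z-Z_A,R)$ genuinely frames $(TA)^{\perp_\om}$; moreover $\lambda(Z-Z_A)=0$ because $\lambda(Z)=\om(Z,Z)=0$ and $\lambda'(Z_A)=0$, so your one-form comparison along $A$ survives verbatim. This correction is in fact forced by the statement itself: a Liouville embedding intertwines the Liouville fields, so at $u=1$ it must send $\partial_u = (Z_A+u\partial_u)-Z_A$ to $Z - Z_A$, not to $Z$.

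With that fix your route (symplectic neighborhood theorem, then Moser on primitives, then saturation by Liouville flows) is viable but genuinely different from, and heavier than, the paper's argument. The paper skips the neighborhood theorem and Moser entirely: it invokes the standard contact surrounding $A\times(-\eps,\eps)\subset\p W$ on which $\lambda=\pi^*\lambda'+dt$ (the normal form for a Weinstein hypersurface in the contact boundary, as in \cite{E18}) and then defines the embedding of $A\times U_{1,\eps}$ directly by matching backward flows of $Z_A+u\frac{\p}{\p u}$ with backward flows of $Z$; uniqueness up to contractible choice reduces to the contractibility of the choice of surrounding. One advantage of the flow-based construction in the Wc setting is that compatibility with corners comes for free, since all the Liouville fields involved are tangent to the boundary faces; in your argument you would still need to arrange the neighborhood-theorem map and the Moser vector field to respect the collar/face structure of $A$ inside $X$, which you do not address.
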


\begin{proof} Let $W \subset X$ be a defining domain such that $A \subset \p W$ and let $A \times (-\eps,\eps) \subset \p W$ be its contact surrounding, as in \cite{E18}, so that the restriction of $\lambda$ to this neighborhood is of the form $\pi^* \lambda|_{A} + dt$, where $\pi: A \times (-\eps, \eps) \to A$ is the projection and $t \in (-\eps,\eps)$. For every $x \in X \setminus \Skel(X,\lambda)$, let $x^s$ denote the backwards flow of $Z_{X}$ starting at $x$, where  $s \in (-\infty, 0]$. For every $y \in A \times (-\eps, \eps)$, let $y^s \in U_{1,\eps}$ denote the backwards flow of $Z_{A} + u \frac{\partial}{\partial u}$ starting at $y$, where $s \in (-\infty, 0]$. We extend the contact embedding $A \times (-\eps, \eps) \to \p W$ which sends $y \mapsto x$ to a Liouville embedding $A \times U_{1,\eps} \to W$ by sending $y^s \mapsto x^s$, so that by construction $Z_{A} + u \frac{\partial }{\partial u}$ is identified with $Z_{X}$. It is clear that the embedding is determined by the contact surrounding and its parametrization, which are unique up to contractible choice. \end{proof}

We now explain the {\em hypersurface-to-nucleus} conversion. Let $(X,\lambda)$ be a  Wc-manifold and let $\sA \subset X\setminus \Skel(X,\lambda)$ be a  Wc-hypersurface. 
Fix defining Wc-domains $A$ and $W$ for $\sA$ and $X$ respectively so that we have an embedding $A \to \p W$ which extends to an embedding $A \times U_{1,\eps} \to W$ as in Lemma \ref{lem:embedding extension}. Consider the exact symplectic manifold which is the result of gluing $W$ and $A \times U_{3,\eps}$ along $A \times U_{1,\eps}$. Consider the Weinstein structure on $U_{3,\eps}$, thought of as the $\eps$-neighborhood of the zero section in $T^*(0,3)$, which is the restriction of the Weinstein normal form for Morse-Bott boundary of index $k=1$ translated two units in the $u$ direction, so that the critical set is $\{t=0, u \geq 2\}$. The restriction of this structure to $U_{1,\eps}$ is $udt$, hence we obtain a patched up structure on the union of $W$ and $A \times U_{3,\eps}$. Moreover, by inspection the new Liouville field points outwards at the boundary. So we may replace $ U_{3,\eps}$ with a smoothing $ U_{3,\mu}=\{-\eps<t<\eps, \, \, 0<u<\text{min}(3,\mu(|t|)) \}$ for $\mu:(0,\eps) \to [1,\infty)$ a function satifying
\begin{itemize}
\item[(i)] $\mu(x) \to \infty $ as $x \to 0$
\item[(ii)] $\mu=1$ on $(\eps/2,\eps)$
\item[(iii)] $\mu'<0$ on $(0,\eps/2)$
\end{itemize}
and the Liouville field is still outwards pointing for the smoothed boundary. Thus we obtain a defining domain for a Wc-manifold $(X^{A},\lambda^{A})$. Indeed the only think that needs to be checked is the existence of a Morse-Bott Lyapunov function, which is clear from the local models since without loss of generality we can start with a defining domain $W$ for $X$ such that $\p W$ is a regular level set of the Morse-Bott Lypanuov function $\phi$ for $X$. Note that $A \times \{ t= 0, u=3 \}$ is the nucleus of the new boundary face of $X^A$.

By construction we have $\Skel(X^{A},\lambda^{A})=\Skel(X,A,\lambda)$, or more precisely $$ \Skel(X^A, \lambda^A)=\Skel(X,A,\lambda) \cup (\Skel(A, \lambda|_A) \times [1,2]).$$ 
In other words, the backwards $\lambda$-Liouville cone on $A$ has a trivial collar attached to it, which does not change the topology of $\Skel(X,A, \lambda)$. Note that the space of possible smoothings $\mu$ is convex, hence contractible. The choice of Morse-Bott Weinstein normal form is also contractible. Similarly the choice of collars used in the construction is contractible. Hence we get a Wc-germ $(\sX^A,\lambda^A)$ which is well defined up to Wc-homotopy. Finally, note that different representatives $A$ for $\sA$ will also result in homotopic Wc-manifolds $(X^A,\lambda^A)$. Hence the conversion operation only depends on the Wc-hypersurface germ $\sA$ up to Wc-homotopy.

\begin{definition} The Wc-germ $(\sX^{\sA}, \lambda^{\sA})$ is called the {\em hypersurface-to-nucleus} conversion of the Wc-germ $X$ and the Wc-hypersurface $\sA$.
\end{definition}

   \begin{figure}[h]
\includegraphics[scale=0.55]{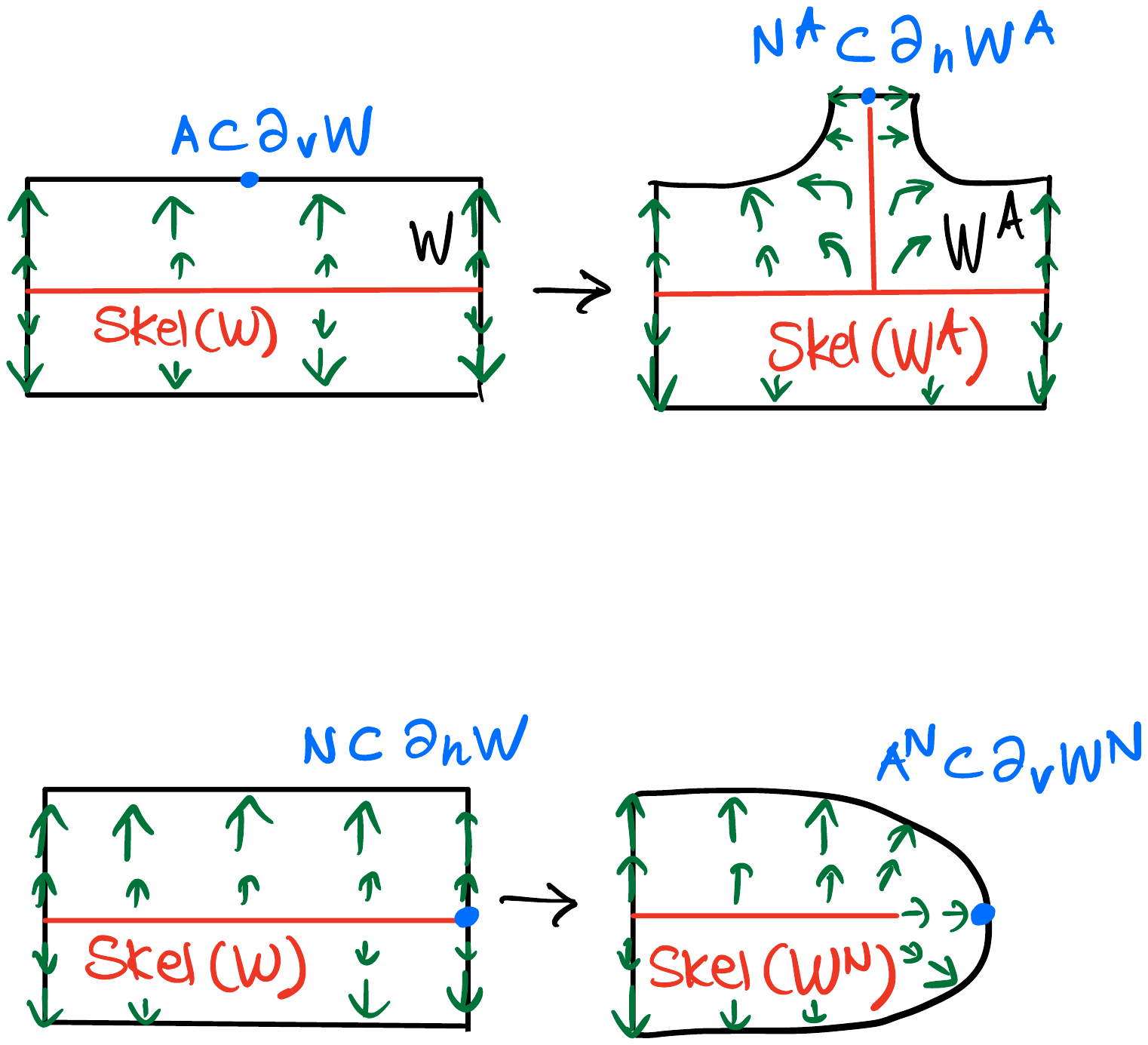}
\caption{Converting a boundary nucleus to a Wc-hypersurface.}
\label{fig:nuctohyp}
\end{figure}

\subsubsection{Cancellation of conversions}

The hypersurface-to-nucleus conversion is inverse to the nucleus-to-hypersurface conversion at the level of Weinstein homotopy classes, as follows from a parametric Smale cancellation of Morse-Bott index $0$ and $1$ critical points, the existence of which is immediate from the local models. This implies the following obvious but important lemma.

\begin{lemma}
Let $(X^\sA,\lambda^\sA)$ be the result of {\it hypersurface-to-nucleus} conversion of a Wc-hypersurface $\sA \subset X \setminus \Skel(X,\lambda)$. Then the underlying Weinstein manifolds of $(X,\lambda)$ and $(X^\sA,\lambda^\sA)$ are Wc-homotopic.
\end{lemma}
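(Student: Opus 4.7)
The plan is to unpack the definition of the underlying Weinstein manifold of both $(X,\lambda)$ and $(X^\sA,\lambda^\sA)$ and then exploit the inverse-operation principle stated in the paragraph preceding the lemma, namely that hypersurface-to-nucleus and nucleus-to-hypersurface conversions cancel at the level of Wc-homotopy via a parametric Smale cancellation of Morse-Bott index $0$ and $1$ critical components.

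First, I would catalog the boundary faces. Let $P_1,\ldots,P_k$ be the boundary faces of $X$, with nuclei $N_1,\ldots,N_k$. By inspection of the hypersurface-to-nucleus construction, the Wc-manifold $(X^\sA,\lambda^\sA)$ has exactly one additional boundary face $P_0$ whose nucleus $N_0$ is canonically identified with $\sA$ (it is the face $A \times \{t=0,\,u=3\}$ in the local model); the remaining faces $P_1,\ldots,P_k$ with their nuclei and product collar structures $N_i \times T^*\cI^{r_i}$ are inherited unchanged from $X$, since the hypersurface-to-nucleus construction is supported in the contact surrounding of $\sA$, which lies in $X \setminus \Skel(X,\lambda)$ and hence away from the other faces.

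Second, I would use the order-independence of the operation that produces $\wh{X^\sA}$ in order to convert $N_0$ back to a Wc-hypersurface first. The local models for nucleus-to-hypersurface and hypersurface-to-nucleus conversion are precisely the Weinstein normal forms for Morse-Bott boundary of index $0$ and index $1$ on $T^*\R$; their composition patches together to a family of Liouville structures on $T^*\R$ that is Wc-homotopic to the trivial collar structure $\lambda_{\mathrm{std}}=pdq$ by a parametric Smale cancellation of the index-$0$/index-$1$ pair. Applying this cancellation fibrewise along the product collar $\sA \times T^*\cI$ produced by Lemma \ref{lem:embedding extension}, one obtains a Wc-homotopy from $(X^\sA)^{N_0}$, equipped with the resulting Wc-hypersurface, back to the original Wc-pair $(X,\sA)$, supported in a neighborhood of $P_0$ and trivial elsewhere.

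Finally, I would convert the remaining nuclei $N_1,\ldots,N_k$ of this pair to Wc-hypersurfaces in any order and then forget all accumulated Wc-hypersurfaces (including the recovered $\sA$). By definition this produces $\wh{X}$. The only genuinely technical point is verifying that the parametric Smale cancellation can be performed locally in the $T^*\cI$ collar factor around $P_0$ and extended by the identity elsewhere; this is ensured by the product decomposition $\sA \times T^*\cI$ built into the hypersurface-to-nucleus construction together with the axiom (L3) guaranteeing a splitting $\lambda = \lambda|_N + u\,dt$ on collars, so that the cancellation takes place strictly within the $T^*\cI$ factor and is disjoint from $\Skel(X,\lambda)$. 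This is where I expect the main bookkeeping effort to lie, but no new ideas beyond what has already been set up in this section are required.
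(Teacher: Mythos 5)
Your proposal is correct and follows essentially the same route as the paper: the paper's own justification is precisely the observation that the two conversions cancel up to Wc-homotopy via a parametric Smale cancellation of the Morse-Bott index $0$ and index $1$ critical components visible in the local normal forms, combined with the order-independence of converting the remaining nuclei to form $\wh X$. Your write-up simply makes explicit the bookkeeping of boundary faces and the fibrewise-over-$\sA$ nature of the cancellation, which is consistent with what the paper leaves implicit.
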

 


\subsection{Gluing and splitting}\label{sec:Lbc-gluing}

\subsubsection{Horizontal gluing}

We will now describe the basic gluing operation for Wc-manifolds. Given two Wc-manifolds $(X_0,\lambda_0)$ and $(X_1,\lambda_1)$, we define their
{\em horizontal gluing} as follows. Pick
  some {\em non-adjacent} boundary faces $P_1,\dots, P_l$ of $X_1$ and 
   $Q_1,\dots, Q_l$ of $X_0$, and  for each $j$ suppose we are given a Liouville isomorphism $\phi_j: N_j \to M_j$, where   $N_j$ and $M_j$ are the nuclei or $P_j$ and $Q_j$ respectively. We have collar neighborhoods $N_j \times T^*\cI \subset X_1$ and $M_j \times T^*\cI \subset X_2$ and we extend $\phi_j$ to $\Phi_j:  P_j\to Q_j$ as $ \Phi_j = \phi_j \times \text{id} _\R: N_j \times \R \to M_j \times \R$. 
   \begin{definition} The horizontal gluing of $(\sX_1,\lambda_1)$ and $(\sX_2,\lambda_2)$ along $\{\phi_j\}_j$ is defined to be the Wc-germ of the Wc-manifold $X_1 \cup X_2$ where we identify $P_j\sim Q_j$ via $\Phi_j$. 
   \end{definition}
   
   \begin{example}
   Suppose $M, N$ are two manifolds with boundary and we have a diffeomorphism $\psi: \p N \to \p M$. Then $\psi$ lifts to a Liouville isomorphism $\phi$ between $T^*(\p N)$ and $T^*( \p M)$ and we can perform the horizontal gluing $T^*N \cup_\phi T^*M$ which is the same as $T^*(N \cup_\psi M)$ for $N \cup_\psi M$ the smooth gluing of $N$ and $M$ given by $\psi$.
   
   \end{example}
      \begin{figure}[h]
\includegraphics[scale=0.4]{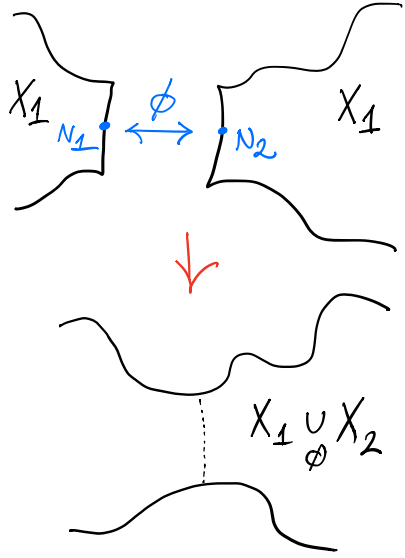}
\caption{Horizontal gluing of two Wc-manifolds.}
\label{fig:Horgluing}
\end{figure}
   
Note that the skeleton of the horizontal gluing of $X_1$ and $X_2$ is equal to  the union of the skeleta of $X_1$ and $X_2$. 

\subsubsection{Vertical gluing} We can also glue two Wc-manifolds $X_1$ and $X_2$ in a different way. 

\begin{definition} Suppose we have a Wc-hypersurface $\sA$  in $\sX_1$ and a Liouville isomorphism $\phi: \sA \to \sN$ for $\sN$ the germ of the nucleus $N$ of a boundary face of $X_2$. The {\em vertical gluing} of $\sX_1$ and $\sX_2$ is defined to be the composition of two steps: a conversion a Wc-hypersurface into a boundary hypersurface, see Section \ref{sec:conversion} above, followed by a horizontal gluing.
\end{definition}

       \begin{figure}[h]
\includegraphics[scale=0.45]{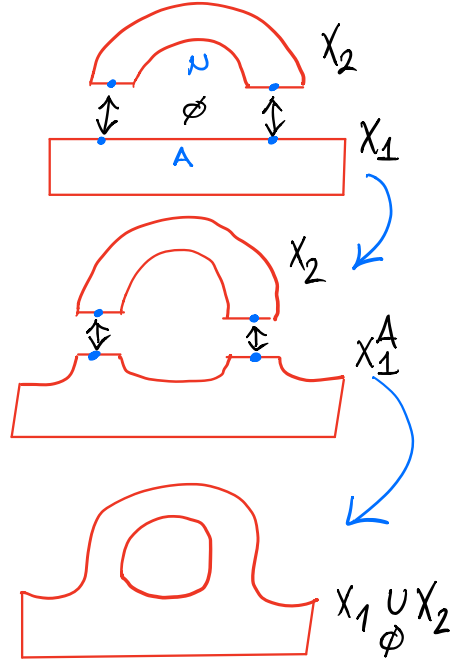}
\caption{Vertical gluing is the composition of two steps: a Wc-hypersurface to boundary nucleus conversion and a horizontal gluing.}
\label{Vertical gluing}
\end{figure}

 Indeed, if we first convert $\sA$ into a boundary nucleus, we can then apply the horizontal gluing determined by $\phi$ as descibed above. The resulting Wc-germ is well defined up to Wc-isomorphism. Note that the skeleton of the vertical gluing is equal to the union of the skeleton of the pair $(\sX_1,\sA)$ and the skeleton of $\sX_2$. We record for future reference the following slight strengthening of this observation, where we denote by $\lambda^P$ the Liouville form $\lambda - d(ut)$ for $t$ the defining coordinate of a boundary face $P$. 
 
 \begin{lemma}\label{lem:maps}
Let $(\sX_i, \lambda_i)$, $i=1,2$ be two Wc-germs, $\sA \subset \sX_2 \setminus \Skel(\sX_2,\lambda_2)$ a Wc-hypersurface, $N$ the nucleus of a boundary face $P$ of $X_1$ and $\sN$ its germ, $\phi: \sA \to \sN$ a Liouville isomorphism and $(\sY, \lambda)$ the result of vertically gluing $\sX_1$ to $\sX_2$ along $\phi$. Then there exists a symplectic embedding of germs $\Psi: \text{int} \sX_1 \to \sY$ such that the following properties hold, where we denote by $\iota: \text{int}  \sX_2 \to \sY$ the restriction of the inclusion to the interior of $\sX_2$.
\begin{itemize}
\item[(i)] $\iota(\Skel(\sX_2 ,\lambda_2)) \cup \Psi(\Skel(\sX_1,\lambda_1) \cap \text{int} \sX_1)=\Skel(\sY,\lambda)$
\item[(ii)] $\iota(\Skel(\sX_2, \lambda_2))\cap \Psi( \text{int} \sX_1) = \varnothing$.
\item[(iii)] $ \Psi^{-1}( \iota(\text{int}  \sX_2 ) ) = \Op \sN \setminus \p \sX_1 \subset \sX_1$ and $(\Psi)^*( \iota)_* \lambda_2=\lambda_1^P$.
\end{itemize}
\end{lemma}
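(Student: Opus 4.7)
The plan is to unwind $(\sY, \lambda)$ as the composition of the hypersurface-to-nucleus conversion $(\sX_2, \sA) \rightsquigarrow (\sX_2^{\sA}, \lambda_2^{\sA})$ followed by the horizontal gluing of $\sX_2^{\sA}$ to $\sX_1$ along $\phi$, which identifies the new boundary nucleus of $\sX_2^{\sA}$ with $\sN$. Under this presentation, $\iota: \text{int}\,\sX_2 \to \sY$ is manifestly the restriction to the interior of the natural chain of inclusions $\sX_2 \hookrightarrow \sX_2^{\sA} \hookrightarrow \sY$.

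The heart of the proof is a local symplectomorphism $f$ near $\sN$. On $\sX_1$, the Wc-structure gives collar coordinates $(n, u_1, t_1) \in N \times T^*\cI$ with $\lambda_1 = \lambda_N + u_1\,dt_1$ and $P = \{t_1 = 0\}$. By Lemma \ref{lem:embedding extension}, after identifying $A \sim N$ via $\phi$, a neighborhood of $\sA$ inside $\sX_2$ is Liouville-isomorphic to $N \times U_{1,\eps}$ equipped with the form $\lambda_N + u_2\,dt_2$. On $\Op\sN \setminus \p\sX_1 \subset \sX_1$, I define
\begin{equation*}
f(n, u_1, t_1) = \bigl(\phi(n),\, t_1,\, -u_1\bigr),
\end{equation*}
which lands in $N \times U_{1,\eps} \subset \sX_2$ for $\Op\sN$ sufficiently small and $t_1 > 0$. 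The linear transformation $(u_1, t_1) \mapsto (t_1, -u_1)$ has unit Jacobian and pulls back $du_2 \wedge dt_2$ to $du_1 \wedge dt_1$, so $f$ is a symplectomorphism; moreover $f^*(\lambda_N + u_2\,dt_2) = \lambda_N - t_1\,du_1 = \lambda_1 - d(u_1 t_1) = \lambda_1^P$. I therefore set $\Psi = \iota \circ f$ on $\Op\sN \setminus \p\sX_1$, which yields the primitive identity of (iii).

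To extend $\Psi$ to all of $\text{int}\,\sX_1$, I take $\Psi$ to coincide outside a slightly smaller neighborhood of $\sN$ with the natural inclusion $\sX_1 \hookrightarrow \sY$ supplied by the horizontal gluing, and reconcile the two prescriptions on the transition region by a symplectic isotopy of $\sY$ supported in the Morse-Bott collar of $\sX_2^{\sA}$. The existence of this isotopy follows from the contractibility of the space of Morse-Bott normal forms used in the hypersurface-to-nucleus conversion: both the natural inclusion and $\iota \circ f$ realize the collar of $P$ as symplectically embedded submanifolds of the Morse-Bott region, and a one-parameter family of such embeddings connects them. This patching step is the main technical point of the proof.

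Conditions (i) and (ii) then follow by direct inspection. Inside $N \times U_{1,\eps}$ the map $f$ carries the local skeleton piece $\Skel(\sX_1) \cap (\Op\sN \setminus \p\sX_1) = \Skel(\sN) \times \{u_1 = 0,\, t_1 > 0\}$ onto $\Skel(\sA) \times \{t_2 = 0,\, u_2 > 0\}$, which is precisely the germ of $\Cone(\Skel(\sA), \lambda_2) \cap (N \times U_{1,\eps})$. Combined with the natural-inclusion part of $\Psi$ capturing $\Skel(\sX_1)$ away from $\sN$, and with $\iota(\Skel(\sX_2))$, this recovers the full skeleton $\Skel(\sY) = \Skel(\sX_2) \cup \Cone(\Skel(\sA), \lambda_2) \cup \Skel(\sX_1)$ of the vertical gluing, proving (i). For (ii), the image $\Psi(\Op\sN \setminus \p\sX_1) \subset \iota(N \times U_{1,\eps})$ is disjoint from $\iota(\Skel(\sX_2))$ because the Weinstein hypersurface $\sA$ and its collar lie in $\sX_2 \setminus \Skel(\sX_2)$ by definition, while $\Psi(\text{int}\,\sX_1 \setminus \Op\sN)$ lies in the $\sX_1$-side of $\sY$, disjoint from $\iota(\sX_2)$.
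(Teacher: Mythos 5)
Your local formula is the right mechanism, and it is the same mechanism the paper uses: passing from the collar of $P$ in $\sX_1$ to the cylinder around $\sA$ in $\sX_2$ swaps the base and fiber directions, and your map $(n,u_1,t_1)\mapsto(\phi(n),t_1,-u_1)$ does pull $\lambda_N+u_2\,dt_2$ back to $\lambda_1^P=\lambda_N-t_1\,du_1$, which (read as a statement of germs) is the real content of (iii), since $\iota(\text{int}\,\sX_2)$ only meets the image near the deep end of the cylinder, close to $\Skel(\sX_2)$. The genuine gap is the patching step, which you yourself call the main technical point but never carry out. As written the two prescriptions even overlap and disagree (you impose $\iota\circ f$ on all of $\Op\sN\setminus\p\sX_1$ and the natural inclusion outside a \emph{smaller} neighborhood); what is actually needed is a symplectic embedding of a transition collar $N\times T^*_\eps[\delta_0,\delta_1]$ onto the intermediate region of $\sY$ --- the rest of the embedded cylinder $\sA\times U_{1,\eps}$, the attached piece $\sA\times U_{3,\eps}$ containing the Morse-Bott critical locus, and a bit of the $P$-collar --- matching $\iota\circ f$ at the inner end and the inclusion at the outer end. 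The justification you give does not produce it: contractibility of the space of Morse-Bott normal forms concerns the choices made in the conversion construction, not the existence of such an interpolating embedding, and an isotopy ``supported in the Morse-Bott collar of $\sX_2^{\sA}$'' cannot reconcile the two prescriptions, because the image of $\iota\circ f$ lies deep inside $\sX_2$, far from that collar. Moreover, your verification of (i) silently assumes the interpolation carries the zero-section piece $\Skel(N)\times\{u_1=0\}$ of the transition collar exactly onto the portion of $\Skel(\sY)$ sitting in the attached cylinder (the continuation of $\Cone(\Skel(\sA),\lambda_2)$ together with the critical collar $\Skel(\sA)\times[1,2]$); with only your two explicit pieces of $\Psi$ that portion of the skeleton is not hit, so (i) fails as argued.

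The missing construction is exactly the paper's one-line trick, and it replaces your local formula and the patching simultaneously: view the attached cylinder in the switched polarization (base $u$, fiber $t$), so that $\sA\times U_{3,\eps}\cup\sN\times T^*_\eps[0,\delta)$ becomes $\sN\times T^*_\eps$ of one long interval, and map $\sN\times T^*_\eps(0,\delta)$ onto it by the cotangent lift of a reparametrization of the base interval which is the identity near the outer end (so it glues with the inclusion on the rest of $\text{int}\,\sX_1$) and which you may take linear near $0$. A cotangent lift is symplectic, sends the zero-section of the collar onto the cylinder part of $\Skel(\sY)$ (giving (i) and (ii)), and near the deep end it reduces to a linear version of your $f$, so the exact identity $\Psi^*\iota_*\lambda_2=\lambda_1^P$ holds on the germ of the overlap; note that a nonlinear reparametrization would only give this identity up to an exact term, which is why one should arrange linearity (or the identity) on the region where the image meets $\iota(\text{int}\,\sX_2)$. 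With that single stretched-collar map in place of your asserted isotopy, your verifications of (i)--(iii) go through.
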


\begin{proof}
We recall from the gluing construction that we attached $\sN \times (-\eps,\eps)$ to the top of a cylinder $\sA \times U_{3,\eps} \subset Y$. Since a neighborhood $\Op N$ of $N$ in $X_1$ is symplectomorphic to $N \times T_\eps^*[0, \delta)$, we can map $\sN \times T^*_\eps(0,\delta)$ symplectomorphically to $\sA \times U_{3,\eps} \cup \sN \times T^*_\eps[0,\delta) $, relative to $\sN \times T^*_\eps (\delta/2,\delta)$ say, so that the symplectomorphism extends to the rest of $\text{int} \sX_1$ as the identity. Moreover, the symplectomorphism can be obtained from a reparametrization of the $t$ coordinate, hence the Liouville forms pull back as desired.
\end{proof}

\subsubsection{Splitting}
 
The converse operation to gluing is splitting.        
\begin{definition} Given a Wc-manifold $(X,\lambda)$,  a regularly embedded  hypersurface 
$(H,\p H)\subset (X,\p X)$ with corners   called   {\em splitting} for  $X$ if it  satisfies the following conditions:
\begin{itemize}
\item[(i)]  $H$  divides $X$  into    two  parts,  $X=X_+\cup X_-$ and $X_-\cap X_+=H$, which are manifolds with corners.
\item[(ii)] the Liouville vector field $Z$ of $X$ is tangent to $H$.
\item[(iii)] there exists a hypersurface with corners $(S,\p S)\subset (H,\p H)$ tangent to $Z$.
\item[(iv)]  $(S,\lambda|_S)$ is a Wc-manifold and there exists a retraction $\pi:H\to S$ such that $\lambda|_H=\pi^*(\lambda|_S)$.
\end{itemize} \end{definition}

\begin{remark}
That $H$ is regularly embedded implies in particular that $\Op H = H \times (-\eps, \eps)$.
\end{remark}

We will refer to $S$ as the  {\em   soul} of the  splitting hypersurface $H$ and   denote it by $\So(H)$.
Note that  $\Skel(S, \lambda|_S)=\Skel(X, \lambda)\cap H$.

\begin{example}
Let $M$ be a manifold with corners and $N$ a regularly embedded codimension $1$ submanifold with corners, so that $\Op N = N \times (-\eps, \eps)$. Assume that $N$ divides $M$ into two parts: $M_+$ and $M_- $, so that $M_+\cup M_-=M$. Then $H=T^*M|_N$ is a splitting hypersurface. Given a decomposition of the tubular neighborhood of $N$, $\Op(N)=N\times(-\eps,\eps)$, we have $S:=T^*N\times 0\subset T^*M=T^*N\times T^*(-\eps,\eps)$ the soul of $H$.
\end{example}

The  splitting hypersurface  $S$ can be used to split $X$ into two Wc-manifolds  in two ways, {\em horizontal} and {\em vertical}. For the {\em horizontal  splitting} we 
 view  $X_\pm$ as Wc-manifolds with an additional component $H$ of $\p_1X_\pm$. The Wc-manifold $S$  serves as the nucleus of the additional boundary component $H$.  The {\em vertical splitting} consists of a horizontal splitting followed by a nucleus-to-hypersurface conversion  operation, transforming  the nucleus $S$ into a Wc-hypersurface.
 
\subsubsection{Partial horizontal gluing}
 
One more basic operation that we will need is the creation of corners and its associated partial horizontal gluing. Let $X$ be a Wc-manifold, $P$ a boundary face of $X$ and $N$ the nucleus of $P$. Let $\Sigma \subset N$ be a splitting hypersurface with soul $T$. Then there is a deformation of the Wc-structure on $X$ which creates an additional corner along $\Sigma$ with its nucleus $T$.

More precisely, take a collar neighborhood $\Op P = N \times T^*\cI$, so that $\lambda = \lambda|_N + udt$ and extend it to $N \times T^*(-\eps,\eps)$ with the same Liouville form. Let $\Sigma \times (-\delta, \delta) \subset N$ be a tubular neighborhood of $\Sigma$ in $N$ and consider a smooth function $\mu:(0,\delta) \to [0,\infty)$ satisfying the following properties:
\begin{itemize}
\item[(i)] $\mu(x) \to \infty$ as $x \to 0$.
\item[(ii)] $\mu(x)=0$ on $[\delta/2,\delta)$
\item[(iii)] $\mu'(x)<0$ on $(0,\delta/2)$.
\end{itemize}

Set $\eta(x)=\min( \mu(x), \epsilon/2)$ and consider the subset $\Omega \subset (\Sigma \times (-\delta,\delta) ) \times T^*(-\eps,\eps)$ consisting of points $(\sigma, s ,t,u)$ such that $\eta(s)+t  \geq 0$. Then the union of $\Omega$ and $X$ along $P$ is a new Wc-manifold which has a new corner along a parallel copy of $\Sigma$.

       \begin{figure}[h]
\includegraphics[scale=0.5]{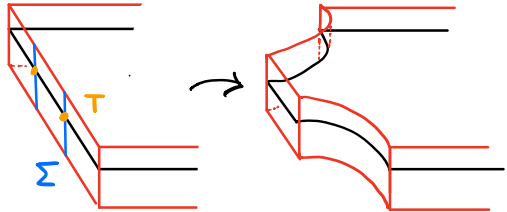}
\caption{A creation of additional corners on $\sT^*M$ for $M$ a closed manifold with corners is induced from a creation of additional corners on $M$.}
\label{creationcorner}
\end{figure}

Next, consider two Wc-manifolds $X_0, X_1$ with boundary faces $P_0,P_1$ and respective nuclei $N_0, N_1$. Let $\Sigma$ be a dividing hypersurface for $N_0$ with its soul $T$, which splits $N_0$  into Wc-manifolds $N_0'$ and $N_0''$, which share a common face $\Sigma$ with the nucleus $T$. Suppose there is a Liouville isomorphism $\phi:N_1\to N_0'$. 
\begin{definition} The partial horizontal gluing of $\sX_1$ to $\sX_0$ using $\phi$ is the Wc-germ obtained from the following two operations:
\begin{itemize}
\item[(a)] Creating a corner along $\Sigma$, i.e. adding $\Sigma$ to $\p_2X_0$, and replacing the face $N_0$ with two  faces $N_0', N_0''$;
\item[(b)] The horizontal gluing $X_0$ and $X_1$ along $\phi$.
\end{itemize}
\end{definition}
 
 As before, the partial horizontal gluing is well defined up to Wc-homotopy.

\subsection{Wc-buildings}
\label{sec:building}

\subsubsection{Iterated gluing}

We now introduce the iterated vertical gluing of Wc-manifolds, which will be a convenient notion for the constructions in this paper. Consider Wc-germs  $(\sX_1,\lambda_1),\dots,(\sX_k,\lambda_k)$. We inductively define a $k$-level {\em Wc-building}, which we will denote by $(  \sX_k\to \sX_{k-1} \to \dots  \to \sX_1)$, as the following presentation of a Wc-germ.
\begin{definition} A $1$-level  Wc-building  is  $(\sX_1,\lambda_1)$ itself.
Suppose that the Wc-building $$( \sY,\mu):= (  \sX_k\to \sX_{k-1}\dots\to \sX_2)$$ is already defined. Consider a nucleus $(\sN,\mu':=\mu|_\sN)$ of one of the boundary faces  $\sP$ of $( \sY,\mu)$ and let $\phi$ be an embedding $\sN$ as  a Wc-hypersurface  in $\sX_1\setminus\Skel(\sX_1,\lambda_1)$. 
The building  $$ ( \sX_k\to \sX_{k-1} \to \cdots \to \sX_2\to \sX_1) := (\sX_k \to \sX_{k-1} \to \cdots \to \sX_2) \to \sX_1 $$ is by definition the result of a vertical gluing of  $( \sY,\mu)$ and  $(\sX_1,\lambda_1)$  along $\phi$. \end{definition}

          \begin{figure}[h]
\includegraphics[scale=0.5]{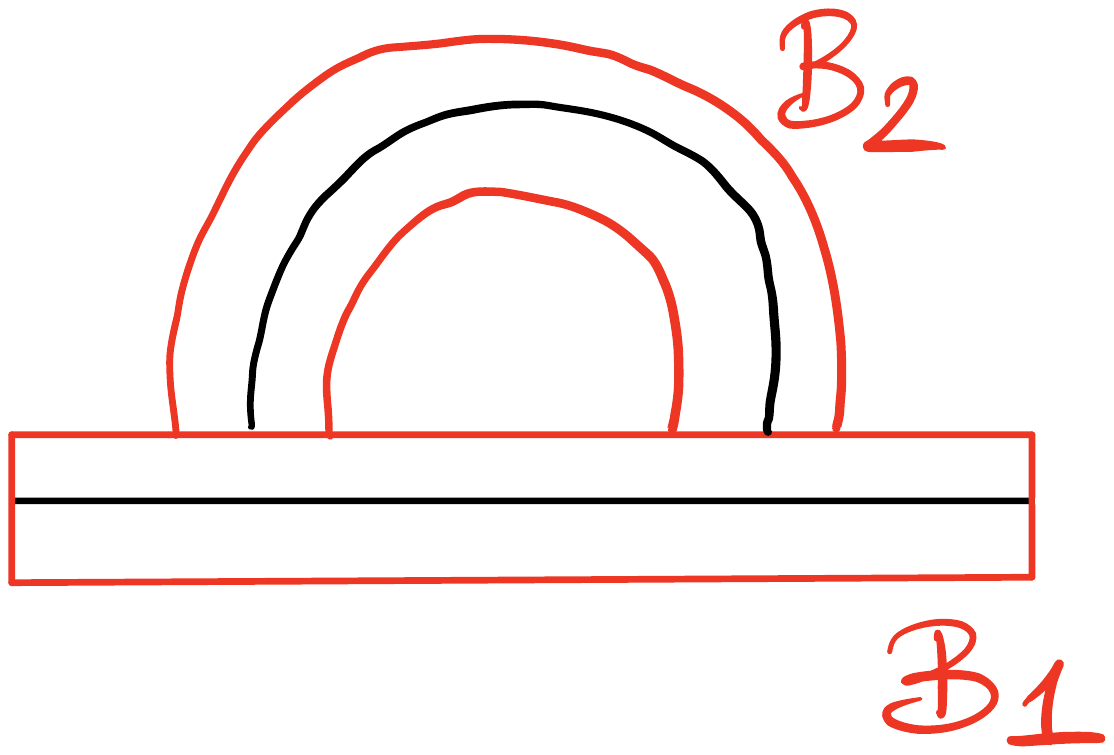}
\caption{A Wc-building.}
\label{fig:2blocks}
\end{figure}

So a Wc-building is a Wc-germ $\sY$ equipped with a choice of presentation of its Wc-deformation class as the iterated vertical gluing $\sY=(\sX_1 \to \cdots \to \sX_k)$. 

\subsubsection{Distinguished cover of a Wc-building}
A Wc-building come equipped with a symplectic cover by the Wc-manifolds it is built out of. The relevant properties of this cover are summarized in the following lemma. Recall that given a Wc-manifold   $(X,\lambda)$ the Liouville form $\lambda$ restricted to a neighborhood of a face $P$ has the form $\lambda:=\lambda|_N+udt,$ where $N$ is the nucleus of $P$ and $t $ is the defining coordinate for the face $P$. We recall the notation $\lambda^P$ for the form $\lambda|_N-tdu=\lambda-d(ut)$ on $\Op N$.
 
 \begin{lemma}\label{lm:complex}
 Suppose  $(\sX ,\lambda )$  is presented as a $k$-level building $ (\sX_k\to \sX_{k-1}\to \dots \to  \sX_1)$.
 Then there exist symplectic embeddings $\Phi_j: \text{int} \sX_j\to \sX$, $j=1,\dots, k$, such that
 \begin{itemize}
 \item[(i)] $\bigcup_{j=1}^k(\Phi_j( \Skel(\sX_j,\lambda_j) \cap \text{int} \sX_j)=\Skel(\sX,\lambda)$;
 \item[(ii)] $\Phi_j(\text{int}{\sX_j})\cap\Phi_i(\Skel(\sX_i,\lambda_i) \cap \text{int} \sX_i)=\varnothing$ if $j>i$;
 \item[(iii)] If  $\Phi_j(\text{int}{\sX_j})\cap\Phi_i(\text{int}{\sX_i}) \neq \varnothing$ for $j>i$ then
 there exists a nucleus $\sN$ of a boundary face of $\sX_j$ such that 
 $\Phi_j^{-1}(\Phi_i(\text{int} \sX_i))=\Op \sN  \cap \text{int}{\sX_j}$ and $\Phi_j^*(\Phi_i)_*\lambda_i=\lambda_j^P.$
 \end{itemize}
 \end{lemma}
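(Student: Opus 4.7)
The plan is to induct on the number $k$ of levels of the building, with each inductive step handled by a single application of Lemma~\ref{lem:maps}. In the base case $k=1$, the building is $\sX = \sX_1$, so I take $\Phi_1$ to be the identity; conditions (ii) and (iii) are then vacuous and (i) holds by definition.

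For the inductive step, I write $\sX = \sY' \to \sX_1$ as the vertical gluing of the $(k-1)$-level building $\sY' := (\sX_k \to \cdots \to \sX_2)$ to $\sX_1$, performed along a Liouville isomorphism $\phi$ from the germ of the nucleus $\sN_*$ of a boundary face $\sP_*$ of $\sY'$ to a Wc-hypersurface $\sA \subset \sX_1 \setminus \Skel(\sX_1,\lambda_1)$. The inductive hypothesis supplies symplectic embeddings $\Phi'_j: \text{int}\,\sX_j \to \sY'$ for $j = 2, \ldots, k$ satisfying (i)--(iii) inside $\sY'$. Applying Lemma~\ref{lem:maps}, with its roles of $\sX_1$ and $\sX_2$ played respectively by $\sY'$ and $\sX_1$, produces a symplectic embedding $\Psi: \text{int}\,\sY' \to \sX$ together with the inclusion $\iota: \text{int}\,\sX_1 \to \sX$ enjoying the three properties listed there. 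I then set $\Phi_1 := \iota$ and $\Phi_j := \Psi \circ \Phi'_j$ for $j \geq 2$.

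Conditions (i) and (ii) now follow by straightforward combinations of the inductive hypothesis with the conclusions of Lemma~\ref{lem:maps} and the injectivity of $\Psi$. The case $j > i \geq 2$ of (iii) reduces to the inductive (iii) using the cancellation $\Psi^* \Psi_* = \mathrm{id}$ on the image of $\Psi$. The genuinely new case is $i = 1$, in which
\[ \Phi_j^{-1}(\Phi_1(\text{int}\,\sX_1)) = (\Phi'_j)^{-1}\bigl(\Psi^{-1}(\iota(\text{int}\,\sX_1))\bigr) = (\Phi'_j)^{-1}(\Op \sN_* \setminus \p \sY'), \]
by Lemma~\ref{lem:maps}(iii). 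Tracing back the face $\sP_*$ of $\sY'$ to its origin as a boundary face of a unique block $\sX_{j_0}$ (the block contributing $\sP_*$), one checks that this preimage is empty unless $j = j_0$, in which case it equals $\Op \sN \cap \text{int}\,\sX_j$ for the nucleus $\sN$ of $\sX_j$ corresponding to $\sN_*$. The Liouville form identity $\Phi_j^*(\Phi_1)_* \lambda_1 = \lambda_j^P$ then follows by combining $\Psi^*(\iota_* \lambda_1) = \lambda_{\sY'}^{\sP_*}$ from Lemma~\ref{lem:maps}(iii) with the tautology $(\Phi'_{j_0})^* \lambda_{\sY'} = \lambda_{j_0}$ on the pure region $\Op \sN \subset \text{int}\,\sX_{j_0}$.

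The main obstacle to make precise is the vanishing $(\Phi'_j)^{-1}(\Op \sN_*) = \varnothing$ for $j \neq j_0$. This will be a consequence of the non-adjacency condition in horizontal gluing together with the inductive (iii): for $j < j_0$ any overlap of $\Phi'_{j_0}$ with $\Phi'_j$ is confined to a nucleus of $\sX_{j_0}$ used in a prior gluing and hence disjoint from the yet-unused nucleus corresponding to $\sN_*$, while for $j > j_0$ the image $\Phi'_j(\text{int}\,\sX_j)$ meets $\Phi'_{j_0}(\text{int}\,\sX_{j_0})$ only near the nucleus of $\sX_j$ along which $\sX_j$ was attached, which projects under $\Phi'_{j_0}$ into a different nucleus of $\sX_{j_0}$. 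Once this structural bookkeeping is settled, the inductive statement closes up cleanly.
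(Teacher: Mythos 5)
Your overall strategy (peel off the bottom block, apply Lemma~\ref{lem:maps} to the vertical gluing of the upper building $\sY'=(\sX_k\to\cdots\to\sX_2)$ onto $\sX_1$, and set $\Phi_1=\iota$, $\Phi_j=\Psi\circ\Phi_j'$) is exactly the paper's argument, and the cases (i), (ii) and (iii) with $j>i\geq 2$ go through as you say. The problem is the step you yourself single out as ``the main obstacle'': the claimed vanishing $(\Phi_j')^{-1}(\Op\sN_*)=\varnothing$ for all $j\neq j_0$, i.e.\ the assertion that the attaching face $\sP_*$ of the upper building traces back to a unique block. This is false in the generality in which the lemma is used. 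A boundary face of a multi-level building is typically a union of (parts of) faces contributed by several blocks: this is precisely how attaching a block along an adapted Legendrian whose Liouville cone runs over corners where earlier blocks meet is encoded (see the proof of Proposition~\ref{prop:WctoW} and the surrounding discussion), and it is why the remark immediately after Lemma~\ref{lm:complex} speaks of multiple overlaps $\Phi_{i_1}(\sX_{i_1})\cap\cdots\cap\Phi_{i_k}(\sX_{i_k})$, and why the positivity formalism of Section~\ref{sec:positivity} assigns to points multi-indices $I=(i_1<\cdots<i_m)$ recording all blocks containing them. In such a building several blocks $\sX_j$ with $j\geq 2$ meet $\Op\sN_*$, hence meet $\Phi_1(\operatorname{int}\sX_1)$ after the gluing, and your argument establishes (iii) for none of them except the single block you call $\sX_{j_0}$.

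What is actually needed for the case $i=1$ is a statement for \emph{every} $j\geq 2$ whose image meets $\Op\sN_*$: that $(\Phi_j')^{-1}(\Op\sN_*)$ is $\Op\sN\cap\operatorname{int}\sX_j$ for a nucleus $\sN$ of a boundary face of the block $\sX_j$, and that along this region the building form pulls back to $\lambda_j^P$. Neither claim follows from your inductive hypotheses (i)--(iii) alone, since those only constrain overlaps between pairs of blocks, not how the block images sit relative to the boundary faces, nuclei and collar decompositions $N\times T^*\II$ of the partial building $\sY'$; nor does your ``tautology'' $(\Phi_{j_0}')^*\lambda_{\sY'}=\lambda_{j_0}$ survive once the relevant region of block $j$ also overlaps lower blocks of $\sY'$, because there the building form differs from the pushed-forward $\lambda_j$ by the $d(ut)$-corrections of the earlier gluings. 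The fix is to strengthen the induction: carry along, in addition to (i)--(iii), the statement that each boundary face of the partial building, together with its nucleus and collar, intersects each $\Phi_j'(\operatorname{int}\sX_j)$ in a collar of a face of $\sX_j$, with $\Phi_j'$ identifying the building's nucleus data with the block's nucleus data and matching the Liouville forms as in (iii). With that extra bookkeeping the argument closes; without it, your reduction to a unique $j_0$ proves a weaker statement than Lemma~\ref{lm:complex} and, in particular, would not support the later use of multi-block overlaps (e.g.\ the positivity conditions and the commuting Liouville fields remark).
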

       \begin{figure}[h]
\includegraphics[scale=0.5]{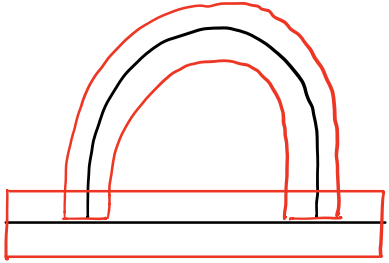}
\caption{Presenting a Wc-building as the union of Wc-manifolds.}
\label{Union}
\end{figure}

 \begin{proof} We inductively construct the maps $\Phi_{i,j}: \sX_i \to (\sX_k \to \cdots \to \sX_j)$, $i\geq j$, as follows, at the end of the inductive process we will set $\Phi_i=\Phi_{i,1}$ . The map $\Phi_{k,k}$ is just the inclusion on $\sX_1$ into $\sX$. Suppose we have defined maps $\Phi_{i,j}$ for $i \geq j$. Recall that $ \sY=(\sX_1 \to \cdots \to \sX_j)$ is vertically glued to $\sX_{j-1}$ by a Liouville isomorphism between the nucelus $\sN$ of a boundary face $\sP$ of $ \sY$ and a Wc-hypersurface $\sA$ in $\sX_{j-1}$.  Let $\Psi :  \sY \to (\sX_1 \to \cdots \to \sX_j \to \sX_{j-1})$ be the symplectic embedding from Lemma \ref{lem:maps}, set $\Phi_{i,j-1}=\Psi \circ \Phi_{i,j}$ and set $\Phi_{j-1,j-1}$ to be the restriction of the inclusion $\sX_{j-1} \hookrightarrow (X_1 \to \cdots \to X_j \to X_{j-1})$ to the interior of $\sX_{j-1}$. The desired properties for the resulting $\Phi_i$ follow from the properties stated in Lemma \ref{lem:maps}.  \end{proof}

 \begin{remark}
Note that on the overlap $\Phi_{i_1}(\sX_{i_1}) \cap \cdots \cap \Phi_{i_k}(\sX_{i_k}) \subset \sX$ the pushed-forward Liouville fields $Z_{i_1}, \ldots , Z_{i_k}$ coming from $\sX_{i_1}, \ldots , \sX_{i_k}$ all commute with each other.
 \end{remark}
 
 \subsubsection{Cotangent buildings}
 
 Finally, we introduce a particular class of Wc-buildings which will be particularly relevant for our purposes. 
  
\begin{definition} A Wc-building $(\sX_k \to \cdots \to \sX_1)$ for which each Wc-germ consists of a cotangent block $\sT^*M_j$ is called a {\em cotangent building}. \end{definition}

We will later show that any finite type Weinstein manifold can be realized as the underlying Weinstein manifold of a cotangent building.
  

\subsection{Legendrian submanifolds}\label{sec:Leg-in-blocks}

\subsubsection{Legendrians in cotangent bundles}
Consider first a cotangent bundle $T^*M$, where $M$ an $n$-dimensional compact manifold with corners. Let $\Lambda$ be an $(n-1)$-dimensional manifold with corners.

\begin{definition} An embedding  $\Lambda \to T^*M\setminus M$  is called {\em Legendrian} if its projection to $S^*M$ is a Legendrian embedding.
\end{definition}

With every Legendrian $\Lambda \subset T^*M$ we associate its Lagrangian Liouville cone $L:=\text{Cone}(\Lambda,pdq) $ formed by forward and backwards trajectories of  the Liouville field $Z$. Note that $L $ is embedded.

   The  splitting $P \times \cI^k$ near a $k$-face $P$ of $M$ provide   canonical splittings \begin{equation}\label{eq:bc-splitting}
 T^*{M}|_{\Op{P}}=T^*P\times T^*\II^k  \end{equation}

  We will  call a  Legendrian embedding   $\phi:  L \to \sT^*M\setminus M$  
  {\em adapted}  to the block $T^*M$ if    $f(\Op\p L)\subset \Op\p M$ and  for each stratum $P\subset\p_kM$  we have $L\cap \Op P =L_k\times \cI^k $ for a Legendrian $L_k\subset \sT^*P$.

 
  By   conical  Lagrangians $L\subset T^*M\setminus M$  we will always mean Lagrangian Liouville cones over    Legendrians.   
  
  Denote by   $\pi:T^*M\to M$ the cotangent bundle projection.
  
\begin{definition} An adapted Legendrian $\Lambda$ is called {\em regular} if $\pi|_\Lambda:\Lambda\to M$ is an   immersion with transverse self-intersections.

\end{definition}

      \begin{figure}[h]
\includegraphics[scale=0.5]{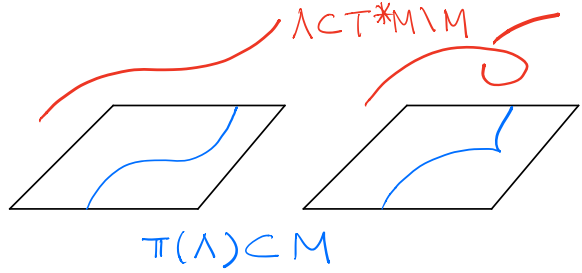}
\caption{The Legendrian on the left is regular, while the one on the right is not.}
\label{fig:regular}
\end{figure}

\subsubsection{Legendrians in Wc-manifolds}


Let $(X,\lambda)$ be a $2n$-dimensional Wc-manifold.
\begin{definition} An  $(n-1)$-dimensional submanifold  $\Lambda\subset 
   X\setminus\Skel(X,\lambda)$ is called {\em Legendrian} if   it projects  to $\p_\infty X$ as an embedded Legendrian submanifold of $\p_\infty X$. Its Liouville cone $L=\Cone(\Lambda,\lambda)$ is a called a {\em conical Lagrangian}.  \end{definition}

   We  say that a Legendrian  $\Lambda$ is  {\em adapted} to the Wc-structure of $(X,\lambda)$ if $\p_k \Lambda \subset \p_k X$ and moreover in a neighborhood of   any corner  $P\subset \p_k X$ we have $\Lambda=\Lambda_N\times  \cI^k \subset P\times T^*\II^k$, where $\Lambda_N$ is a Legendrian in the nucleus $N$ of the corner $P$.
   
       \begin{figure}[h]
\includegraphics[scale=0.5]{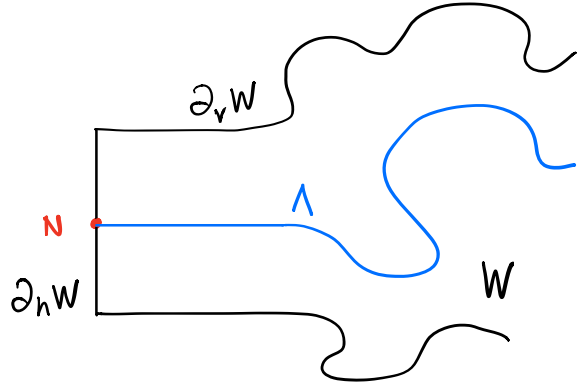}
\caption{An adapted Legendrian $\Lambda$ in a Wc-manifold $W$.}
\label{fig:adatedLeg}
\end{figure}

       \begin{example}
       Let $M$ be a manifold and $N\subset M$ a co-oriented hypersurface. Then any submanifold $\Lambda \subset T^*M\setminus M$ which  1-1 projects to the  conormal lift of $N$ in $S^*M$ (and is never tangent to the Liouville flow) is   Legendrian. If $N$ is adapted for the corner structure of $M$, then one can construct a Legendrian lift $\Lambda$ which is adapted for the Wc-structure of $T^*M$.
              \end{example}
              
              We can also think of Legendrians in $T^*M$ as equivalence classes of Legendrian embeddings into defining domains for $T^*M$, where the equivalence is given by translation along the Liouville flow. This gives us the notion of a Legendrian in a cotangent block $\sT^*M$. Similarly, we have the notion of a Legendrian in a Wc-germ $\sX$. For a Legendrian $\Lambda$ in a defining domain $(W,\lambda|_W)$ for $(\sX,\lambda)$, we only take its positive Liouville cone $\text{Cone}^+(\Lambda, \lambda)$, which is contained in $W \setminus \Skel(W,\lambda)$. One can think of the equivalence class of all positive Liouville cones over all Legendrians equivalent to $\Lambda$ as the whole Liouville cone $\text{Cone}(\Lambda, \lambda) \subset X \setminus \Skel(X, \lambda)$.
    

   \subsection{Gluing and splitting of Legendrians}\label{sec:Leg-split}
   
   \subsubsection{Gluing Legendrians}
 
\begin{lemma}\label{lm:eff-of-conv-on-Leg} Let $(\sX,\lambda)$ be a  Wc-germ, $\Lambda\subset \sX\setminus(\Skel(\sX,\lambda) $ an  adapted  Legendrian and $L$ the $\lambda$-Liouville cone over $\Lambda$. Let $(\sX^N,\lambda^N)$ be the result of a nucleus-to-hypersurface conversion of the nucleus $N$ of  one  of the boundary faces of $X$. Then  $\Lambda\subset\sX^N$ remains Legendrian  and its $ \lambda^N$-Liouville cone coincides with $L$. 
 \end{lemma}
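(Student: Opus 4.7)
The plan is to exploit the explicit local description of the nucleus-to-hypersurface conversion to reduce the lemma to direct inspection. Recall from Section~\ref{sec:conversion} that $X^N$ is obtained by attaching a half-infinite collar $N\times T^*(-\infty,0]$ to $X$ along the face $P$, with Liouville form given by the direct sum of $\lambda|_N$ and the Weinstein normal form for Morse-Bott boundary of index $0$. Since the cutoff function $\psi$ appearing in that normal form vanishes identically on $[0,\infty)$, the restriction of $\lambda^N$ to the original collar of $P$ inside $X$ coincides with $\lambda|_N + u\,dt = \lambda$. Combined with the fact that $\lambda^N$ is unchanged outside this collar, this yields the key identity $\lambda^N = \lambda$ on all of $X \subset X^N$.

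From this identity it follows at once that $\lambda^N|_\Lambda = \lambda|_\Lambda = 0$, so $\Lambda$ is isotropic for $\lambda^N$, and that the Liouville vector fields satisfy $Z^N = Z$ on $X$. Since $Z$ is tangent to the horizontal boundary $\p_h X$, and in particular to the face $P$, the entire $Z$-orbit of any point of $\Lambda$ remains inside $X$ and never crosses $P$ into the attached collar. Consequently
\[
\Cone(\Lambda,\lambda^N) \;=\; \bigcup_{t\in\R}(Z^N)^{-t}(\Lambda) \;=\; \bigcup_{t\in\R} Z^{-t}(\Lambda) \;=\; \Cone(\Lambda,\lambda) \;=\; L.
\]

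To finish it must be shown that $\Lambda$ is Legendrian in $(\sX^N,\lambda^N)$. Direct inspection of the explicit model verifies that no new critical point of $Z^N$ appears in the interior of the attached collar and that every trajectory there escapes to infinity in forward time; hence $\Skel(X^N,\lambda^N) = \Skel(X,\lambda)$, and so $\Lambda \cap \Skel(X^N,\lambda^N) = \varnothing$ by hypothesis on $\Lambda$. The inclusion $X \hookrightarrow X^N$ intertwines Liouville flows and therefore descends to an injective smooth map $\p_\infty X \to \p_\infty X^N$, since two distinct $Z$-orbits in $X$ cannot merge into a single $Z^N$-orbit in $X^N$ (the $Z^N$-orbit of a point of $X$ restricts to the $Z$-orbit of that point in $X$). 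The embedded Legendrian projection of $\Lambda$ in $\p_\infty X$ thus maps to an embedded Legendrian in $\p_\infty X^N$, completing the proof. The one mild subtlety is ensuring that the enlarged Liouville dynamics does not absorb a $\Lambda$-orbit into the attached collar, and this is precisely what the tangency of $Z$ to $P$ rules out.
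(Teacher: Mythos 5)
Your proposal is correct and follows essentially the same route as the paper: both arguments come down to inspecting the explicit conversion model near the face $P$, observing that the Liouville form is unchanged on $X$ (the paper phrases this as changing only by $d$ of a function of $(u,t)$) and that the new Liouville dynamics in the $T^*\cI$-direction keep $\Lambda$ and its cone in place. Your additional bookkeeping (no new skeleton in the attached collar, injectivity of the induced map on ideal boundaries) just spells out the ``remains Legendrian'' claim that the paper's one-line proof leaves implicit.
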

 
 \begin{proof} In a neighborhood of the face $P$ of $X$ whose nucleus $N$ we convert to a boundary face we have $\Op P = T^*N \times T^*\cI$ and $\Lambda=\Lambda_N \times \cI$. The conversion changes $\lambda = \lambda_N + udt$ only by the differential of a function of $(u,t)$, moreover the resulting Liouville field on $T^*\cI$ is tangent to $\cI$ (and outwards pointing at the boundary). Hence $\Lambda$ remains Legendrian and its Liouville cone is unchanged.
 \end{proof}

\begin{lemma}\label{lm:Leg-horizont-gluing}
Let $\sX_0, \sX_1$ be two Wc-germs, $P_0, P_1$ two of their boundary faces and $N_0,N_1$ the nuclei of $P_0, P_1$.  Let $\Lambda_0\subset \sX_0\setminus\Skel(\sX_0,\lambda_0)$ and  $\Lambda_1\subset \sX_1\setminus\Skel(\sX_1,\lambda_1)$ be two adapted Legendrians. Denote $\Lambda'_i:=\Lambda_i\cap N_i$, $i=0,1$. Let $\phi:N_0\to N_1$ be a Liouville isomorphism such that $\phi(\Lambda'_0)=\Lambda'_1$. Then the horizontally glued Wc-manifold
$\sX_{01}=\sX_0\mathop{\cup}_\phi \sX_1$ contains a glued Legendrian
$$\Lambda_{01}=\Lambda_0\mathop{\cup}\limits_{\Lambda_1'=\phi|_{\Lambda_0'}} \Lambda_1 . $$
\end{lemma}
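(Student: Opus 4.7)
The plan is to check that the adaptedness of the $\Lambda_i$ is precisely the matching data required to make the set-theoretic union $\Lambda_0\cup \Lambda_1$ into a smooth Legendrian in the glued germ $\sX_{01}$. First I would unwind the horizontal gluing recipe of Section \ref{sec:Lbc-gluing}: the faces $P_0, P_1$ have a common order $r$ (forced by $\phi: N_0 \to N_1$ being a Wc-isomorphism), and one fixes collar germs $N_i \times T^*\cI^r \subset X_i$ so that $\lambda_i$ splits as $\lambda_i|_{N_i} + \sum_j u_j\,dt_j$. The gluing is then the identification $\Phi = \phi \times \mathrm{id}_{T^*\cI^r}$ along $P_0 = N_0 \times T^*_0\cI^r \sim N_1 \times T^*_0\cI^r = P_1$, and the glued Liouville form restricts to $\lambda_i$ on each piece.

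Next I would invoke adaptedness. By hypothesis, in each collar $N_i \times T^*\cI^r$ the Legendrian $\Lambda_i$ is a product $\Lambda_i' \times \cI^r$ sitting inside $N_i \times \{0\} \times \cI^r$, where $\Lambda_i' \subset N_i$ is a Legendrian. The assumption $\phi(\Lambda_0') = \Lambda_1'$ combined with the product form of $\Phi$ shows that, under the identification along $P_0 = P_1$, the collar pieces of $\Lambda_0$ and $\Lambda_1$ patch into a single smooth $(n-1)$-dimensional submanifold $\Lambda_{01} \subset \sX_{01}$.

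To see that $\Lambda_{01}$ is Legendrian, I would observe that $\lambda_{01}$ restricted to either piece vanishes on the corresponding $\Lambda_i$ by hypothesis, and the projection to $\p_\infty \sX_{01}$ is an embedding because it is piecewise an embedding and the two piecewise projections agree on the overlap $P_0 \sim P_1$. Adaptedness of $\Lambda_{01}$ at the remaining faces of $\sX_{01}$ is inherited directly from that of $\Lambda_0$ and $\Lambda_1$.

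The only mildly delicate point, and the step I expect to require the most bookkeeping, is compatibility at higher-order corners: if $P_0$ meets another face $Q_0$ of $X_0$ in a corner $P_0 \cap Q_0$, then near that corner the Wc-structure forces two compatible product decompositions of $\Lambda_0$ (one for each face), and I would need to verify that the matching corner in $X_1$ carries the matching decompositions of $\Lambda_1$. This follows from the fact that $\phi$ is a Wc-isomorphism of the nuclei, so it respects the induced collar germs on all faces of $N_0$, together with the compatibility of collar germs built into the definition of a Wc-manifold in Section \ref{sec:Wbc}. Once this compatibility is recorded, the assembled $\Lambda_{01}$ is automatically adapted to the Wc-structure of $\sX_{01}$, and its Liouville cone is $\Cone(\Lambda_0,\lambda_0)\cup_{\phi}\Cone(\Lambda_1,\lambda_1)$.
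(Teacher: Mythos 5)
Your argument is correct and is essentially the paper's proof spelled out: the paper dismisses the lemma with ``this is obvious by definition of adapted,'' and your unwinding of the collar splittings $\lambda_i|_{N_i}+u\,dt$, the product form $\Lambda_i'\times\cI$ near the glued faces, and the matching $\phi(\Lambda_0')=\Lambda_1'$ is exactly the content of that remark. The only cosmetic point is that the horizontal gluing of Section \ref{sec:Lbc-gluing} is performed along boundary ($1$-)faces, so your ``order $r$'' bookkeeping reduces to $r=1$, with compatibility at higher corners inherited from the collar structure just as you say.
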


\begin{proof} This is obvious by definition of adapted. \end{proof}

 
 \subsubsection{Splitting Legendrians}

Let $X$ be a Wc-manifold and $P\subset X$ a splitting  hypersurface with the soul $S$.
A Legendrian $\Lambda\subset X\subset\Skel (X,\lambda)$ is called  in {\em in splitting position} if  $\Lambda$ is transverse to $P$ and $\Lambda\cap P\subset S$.
\begin{lemma}\label{lm:Legendrian-splitting}
Let $X$ be a Wc-manifold,  $P\subset X$ a splitting hypersurface and  consider a Legendrian $\Lambda\subset X\setminus\Skel (X,\lambda)$ which is  in splitting position.
Then under a horizontal  splitting of $X$ we can arrange it so that $\Lambda$  splits into  Legendrians with corners
$\Lambda_\pm\subset X_\pm\setminus\Skel (X_\pm,\lambda_\pm)$.  \end{lemma}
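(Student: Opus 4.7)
My plan is to reduce to a local normal form near the splitting hypersurface $P$, extract from the splitting-position hypothesis a Legendrian isotopy inside the soul $S$, straighten this isotopy by an ambient Hamiltonian isotopy supported near $\Lambda \cap P$, and then perform the horizontal split. The defining properties of a splitting hypersurface, namely that $Z$ is tangent to $P$, that $S \subset P$ is tangent to $Z$, and that $\lambda|_P = \pi^*(\lambda|_S)$, say precisely that $P$ is coisotropic in $X$ with characteristic reduction equal to $S$ and that the Liouville form descends under the reduction. Combining the coisotropic Weinstein neighborhood theorem with a transverse collar then identifies a tubular neighborhood of $P$ in $X$ symplectically with an open neighborhood of $S \times \{t=0,\,u=0\}$ inside $S \times T^*(-\varepsilon, \varepsilon)$, carrying $\lambda$ to $\lambda_S + u\,dt$. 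In this model $P = \{t=0\}$, $S = \{t=0,\,u=0\}$, and the halves $X_\pm$ correspond to $\{\pm t \geq 0\}$.

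I next use the splitting-position conditions $\Lambda \pitchfork P$ and $\Lambda \cap P \subset S$ to describe $\Lambda$ near $\Lambda_S := \Lambda \cap S$. Transversality lets $t$ serve as a coordinate on $\Lambda \cap \Op P$, so one obtains a parametrization $\Lambda = \{(\phi(\sigma,t),\,t,\,u(\sigma,t)) : \sigma \in \Lambda_S,\ t \in (-\delta,\delta)\}$ with $\phi(\sigma,0) = \sigma$ and $u(\sigma,0)=0$. Evaluating the Legendrian condition $\lambda|_\Lambda = 0$ on $\partial_\sigma$-directions shows that each slice $\phi_t := \phi(\cdot,t) : \Lambda_S \to S$ is a Legendrian embedding, so the family $\{\phi_t(\Lambda_S)\}_{t \in (-\delta,\delta)}$ is a Legendrian isotopy in $S$ starting at $\Lambda_S$; evaluating on $\partial_t$ yields $u(\sigma,t) = -\lambda_S(\partial_t \phi_t(\sigma))$, so $u$ is determined by $\phi$ and vanishes identically once $\phi_t$ is constant in $t$.

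To straighten, I apply the Legendrian isotopy extension theorem in the ideal contact boundary $\partial_\infty S$: the Legendrian isotopy $\phi_t$ is realized by a compactly supported contact isotopy of a neighborhood of the projection of $\Lambda_S$ in $\partial_\infty S$, which lifts to a Hamiltonian isotopy of a neighborhood of $\Lambda_S$ in $S$. Combining this with a cutoff in $t$ and the product structure of the collar, I produce a Hamiltonian isotopy of $X$ supported in $\Op P$, disjoint from $\partial_h X$ and from any other face already adapted to $\Lambda$, which carries $\Lambda$ to a Legendrian $\Lambda'$ that agrees with $\Lambda$ outside $\Op P$ and has the product form $\Lambda_S \times (-\delta,\delta) \subset S \times (-\delta,\delta) \times \{u=0\}$ near $P$. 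Performing the horizontal split of $X$ along $P$ now yields adapted Legendrians with corners $\Lambda'_\pm := \Lambda' \cap X_\pm$ in $X_\pm$, whose collars near the new boundary face with nucleus $S$ are respectively $\Lambda_S \times [0,\delta)$ and $\Lambda_S \times (-\delta,0]$, as required.

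The main obstacle is engineering the straightening Hamiltonian isotopy in the third step so that it is supported in a small neighborhood of $\Lambda_S$ in $\Op P$ while also respecting the existing Wc-adaptedness of $\Lambda$ at other faces of $X$; this is a parametric Weinstein-type extension combined with a standard cutoff argument, using the contractibility of the space of compatible extensions of the contact isotopy generated by $\phi_t$.
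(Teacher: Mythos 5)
Your argument is correct in outline, but it takes a much heavier route than the paper, which disposes of this lemma in one line: since $\Lambda$ is transverse to $P$ with $\Lambda\cap P\subset S$, one simply \emph{chooses the product neighborhood of $P$ used in the splitting so that $\Lambda$ is already of product form with respect to it}; no isotopy of $\Lambda$ is performed at all. You instead fix a standard collar $S\times T^*(-\eps,\eps)$ with $\lambda=\lambda_S+u\,dt$ and then straighten $\Lambda$ by an ambient Hamiltonian isotopy built from Legendrian isotopy extension in $\p_\infty S$. What your route buys is an explicit normal form and an explicit deformation; what it costs is twofold. First, it replaces $\Lambda$ by an isotopic copy $\Lambda'$, whereas the statement (and the paper's proof) keeps $\Lambda$ fixed and adapts the splitting data to it — harmless for the downstream applications, where $\Lambda$ has just been moved by a Legendrian isotopy anyway, but strictly a different conclusion. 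Second, two of your steps are glossed: the definition of a splitting hypersurface only constrains $\lambda|_P$ and the tangency of $Z$, so the \emph{Liouville} normal form $\lambda=\lambda_S+u\,dt$ on a whole neighborhood is stronger than what the coisotropic neighborhood theorem gives (that theorem standardizes $\omega$, and the residual exact term $dF$ would perturb your computation $u=-\lambda_S(\p_t\phi)$); and the cut-off, $t$-dependent straightening isotopy must be engineered (via the suspension correction in the $u$-direction and a homogeneous lift of the contact isotopy) so that the image remains $\lambda$-isotropic with embedded projection to $\p_\infty X$ — exactly the kind of care the paper exercises in the convex-hypersurface Lemma \ref{lm:Leg-convex}, and which "a standard cutoff argument" does not by itself supply. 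With those two points filled in, your proof goes through, but the paper's observation that one may adapt the collar to $\Lambda$ rather than $\Lambda$ to the collar makes all of this machinery unnecessary.
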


\begin{proof} It suffices to take a neighborhood of $P$ in $\sX$ so that $\Lambda$ is of product form with respect to the neighborhood, which is clearly possible. \end{proof}

Let us recall that given a contact manifold $(M,\xi)$ a hypersurface $N\subset M$ is called {\em convex} if it admits a tranverse to it contact vector field $v$.  Consider a split a tubular neighborhood $U_\eps:=N\times(-\eps,\eps)$ of a contact hypersurface $N$ such that the contact vector field $v$ coincides with $\frac{\p}{\p t}$, where $t$ is the coordinate corresponding to the second factor.
The contact structure $\xi$ on $U_\eps$ can be then defined by a contact form $\alpha=fdt+\lambda$, where $f:N\to\R$ is a smooth function and $\lambda$ is a 1-form on $N$. The set $Q=\{f=0\}=\{x\in N;\;v(x)\in\xi\}$ is called the {\em dividing set}.   It is always  a regular level set of the function $f$.  Denote $N_+:=\{f>0\}, N_-=\{f<0\}$. We also denote $$\wh Q:=Q\times(-\eps,\eps)=\bigcup\limits_{t\in(-\eps,\eps)}v^t(Q)$$ and call it the {\em dividing wall}.
The contact condition implies that $\beta:=\lambda|_{\Sigma}$ is a contact form on $Q$ and that the form $\lambda_f:=\frac1f\lambda$ is Liouville on $N\setminus Q=N_+\cup N_-$.
The corresponding Liouville fields $Z_\pm$ are complete of both halves $N_+$ and $N_-$ and for any point $x\in N_\pm\setminus\Skel(N_\pm)$  there exists a limit  $\lim\limits_{s\to\infty} Z_\pm^s(x)\in Q$.
We say that  the pair $(N,v)$, where $N$ is a convex hypersurface and $v$  a transverse vector field, is of {\em Weinstein type}, if $(N_\pm,\lambda_f)$ is Weinstein.
\begin{lemma}\label{lm:Leg-convex}
Let $(M,\xi)$ be a contact manifold, $N\subset (M,\xi)$ a Weinstein type convex hypersurface and $\Lambda\subset (M,\xi)$ a Legendrian transverse to $N$. Then there exists a contact isotopy $\phi_s:M\to M$, $t\in[0,1]$ such that
\begin{itemize}
\item[(i)] $\phi_s$ fixes $Q$;
\item[(ii)] $\phi_s$    leaves $N$ invariant;
\item[(iii)]   there exists $\eps>0$ such that $\phi_1(\Lambda\cap U_\eps)\subset Q_\eps$;
\item[(iv)] $\phi_s$ is supported in a neighborhood $U'\Supset U_\eps$.
\end{itemize}
\end{lemma}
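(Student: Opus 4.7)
The plan is to build $\phi_s$ as the composition of two contact isotopies: a preliminary perturbation putting $\Lambda\cap N$ in general position with respect to the Weinstein skeleta of $N_\pm$, followed by a ``pushing'' isotopy obtained from a contact lift of the Liouville flows on $N_\pm$ across the dividing set $Q$, with a final short adjustment to ensure tangency to the dividing wall.

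First, observe that $K:=\Lambda\cap N$ is an $(n-1)$-dimensional submanifold of $N$, and since $\Lambda$ is Legendrian with $\alpha|_N=\lambda$, it is $\lambda$-isotropic. In particular $K\cap N_\pm$ is isotropic for the Weinstein structure $\lambda_f=\lambda/f$ on $N_\pm$. Since $\dim K=n-1$ is strictly less than $\dim\Skel(N_\pm)\leq n$ inside the $2n$-dimensional Weinstein manifold $N_\pm$, a standard general-position argument, realized by a $C^1$-small contact isotopy of $M$ generated by a contact Hamiltonian vanishing to second order along $N$ and supported away from $\Op Q$, allows us to assume $K\cap(\Skel(N_+)\cup\Skel(N_-))=\varnothing$. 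This first isotopy fixes $N$ pointwise and leaves $Q$ untouched.

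Next, I construct a contact vector field $X$ on a neighborhood of $N$ in $M$ whose restriction to $N\setminus Q$ is (a positive rescaling of) the piecewise Liouville field $Z_\pm$, and which vanishes on $Q$. Working in the normal form $U_\eps=N\times(-\eps,\eps)$ with $\alpha=fdt+\lambda$, this $X$ corresponds to a contact Hamiltonian of the form $H=f\cdot g$, where $g$ is a smooth extension across $Q$ of $\lambda_f(Z_\pm)=\lambda(Z_\pm)/f$. Smoothness of this extension follows from the Weinstein-type hypothesis, which ensures that near $Q$ the Liouville field $Z_\pm$ is the radial field of the symplectization of the contact ideal boundary $(Q,\beta)$, so that $\lambda(Z_\pm)$ vanishes on $Q$ to first order at a rate canceling the zero of $f$. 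A direct calculation in the normal form verifies that $X_H$ preserves $N=\{t=0\}$, vanishes on $Q$, and restricts on $N\setminus Q$ to $Z_\pm$. Multiplying $H$ by a bump function in $t$ supported in $(-\eps,\eps)$ and identically $1$ near $t=0$ yields a compactly supported Hamiltonian whose flow is supported in any prescribed $U'\Supset U_\eps$. Since each orbit of $Z_\pm$ starting at a non-skeletal point limits to a point of $Q$ as $s\to\infty$, and $K$ is compact and avoids $\Skel(N_\pm)$, by rescaling time we arrange that the time-$1$ flow $\psi_1$ sends $K$ into an arbitrarily small neighborhood $Q_\delta\subset N$ of $Q$. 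A final short contact isotopy near $\widehat Q$, built from $Q$-Reeb-type coordinates in the symplectization picture, pushes $\psi_1(K)$ exactly into $Q$ and simultaneously rotates the transverse-to-$N$ direction of $T\Lambda$ along $K$ into the line $\R v$, so that $\Lambda$ becomes tangent to the dividing wall $\widehat Q$ along $K$. Continuity of the resulting $\phi_1$ then yields $\eps>0$ with $\phi_1(\Lambda\cap U_\eps)\subset\widehat Q_\eps$, while conditions (i), (ii), (iv) hold by construction of the Hamiltonians.

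The main technical obstacle is the explicit construction of the contact lift $X_H$ and, especially, the final first-order alignment step: pushing the $(n-1)$-dimensional intersection $K$ into $Q$ is one matter, but ensuring that a full tubular neighborhood of $K$ in $\Lambda$ maps into the $2n$-dimensional dividing wall rather than merely near it is an extra requirement on the derivatives of $\phi_1$ along $K$. This alignment exploits the freedom left in the contact Hamiltonian near $Q$ to ``rotate'' the normal direction into $v$ without disturbing the pushing already achieved, and is where the Weinstein-type (rather than merely Liouville-type) convexity hypothesis on $N$ is most substantially used.
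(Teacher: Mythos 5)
Your overall strategy (general position to avoid the skeleta, then a contact flow built from $Z_\pm$ that pushes toward $Q$) starts on the same track as the paper, but the final step, which is where the actual content of the lemma lies, does not work as written. Condition (iii) demands genuine containment $\phi_1(\Lambda\cap U_\eps)\subset \wh Q$ of an $n$-dimensional Legendrian piece inside the dividing wall. Your proposal produces this by pushing $K=\Lambda\cap N$ into $Q$ and then ``rotating'' the transverse direction of $T\Lambda$ along $K$ into $\R v$, and then appeals to ``continuity of $\phi_1$'' to conclude containment. First-order tangency of $\phi_1(\Lambda)$ to $\wh Q$ along the $(n-1)$-dimensional set $K$ does not imply that a neighborhood of $K$ in $\phi_1(\Lambda)$ lies in $\wh Q$ (compare: a curve tangent to a line at a point need not coincide with it nearby), so the key assertion is unproved and no finite ``short adjustment'' of this kind can repair it. The paper's mechanism is different and essential: it uses the contact field $Y=t\frac{\p}{\p t}+Z_\pm$ (which extends smoothly across $\wh Q$ as $t\frac{\p}{\p t}$) and defines the isotopy by the reparametrized flow $h_s(x,t)=Y^s\bigl(h_0(x,te^{-s})\bigr)$, so that the $e^s$-expansion of $t$ is exactly compensated and each slice $\{t=\const\}$ stays at its level while the Liouville coordinate $u$ is contracted like $e^{-s}$; in the normal form $\lambda_f=\frac1u\beta+dt$ near $Q$ the map becomes $(x,u,t)\mapsto(x,e^{-s}u,t)$, whose limit as $s\to\infty$ lands the whole of $\Lambda\cap U_\eps$ (not just $K$) smoothly in the wall. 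This infinite-time collapse, reparametrized to $[0,1]$, is what produces (iii); your two-stage ``push then align'' scheme has no analogue of it.

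Two further points. First, the general position step must include more than $\Gamma\cap(\Skel(N_+)\cup\Skel(N_-))=\varnothing$: one also needs the projection of $\Gamma=\Lambda\cap N$ to $Q$ along Liouville trajectories to be an embedding (possible since $2(n-1)<2n-1$), for otherwise the limiting Legendrian is only immersed and the isotopy degenerates at the endpoint, so it cannot be realized by an ambient contact isotopy; your proposal omits this. Second, your contact Hamiltonian $H=f\cdot g$ with $g$ an extension of $\lambda_f(Z_\pm)=\lambda(Z_\pm)/f$ is vacuous: a Liouville form annihilates its own Liouville field, so $\lambda_f(Z_\pm)\equiv 0$ and your $H\equiv 0$. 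The correct Hamiltonian for the field actually used is $\alpha(Y)=(fdt+\lambda)\bigl(t\frac{\p}{\p t}+Z_\pm\bigr)=ft$, which is smooth across $Q$; this is a fixable slip, but it signals that the claimed ``contact lift of $Z_\pm$ vanishing on $Q$'' was not actually constructed.
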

\begin{proof} Note that  a contact vector field $Y:=t\frac{\p}{\p t}+Z_\pm$ on $N_\pm\times(-\eps,\eps)$ smoothly extends to $N\times(-\eps,\eps)$ as equal to $t\frac{\p}{\p t}$ on $Q\times(-\eps,\eps)$.
Let us choose $\eps$ sufficiently small so that the coordinate $t|_{\Lambda\cap U_\eps}$ has no  critical points.
Denote $\Gamma:=\Lambda\cap N$. A general position argument allows us to  $C^\infty$-perturb  $\Lambda$ so that $\Gamma\cap(\Skel(N_+)\cup\Skel(N_-))=\varnothing$ and  the  projection of $\pi:\Gamma\to Q$ along Liouville  trajectories of $N_\pm$ is an embedding. Note that  $\lambda|_\Gamma=0$, and hence $\pi(\Gamma)\subset Q$ is Legendrian for the contact structure $\ker\beta$ on $Q$. Let us parameterize $\Lambda\cap U_\eps$ by an embedding
$h_0:\Gamma\times(-\eps,\eps)\to \Lambda\cap U_\eps$ such that $h_0(\Gamma\times t)\subset N\times t$. Define a Legendrian isotopy $h_s:\Gamma\times(-\eps,\eps)\to \Lambda\cap U_\eps$, $s\in[0,\infty)$ by the formula
$$h_s(x,t)=Y^s(h_0(x,te^{-s})).$$
Let us observe that $h_s(x,t)=(g_{s,t}(x),t) $, with $g_{s,0}(x)$ flowing a $\lambda$-isotropic submanifold  $\Gamma$ to $\pi(\Gamma)$ along trajectories of $Z_\pm$. The flow is fixed on $Q$.
Let us analyze the isotopy in a neighborhood of $Q$. The form $\lambda_f$ on $\Op Q$ can be written as $\frac1u\beta+dt$ for the coordinate $u=f$ on $\Op Q$, and hence, $Y=t\frac{\p}{\p t}-u\frac{\p}{\p u}$. The flow $Y^t$ is given by $Y^s(x,u,t)=(x,e^{-s}u, e^s t)$. Assuming that $T$ is large enough so that $h_T(\Gamma\times(-\eps,\eps))\subset \Op Q\times(-\eps,\eps)$ we get $h_{T+s}(x,u,t)=(x, e^{-s}x, t)\mathop{\to}(x,0,t)$ as $s\to\infty$.
Hence by reparameterizing the isotopy to the interval $[0,1)$ and extending it to the end-point $1$  as $(x,0,1)$. we get a smooth Legendrian isotopy  $\phi_s:\Gamma\times(-\eps,\eps)$ such that $\phi_1(\Gamma\times(-\eps,\eps))\subset Q\times(-\eps,\eps)$. The isotopy  $\phi_s$ can be realized by an ambient contact isotopy which keeps $N$ invariant. By cutting off this isotopy outside a smaller neighborhood of $N$ we get the required  Legendrian isotopy of $\Lambda$.
\end{proof}

 \begin{lemma}\label{lm:Leg-adjustment}
 Consider a Wc-manifold $(X,\lambda)$ with a dividing hypersurface
 $P \subset X$  and its soul $S  $.
  Let $\Lambda_0\subset   X\setminus\Skel (X,\lambda)$ be any Legendrian.
 Then there exists a Legendrian isotopy $\Lambda_t\subset   X\setminus\Skel (X,\lambda)$, $t\in[0,1]$, such that $\Lambda_1$ is in a splitting position. \end{lemma}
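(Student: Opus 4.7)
The plan is to reduce the statement to Lemma~\ref{lm:Leg-convex} by exhibiting the splitting hypersurface $P$ as a Weinstein-type convex hypersurface at the level of the ideal contact boundary $\p_\infty X$.

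First, since the Liouville field $Z$ is tangent to $P$ and $\Skel(X,\lambda)\cap P=\Skel(S,\lambda|_S)$, the projection $\pi_\infty\colon X\setminus\Skel(X,\lambda)\to\p_\infty X$ sends $P\setminus\Skel(S,\lambda|_S)$ onto a hypersurface $P_\infty\subset\p_\infty X$ and $S\setminus\Skel(S,\lambda|_S)$ onto a codimension-one submanifold $S_\infty\subset P_\infty$. Writing $\Lambda_\infty:=\pi_\infty(\Lambda_0)$, a preliminary generic Legendrian perturbation ensures that $\Lambda_\infty$ is transverse to $P_\infty$ and meets $S_\infty$ generically.

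The main step will be to show that $P_\infty$ is a Weinstein-type convex hypersurface of $\p_\infty X$ with dividing set $S_\infty$. The condition $\lambda|_P=\pi^*(\lambda|_S)$ implies that the characteristic line field of $P$ is tangent to the fibres of $\pi$, so $S$ is everywhere transverse to this line field. The coisotropic Weinstein neighborhood theorem, combined with a Moser-type argument using $\lambda|_P=\pi^*(\lambda|_S)$, identifies a neighborhood of $S$ in $X$ with the standard symplectic model $(S\times T^*\R,\,\lambda_S+u\,dt)$, in which $S$ is the zero section and $P$ corresponds to the cotangent fibre over the origin, $S\times\R_u$. In this model the vector field $\p_t$ is symplectic and commutes with the Liouville field $Z=Z_S+u\,\p_u$, so it descends to a contact vector field on $\p_\infty X$ transverse to $P_\infty$; since $\lambda(\p_t)=u$, its dividing set is precisely $S_\infty$. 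The two halves of $P\setminus S$ carry Liouville structures obtained from $\lambda|_P$ by conformally rescaling by $1/|u|$ on each half, and these are Weinstein with skeleton $\Skel(S,\lambda|_S)$, so that $P_\infty$ is of Weinstein type. Once this structure is in place, Lemma~\ref{lm:Leg-convex} applied to the pair $(\Lambda_\infty,P_\infty)$ in $\p_\infty X$ produces a Legendrian isotopy $\phi_s$ of $\Lambda_\infty$, supported in a neighborhood of $P_\infty$, such that $\phi_1(\Lambda_\infty)\cap P_\infty\subset S_\infty$, with transverse intersection.

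Finally, I will lift $\phi_s$ to a Legendrian isotopy $\Lambda_t\subset X\setminus\Skel(X,\lambda)$ by passing to Liouville cones: each $\phi_s(\Lambda_\infty)\subset\p_\infty X$ has a unique Legendrian lift in $X$ extending $\Lambda_0$, and the resulting smooth family gives the required isotopy, with $\Lambda_1$ in splitting position by construction.

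I expect the main technical difficulty to be verifying the Weinstein-type convex hypersurface structure on $P_\infty$ globally rather than merely near $S$. The coisotropic Weinstein neighborhood theorem yields the normal form near $S$, but extending the transverse contact vector field over all of $P_\infty$ and confirming that the two halves are Weinstein (rather than merely Liouville) will require gluing local models along the characteristic foliation of $P$, using the global retraction $\pi\colon P\to S$ and the identity $\lambda|_P=\pi^*(\lambda|_S)$ to ensure compatibility of the patched data.
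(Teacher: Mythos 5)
Your proposal is correct and follows essentially the same route as the paper: the paper's proof also passes to the ideal boundary, observes that $\p_\infty P$ is a Weinstein-type convex hypersurface of $(\p_\infty X,\lambda)$ whose dividing set $\p_\infty S$ splits it into two copies of $S$, and then invokes Lemma~\ref{lm:Leg-convex} before lifting the resulting isotopy back to $X\setminus\Skel(X,\lambda)$. The global verification you flag as the main difficulty is handled by the fact that the retraction $\pi\colon P\to S$ and the identity $\lambda|_P=\pi^*(\lambda|_S)$ are global conditions in the definition of a splitting hypersurface, so the normal form you build near $S$ in fact propagates along all of $P$.
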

 \begin{proof}
 We can realize $\p_\infty X$ as a smooth boundary of a neighborhood of $\Skel(X,\lambda)$ and view $\Lambda, \p_\infty S$ and $\p_\infty P$ as   submanifolds of $\p_\infty X$. 
 Then the boundary $\p_\infty P\subset\p_\infty X$ is a convex   hypersurface in the contact manifold $(\p_\infty X, \lambda)$, and its dividing set $\p_\infty S$ divides $\p_\infty P$ into two copies $S_\pm$ of $S$.  Therefore the conclusion follows from the previous lemma.
\end{proof}

\subsubsection{Legendrians in Wc-buildings}
 
 Let $(\sX,\lambda)$ be  a Wc-germ with a structure of a $k$-block building  $\sX_k\to\dots\to \sX_1$. We say that a Legendrian $\Lambda\subset\sX\setminus\Skel(\sX,\lambda)$ is {\em compatible with the building structure} if its Liouville cone $L$ is invariant with respect to all Liouville fields $Z_i$, $i=1,\dots, k$, which we can think of as living in $\sX$ via Lemma \ref{lm:complex}.

   \begin{lemma}\label{lm:Leg-adjustment}
  Let $(X,\lambda)$ be presented as a Weinstein building
  $\sX_k\to\dots\to\sX_1$.  Let $\Lambda\subset \sX\setminus\Skel(\sX,\lambda)$ be  a Legendrian.  Then there exists a    Legendrian  isotopy $\Lambda_t$ starting from $\Lambda_0=\Lambda$ such that  $\Lambda_1$ is compatible with  the  building, i.e. its $Z$-Liouville cone is invariant with respect to all Liouville fields $Z_i$ where they are defined.
  \end{lemma}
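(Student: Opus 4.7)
The plan is to proceed by induction on $k$, the number of blocks in the building. The base case $k=1$ is immediate: the only Liouville field is $Z = Z_1$, and the $Z$-Liouville cone of any Legendrian is tautologically $Z$-invariant. No isotopy is required.

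For the inductive step, I would write $\sX = \sY \to \sX_1$ where $\sY = (\sX_k \to \cdots \to \sX_2)$ is a $(k-1)$-block building, vertically glued to $\sX_1$ along a Liouville isomorphism $\phi$ between a Wc-hypersurface $\sA \subset \sX_1 \setminus \Skel(\sX_1,\lambda_1)$ and the nucleus $\sN$ of a boundary face of $\sY$. The vertical gluing, by construction, produces a distinguished splitting hypersurface $P \subset \sX$ with soul $S$ naturally identified with $\sA \cong \sN$; the product collar $S \times (-\eps,\eps)$ near $P$ realizes the matching of the Liouville field $Z_1$ coming from $\sX_1$ with the direct sum of $Z_{\sY}$ and the transverse Liouville field on the added $T^*\cI$-factor provided by Lemma~\ref{lm:complex}.

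The first step is to apply the previous lemma on splitting position to $P$, obtaining a Legendrian isotopy $\Lambda_t$, $t \in [0,1/2]$, such that $\Lambda_{1/2}$ is transverse to $P$ and $\Lambda_{1/2} \cap P \subset S$. By a standard product neighborhood argument, we may further arrange that in a collar $S \times (-\eps,\eps)$ of $P$ the Legendrian $\Lambda_{1/2}$ has the product form $\Lambda' \times (-\eps,\eps)$ for some Legendrian $\Lambda' \subset S$; its Liouville cone is then automatically invariant under $Z_1$ as well as under all of the $Z_2,\ldots,Z_k$ in the portion of the collar where those fields are defined. The second step is to split $\Lambda_{1/2}$ along $P$ via Lemma~\ref{lm:Legendrian-splitting} into $\Lambda_- \subset \sX_1$ and $\Lambda_+ \subset \sY$. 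The piece $\Lambda_-$ is $Z_1$-invariant by construction. The piece $\Lambda_+$ is an adapted Legendrian in the $(k-1)$-block building $\sY$, so by the inductive hypothesis there is a Legendrian isotopy $\Lambda_{+,t}$, $t \in [1/2,1]$, ending at a Legendrian whose Liouville cone is invariant under $Z_2, \ldots, Z_k$.

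To complete the proof, the isotopy $\Lambda_{+,t}$ must be supported away from the collar of the boundary face $\sN \subset \sY$, so that it can be extended by the identity on $\Lambda_-$ to a genuine Legendrian isotopy of $\Lambda_{1/2}$ inside $\sX$. This is the main point that requires care: it forces one to run the induction with a relative hypothesis of the form \emph{``if the Legendrian is already in product form in the collar of a boundary face of $\sY$, then the isotopy can be taken to be the identity there.''} This relative refinement follows by cutting off the ambient contact isotopy in the proof of the previous Lemma~\ref{lm:Leg-adjustment} outside a slightly larger neighborhood of the convex dividing hypersurface on which we act, which is permissible because the Legendrian is already in splitting position in the collar and so no motion is needed there. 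Concatenating the two isotopies then yields the required Legendrian isotopy from $\Lambda$ to a Legendrian compatible with the building.
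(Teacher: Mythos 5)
Your proof is correct and follows essentially the same route as the paper: peel off the bottom block by moving $\Lambda$ into splitting position with respect to the splitting hypersurface coming from the vertical gluing, split the Legendrian, and recurse into the remaining $(k-1)$-block building, exactly as the paper's bottom-up iteration does. The relative refinement you flag (that the isotopy inside $\sY$ be the identity near the attaching collar) is precisely what the paper leaves implicit, and it is supplied by the support clause of the convex-hypersurface lemma: the contact isotopy is supported in a neighborhood of the dividing hypersurface, which can be taken disjoint from the previously arranged collars.
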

  \begin{proof}
  By definition we have filtration    by  Wc-manifolds
 $\sX\supset\sX_{\geq 2},\dots, \supset\sX_{\geq k-1}\supset \sX_k$, where
 $\sX_{\geq j}= \sX_k\to\dots\to X_j$. Note that we equivalently view $\sX$ as a 2-block building $\sX_{\geq2}\to \sX_1$. By definition the building $\sX_{\geq2}\to \sX_1$ can be split into the blocks
 $\sX_1$ and  $\sX_{\geq2}$ along a splitting hypersurface $H$ which can be defined as follows. Recall that  $\sX_{\geq2}\to \sX_1$ is obtained from 
 $\sX_1$ and  $\sX_{\geq2}$ by taking the nucleus $N$ of one of the boundary faces of $\sX_{\geq2}$, realizing it as Wc-hypersurface in $\sX_1$,  converting it to a nucleus of a boundary face and then performing the horizontal gluing along the face  $H$ of $\sX_1$ and the converted face 
 of  $\sX_{\geq2}$.  The collar  neighborhood $\Op H$ of the face $H$ in $\sX_1$ is split as  $\Op H=H \times\II$. The inverse splitting can be done by using the $H \times\eps$ as a splitting surface. 
     Note  that   making the  Legendrian  $\Lambda$ compatible with the building
  $\sX_{\geq2}\to \sX_1$ is equivalent to moving it to a splitting position with respect to the splitting hypersurface $H \times\eps$.
   Hence, using Lemma \ref{lm:Leg-adjustment} we can move $\Lambda$ by a Legendrian isotopy to  make it compatible with the  building $\sX_{\geq2}\to \sX_1$. The splitting defines a Legendrian $\Lambda_{\geq2} \subset \sX_{\geq2}\setminus\Skel(\sX_{\geq2},\lambda)$.  We continue by applying again  \ref{lm:Leg-adjustment} we move $\Lambda_{\geq 2}$ by a Legendrian isotopy to make it compatible with    the  building $\sX_{\geq2}\to \sX_2$. Continuing the process we prove the lemma.  \end{proof}

\subsection{Weinstein handlebodies revisited}

\subsubsection{Rounded handles}

Consider an $n$-ball $D^n\subset S^{n-k}\times D^k$ and denote  $$K^n_k:=(S^{n-k}\times D^k)\setminus \Int D^n.$$ Thus, $\p K_k^n$ is manifold with two boundary components $\p_-K_k^n=S^{n-k}\times\p D^{k-1}$ and $\p_+K_k^n:=S^{n-1}$.
 
Consider  a Weinstein manifold with boundary $G^n_k:=T^*K^n_k$. It has two boundary faces $M_-$ and $M_+$  with nuclei $P_-:=T^*(S^{n-k}\times\p D^{k})$ and $P_+:=
T^*S^{n-1}$. 

\begin{definition} We  call  $G^n_k$ a  {\em rounded  subcritical Weinstein handle}.
\end{definition}

\begin{remark} Our rounded handles $G^n_k$ have nothing to do with {\em round} handles of index $k$. \end{remark}

\begin{lemma}\label{lm:rounded-handles}
Let $W^{2n}$ be Weinstein domain and   $\phi:\p D^k\times D^{n-k} \to  \p W$   a Legendrian embedding, $k \leq n$.  Let $W'$ be the Weinstein domain obtained by a subcritical  index $k$ Weinstein   handle attachment along the isotropic sphere $\phi(\p D^k\times 0)$ with the framing given by the Legendrian extension.
 Suppose that the embedding $\phi$ is extended to a Legendrian embedding $\Phi:\p D^k\times S^{n-k}\to \p W$. Let $\Sigma=T^*(\p D^k\times S^{n-k})$ be the Weinstein     ribbon of this embedding. Then the Weinstein structure obtained by the  vertical gluing of the rounded handle  $G_k^n$  along $\Sigma$ is homotopic to  the Weinstein structure on $W'$.
 \end{lemma}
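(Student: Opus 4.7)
The plan is to show that smoothly, the rounded handle $K_k^n$ decomposes as the union of a standard $k$-handle and a collar piece, and to lift this decomposition to the Weinstein level so that the $k$-handle piece recovers $W'$ while the collar piece contributes only a trivial Weinstein cobordism that can be absorbed by a Smale cancellation.

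First I would establish the smooth decomposition. Write $S^{n-k} = D_+^{n-k} \cup_{S^{n-k-1}} D_-^{n-k}$ as the union of two closed hemispheres glued along their common equator, and arrange the embedded ball $D^n \subset S^{n-k} \times D^k$ to lie in the interior of $D_+^{n-k} \times D^k$. This produces a decomposition
\[
K_k^n = C \cup_{S^{n-k-1} \times D^k} H, \qquad H := D_-^{n-k} \times D^k, \quad C := (D_+^{n-k} \times D^k) \setminus \Int D^n,
\]
where $H$ is diffeomorphic to the standard $k$-handle $D^k \times D^{n-k}$ and $C$ is diffeomorphic to a collar $S^{n-1} \times I$, once a corner is created along $S^{n-k-1} \times S^{k-1}$ to separate the two pieces of its boundary. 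The attaching face $\p_- K_k^n = S^{n-k} \times S^{k-1}$ splits correspondingly into $D_+^{n-k} \times S^{k-1}$ and $D_-^{n-k} \times S^{k-1}$, and the second piece $\cong S^{k-1} \times D^{n-k}$ is precisely the attaching region of $H$ viewed as a standard $k$-handle.

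Next I would lift this to the cotangent level. Using the creation of corners and partial horizontal gluing of Section \ref{sec:Lbc-gluing}, the block $G_k^n = T^*K_k^n$ is the horizontal gluing of $T^*H$ and $T^*C$ along their common face with nucleus $T^*(S^{n-k-1} \times D^k)$. Since vertical gluing along $\Sigma$ commutes with horizontal gluing along a disjoint face, the vertical gluing of $G_k^n$ to $W$ along $\Sigma$ coincides with the horizontal gluing of $T^*C$ to the vertical gluing of $T^*H$ to $W$, the latter performed along the nucleus $T^*(D_-^{n-k} \times S^{k-1})$ using the restriction of $\Phi$ that recovers $\phi$. By the Weinstein neighborhood theorem applied to the isotropic attaching sphere $\Lambda = \phi(\p D^k \times 0)$, this vertical gluing of $T^*H$ is identified with the standard Weinstein $k$-handle attachment along $\Lambda$ with the prescribed Legendrian framing, yielding $W'$.

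The main obstacle is to show that the contribution of $T^*C$ is a trivial Weinstein deformation. Here I would use that $C \cong S^{n-1} \times I$ admits a Morse--Bott Lyapunov function whose critical set is a cancellable pair of components of Morse--Bott indices $0$ and $1$, each diffeomorphic to $S^{n-1}$. After Morsifying via Lemma \ref{lm:to-Morse} and applying the parametric Smale cancellation theorem for Weinstein handles, the addition of $T^*C$ on top of $W'$ along $T^*(D_+^{n-k} \times S^{k-1})$ is undone by a Weinstein homotopy, with only the trivial Wc-hypersurface data coming from the conversion of the $\p_+ K_k^n$ face (the Weinstein ribbon $T^*S^{n-1}$) surviving; on the level of underlying Weinstein manifolds this is Weinstein homotopic to $W'$. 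A final bookkeeping step verifies the compatibility of Liouville primitives along the intermediate overlap $T^*(S^{n-k-1} \times D^k)$ and keeps track of the corner structure, using convexity of the space of Liouville primitives agreeing with a fixed form near infinity.
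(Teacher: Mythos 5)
Your argument is correct and is essentially the paper's own proof in a different order: the same smooth decomposition of the rounded handle into a standard $k$-handle plus a collar diffeomorphic to $S^{n-1}\times I$, the same identification of the handle piece with the Weinstein surgery along $\phi(\p D^k\times 0)$ with its Legendrian framing, and the same parametric Morse--Bott (Smale) index $0/1$ cancellation to absorb the collar, which in the paper appears as cancelling the conversion collar $T^*(\p D^k\times S^{n-k}\times[0,1])$ along the $[0,1]$ direction. The only loose point is describing the splitting face as ``disjoint'' from the attaching face (they are adjacent, and the splitting cuts $\Sigma$ itself, so the reordering of gluings needs the partial horizontal gluing and corner-creation machinery), but this is bookkeeping rather than a genuine gap.
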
 
 
 \begin{proof}
 The vertical gluing consists of the conversion of $\Sigma$ to a boundary nucleus and the horizontal gluing to $G_k^n$ along the new nucleus. The conversion is achieved by taking the union of $W$ with a collar $T^*( \p D^k \times S^{n-k} \times [0,1])$ and deforming the Liouville field of $W$ near $\Sigma$ so that the result is a Weinstein manifold with boundary. Hence the vertical gluing is the union of $W$ with the cotangent bundle of $\wh K_k^n=(\p D^k \times S^{n-k} \times [0,1] )\cup (K_n^k \times  1)$. Let $L := D^k \times D^{n-k} $ be attached to $\p D^k \times S^{n-k}$ along $\p D^k \times D^{n-k}$, i.e. the image of $\phi$. Then $\wh K_k^n $ is diffeomorphic to $T_k^n:=(\p D^k \times S^{n-k} \times [0,1] ) \cup ( L \times 1)$ after smoothing corners. The underlying Weinstein structure of the vertically glued Weinstein manifold is given along the boundary of $T^*\wh K_k^n$ by the Weinstein normal form for Morse-Bott boundary of index $0$. Before smoothing, on $T^*T_k^n$ this corresponds to the Weinstein normal form for Morse-Bott boundary, also of index 0. By a Weinstein homotopy we can deform the structure to cancel the critical points on $T^*(\p D^k \times S^{n-k} \times [0,1])$ along the $[0,1]$ direction, similar to a parametric Smale cancellation, where we think of $\p D^k \times S^{n-k}$ as the parameter space. We are left with the Weinstein normal form for Morse-Bott boundary and corners on $T^*L \simeq T^*(D^k \times D^{n-k})$ which has index $1$ on the boundary face $\p D^k \times D^{n-k}$ and index $0$ on the boundary face $D^k \times \p D^{n-k}$. This Weinstein structure is evidently homotopic to the standard Morse-Weinstein normal form of index $k$ on $T^*(D^k \times D^{n-k})$, which is by definition the Weinstein surgery of index $k$. 
 \end{proof}
 
         \begin{figure}[h]
\includegraphics[scale=0.6]{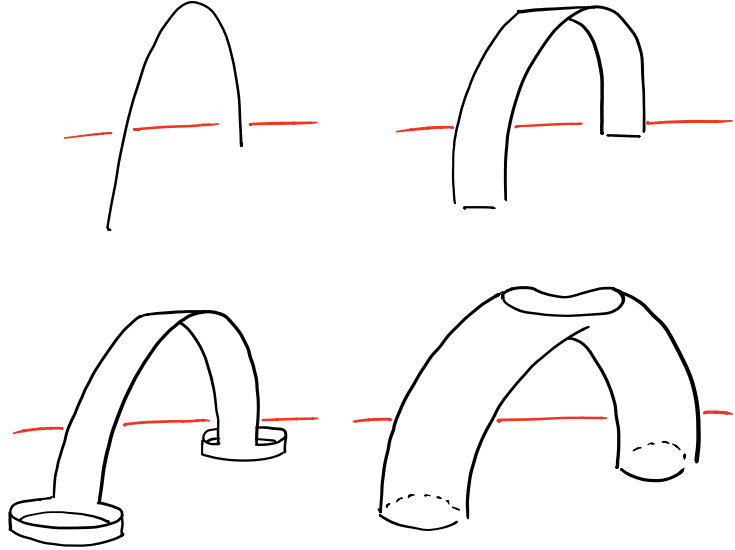}
\caption{Attaching a subcritical isotropic handle is the same as attaching a rounded handle.}
\label{fig:roundedhandle}
\end{figure}
                                                                                                                                                                                                                                                                                                                                                                                                                                                                                                                                                                                                                             
 \subsubsection{Weinstein manifolds as cotangent buildings}\label{sec:W-Wbc}
  
Recall that   a cotangent building  is a Wc-building $W=(B_k\to\dots \to B_1)$ consisting of cotangent blocks $B_i:=\sT^*M_i$.

     
 \begin{prop}\label{prop:WctoW}  
Any Weinstein manifold germ $(\sX,\lambda)$ admits the structure of a cotangent building with blocks $B_j=\sT^*M_j$, where each $M_j$ is diffeomorphic to either a  disc  $D^n$ or  one of the manifolds $K^n_j$, $j=1,\dots,n-1$ with corners added to its boundary. \end{prop}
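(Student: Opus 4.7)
The plan is to start from a standard Weinstein handlebody presentation of the germ $(\sX,\lambda)$ and inductively convert each handle attachment into a vertical gluing of a cotangent block, using Lemma \ref{lm:rounded-handles} as the key translation device.

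First, invoke the existence of a Weinstein handlebody decomposition, as provided by \cite{CE12}: after a Weinstein homotopy we may assume $(\sX,\lambda)$ is built from a standard Darboux ball by successively attaching subcritical Weinstein handles of indices $1,\dots,n-1$ and critical handles of index $n$ along isotropic respectively Legendrian attaching spheres in the contact boundary of the partial handlebody. I would first replace the initial Darboux ball by $\sT^*D^n$ equipped with the Morse--Bott canonical Liouville structure; the two are Weinstein homotopic (by Morsification as in Lemma \ref{lm:to-Morse}), so this is our first block $B_1 = \sT^*M_1$ with $M_1=D^n$.

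Next, proceed by induction on the handles. Suppose the Weinstein manifold $W$ obtained after attaching the first $j-1$ handles has already been exhibited as a cotangent building $B_{j-1}\to\cdots\to B_1$. For a subcritical handle of index $k \in \{1,\dots,n-1\}$ attached along an isotropic sphere $S^{k-1}\subset\p W$, choose a Legendrian extension $\p D^k\times S^{n-k}\subset\p W$ of the attaching sphere; such an extension exists since the conformal symplectic normal bundle of a subcritical isotropic sphere is trivial (after a suitable framing, which is part of the Weinstein attaching data). Lemma \ref{lm:rounded-handles} then identifies the Weinstein structure after handle attachment with the vertical gluing of the rounded handle $G_k^n=\sT^*K_k^n$ to $W$ along the ribbon $\sT^*(\p D^k\times S^{n-k})$, which is precisely the nucleus of the distinguished boundary face of $\sT^*K_k^n$. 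This realizes the $j$-th handle as a new cotangent block $B_j=\sT^*K_k^n$, possibly after adding corners to $\p K_k^n$ in order to match the corner structure of the face of $W$ to which we are gluing. For a critical handle of index $n$ attached along a Legendrian sphere $S^{n-1}\subset\p W$, take a Weinstein tubular neighborhood of $S^{n-1}$, whose associated ribbon is diffeomorphic to $\sT^*S^{n-1}$; this is the nucleus of the unique boundary face of $\sT^*D^n$. Vertical gluing of $\sT^*D^n$ along this Wc-hypersurface then recovers, up to Weinstein homotopy, the standard critical handle $(D^n\times D^n, pdq-qdp)$ attached along $S^{n-1}$, using the hypersurface-to-nucleus conversion followed by the horizontal gluing described in Section \ref{sec:conversion}. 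This gives $B_j=\sT^*D^n$.

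At each inductive step, before performing the vertical gluing we must arrange for the attaching Legendrian to be compatible with the cotangent building already constructed on $W$. This is the point where the technical content of Lemma \ref{lm:Leg-adjustment} is used: by a Legendrian isotopy in $\p_\infty W$ supported away from the skeleton, we may place the attaching Legendrian in splitting position with respect to all the splitting hypersurfaces separating the blocks $B_1,\dots,B_{j-1}$, and thus realize it as an adapted Legendrian in one of the blocks (or at worst on the interface between two blocks, which is absorbed by the gluing). The main obstacle in executing the plan is precisely this bookkeeping: verifying that the attaching Legendrian and its Legendrian extension can be simultaneously arranged to be adapted to the existing cotangent building and to the boundary face along which the new cotangent block is glued. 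This bookkeeping is where the possible addition of corners to the $M_j$ becomes unavoidable, since arranging a Legendrian in splitting position may require subdividing a boundary face of $T^*M_i$ for some $i<j$, which by Section \ref{sec:Lbc-gluing} corresponds to creating additional corners on $M_i$. After finitely many such handle conversions, the resulting cotangent building $B_N\to\cdots\to B_1$ has underlying Weinstein germ Wc-homotopic to $(\sX,\lambda)$, which completes the proof.
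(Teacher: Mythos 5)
Your overall route is the same as the paper's: take a Weinstein handle decomposition, replace the index-$0$ handle by $\sT^*D^n$, use Lemma \ref{lm:rounded-handles} to trade each subcritical handle for the rounded handle $\sT^*K^n_k$ and keep critical handles as $\sT^*D^n$, and invoke Lemma \ref{lm:Leg-adjustment} before each attachment. However, there is a genuine gap at the crucial step. You assert that after the Legendrian adjustment the attaching Legendrian can be realized ``as an adapted Legendrian in one of the blocks (or at worst on the interface between two blocks, which is absorbed by the gluing)''. This is false in general: when the new handle runs over earlier handles of positive index, its attaching sphere has nonzero intersection number with the corresponding belt spheres, so no Legendrian isotopy can confine it to (the ideal boundary of) a single block. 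What Lemma \ref{lm:Leg-adjustment} actually provides is only that the Liouville cone of the attaching Legendrian can be made invariant under the Liouville fields of all the blocks it passes through, i.e.\ it is put in splitting position with respect to every interface, not pushed into one block. Since the inductive definition of a Wc-building and of vertical gluing attaches along a Wc-hypersurface identified with a face nucleus sitting inside a single block, your plan of gluing $\sT^*K^n_k$ (or $\sT^*D^n$) directly along its ribbon does not produce a cotangent building once the attaching region spreads over several blocks.

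The missing idea, which is the actual content of the paper's argument and the reason for the phrase ``with corners added to its boundary'' in the statement, is to enlarge the core of the new block by the adjusted Liouville cone of the attaching Legendrian: one sets $\wh M_k := L \cup M_k$, where $L$ is the cone of $\Lambda$ made invariant under the contracting fields of the existing blocks, takes the new block to be $B_k = \sT^*(\wh M_k)$, and vertically glues it to the building. The additional corners of $\wh M_k$ arise precisely where this cone crosses the interfaces between the earlier blocks; they are added to the boundary of the new block's core (a disc or a $K^n_m$), not to the earlier $M_i$. Your alternative mechanism of creating corners on older blocks by subdividing their faces neither addresses the multi-block attaching problem nor reflects where the corners in the statement come from, so as written the inductive step does not go through without the cone-absorption device.
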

 
\begin{proof} Consider a handle decomposition $H_k\to H_{k-1}\to\dots\to H_1$ of a Weinstein germ $(\sX,\lambda)$.  If $k=0$ then we  just replace the first index $0$ handle by $T^*D^n$. Suppose that  the Weinstein structure on the handlebody $W_{\leq k}:=H_{k-1}\to \dots\to H_1$ is already compatible   with a structure of a cotangent building.
If the index of the handle $H_k$ is equal $m$ for $0<m<n$ we  use Lemma \ref{lm:rounded-handles} to replace the handle $H_k$ by the rounded  handle $G^n_m=T^*K^n_m$.  If $m=n$ we keep  it as is equal to $T^*D^n$. In both cases we denote by $T^*M_k$  the block we need to attach with the  nucleus of the attaching  face $N$ equal to $T^*\p D^n$ in the latter case and $\p_-G^n_m=T^*(S^m\times S^{n-m})$ in the former one.
 For the attaching, $N$ is realized as a Weinstein hypersurface in $W^{<k}\setminus\Skel(W^{<k} )$. We denote by $\Lambda$ the core Legendrian of that hypersurface.
    Let $L$ be the   $(Z_{\leq k})$-cone of $\Lambda$. Using Lemma \ref{lm:Leg-adjustment} we deform $\Lambda$ via a Legendrian isotopy to make the cone $L$ invariant with respect to the contracting fields  on the blocks $B_1,\dots, B_k$. Note that this automatically gives $\wh M_k:=L\cup M^k$   a new structure of a  Wc-manifold with additional corners. The neighborhood of $\wh M_k$ then gets the structure of a cotangent block $B_k=\sT^*(\wh M_k)$ and $W$ can be obtained from the vertical gluing of $B_k$ to $W_{\leq k}$. \end{proof}
    
        \begin{figure}[h]
\includegraphics[scale=0.4]{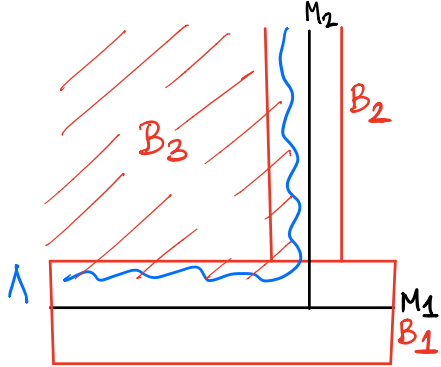}
\caption{If $B_2 \to B_1$ already has the structure of a cotangent building and we wish to extend this structure to a third block $B_3$, we need to make sure the Legendrian $\Lambda$ along whose ribbon the block $B_3$ will be attached is adapted, i.e. its Liouville cone is invariant under the Liouville flows of the blocks $B_1$ and $B_2$.}
\label{fig:Adapted}
\end{figure}

More generally, we have the following result:

\begin{proposition}\label{prop:Wctocot}
Any Wc-manifold admits the structure of a cotangent building.
\end{proposition}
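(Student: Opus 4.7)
The plan is to extend the inductive scheme of Proposition \ref{prop:WctoW} by an outer induction on the dimension of $X$, reducing a Wc-germ to its underlying Weinstein manifold together with its boundary nuclei. The base case $\dim X = 0$ is immediate. For the inductive step, let $(\sX, \lambda)$ be a Wc-germ of dimension $2n$ and let $\wh X$ denote its underlying Weinstein manifold, obtained by performing nucleus-to-hypersurface conversion on every boundary face of $X$. This procedure produces Wc-hypersurfaces $\sA_1,\dots,\sA_m \subset \wh X \setminus \Skel(\wh X,\lambda)$, one for each boundary face, such that $X$ is recovered from $\wh X$ by applying the inverse hypersurface-to-nucleus conversions. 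Each $\sA_j$ is Wc-isomorphic to the germ of the corresponding boundary nucleus $N_j$ and has dimension $2n-2$.

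Now I would apply Proposition \ref{prop:WctoW} to $\wh X$ to obtain a cotangent building $\sT^*M_k \to \cdots \to \sT^*M_1$, and apply the outer inductive hypothesis to each $\sA_j$ to endow it with its own cotangent building $\sT^*N_j^{\ell_j} \to \cdots \to \sT^*N_j^1$. To combine these into a cotangent building for $X$, I would invoke Lemma \ref{lm:Leg-adjustment} applied to the core Legendrian $\Lambda_j := \pi_\infty(\Skel(\sA_j, \lambda|_{\sA_j})) \subset \partial_\infty \wh X$ of each Wc-hypersurface, in order to move $\Lambda_j$ by a Legendrian isotopy so that its Liouville cone becomes invariant under the Liouville fields of the blocks $\sT^*M_i$. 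Once each $\sA_j$ is so adapted, it sits as a Wc-hypersurface in (a boundary face of) one of the blocks $\sT^*M_{i_j}$, and performing the hypersurface-to-nucleus conversion recovers $X$ with the vertically glued building structure having blocks $\{\sT^*M_i\}_i$ together with $\{\sT^*N_j^s\}_{j,s}$, inserted on top of the corresponding $\sT^*M_{i_j}$ in the iterated vertical gluing order.

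The main obstacle is verifying that the Legendrian adjustments for the various $\sA_j$ can be carried out simultaneously and remain compatible at higher-order corners of $X$ where several boundary faces meet. The first concern is handled by adjusting the Wc-hypersurfaces one at a time: after each adjustment, the Liouville cone of the adjusted Wc-hypersurface is preserved by the ambient block fields, so subsequent adjustments of other $\sA_j$, which can be arranged to have disjoint support, do not disturb it. The second concern is addressed using the product splitting $N \times T^*\cI^k$ built into the definition of a Wc-manifold near a $k$-fold corner: near such a corner the Wc-hypersurfaces and their adjustments decompose as products, so the compatibility reduces to independent adjustments in the factors, which succeeds by the same inductive argument applied to the smaller-dimensional nuclei.
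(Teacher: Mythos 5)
Your first half — passing to the underlying Weinstein manifold $\wh X$, applying Proposition \ref{prop:WctoW} to it, and invoking an induction on dimension to give each former boundary nucleus a cotangent building — is exactly the paper's strategy. The reassembly step, however, has a genuine gap. The blocks you insert, $\sT^*N_j^s$, have dimension $2n-2$ and therefore cannot be blocks of a cotangent building for the $2n$-dimensional manifold $X$. The point you are missing is that the skeleton of a Wc-manifold near a boundary face with nucleus $N_j$ is not a copy of $\Skel(N_j)$ but the collar $\Skel(N_j)\times\cI$, and the hypersurface-to-nucleus conversion adds a further collar $\Skel(\sA_j)\times[1,2]$; accordingly the new blocks must be the thickened ones $\sT^*(N_j^s\times[0,1])\cong \sT^*N_j^s\times\sT^*[0,1]$, stacked on top of the ambient building along $\sA_j$. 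Producing exactly this induced structure is the content of Lemma \ref{lem:rel cot}, which is the key lemma of the paper's proof and which your argument replaces by an incorrect block list.

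Your appeal to Lemma \ref{lm:Leg-adjustment} is also both unjustified and unnecessary. It is unjustified because $\Lambda_j=\pi_\infty(\Skel(\sA_j,\lambda|_{\sA_j}))$ is in general a singular isotropic complex (the skeleton of a $(2n-2)$-dimensional Weinstein manifold), whereas Lemma \ref{lm:Leg-adjustment}, and the convex-hypersurface Lemma \ref{lm:Leg-convex} on which it rests, are proved only for embedded smooth Legendrians, using transversality of $\Lambda$ to the dividing wall and a general-position flow argument that does not apply verbatim to singular skeleta; moreover one would in any case have to move the whole Wc-hypersurface $\sA_j$, not just its skeleton. It is unnecessary because, unlike in Proposition \ref{prop:WctoW} — where the Liouville cone of the attaching Legendrian must be absorbed into the base of a new cotangent block $\wh M_k=L\cup M_k$, which forces invariance under the earlier block fields — the hypersurface-to-nucleus conversion requires no compatibility of $\sA_j$ with the individual block Liouville fields of $\wh X$: the thickened building $\sA_j\times\sT^*[0,1]$ is attached along $\sA_j$ regarded as a Wc-hypersurface of the whole ambient building, and the skeleton bookkeeping is handled by Lemma \ref{lem:maps}. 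Your closing paragraph on higher-order corners likewise asserts rather than proves the needed product compatibility; in the paper this is automatic because each former nucleus is itself a Wc-manifold with corners and the conversions are performed at the level of Wc-germs, whose corner collars are of product form by definition. If you drop the Legendrian adjustment and thicken your blocks as above, your argument becomes the paper's proof.
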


To prove this we will inductively apply the following lemma.

\begin{lemma}\label{lem:rel cot}
Given any Wc-pair $(\sX,\sA,\lambda)$, where $\sX$ and $\sA$ are Wc-germs which admit the structure of a cotangent building, the hypersurface-to-nucleus conversion $(\sX^\sA,\lambda^\sA)$ also admits the structure of a cotangent building. 
\end{lemma}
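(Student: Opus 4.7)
The plan is to interpret the hypersurface-to-nucleus conversion as a vertical gluing and then obtain a cotangent building of $\sX^\sA$ by concatenating the given cotangent buildings of $\sX$ and $\sA$, after first thickening the latter by an interval. By the construction in Section~\ref{sec:conversion}, $\sX^\sA$ is Wc-homotopic to the vertical gluing of a collar $\sC = \sA \times \sT^*\cI$ onto $\sX$, where the $\sT^*\cI$ factor carries the Weinstein normal form for Morse-Bott boundary of index $1$; the gluing is performed along the Wc-hypersurface $\sA \subset \sX$ and the nucleus of the Morse-Bott boundary face of $\sC$, both canonically identified with $\sA$.

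Given the cotangent building $\sA = \sT^*N_\ell \to \cdots \to \sT^*N_1$, the collar $\sC$ inherits a cotangent building
$\sT^*(N_\ell \times \cI) \to \cdots \to \sT^*(N_1 \times \cI)$,
obtained by multiplying each step of $\sA$'s building by $\sT^*\cI$: each product $\sT^*N_j \times \sT^*\cI$ is canonically identified with the cotangent block $\sT^*(N_j \times \cI)$, and each vertical gluing internal to $\sA$ induces a vertical gluing of the corresponding products along the thickened Wc-hypersurfaces and nuclei, since taking products with $\sT^*\cI$ commutes with the hypersurface-to-nucleus conversion and with horizontal gluing. Concatenating with the given cotangent building $\sX = B_k \to \cdots \to B_1$ produces the candidate cotangent building
\[
\sT^*(N_\ell \times \cI) \to \cdots \to \sT^*(N_1 \times \cI) \to B_k \to \cdots \to B_1
\]
for $\sX^\sA$.

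The main obstacle lies in the junction between the two buildings, namely the vertical gluing that attaches $\sT^*(N_1 \times \cI)$ on top of $\sX$: this requires realizing the nucleus $\sT^*N_1$ of the appropriate boundary face as a Wc-hypersurface of $\sX$. The only natural candidate is the composition of the symplectic embedding $\sT^*N_1 \hookrightarrow \sA$ supplied by Lemma~\ref{lm:complex} applied to $\sA$'s building with the Wc-hypersurface embedding $\sA \hookrightarrow \sX$; however by property (iii) of Lemma~\ref{lm:complex} this composition need not be Liouville, since the embedding of a block pulls back the ambient Liouville form to $\lambda_j^P$ rather than $\lambda_j$. I would address this by first extending $\sA \hookrightarrow \sX$ to a Liouville embedding $\sA \times U_{1,\eps} \hookrightarrow \sX$ via Lemma~\ref{lem:embedding extension}, and then applying Lemma~\ref{lm:Leg-adjustment} together with a preliminary Wc-homotopy of $\sA$'s cotangent building to force the Liouville fields of the blocks of $\sA$ to commute with those of the blocks of $\sX$ in a neighborhood of the junction. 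Once this compatibility is arranged, the composition becomes an honest Wc-hypersurface embedding, the junction vertical gluing is well-defined, and the concatenated sequence realizes $\sX^\sA$ as a cotangent building up to Wc-homotopy, as required.
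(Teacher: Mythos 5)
Your candidate building is exactly the one the paper writes down: its proof of Lemma~\ref{lem:rel cot} consists of asserting the induced structure $(B_r'\times\sT^*[0,1])\to\cdots\to(B_1'\times\sT^*[0,1])\to B_k\to\cdots\to B_1$ and declaring the verification straightforward, so in outline you and the paper take the same route.

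However, the detail you add at the junction is off in two ways. First, the obstacle you flag is not there: $\sT^*N_1$ is the \emph{bottom} block of $\sA$'s building, and bottom blocks include with their Liouville form unchanged --- in Lemma~\ref{lem:maps} it is the lower piece that includes by the honest inclusion $\iota$, and the discrepancy $\lambda_j^P=\lambda_j-d(ut)$ of Lemma~\ref{lm:complex}(iii) is seen only through the embeddings of the \emph{higher} blocks near their attaching faces. Since $\sA\hookrightarrow\sX$ is Liouville by the very definition of a Wc-hypersurface, the composite $\sT^*N_1\hookrightarrow\sA\hookrightarrow\sX$ is already a Wc-hypersurface embedding and needs no repair; in particular Lemma~\ref{lm:Leg-adjustment} is not the relevant tool (it isotopes a Legendrian so that its cone becomes invariant under the block Liouville fields, and the only compatibility the conversion uses near $\sA$ --- that the ambient field there is $Z_\sA+u\frac{\p}{\p u}$ --- is already supplied by Lemma~\ref{lem:embedding extension}). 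Second, the step that actually requires care is the one your concatenation passes over: the conversion of Section~\ref{sec:conversion} glues the collar along \emph{all} of $\sA$, not just along its bottom block. Hence each higher thickened block $B_j'\times\sT^*\cI$, $j\geq 2$, must be attached not only to the previously attached thickened blocks (along the thickened internal hypersurfaces, as you say) but also down to $\sX$ along its end-of-$\cI$ face over the image of $B_j'$ --- and, as you correctly observe, that image is only a symplectic, not Liouville, copy of $\sT^*N_j$, so this attachment has to be organized through the cornered locus where the two faces meet, using the corner-creation/partial-gluing machinery and the product structure of the collar. As literally described, your presentation touches $\sX$ only through $\sT^*N_1$, which yields a Wc-manifold with a different skeleton and different boundary-face structure than $(\sX^\sA,\lambda^\sA)$: in $\sX^\sA$ the cones $\Cone^+(N_j,\lambda)$ for $j\geq2$ run directly down into $\sX$ and the new boundary face has nucleus all of $\sA$, neither of which is reproduced by gluing the thickened building to $\sX$ only at its bottom block. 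This bookkeeping is precisely what the paper's ``straightforward to verify'' conceals, and it, rather than the Liouville-ness of the bottom attachment, is what a complete proof must supply.
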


\begin{proof}
Suppose that $(\sX,\lambda)$ is a Wc-manifold which admits the structure of a cotangent building $B_k \to \cdots \to B_1$ and $\sA \subset \sX \setminus \Skel(\sX,\lambda)$ is a Wc-hypersurface which also admits the structure of a cotangent building $B_r' \to \cdots \to B_1'$. Then  it is straightforward to verify that the hypersurface-to-nucleus conversion $(\sX^\sA,\lambda^\sA)$ inherits an induced structure of a cotangent building $(B_r' \times \sT^*[0,1]) \to \cdots \to (B_1' \times \sT^*[0,1]) \to B_k \to \cdots \to B_1$. 
\end{proof}

\begin{proof}[Proof of Proposition \ref{prop:Wctocot} ]
Given any Wc-manifold $(\sX,\lambda)$, we may convert all of its boundary nuclei to Wc-hypersurfaces to obtain its underlying Weinstein manifold $\wh X$, and $\sX$ can be recovered from $\wh X$ by converting back the Wc-hypersurfaces to face nuclei. By Proposition \ref{prop:WctoW} we know that $\wh X$ admits the structure of a cotangent building, and we may assume by induction on dimension that the Wc-hypersurfaces also admit the structure of a cotangent building. Hence it follows from Lemma \ref{lem:rel cot} that $\sX$ admits the structure of a cotangent building.
\end{proof}

\section{Symplectic neighborhoods of arboreal Lagrangians}\label{sec: nbhd}

In this section we introduce the notion of an arboreal space and prove that any arboreal space has a unique symplectic thickening.

\subsection{Arboreal spaces}

\subsubsection{Pre-Ac spaces}
In \cite{AGEN20a} we associated to each signed rooted tree $\cT = (T, \rho, \eps)$, an arboreal Lagrangian $L_\cT \subset T^*\R^{n(\cT)}$ which is a union of smooth pieces, $L_\cT = \bigcup_{\alpha \in v(\cT)} L_\alpha$.

For fixed $c\geq 0$, and $m\geq n(\cT) + c$, set $d = m -  n(\cT) + c$, and consider the product
$$
\xymatrix{
L(\cT, m, c) := L_\cT \times \R^d \times \R^c_{\geq 0}
}
$$
We also have the smooth pieces $L(\cT, m, c)_\alpha := L_\alpha \times \R^d \times \R^c_{\geq 0}$.

We equip $L(\cT, m, c)$ with the structure sheaf $\cO(\cT, m, c)$ of functions restricted from smooth functions on $T^*\R^{n(\cT)}\times \R^d\times\R^c_{\geq 0}$ by the inclusion
$L(\cT, m, c)\hookrightarrow T^*\R^{n(\cT)}\times \R^d\times\R^c_{\geq 0}$.

\begin{definition}
An $m$-dimensional {\em pre-arboreal space with corners (pre-Ac space)} $(X, \cO)$  is a locally ringed Hausdorff topological space locally modeled  on 
$$(L(\cT, m, c), \cO(\cT, m, c))
$$
 for varying signed rooted trees $\cT$, and  $c \leq m - n(\cT) $.

A {\em smooth sheet} $Y \to X$ of a pre-Ac space $(X, \cO)$  is a finite proper map from a manifold with corners such that the map is an embedding on the interior of $Y$, and
 locally the map identifies the local connected components of $Y$ with smooth pieces
$$
\xymatrix{
L(\cT, m, c)_\alpha  \subset L(\cT, m, c)
}$$
for varying $\alpha \in v(\cT)$.

\end{definition}

In this paper we shall only need to consider the cases where $c=0$ or $c=1$. When $c, d = 0$, we write $(L_\cT, \cO_\cT)$ in place of $(L(\cT, m, c), \cO(\cT, m, c))$.

      \begin{figure}[h]
\includegraphics[scale=0.5]{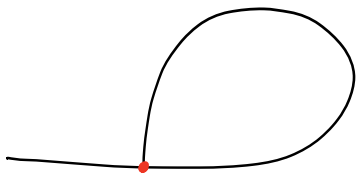}
\caption{We can take $X$ to be the interval $[-1, 1]$ with $1$ glued to $0$ as an $\cA_2$-singularity. Then $[-1, 1]\to X$ is a smooth sheet. We will introduce the notion of Ac-building to avoid the unnecessary complications of this kind of gluing. }
\label{fig:nonex}
\end{figure}

\begin{remark}
Given a pre-Ac space $(X, \cO)$, we can speak about a closed embedding into a smooth manifold $(M, C^\infty_M)$. These are embeddings with closed image $X \to M$ with a surjective  map of sheaves of local rings $C^\infty_M|_X \to \cO$. 
\end{remark}

\begin{remark}
One can relax the definition of a pre-arboreal space by choosing  the alternative structure sheaf
of
  continuous real-valued functions on $L(\cT, m, c)$ whose restrictions to each smooth piece
 $
 L(\cT, m, c)_\alpha, 
 $
 for $\alpha \in v(\cT)$,
are  smooth. One can show that every such ``weak" pre-arboreal space     is diffeomorphic to a strong one,  and any  weak diffeomorphism between strong pre-arboreals is isotopic to a strong one.
\end{remark}

\subsubsection{Ac spaces}
 In \cite{AGEN20a} we proved that any germ of an arboreal Lagrangian is symplectomorphic to exactly one of the canonical models $L_\cT \subset T^*\R^{n(\cT)}$, hence in particular isomorphic as the germ of a locally ringed Hausdorff topological space to the corresponding $(L_\cT,\cO_\cT)$. Going the other way, if we are given a local model $(L_\cT, \cO_\cT)$ it is not possible to reconstruct the symplectic geometry of
$L_\cT \subset T^*\R^{n(\cT)}$ unless we keep track of additional structure. This is due to the fact that not every automorphism of $(L_\cT,\cO_T)$ can be lifted to an automorphism of $L_\cT \subset T^*\R^{n(\cT)}$, i.e. a germ of a symplectomorphism preserving $L_\cT$. We now explain the additional structure that we need to keep track of.

For a  signed rooted tree $\cT = (T, \rho, \eps)$, recall if we remove the root $\cT \setminus \rho$ what remains is a disjoint union of rooted trees  with signs along their edges. Let us write $ \cT' =  (T, \rho, \eps')$ for the 
signed rooted tree  where
we negate all of the signs along the edges in any collection of components of $\cT\setminus \rho$. 
Then  negating corresponding coordinates in our local models provides an isomorphism of pre-Ac spaces
$$
(L_\cT, \cO_\cT)
\simeq
(L_{\cT'}, \cO_{\cT'})
$$
that does not extend to an ambient symplectomorphism unless $\cT = \cT'$.
Additionally, in the case of the tree $\cA_2$ with a single unsigned edge, the nontrivial involution  of the pre-Ac space $(L_{ \cA_2}, \cO_{\cA_2})$ that reverses the orientation on the smooth base piece  $L_\rho \subset L_{\cA_2}$ can only be extended to 
an ambient anti-symplectomorphism.  We will see in Lemma~\ref{lem: ambiguity} below these are the only ambiguities.

We   introduce here additional structure to overcome the above ambiguities and hence enable us to reconstruct $L_\cT \subset T^*\R^{n(\cT)}$ from
$(L_\cT, \cO_\cT)$. Let us first mention some simple data we could add.
Given $(L_\cT, \cO_\cT)$, consider the base smooth piece $L_\rho \subset L_\cT$, and the hypersurface
$H = \bigcup_{\alpha \not = \rho} L_\alpha \cap L_\rho \subset L_\rho$. Then a coorientation of $H$ provides an embedding  $L_\cT \subset T^*\R^{n(\cT)}$ as the corresponding positive conormal of $H$. In particular, there is the  specific choice of coorientation that gives the canonical embedding $L_\cT \subset T^*\R^{n(\cT)}$ of the local model.

To go forward, we will introduce a version of the above coorientation data that can be easily glued in a global setting. 
For a signed rooted tree $\cT = (T, \rho, \eps)$, and vertex
 $\alpha\in v(\cT)$, consider the orientation bundle
$$
\xymatrix{
\kappa(\cT, m, c)_\alpha := \wedge^m T  L(\cT, m, c)_\alpha 
}
$$
Let  $K(\cT,m, c)$,  denote the space of real line bundles $\kappa \to L(\cT,m, c)$ equipped with identifications
$$
\xymatrix{
\kappa|_{L(\cT,m, c)_\alpha} \simeq \kappa(\cT, m, c)_\alpha  &
\alpha \in v(\cT)
}
$$
Note that given such a $\kappa$, we can extract from it a coorientation of the  hypersurface 
$H = \bigcup_{\alpha \not = \rho} L(\cT,m, c)_\alpha \cap L(\cT,m, c)_\rho$ in the base smooth piece
$L(\cT,m, c)_\rho$. Namely, we can take the coorientation  of $H$ to be that glued under $\kappa$ with  the positive Liouville direction.

\begin{lemma}\label{lem:orient}

%
%
 Each connected component of $K(\cT,m, c)$ is contractible, and $\pi_0(K(\cT,m, c))$ is a $(\Z/2\Z)^{e(T)}$-torsor.
\end{lemma}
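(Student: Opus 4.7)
The plan is to reduce to the case $d = c = 0$ and then perform a discrete count. Since $\R^d \times \R^c_{\geq 0}$ is contractible, restriction along the inclusion $L_\cT \hookrightarrow L(\cT,m,c)$ gives a homotopy equivalence $K(\cT,m,c) \simeq K(\cT, n(\cT), 0)$, so I would work with the latter throughout.

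The central geometric input I would need is that each smooth piece $L_\alpha \subset L_\cT$ is diffeomorphic to a Euclidean (half-)space, that every non-empty intersection of such pieces is contractible, and that $L_\cT$ itself is contractible. These claims should follow by induction on the size of $\cT$ using the iterated Liouville-cone construction of arboreal singularities from \cite{AGEN20a}: each step attaches a contractible piece along a contractible hypersurface, which preserves contractibility by a Mayer--Vietoris argument.

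Granted these contractibility statements, every orientation bundle $\kappa(\cT, n(\cT), 0)_\alpha$ is trivial, and every real line bundle on $L_\cT$ is trivial. I may therefore set $\kappa = \underline{\R}$, and each identification $\phi_\alpha$ becomes an element of $C^\infty(L_\alpha, \R^*)$ after fixing a trivialization of $\kappa_\alpha$. Connectedness of $L_\alpha$ gives $\pi_0 C^\infty(L_\alpha, \R^*) = \Z/2\Z$, with each component contractible, so the space of tuples $(\phi_\alpha)_{\alpha \in v(\cT)}$ has $(\Z/2\Z)^{|v(\cT)|}$ contractible components.

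Two such tuples represent the same point of $K(\cT, n(\cT), 0)$ precisely when they differ by a global automorphism in $\Aut(\underline{\R}) = C^\infty(L_\cT, \R^*)$, which by connectedness of $L_\cT$ has $\pi_0 = \Z/2\Z$ acting by simultaneous sign flip on all the $\phi_\alpha$'s. Hence $\pi_0 K(\cT, n(\cT), 0) = (\Z/2\Z)^{|v(\cT)|} / (\Z/2\Z) = (\Z/2\Z)^{|v(\cT)| - 1} = (\Z/2\Z)^{e(T)}$, using that $T$ is a tree. The torsor structure is realized by the natural operation of flipping the identification across each individual edge, giving a free transitive $(\Z/2\Z)^{e(T)}$-action. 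The main obstacle will be the inductive verification of contractibility of $L_\cT$ and its intersection strata, since the rest of the argument is formal once these facts are in hand.
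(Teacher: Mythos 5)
Your computation is correct, but it takes a genuinely different route from the paper's. The paper argues by induction over the tree, using a \v{C}ech cover adapted to the conical structure of the model: a ball-bundle neighborhood $N_r$ of the zero-section together with the disjoint opens $L(\cT_i,m-1,c)\times\R_{>0}$ indexed by the root-adjacent subtrees $\cT_i$; each overlap is connected, so the gluing across it contributes exactly one sign per root-adjacent edge, and the data on each cone open is handled by the inductive hypothesis. You instead make a one-shot global count: since $L_\cT$ and each smooth piece are connected with trivial first $\Z/2$-cohomology, all bundles in sight are trivializable, the identifications $\phi_\alpha$ are unconstrained on overlaps (the definition of $K(\cT,m,c)$ imposes no compatibility there), so up to homotopy the data is one sign per vertex, and the global sign ambiguity of $\kappa$ identifies tuples diagonally, giving $(\Z/2)^{|v(\cT)|}$ modulo the diagonal, which is $(\Z/2)^{e(T)}$ via relative signs across edges since $T$ is a tree. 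Your version makes the torsor structure and the interpretation of elements of $K$ as per-edge gluings completely transparent; the paper's induction instead sets up exactly the edge-by-edge gluing description that is used immediately afterwards to specify the neutral element $\kappa(\cT,m,c)$.

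Two caveats on your write-up. First, the ``main obstacle'' you defer is smaller than you think: contractibility of intersections of pieces is never used in your argument (no compatibility is imposed on overlaps), contractibility of $L_\cT$ is immediate because the fiberwise scaling $(q,p)\mapsto(q,tp)$, $t\in[0,1]$, preserves $L_\cT$ and retracts it onto the zero-section, and each closed piece in the models of \cite{AGEN20a} is a quadrant-type manifold with corners; in fact all you need is connectedness of each piece and of $L_\cT$ together with vanishing of $H^1(\,\cdot\,;\Z/2)$. Second, the step ``two tuples represent the same point of $K$ precisely when they differ by a global automorphism'' deserves a sentence of justification, since the paper does not specify whether $K$ means strict pairs $(\kappa,\{\phi_\alpha\})$ or their isomorphism classes. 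Note that such a pair has no nontrivial automorphisms (an automorphism restricts to the identity on every piece and the pieces cover $L_\cT$), and in a strict model the pair with identifications $\{\phi_\alpha\}$ is joined by a path to the pair with $\{-\phi_\alpha\}$ by rotating the line bundle inside a trivial rank-two bundle; so under either reading $\pi_0$ is as you computed and the components remain contractible, and your count stands.
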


\begin{proof}
Suppose the lemma is proved for  signed rooted trees with fewer vertices than $\cT$.

Fix $r>0$,  let $B_r   = \{ |p| = r\} \subset T^* (\R^{m-c}  \times \R^c_{\geq 0})$ denote the radius $r$
open ball bundle, and
 $N_r = B_r \cap L(\cT, m, c)$  the corresponding open neighborhood of 
 the zero-section $L(\cT, m, c)_\rho \simeq \R^{m-c}  \times \R^c_{\geq 0}$. 
Note the Liouville flow of $T^*\R^m$ deformation retracts $N_r$ back to the zero-section.

Consider the  \v{C}ech cover of $L(\cT, m, c)$ given by the open $N_r$ along with the disjoint opens
$$
\xymatrix{
L(\cT_i, m-1, c) \times \R_{>0} & i\in I
}
$$
where $I \subset v(T)$ denotes the set of vertices adjacent to the root $\rho$,
or equivalently the set of edges adjacent to $\rho$, $\cT_i$ is the signed rooted tree given by the connected component of $\cT \setminus \rho$ containing $i \in I$, and
the last factor $\R_{>0}$ of each open is given by Liouville flow.
Note  each intersection $N_r \cap L(\cT_i, m-1, c) \times \R_{>0}$, for $i\in I$, is homeomorphic to $ L(\cT_i, m-1, c) \times (0, r)$, in particular connected.

We see that to give a line bundle on $L(\cT, m, c)$, given its restriction to the above opens, is to give
a sign $\pm 1$ on each intersection $N_r \cap L(\cT_i, m-1, c) \times \R_{>0}$, for $i\in I$.
This inductively establishes the lemma.
\end{proof}

We will now use the symplectic geometry to specify a distinguished element of $\kappa(\cT, m, c)  \in K(\cT,m, c)$, i.e. a trivialization of the $(\Z/2\Z)^{e(T)}$-torsor of the lemma
$$
\xymatrix{
K(\cT,m, c) \simeq (\Z/2\Z)^{e(T)} 
}$$ 
Given the canonical model $L(\cT, m, c)\subset T^*\R^m$,  
we have a canonical coorientation of the  hypersurface 
$H = \bigcup_{\alpha \not = \rho} L(\cT,m, c)_\alpha \cap L(\cT,m, c)_\rho$ in the base smooth piece
$L(\cT,m, c)_\rho$. To identify the orientation bundles of the smooth pieces along this intersection it suffices to specify that {\em the positive codirection to the co-oriented front is glued to the positive Liouville direction}. Note this choice is invariant under symplectomorphism.


\begin{definition}
The {\em neutral element} $\kappa(\cT, m ,c) \in K(\cT,m, c)$ is given by the above inductively specified gluing.
\end{definition}

\begin{remark}\label{rem: pos char}
Let us mention a characterization (we will not use) of when an arboreal model is positive. Assume for simplicity $\cT$ has a single vertex adjacent to the root. Then  
the Legendrian at infinity $L(\cT, m, c)^\infty \subset S^* \R^m$ projects homeomorphically to its front $H\subset \R^m$, and the front has a natural ``tangent" bundle given by the orthogonal to the coorientation. In the case of positive $\cT$,
the restriction of $\kappa(\cT, m, c)$ to $L(\cT, m, c)^\infty$ coincides with the top exterior power of the  ``tangent" bundle of the front under the above homeomorphism. 
\end{remark}

%
%

\begin{definition}
An {\em orientation structure} $\kappa$ for a
{pre-Ac space} $(X, \cO)$
is a real line bundle $\kappa\to X$ equipped with identifications $\kappa|_Y\simeq \kappa_Y$ for each smooth sheet $Y \to X$,  locally modeled on the neutral elements $\kappa(\cT,m,c) \in K(\cT,m, c)$.
\end{definition}

\begin{definition}

An $m$-dimensional {\em arboreal space with corners (Ac space)} $(X, \cO, \kappa)$  is an $m$-dimensional
 pre-arboreal space $(X, \cO)$ equipped with an orientation structure $\kappa$.
\end{definition}

\begin{remark}
When $m=1$, one can compare the notion of orientation structure with ribbon structure for a trivalent graph. At each trivalent vertex we have a ``T'' singularity and in this case an orientation structure is equivalent to a cyclic orientation of the three edges.  
In particular, the orientation structure rigidifies the  pre-Ac space $(L_{ \cA_2}, \cO_{\cA_2})$.
\end{remark}

         \begin{figure}[h]
\includegraphics[scale=0.6]{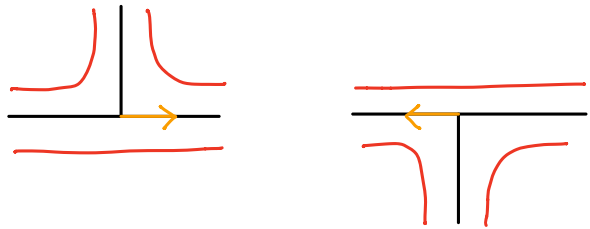}
\caption{For the $A_2$ singularity there are two possible orientation structures, which can be realized by the above embeddings as arboreal spaces into $(\R^2,dp \land dq)$. Since the zero section is fixed, the choice of an orientation structure is naturally equivalent to the choice of a cyclic structure on the three half-edges incident to the vertex. Note that the choice of an orientation structure is also equivalent to a choice of co-orientation for a point inside a line.}
\label{fig:orientstr}
\end{figure}

Recall from \cite{AGEN20a} that an arboreal Lagrangian $L$ in a symplectic manifold $(M, \omega)$ is locally symplectomorphic to a standard model. Since this model has an orientation structure given by its neutral element, and this element is invariant under symplectomorphism, we conclude that any arboreal Lagrangian has an orientation structure induced by the ambient symplectic structure. Hence {\em every arboreal Lagrangian has the structure of an arboreal space}.

Finally, let us return to the discussion at the beginning of this section, and observe that, locally, orientation structures are nothing more than choices of coorientations.
 
\begin{lemma}\label{lem: ambiguity} Let $\cT =(T, \rho, \eps)$ be a signed rooted tree.

There are precisely $2^k$ isomorphism classes of orientation structures on the pre-Ac space $(L_\cT, \cO_\cT)$,
where $k$ is the number of edges adjacent to the root $\rho$.
\end{lemma}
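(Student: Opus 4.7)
The plan is to establish a bijection between isomorphism classes of orientation structures on $(L_\cT, \cO_\cT)$ and coorientations of the base hypersurface
\[
H \;:=\; \bigcup_{\alpha \neq \rho} L_\alpha \cap L_\rho \;\subset\; L_\rho.
\]
First I would verify that an orientation structure $\kappa$ on $L_\cT$ yields a well-defined coorientation of $H$ by the neutral-element rule already used in the definition of $\kappa(\cT,m,c)$: at each smooth point of $H$, the positive Liouville direction on the transversally attached non-root piece $L_\alpha$ is glued via $\kappa$ to a distinguished codirection of $H$ in $L_\rho$, which one declares positive. This assignment is manifestly invariant under isomorphism of orientation structures, and conversely every coorientation of $H$ is realized by the canonical positive-conormal embedding $L_\cT \hookrightarrow T^*\R^{n(\cT)}$, together with the orientation structure induced from the ambient symplectic form.

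Second, I would count the coorientations of $H$. By the inductive Liouville cone construction of Definition \ref{def:arb intr}, the complement $L_\cT \setminus L_\rho$ decomposes as the disjoint union of the Liouville cones over the Legendrian lifts of the $k$ components of $\cT \setminus \rho$. Each such cone meets $L_\rho$ in a single connected component of $H$, so $H$ has exactly $k$ connected components $H_1, \dots, H_k$, one for each edge adjacent to $\rho$. Choosing a coorientation of $H$ thus amounts to independently choosing a coorientation of each $H_i$, yielding $2^k$ possibilities.

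The main obstacle is verifying injectivity of the correspondence, i.e., that two orientation structures inducing the same coorientation of $H$ are isomorphic. By Lemma \ref{lem:orient}, the raw set of line bundles with identifications forms a $(\Z/2\Z)^{e(T)}$-torsor, of which only $k$ bits are visible in the coorientation; the remaining $e(T)-k$ bits correspond to signs on non-root edges. I would produce, for each non-root edge $\beta$, a pre-Ac automorphism of $(L_\cT, \cO_\cT)$ that flips only the sign on $\beta$ while fixing all other signs, by negating the coordinate in the arboreal model $L_\cT \subset T^*\R^{n(\cT)}$ corresponding to the vertex of $\beta$ farther from $\rho$ (combined, if necessary, with a compensating coordinate change in the subtree above $\beta$ to isolate the sign flip to the single Čech intersection governing $\beta$ in the proof of Lemma \ref{lem:orient}). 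Together these generate the required $(\Z/2\Z)^{e(T)-k}$ worth of automorphisms, identifying all orientation structures with a given coorientation of $H$ and thus completing the bijection with the $2^k$ coorientations.
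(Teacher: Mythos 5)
Your first step (extracting a coorientation of $H$ from an orientation structure, and realizing every coorientation by a positive-conormal embedding) is fine and is essentially the paper's argument, but your injectivity step is where the proof breaks. An orientation structure is not an arbitrary element of the torsor $K(\cT,m,c)$ of Lemma \ref{lem:orient}: by definition it must be \emph{locally modeled on the neutral elements}, and the isomorphism classes being counted are classes of this data over the fixed pre-Ac space (equivalently, components of $K$), not classes up to pullback by pre-Ac automorphisms. You can see that the latter notion cannot be intended already for $\cT=\cA_2$: the involution reversing the base piece is a pre-Ac automorphism exchanging the two orientation structures, so if automorphism-pullback counted as an isomorphism there would be one class instead of $2^k=2$; analogous branch-negating automorphisms exist for every $\cT$ and flip coorientations of $H$, so admitting them also contradicts your own first step, where you need the coorientation to be an isomorphism invariant. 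Consequently, producing automorphisms that flip a single non-root-edge bit would not identify orientation structures in the relevant sense; worse, if such automorphisms existed, pulling back the neutral element along them would produce orientation structures with the same coorientation of $H$ but lying in different components of $K$, i.e.\ it would exhibit \emph{more} than $2^k$ classes and refute the statement. What must actually be shown is the opposite: the local-modeling condition, applied at the central point where the germ is all of $L_\cT$, forces any orientation structure to be the pullback of a neutral element under an isomorphism $(L_\cT,\cO_\cT)\simeq(L_{\cT'},\cO_{\cT'})$; since the pre-Ac space determines the pair $(\R^{n(\cT)},H)$, the only possible $\cT'$ are whole-component sign negations, and the resulting class depends only on the induced coorientation of $H$. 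Your proposal never engages with the local-modeling condition, and the single-edge flip you want is in fact obstructed by this very classification: negating the coordinate at the far vertex of a non-root edge $\beta$ is an isomorphism onto the model with only $\eps(\beta)$ changed, which is not pre-Ac isomorphic to $(L_\cT,\cO_\cT)$ unless the flip is part of a whole-component negation, so the ``compensating coordinate change'' is precisely where the construction fails.

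There is also a factual slip in your counting step: $H$ does not have $k$ connected components. All the fronts $\pi(\Lambda_i)$ pass through the central point (the axiom in Definition \ref{def:arb intr} requires $\pi(z_1)=\dots=\pi(z_k)$), so $H$ is connected as a germ, while its smooth locus typically has more than $k$ components. The correct statement is that $H$ consists of $k$ branches $H_i$, each the co-orientable front of a connected arboreal Legendrian $\Lambda_i$, so each branch carries exactly two coorientations and the choices are independent, which gives the $2^k$; this is easily repaired, but the justification as written is incorrect.
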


\begin{proof}
By construction, an orientation structure on the local model $(L_\cT, \cO_\cT)$ must result by pullback of the neutral orientation structure under an isomorphism $(L_\cT, \cO_\cT) \simeq (L_{\cT'}, \cO_{\cT'}) $ for some 
$\cT'$.
Moreover, note that $(L_\cT, \cO_\cT)$
is determined  by 
 the hypersurface
$H = \bigcup_{\alpha \not = \rho} L_\alpha \cap L_\rho \subset L_\rho \simeq \R^{n(\cT)}$, and so
we must have  an isomorphism $( \R^{n(\cT)}, H) \simeq ( \R^{n(\cT')}, H')$ of hypersurfaces.
 So we must have $\cT' =(T, \rho, \eps')$ where $\eps'$ results from $\eps$ by
 negating all of the signs along the edges in some collection of components of $\cT\setminus \rho$. 
These choices correspond to the $2^k$ choices of coorientations of the hypersurface $H$. Each such choice provides a distinct Lagrangian realization $L_\cT \subset T^*\R^{n(\cT)}$ with corresponding orientation structure.
\end{proof}

\begin{remark} Note therefore that the data of an orientation structure on an arboreal singularity is equivalent to a fully signed rooted tree, i.e. a signed rooted tree in the previous sense together with additional signs on each of the edges of $\cT$ adjacent to the root, so that now every edge in $\cT$ has a sign. We will however not use this notion of a fully signed rooted tree and will always use the term signed rooted tree for rooted trees with signs on those edges not adjacent to the root. \end{remark}

\subsection{Arboreal Darboux-Weinstein neighborhood theorem}

\subsubsection{Lagrangian subsets}

 Let $L$ be a path connected and locally simply-connected closed subset in a symplectic manifold $(M, \omega)$.
  We call $L$  a {\em Lagrangian subset} if each point of $L$ has a Darboux neighborhood  $U$ such that $\oint_\gamma pdq=0$ for every loop $\gamma$ in $L\cap U$.  
Clearly, any  arboreal Lagrangian $L \subset M$ is  a Lagrangian subset. 
If $f$ is (the germ at a point of) a diffeomorphism $M \to M$, then  $f(L)$ is (the  germ of) a Lagrangian subset if and only if the image under $f$ of each smooth piece of $L$ is Lagrangian.

 If $L  \subset M$ is a pre-Ac space embedded as a Lagrangian subset in a symplectic manifold $(M, \omega)$, then $L$ inherits an orientation structure as follows. Returning to the inductive description of orientation structures given in Lemma~\ref{lem:orient},  along a smooth Lagrangian piece $P \subset L$, given a Lagrangian half-plane $H$ with codimension 1 clean intersection with $P$, we specify that the inward normal direction $\nu$ to $P \cap H$ in $H$ is glued to the normal direction to $P \cap H$ in $P$ which pairs positively with $\nu$ under $\omega$. With this structure, any pre-Ac space embedded as a Lagrangian subset gets an induced structure of an Ac space.

 Note  the above orientation structure on $L$ as a Lagrangian subset agrees with the previously defined orientation structure on $L$ as an arboreal Lagrangian, i.e. the one defined using  local models.

\subsubsection{Darboux-Weinstein theorem}

The main result of this section is the following:

\begin{theorem}\label{thm:arb-Darboux}
Let $V$ be  a manifold with two symplectic forms $\omega_0, \omega_1$.  Suppose $L \subset V$ is a compact   arboreal  space embedded as a Lagrangian subset for both symplectic forms. Let $\kappa_0, \kappa_1$ be the induced orientation structures on $L$.

If the orientation structures are isomorphic $\kappa_0\simeq \kappa_1$, then there exists a diffeomorphism $\varphi:V\to V$,  preserving $L$ as a subset, such that $\varphi^*\om_1=\om_0$. If $\omega_0$ and $\omega_1$ coincide on $\Op A$ for a closed subset $A\subset L$ then $\varphi$ can be chosen to be the identity on $\Op A$.
\end{theorem}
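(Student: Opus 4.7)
The plan is to combine the local stability theorem for arboreal singularities proved in \cite{AGEN20a} with a Moser-type argument, reading the theorem as a germ-level statement along the compact Lagrangian $L$. The role of the orientation structure is to rigidify the local models so that they patch together across strata.

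First I would reduce to the case $\omega_0 = \omega_1$ on a neighborhood of $L$. By the arboreal stability theorem of \cite{AGEN20a}, each point $x\in L$ admits an open neighborhood $U_x\subset V$ together with diffeomorphisms $\phi_x^i:U_x\to W_x\subset T^*\R^{n(\cT_x)}\times\R^d\times\R^c_{\geq 0}$, $i=0,1$, each carrying $L\cap U_x$ onto the standard local model $L(\cT_x,m,c_x)$ and pulling back the standard symplectic form to $\omega_i$. By Lemma~\ref{lem: ambiguity}, the only freedom in the local model is a choice of coorientations at the root, and this choice is exactly recorded by $\kappa_i|_{U_x}$. Since $\kappa_0\simeq\kappa_1$ as orientation structures, after shrinking $U_x$ I may choose the two charts so that $\phi_x^1\circ(\phi_x^0)^{-1}$ is a germ of a symplectomorphism preserving $L$.

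Second, I would glue these local identifications into a global diffeomorphism $\psi:\Op L\to\Op L$ fixing $L$ setwise, with $\psi^*\omega_1=\omega_0$ on a possibly smaller neighborhood of $L$. I would do this by induction on the depth of the canonical stratification of $L$, starting from the deepest strata (the zero-dimensional arboreal singular points) and extending outward. At each step, the previously constructed identification is defined on an open thickening of the already-treated strata; the orientation structure compatibility guarantees that the newly constructed local symplectomorphisms agree with the old one up to a Hamiltonian-type correction tangent to the next stratum, which can be absorbed by a cutoff and a relative Moser argument along that stratum. The main obstacle is managing this stratified gluing cleanly: naively patching local symplectomorphisms only gives a diffeomorphism that pulls $\omega_1$ to a form agreeing with $\omega_0$ to first order along $L$, and one must then run a relative Moser argument stratum by stratum, using that the vector fields solving $\iota_{X_t}\omega_t=-\alpha_t$ automatically remain tangent to each smooth piece of $L$ because $\alpha_t$ vanishes on that piece. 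The arboreal structure (rather than, say, an arbitrary Whitney stratified Lagrangian) is what allows this induction to close, because the local models are explicit and the normal symplectic data at each stratum is canonical.

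Third, with $\omega_1=\omega_0$ on $\Op L$ after applying $\psi$, I would finish with a standard Moser argument on $V$: the form $\omega_1-\omega_0$ vanishes on $\Op L$, hence equals $d\beta$ for some $\beta$ supported away from $L$ (using that $L$ is a deformation retract of its neighborhood and Poincar\'e's lemma on $V\setminus L$ after trivializing the class), and the straight-line family $\omega_t=(1-t)\omega_0+t\omega_1$ is symplectic; the time-one flow of the Moser vector field $X_t$ with $\iota_{X_t}\omega_t=-\beta$ is the identity near $L$, so composing with $\psi$ produces the required $\varphi$ preserving $L$. For the relative form of the statement, I would choose all partitions of unity, cutoffs, and primitives used in the construction of $\psi$ to vanish on $\Op A$, and similarly take $\beta\equiv 0$ there, so that the resulting $\varphi$ is the identity on $\Op A$.
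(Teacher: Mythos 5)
Your second step hides the entire difficulty of the theorem. You obtain, for each point of $L$, two charts carrying $(L,\omega_i)$ to the same standard model (this much is fine, and matching $\kappa_0\simeq\kappa_1$ via Lemma~\ref{lem: ambiguity} does pin down the model), but then you assert that the resulting local symplectomorphisms can be glued over the stratification because "the newly constructed local symplectomorphisms agree with the old one up to a Hamiltonian-type correction tangent to the next stratum, which can be absorbed by a cutoff and a relative Moser argument." That is precisely what needs proof: on an overlap, the discrepancy is a germ of a symplectomorphism of $(\Op L,\omega_0)$ preserving the singular Lagrangian $L$, and to cut it off you must know it is generated by a Hamiltonian isotopy preserving $L$ (i.e.\ a parametric/relative uniqueness statement for arboreal germs), which is essentially a relative form of the theorem you are proving, or the parametric stability theorem of \cite{AGEN20a}. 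Nothing in your write-up supplies this, and "agreeing to first order along $L$ plus Moser" does not close the gap, because you have not produced a connecting family of symplectic forms for which every smooth piece of $L$ stays Lagrangian — that is exactly where the orientation structure must be used quantitatively. The paper avoids gluing local symplectomorphisms altogether: it shows (Proposition~\ref{prop:symplectic-normal}) by linear algebra — convexity of the components of the space $\Omega(F)$ of symplectic forms adapted to the canonical flag, Lemmas~\ref{lem:linear-contractible} and~\ref{lem:convex-interpol} for the stabilized directions — that after a single pointwise linear correction $\Psi_x$, fixing all directions tangent to $L$, the straight-line interpolation $(1-t)\omega_0+t\Psi^*\omega_1$ is symplectic with $L$ Lagrangian throughout; matching orientation structures is exactly the statement that the two forms lie in the same convex component. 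Then Proposition~\ref{prop:hom-to-Moser} runs the Moser argument for this family, the nontrivial input being a primitive that vanishes on vectors tangent to $L$ (Lemma~\ref{lm:ext-Liouv-restr}), applied in conical charts and extended over a finite cover.

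Your third step is also wrong as written: $\omega_1-\omega_0$ need not be exact on all of $V$, the straight-line family $(1-t)\omega_0+t\omega_1$ need not be nondegenerate away from $L$, and the Moser flow need not exist for time one on a noncompact $V$. The theorem is a Darboux--Weinstein statement about the germ of a neighborhood of the compact set $L$ (this is how it is used and how the paper proves it), so no such global extension step is needed; if you want a literal diffeomorphism of $V$ you should instead shrink to a neighborhood of $L$, not attempt a global Moser argument.
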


\begin{cor}
Any arboreal space embeds into a symplectic manifold as an arboreal Lagrangian such that the orientation structure induced from the ambient symplectic structure is isomorphic to the given orientation structure.
\end{cor}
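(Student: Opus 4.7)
The plan is to build the ambient symplectic manifold chart by chart from the local models and glue those charts together coherently, with coherence enforced by the relative form of Theorem~\ref{thm:arb-Darboux}.

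First I would exploit that, by definition, the Ac space $(X,\cO,\kappa)$ admits a cover by opens $U_i \subset X$ each isomorphic, as a pre-Ac space, to a local model $(L(\cT_i,m,c_i),\cO(\cT_i,m,c_i))$. Each local model admits a canonical symplectic thickening $V_i^{\mathrm{std}}\supset L(\cT_i,m,c_i)$ in $T^*\R^{m}$ (with corners coming from the $\R^{c_i}_{\ge 0}$ factors), and this thickening induces the neutral orientation structure $\kappa(\cT_i,m,c_i)$. By Lemma~\ref{lem: ambiguity}, the orientation structures on a local model form a torsor under negating coordinates on root-adjacent components; thus, after possibly adjusting each chart by such a reflection, I may assume that the isomorphism $U_i \simeq L(\cT_i,m,c_i)$ sends $\kappa|_{U_i}$ to the neutral orientation structure. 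Pulling back the standard embedding then gives each $U_i$ a symplectic thickening $V_i \supset U_i$ realizing $\kappa|_{U_i}$.

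Next comes the gluing. Order the charts (a finite list suffices since $X$ is compact), and construct the global manifold $V$ inductively by adjoining $V_1,V_2,\ldots$ one at a time. At stage $n$, suppose a symplectic manifold $V^{(n-1)}$ containing $A:=U_1\cup\cdots\cup U_{n-1}\subset X$ as a Lagrangian subset inducing $\kappa|_A$ has already been built, together with a Lagrangian embedding of $A\cap U_n$ into $V_n$. Restricting $V^{(n-1)}$ and $V_n$ to neighborhoods of $A\cap U_n$ gives two symplectic structures on a common ambient manifold, both realizing $A\cap U_n$ as a Lagrangian subset with identical induced orientation structure. Theorem~\ref{thm:arb-Darboux} then furnishes a diffeomorphism identifying these structures and fixing $A\cap U_n$, along which I glue $V_n$ to $V^{(n-1)}$ to produce $V^{(n)}$.

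The main obstacle is ensuring consistency of successive gluings on triple overlaps. This is handled precisely by the \emph{relative} form of Theorem~\ref{thm:arb-Darboux}: at stage $n$, the identification of symplectic forms can be chosen to be the identity on $\Op(A\cap(U_n\cap\text{previously glued region}))$, so all earlier gluings are preserved and no cocycle obstruction arises. After finitely many steps I obtain a symplectic manifold $V=V^{(N)}$ into which $X$ sits as an arboreal Lagrangian whose induced orientation structure is, chart by chart and hence globally, isomorphic to $\kappa$.
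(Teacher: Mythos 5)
Your proposal is correct and takes essentially the same route as the paper: realize each local model with its standard symplectic thickening (which induces the neutral, hence the given, orientation structure) and glue these thickenings using Theorem \ref{thm:arb-Darboux}, invoking its relative form so that each new identification is the identity near the previously glued region; the paper merely organizes this induction over neighborhoods of strata rather than over an ordered chart cover. The only points you gloss over—applying the theorem, stated for a compact arboreal space inside a single ambient manifold, to the a priori distinct thickenings of a non-compact overlap—are routine (shrink to compact pieces, and use the remark after Proposition \ref{prop:symplectic-normal} that the statement holds for two different ambient symplectic manifolds), and the paper's own proof is equally brief on them.
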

\begin{proof} By definition, any point of an arboreal space has a model neighborhood. 
On overlaps, these local symplectic structures have the same orientation structure, and hence  according to Theorem \ref{thm:arb-Darboux} they  are diffeomorphic via a diffeomorphism preserving the arboreal. Moreover, the relative form of that theorem allows us to construct a symplectic neighborhood inductively extending it over neighborhoods of  strata of  certain stratification.
\end{proof}
 
In Theorem \ref{thm:unique-W-for-buildings},  proved below in Section \ref{sec:unique-W-thick}
 below,  we find a Weinstein structure on a symplectic  neighborhood of on arboreal space $L$ for which  $L$ serves as the skeleton.  Moreover,
Theorem \ref{thm:unique-W-for-buildings} ensures uniqueness of this structure  up to  Weinstein homotopy fixing the skeleton.

Theorem \ref{thm:arb-Darboux} is a corollary of the following two propositions. 

\begin{prop}\label{prop:symplectic-normal} Let $V$ be  a manifold with two symplectic forms $\omega_0, \omega_1$.  Suppose $L \subset V$ is a compact   arboreal  space which is a Lagrangian subset   for both symplectic forms. Let $\kappa_0, \kappa_1$ be the induced orientation structures on $L$.
If the orientation structures are isomorphic $\kappa_0\simeq \kappa_1$, then there exists a diffeomorphism  $\phi$ of $\Op(L)$ preserving $L$ pointwise such that the linear interpolation $\omega_t=(1-t)\omega_0 + t \phi^* \omega_1$ is a family of symplectic forms on $\Op(L)$ so that $L$ is a Lagrangian subset for each $\omega_t$. 
If $\omega_0$ and $\omega_1$ coincides on $\Op A$ for a closed subset $A\subset X$ then $\phi$ can be chosen to be the identity on  $\Op A$.\end{prop}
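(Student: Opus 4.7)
The goal of this proposition is the first-order neighborhood matching step that precedes a Moser-type argument. Concretely, it suffices to produce a diffeomorphism $\phi$, equal to the identity on $L$, such that the bilinear forms $\omega_0|_{T_xV}$ and $(\phi^*\omega_1)|_{T_xV}$ coincide at every $x\in L$. Once such a pointwise match along $L$ is achieved, the linear interpolation $\omega_t=(1-t)\omega_0+t\phi^*\omega_1$ is automatically non-degenerate on $T_xV$ for every $x\in L$ (it equals $\omega_0|_{T_xV}$ there), hence non-degenerate on a neighborhood of $L$; and $L$ remains Lagrangian throughout the deformation because $\phi$ preserves each smooth piece of $L$ as a subset and $\omega_0,\omega_1$ both vanish on those pieces.

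My plan is to build $\phi$ by induction on the strata of $L$, proceeding from the largest open stratum down through the deeper arboreal strata. Along a single smooth Lagrangian piece $L_\alpha$, the classical relative Weinstein Lagrangian neighborhood theorem produces a local diffeomorphism, equal to the identity on $L_\alpha$, that achieves the pointwise first-order matching along $L_\alpha$. These local matchings must then be patched across lower-dimensional strata, and this is where the orientation structure hypothesis enters. At a codimension-one intersection where two smooth pieces meet cleanly, the first-order data at the intersection is captured by the orientation bundles $\kappa(\cT,m,c)_\alpha$; the hypothesis $\kappa_0\simeq\kappa_1$, combined with the defining property that the neutral orientation pairs the positive codirection of the front with the positive Liouville direction, forces the two locally constructed diffeomorphisms to agree to first order along the intersection.

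To execute the induction uniformly, I would cover $L$ by a finite collection of model neighborhoods coming from its Ac structure, and on each such neighborhood invoke the local arboreal stability theorem of \cite{AGEN20a} to identify both triples $(V,\omega_i,L)$, for $i=0,1$, with the canonical symplectic model $(T^*\R^{n(\cT)},\omega_{\std},L_\cT)$. The hypothesis on orientation structures guarantees that these two identifications differ by an automorphism of the pre-Ac space $(L_\cT,\cO_\cT)$ preserving the neutral orientation, which by Lemma \ref{lem: ambiguity} lifts to a local symplectomorphism of the ambient model preserving $L_\cT$ pointwise. Composing with this symplectomorphism yields, on each chart, a local diffeomorphism with the required pointwise matching, and a partition-of-unity argument along the strata of $L$ (interpolating in the space of diffeomorphisms fixing $L$) glues these into the global $\phi$. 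The relative version is obtained by carrying cutoff functions vanishing on $\Op A$ through every step of the construction; since the matching problem is trivial there, nothing need be altered on $\Op A$.

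The main obstacle I expect is precisely the patching across the deep arboreal strata where many smooth pieces meet simultaneously: a priori the combinatorial ambiguity isolated in Lemma \ref{lem: ambiguity} could obstruct the coherent gluing of local matchings, and the entire purpose of introducing the orientation structure is to trivialize this obstruction globally. Managing the induction so that the successive first-order matchings are compatible with the previously constructed ones on all adjacent strata is the technical heart of the argument.
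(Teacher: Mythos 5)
Your opening reduction---find $\phi$ fixing $L$ pointwise so that $(\phi^*\omega_1)|_{T_xV}=\omega_0|_{T_xV}$ at every $x\in L$---is not achievable at the arboreal singular points, so the proof collapses at its first step. If $\phi|_L=\mathrm{id}$, then $d\phi_x$ fixes the tangent space of every smooth piece of $L$ through $x$. Already at an $A_2$ point the union of these tangent spaces contains a symplectic $2$-plane: in the model $L_{\cA_2}\times\R^{n-1}\subset T^*\R^n$ the zero section contributes $\partial_{q_n}$ and the conormal piece contributes $\partial_{p_n}$, so $d\phi_x$ is forced to be the identity on $\mathrm{span}(\partial_{q_n},\partial_{p_n})$, and $\phi^*\omega_1$ can agree with $\omega_0$ on that plane only if $\omega_1$ and $\omega_0$ already did. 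But both smooth pieces are Lagrangian for, say, $\omega_1=2\,dp_n\wedge dq_n+\sum_{i<n}dp_i\wedge dq_i$ as well as for $\omega_0=\sum_i dp_i\wedge dq_i$, and these differ on that plane by a positive scale. The orientation-structure hypothesis cannot repair this, since the obstruction is a scale factor, not a sign; and Lemma \ref{lem: ambiguity} does not supply the ``local symplectomorphism of the ambient model preserving $L_\cT$ pointwise'' you invoke---it only counts isomorphism classes of orientation structures on the local model. So both the pointwise matching and the patching mechanism you propose are unavailable exactly at the strata the proposition is about.

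The proof in the paper gives up on matching the forms along $L$ and instead establishes directly that the straight segment between $\omega_0$ and a corrected $\phi^*\omega_1$ stays nondegenerate. Pointwise, one attaches to each singularity the flag $F({}^n L)$ built from the tangent data of the smooth pieces, proves that the symplectic forms compatible with such a flag form a set whose connected components are \emph{convex} and are indexed by orientations of the symplectic quotients (Lemma \ref{lm:arb-lin-alg}), and identifies ``same induced orientation structure'' with ``same component''; convexity, not pointwise equality, is what makes $\omega_t$ nondegenerate along $L$ and hence on $\Op(L)$, while the Lagrangian condition is linear and therefore free. The diffeomorphism $\phi$ enters only to correct the stabilized directions: at each point one chooses a linear map $\Psi_x$ equal to the identity on all tangent directions of $L$ and making the two forms agree on the distinguished coisotropic/quotient data, the space of such $\Psi_x$ being nonempty and contractible (Lemmas \ref{lem:linear-contractible} and \ref{lem:convex-interpol}), and then integrates the family to $\phi$. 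The missing idea in your plan is precisely this convexity statement for forms compatible with the arboreal flag; without it, and with pointwise matching impossible, you have no argument that the interpolation $\omega_t$ is symplectic.
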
 


  \begin{prop}\label{prop:hom-to-Moser}
  Let $L\subset V$ be an arboreal subspace which is a Lagrangian subset for a family $\om_t$, $t\in[0,1]$, of symplectic forms on $L$. Then there exists a diffeotopy $\h_t: \Op(L)\to \Op(L)$, $t\in[0,1]$, such that $h_t(L)=L$ and $h_t^*\om_t=\om_0$.
  If $\om_t=\om_0$ on $\Op A$ for a closed subset $A\subset L$ then the isotopy $h_t$ can be chosen fixed on $\Op A$.   
  \end{prop}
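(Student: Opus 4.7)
The plan is to run the Moser trick with a time-dependent vector field $X_t$ whose flow preserves $L$ stratum by stratum, hence preserves $L$ as a subset. Concretely, I seek a smooth family of 1-forms $\alpha_t$ on a neighborhood $U$ of $L$ satisfying $d\alpha_t=\dot\om_t$ and, crucially, $\alpha_t|_{L_\beta}=0$ on every smooth Lagrangian sheet $L_\beta\subset L$; in the relative case I also require $\alpha_t\equiv 0$ on $\Op A$. Given such $\alpha_t$, define $X_t$ by $\iota_{X_t}\om_t=-\alpha_t$. At $p\in L_\beta$ the vanishing of $\alpha_t$ on $T_pL_\beta$, together with $L_\beta$ being $\om_t$-Lagrangian, forces $X_t(p)\in (T_pL_\beta)^{\om_t}=T_pL_\beta$, so the flow $h_t$ of $X_t$ preserves each sheet and hence $L$. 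The standard identity $\frac{d}{dt}(h_t^*\om_t)=h_t^*(d\iota_{X_t}\om_t+\dot\om_t)=0$ then yields $h_t^*\om_t=\om_0$, while the relative vanishing makes $h_t$ fixed on $\Op A$.

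The construction of $\alpha_t$ splits into two steps. First, after shrinking $U$ so that it deformation retracts onto $L$ through arboreal model charts, a Poincar\'e-type homotopy operator (patched by a partition of unity subordinate to a cover of $L$ by standard model neighborhoods) yields a family $\beta_t$ with $d\beta_t=\om_t-\om_0$; differentiating in $t$ and setting $\alpha_t:=\dot\beta_t$ gives the required $d\alpha_t=\dot\om_t$. Because every $\om_t$ vanishes on every sheet, the restriction $\alpha_t|_{L_\beta}$ is automatically closed. Second, I correct $\alpha_t$ by an exact form $df_t$ to make it vanish on all sheets simultaneously. Here the Lagrangian subset hypothesis is essential: in any local Darboux chart the condition $\oint_\gamma \lambda=0$ for loops $\gamma$ in $L$ supplies a continuous primitive of $\alpha_t|_L$ which is smooth on each sheet. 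These local primitives differ on overlaps by locally constant functions, so they glue (using connectedness and compactness of $L$) into a continuous $f_t:L\to\R$ smooth on each sheet, which admits a smooth extension to $U$ by a stratum-compatible Whitney-type extension. Then $\alpha_t-df_t$ has the desired property.

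The main obstacle I expect is the compatibility of stratum-wise primitives at points where several sheets meet at an arboreal singularity, and the corresponding smooth extension from $L$ to $U$. For this I would argue inductively on the signed rooted tree $\cT$ labeling the singularity: treat the base sheet $L_\rho$ first and extend across each incident sheet using the product structure of the local model $L(\cT,m,c)=L_\cT\times\R^d\times\R^c_{\geq 0}$, checking at each step that the correction needed on the new sheet is compatible with what has already been fixed on lower strata. The relative version is then handled by multiplying $f_t$ by a cutoff supported away from $A$, which is harmless since $\om_t=\om_0$ on $\Op A$ allows $\beta_t$, and hence $\alpha_t$, to be chosen identically zero near $A$ from the outset.
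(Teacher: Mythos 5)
Your overall Moser scheme -- primitives $\alpha_t$ of $\dot\om_t$ vanishing on the sheets of $L$, so that the Moser field is $\om_t$-dual to a form killing each Lagrangian sheet and hence tangent to it -- is exactly the paper's scheme, but the two construction steps on which everything rests contain genuine gaps. First, patching local primitives of $\om_t-\om_0$ by a partition of unity does not produce a primitive (the terms $d\rho_i\wedge\beta_t^i$ survive), so even the existence of a global $\beta_t$ on $\Op L$ requires the vanishing of $[\om_t-\om_0]$ in $H^2(\Op L;\R)\cong H^2(L;\R)$, which you do not address. More seriously, the gluing of the local primitives of $\alpha_t|_L$ into a global function $f_t:L\to\R$ is obstructed by a \v{C}ech class in $H^1(L;\R)$: local primitives differing by locally constant functions on overlaps glue if and only if that class vanishes, and ``connectedness and compactness of $L$'' do not kill it (already for a smooth Lagrangian circle or torus nothing in your construction forces the periods $\oint_\gamma\alpha_t$ over loops $\gamma\subset L$ to vanish). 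This can be repaired by correcting $\beta_t$ with closed $1$-forms via $H^1(\Op L)\cong H^1(L;\R)$, but that needs a de Rham-type statement on the stratified space which you would have to supply. Second, the ``stratum-compatible Whitney-type extension'' of $f_t$ -- a function that is only continuous on $L$ and smooth sheet-by-sheet -- to a smooth function on $\Op L$ is precisely the delicate point where many sheets meet at an arboreal singularity; it is not an instance of the classical Whitney extension theorem and you assert it rather than prove it. (The relative case has the same flavor of unjustified step: a primitive of $0$ near $A$ cannot simply be cut off, and $f_t\,d\chi$ need not vanish on sheets in the transition region.)

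The paper avoids both issues by exploiting conicality of the arboreal models. Locally $L=M\cup C(\Lambda)$ is invariant under the radial scaling of $T^*M$, so integrating an ambient primitive $\lambda_t$ of $\om_t$ (chosen to vanish along $M$ and to equal $pdq$ near $A$) along the rays $s\mapsto(sp,q)$ -- which stay inside $L$ whenever the endpoint lies in $L$ -- produces a smooth function $H_t$ on the whole chart with $dH_t=\lambda_t$ on vectors tangent to $L$ (Lemma \ref{lm:ext-Liouv-restr}); no extension of a function from $L$ is ever needed, and $\mu_t=\lambda_t-dH_t$ is a genuine ambient primitive vanishing on the sheets. The global statement is then obtained not by building a global primitive but by covering the compact $L$ with finitely many such conical charts and successively extending the isotopy using the relative form of the local result (Lemma \ref{lm:conic-Darboux}), which removes the $H^1(L;\R)$ issue altogether. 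To make your argument complete you would either have to prove the two missing cohomological/extension facts, or reorganize the proof along these local-to-global lines.
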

  
 Indeed, to prove Theorem \ref{thm:arb-Darboux} we can apply Proposition \ref{prop:hom-to-Moser} to the output of Proposition \ref{prop:symplectic-normal}. 
  We will prove Propositions \ref{prop:symplectic-normal} and \ref{prop:hom-to-Moser} in the next two subsections. 
  
  \begin{remark}
  As the proof of Proposition \ref{prop:symplectic-normal} will show, the conclusion holds more generally for two symplectic manifolds $(V_i, \omega_i)$, $i=1,2$, together with embeddings of a compact arboreal space $L$ onto a Lagrangian subset of $V_i$, $i=1,2$. In particular it follows that the smooth topology of a neighborhood of a Lagrangian compact arboreal space in a symplectic manifold is uniquely determined by the compact arboreal space. 
  \end{remark}
  
\subsubsection{Proof of Proposition \ref{prop:symplectic-normal}}
Let us revisit the notion of an orientation structure. Consider   a full flag $F= (0=V_0\subset V_1\subset\dots\subset V_{2n}=V)$ of subspaces in a $2n$-dimensional vector  space $V$, $\dim V_j=j$. 
Let  $\Om(F)$ be the space of all symplectic forms  $\om$ on $V$ such that    $V_j$  is  isotropic for $j\leq n$,  $V_j$ is coisotropic for $j\geq n$, and  $V_j^{\perp_\om}=  V_{2n-j} $ for $j\leq n$.

We begin with following elementary assertion.

 \begin{lemma}\label{lm:arb-lin-alg}
 The connected components of $\Om(F)$ are determined by orientations of  the symplectic quotients $V_{2n-j}/V_j$, for $j=0, \ldots, n-1$. Any sequence of orientations can occur and each connected  component is convex.
 \end{lemma}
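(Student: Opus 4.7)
The plan is to reduce the question to a concrete description of $\Om(F)$ as a subset of the space of skew-symmetric matrices, by working in a basis $e_1,\dots,e_{2n}$ of $V$ adapted to the flag, i.e.\ $V_j=\Span(e_1,\dots,e_j)$. Writing $\om$ in this basis as a skew-symmetric matrix $A=(a_{ij})$ with $a_{ij}=\om(e_i,e_j)$, the flag conditions translate directly into the vanishing condition $a_{ij}=0$ whenever $i+j\leq 2n$, because $V_j^{\perp_\om}=V_{2n-j}$ for $j\leq n$. Thus $A$ is an ``anti-upper-triangular'' skew-symmetric matrix, and non-degeneracy of $\om$ is equivalent to non-vanishing of the $n$ antidiagonal entries $a_{j,2n+1-j}$ for $j=1,\dots,n$. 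This will realize $\Om(F)$ as an open subset of an affine space, cut out by $n$ open conditions $a_{j,2n+1-j}\neq 0$ and no further constraints.

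From this description, the connected components are immediately visible: they correspond to the $2^n$ possible sign patterns $s_j:=\sgn(a_{j+1,2n-j})$, $j=0,\dots,n-1$, and each component is a product of open half-lines (for the antidiagonal entries) with affine spaces (for the strictly-above-antidiagonal entries), hence convex. This handles the convexity claim and identifies $\pi_0(\Om(F))$ with $\{\pm 1\}^n$.

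It remains to match these sign parameters to the orientations $o_j$ of the symplectic quotients $V_{2n-j}/V_j$, $j=0,\dots,n-1$. Since $V_j=V_{2n-j}^{\perp_\om}$, the form $\om$ descends to a symplectic form on $V_{2n-j}/V_j$; its matrix in the induced basis $\bar e_{j+1},\dots,\bar e_{2n-j}$ is again anti-upper-triangular with antidiagonal entries $a_{j+1,2n-j},a_{j+2,2n-j-1},\dots,a_{n,n+1}$. The orientation $o_j$ determined by the top power of this restricted form is therefore the sign of the corresponding Pfaffian, which computes as $\prod_{k=j}^{n-1}s_k$. This triangular change of variables
\[ o_{n-1}=s_{n-1},\qquad o_{n-2}=s_{n-2}s_{n-1},\qquad \dots,\qquad o_0=s_0 s_1\cdots s_{n-1} \]
is a bijection $\{\pm1\}^n\to\{\pm1\}^n$, so every orientation sequence is realized and the assignment $\om\mapsto(o_0,\dots,o_{n-1})$ separates connected components.

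The only mildly delicate step is the Pfaffian computation for an anti-upper-triangular skew-symmetric matrix, which I would verify inductively by expanding along the last row and using that the zero pattern forces only the antidiagonal term to contribute. I do not anticipate any serious obstacle: once the basis is chosen, everything is linear algebra, and the inductive Pfaffian identity is standard.
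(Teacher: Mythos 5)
Your argument is correct and takes essentially the same route as the paper's proof: both write $\om$ in a flag-adapted basis, observe that the flag conditions force an anti-triangular zero pattern with non-vanishing anti-diagonal entries whose signs index the (convex) connected components, and identify these signs with orientations of the quotients $V_{2n-j}/V_j$ --- your Pfaffian computation and the triangular bijection $o_j=\prod_{k\geq j}s_k$ simply make explicit what the paper compresses into the statement that the signs ``naturally correspond'' to the orientations. The only slip is one of wording and is harmless: the free non-anti-diagonal entries are those strictly \emph{below} the anti-diagonal (index sum $>2n+1$), not above.
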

 
 \begin{proof}
Represent  $\omega$ as a matrix with respect to a basis $u_1, \ldots , u_{2n}$ of $V$ such that $V_j=\text{span}(u_1, \ldots , u_j)$. It has a block form with four $n$ by $n$ blocks, the two diagonal blocks being zero and the two off-diagonal blocks being $A$ and $-A$. Moreover, $A$ is a triangular matrix and its diagonal elements must be non-zero. Their signs naturally correspond to orientations of the symplectic quotients.
\end{proof}
  
We use the notation of \cite{AGEN20a} for the explicit construction of the extended local model 
  $$
\xymatrix{
{}^{n} L= \bigcup_{i = 0}^n {}^n L_i \subset T^*\R^n
}
$$
in terms of equations in the canonical coordinates $x_0, \ldots, x_{n-1}, p_0, \ldots, p_{n-1}$.
Introduce the flag $F({}^n L) = (0 = V_0 \subset V_1 \subset \cdots \subset V_{2n} = T^*\R^n)$ given  by the assignments
  $$
  \xymatrix{
  V_i = T_0(\bigcap_{j = 0}^{n-i} {}^nL_j) = \Span(\p_{x_{n-1}}, \ldots, \p_{x_{n-i}})  & i\leq n
  }
  $$
  $$
  \xymatrix{     
   V_{2n-i} = \Theta_0(\bigcup_{j = 0}^{n-i} {}^nL_j)  =
   \Span(\p_{x_{n-1}}, \ldots, \p_{x_{0}}, \p_{p_0}, \ldots, \p_{p_{2n-i}})  &  i \leq n
  }
  $$
  Here $\bigcap_{j = 0}^{n-i} {}^nL_j$ is a submanifold of $T^*\R^n$, in fact a linear subspace of ${}^n L_0 = \R^n$, and $T_0(\bigcap_{j = 0}^{n-i} {}^nL_j)$ denotes its tangent space at the origin; $\bigcup_{j = 0}^{n-i} {}^nL_j$ is not a submanifold, but has a tangent sheaf  in the sense of algebraic geometry, and $\Theta_0(\bigcup_{j = 0}^{n-i} {}^nL_j) = (\cI_0/\cI_0^2)^\vee$ denotes its fiber at the origin. 
Note these characterizations imply $F({}^n L)$  is preserved by diffeomorphisms of $T^*\R^n$ preserving ${}^n L$ as a subset hence preserving each ${}^n L_i$ as a subset, i.e. $F({}^n L)$ only depends on the sheaf of smooth functions on ${}^nL$. Note also   $V_i$  is  isotropic for $i\leq n$,  $V_i$ is coisotropic for $i\geq n$, and  $V_i^{\perp_\om}=  V_{2n-i} $ for $j\leq n$ for any symplectic form on $\R^{2n}$ such that ${}^nL \subset \R^{2n}$ is Lagrangian.
%
%
%

Recall that the local model for an $\cA_{n+1}$-arboreal Lagrangian is a subset of the extended local model: $L_{\cA_{n+1}} \subset {}^n L$. We have the following reformulation of orientation structures for Ac-space structures on the germ of $L_{\cA_{n+1}}$ at its central point:

\begin{lemma}\label{l:orient reform}
Let $\omega_0$ be a symplectic form on $\R^{2n}$ for which $L_{\cA_{n+1}}$ is a Lagrangian subset.

 If $\omega_1$ is another such form, then the orientation structures $\kappa_0, \kappa_1$ induced by $\omega_0$, $\omega_1$ respectively  on $L_{\cA_{n+1}}$ are isomorphic if and only $\om_0$ and $\om_1$ lie in the same connected component of $\Omega(F({}^n L))$. 

\end{lemma}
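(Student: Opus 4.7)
The plan is to verify directly that the isomorphism class of $\kappa_\omega$, as the induced orientation structure on $L_{\cA_{n+1}}$, is a complete invariant of the connected component of $\omega$ in $\Omega(F({}^nL))$. Once this is done, the ``if and only if'' falls out of Lemma \ref{lm:arb-lin-alg}, which classifies these components by orientations of the symplectic quotients $V_{2n-j}/V_j$ for $j=0,\dots,n-1$.

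I first unpack the construction of $\kappa_\omega$ from the subsection \emph{Lagrangian subsets}. On each top-dimensional smooth piece $L_\alpha$ of $L_{\cA_{n+1}}$, $\kappa_\omega$ has fiber $\wedge^{\mathrm{top}} T L_\alpha$. At every codimension-one clean intersection $L_\alpha\cap L_\beta$, the identification between $\wedge^{\mathrm{top}} T L_\alpha$ and $\wedge^{\mathrm{top}} T L_\beta$ along the intersection is pinned down by the rule that an inward normal to $L_\alpha\cap L_\beta$ inside $L_\beta$ is glued to the normal in $L_\alpha$ with which it pairs positively under $\omega$. Since this gluing sign only depends on the sign of a pairing which is a continuous nonvanishing function of $\omega$ on each component of $\Omega(F({}^nL))$, and each component is convex by Lemma \ref{lm:arb-lin-alg}, the isomorphism class of $\kappa_\omega$ is constant on each component.

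Next I identify the sign data cleanly in coordinates. In the explicit equations of \cite{AGEN20a} for $L_{\cA_{n+1}} \subset {}^nL \subset T^*\R^n$, each of the $n$ codimension-one intersections of smooth pieces takes place at a stratum whose tangent space sits between two consecutive terms of the flag $F({}^nL)$; a transverse direction inside $L_\alpha$ and a transverse direction inside $L_\beta$ project to a canonical pair in the symplectic quotient $V_{2n-j}/V_j$ for the appropriate $j$. The positivity of the $\omega$-pairing used to define the gluing of $\kappa_\omega$ at that intersection is therefore precisely the sign of the corresponding diagonal entry of the block $A$ appearing in the proof of Lemma \ref{lm:arb-lin-alg}, i.e.\ the orientation of $V_{2n-j}/V_j$ induced by $\omega$. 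Thus the $n$-tuple of gluing signs of $\kappa_\omega$ equals the $n$-tuple of orientations of the quotients $V_{2n-j}/V_j$, which classifies the connected component of $\omega$.

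To finish, I argue the converse: if $\kappa_{\omega_0}\cong\kappa_{\omega_1}$ as orientation structures, then their tuples of gluing signs must agree, so $\omega_0$ and $\omega_1$ lie in the same component of $\Omega(F({}^nL))$. I expect to establish this by induction on $n$ along the chain $\cA_{n+1}$, with the base case $n=1$ given by the remark preceding the lemma: the two orientation structures on $L_{\cA_2}$ correspond exactly to the two orientations of $\R^2$, hence to the two components of $\Omega(F({}^1L))$. The main obstacle will be the bookkeeping in this inductive step: one must carefully match the inductive definition of the neutral element $\kappa({}^nL,m,c)$ (which glues the positive codirection to the positive Liouville direction) against the flag-based sign convention, and verify that no two distinct sign tuples can be identified by an abstract automorphism of the rigid Ac-space $L_{\cA_{n+1}}$, so that the map from $\pi_0(\Omega(F({}^nL)))$ to isomorphism classes of orientation structures is injective.
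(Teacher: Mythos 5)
There is a genuine gap, and it sits exactly where you defer the work in your last paragraph. Your plan is to show that the $n$-tuple of gluing signs of $\kappa_\omega$ both recovers the component of $\Omega(F({}^nL))$ and is an invariant of the isomorphism class of the orientation structure, the latter by checking that ``no two distinct sign tuples can be identified by an abstract automorphism of $L_{\cA_{n+1}}$''. But this cannot work as stated: $\Omega(F({}^nL))$ has $2^n$ connected components (Lemma \ref{lm:arb-lin-alg}), while on $L_{\cA_{n+1}}$ there are only $2^k=2$ isomorphism classes of orientation structures, since the chain has a single edge adjacent to the root (Lemma \ref{lem: ambiguity}); indeed an orientation structure must be locally modeled on neutral elements, so only the coorientation of the front hypersurface in the root piece survives. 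Hence a map from $\pi_0(\Omega(F({}^nL)))$ to isomorphism classes of orientation structures cannot be injective for $n\geq 2$, and no amount of rigidity of sign tuples under automorphisms will rescue it. What saves the lemma --- and what your argument never touches --- is that a symplectic form for which the genuinely curved model $L_{\cA_{n+1}}$ (not just its tangent flag) is Lagrangian cannot land in an arbitrary component: the Lagrangian condition on the higher strata forces all the diagonal entries of the triangular block $A$ from Lemma \ref{lm:arb-lin-alg} to be \emph{equal}. This is the quadratic computation at the heart of the paper's proof: for $\cA_3$, the third stratum is the positive conormal of $\{x_0=x_1^2,\ x_1\geq 0\}$, and requiring the span of $2x_1\p_{x_1}+\p_{x_0}$ and $\p_{p_1}+2x_1\p_{p_0}$ to be $\omega$-isotropic gives $2x_1(\lambda-\mu)=0$, hence $\lambda=\mu$; induction handles general $n$. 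Only after this does the count match: exactly two components of $\Omega(F({}^nL))$ contain forms making $L_{\cA_{n+1}}$ Lagrangian, and they are distinguished by the single root sign, which is precisely the pairing defining the induced orientation structure.

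Two smaller points. First, your claim that the gluing sign at the $j$-th codimension-one intersection equals the sign of the $j$-th diagonal entry of $A$ is not free: those gluings are computed at points of strata away from the origin, where the local root piece is not $L_\rho$, and relating them to the matrix of $\omega$ at the origin with respect to $F({}^nL)$ requires essentially the same computation on the curved pieces as above; you assert it by projecting to the quotients $V_{2n-j}/V_j$ without justification. Second, in your constancy argument you should note that the induced orientation structure is only defined on the subset of a component consisting of forms for which $L_{\cA_{n+1}}$ is Lagrangian; this subset is the intersection of the convex component with a linear subspace, hence convex, so the constancy argument does go through, but as written the ``continuous nonvanishing function of $\omega$ on each component'' phrasing is not accurate.
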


\begin{proof}
Let $\omega$ be a symplectic structure on $\R^{2n}$ such that ${}^nL$ is Lagrangian. The sign of the leading diagonal element of the matrix $A$ in the proof of Lemma \ref{lm:arb-lin-alg} applied to $\omega$ is precisely the pairing used to define the orientation structure on ${}^n L$ as a Lagrangian subset of $(\R^{2n}, \omega)$. The other signs are uniquely determined by quadratic information. For example, for  $L_{\cA_3} \subset T^*\R^2$ the third stratum is the positive conormal to $\{x_0=x_1^2 \, \, x_1 \geq 0 \}$. Therefore, the vectors $2x_1 \p/\p x_1 + \p/\p x_0$ and $\p/\p p_1 + 2x_1 \p/\p p_0$ should span a Lagrangian subspace. Their symplectic product is  $2x_1(\lambda- \mu)$ where $\lambda$ is the first diagonal term of the matrix $A$ and $\mu$ is the second diagonal term of the matrix $A$. So in fact $\lambda = \mu$. The general case can be proved similarly by induction. The conclusion is that the connected component of $\Omega(F({}^n L))$ is uniquely determined by the orientation structure, which in this case is just a single sign. 
\end{proof}

For more general trees $\cT$, one can apply the above argument to each factor $T^*\R^n$ in $T^*\R^{n(\cT)}$ corresponding to each $\cA_n$-subtree of $\cT$ with the same root $\rho$ and deduce the general version:

\begin{lemma}\label{l:orient reform}
Let $\cT$ be a signed rooted tree and $\omega_0$ a symplectic form on $\R^{2n(\cT)}$ for which $L_\cT$ is Lagrangian.

 If $\omega_1$ is another such form, then the orientation structures $\kappa_0, \kappa_1$ induced by $\omega_0$, $\omega_1$ respectively on $L_{\cT}$ are isomorphic if and only if for any $\cA_{m+1}$-type subtree $\cA_{m+1} \subset \cT$ with the same root $\rho$, the restrictions of $\om_0$ and $\om_1$ to the factor $\R^{2m}$ of $\R^{2n(\cT)}$ corresponding to the $\cA_{m+1}$-subtree lie in the same connected component of $\Omega(F({}^m L))$.

\end{lemma}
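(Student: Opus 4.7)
The plan is to reduce the general case to the already-treated $\cA_{m+1}$-case by systematic restriction along $\cA$-subtrees through the root. By the inductive construction of $L_\cT$ in \cite{AGEN20a}, the ambient $T^*\R^{n(\cT)}$ decomposes so that each non-root vertex contributes an independent conormal direction while the root $\rho$ contributes the shared zero-section coordinates of $L_\rho$. Consequently, for every $\cA_{m+1}$-subtree $\cA \subset \cT$ with root $\rho$, there is a distinguished symplectic coordinate subspace $T^*\R^m \subset T^*\R^{n(\cT)}$ in which the canonical local model $L_{\cA_{m+1}} \subset T^*\R^m$ sits, and each smooth stratum of $L_{\cA_{m+1}}$ is the intersection of the corresponding stratum of $L_\cT$ with this subspace.

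First I would check that if $\omega$ is a symplectic form on $T^*\R^{n(\cT)}$ making $L_\cT$ a Lagrangian subset, then its restriction $\omega|_{T^*\R^m}$ is a symplectic form on $T^*\R^m$ lying in $\Omega(F({}^m L))$ and making $L_{\cA_{m+1}}$ a Lagrangian subset. This is immediate, since the Lagrangian/isotropic condition on each stratum passes to the intersection with a coordinate subspace, and the flag $F({}^m L)$ is intrinsic to the pre-Ac structure of $L_{\cA_{m+1}}$.

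The key step is to show that the orientation structure $\kappa_\omega$ induced by $\omega$ on $L_\cT$ restricts along each $\cA$-subtree to the orientation structure on $L_{\cA_{m+1}}$ induced by $\omega|_{T^*\R^m}$. This follows because the definition of the induced orientation structure at each codimension-one intersection of smooth strata uses only the symplectic pairing between the relevant normal directions, which in this setting lie in the factor $T^*\R^m$. By Lemma~\ref{lem:orient} and Lemma~\ref{lem: ambiguity}, the isomorphism class of $\kappa_\omega$ is recorded by the edge-signs on the underlying rooted tree, and every edge of $\cT$ lies on some $\cA$-subtree through the root, so the global isomorphism class of $\kappa_\omega$ is reconstructed from its restrictions to all such $\cA$-subtrees.

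The proof then finishes by combining these steps with the already-proved $\cA$-case: $\kappa_0 \simeq \kappa_1$ if and only if $\kappa_0|_{L_{\cA_{m+1}}} \simeq \kappa_1|_{L_{\cA_{m+1}}}$ for every $\cA$-subtree through $\rho$, which by the $\cA$-case is equivalent to $\omega_0|_{T^*\R^m}$ and $\omega_1|_{T^*\R^m}$ lying in the same component of $\Omega(F({}^m L))$. The main subtlety, which I would address carefully, is the coherence across $\cA$-subtrees sharing initial edges: one must verify that the sign on such a shared edge is read off consistently from each containing $\cA$-subtree, which reduces to a direct inspection of the coordinate description of the neutral element in $K(\cT,m,c)$ and constitutes the main obstacle in making the overall reduction rigorous.
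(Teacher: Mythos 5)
Your proposal is correct and follows essentially the same route as the paper, which likewise deduces the general case by applying the $\cA_{m+1}$-argument to each factor $T^*\R^m \subset T^*\R^{n(\cT)}$ corresponding to an $\cA$-subtree through the root $\rho$. The extra points you raise (restriction of the induced orientation structure to each factor, and consistency of edge-signs on shared edges) are exactly the bookkeeping implicit in the paper's one-line reduction, so there is no substantive difference in approach.
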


So we have proved Proposition \ref{prop:symplectic-normal} for the germ of the central point of the pre-Ac space $L(\cT,m)$ with $m=n(\cT)$, i.e. not stabilized. In the case $d=m-n(\cT)>0$, the situation changes as follows. Given a symplectic form $\omega_0$ on $\R^{2m}$ for which the standard model $L_{\cT} \times \R^{d} \subset \R^{2 n(\cT)} \times \R^{2d}$ is Lagrangian, consider $E \subset \R^{2m}$, the $(2m-d)$-dimensional coisotropic subspace of $\R^{2m}$ which is symplectically orthogonal to the isotropic $F=0 \times \R^{d}$. Since $L_\cT \times \R^d$ is assumed to be Lagrangian, note that $F$ is spanned by those directions tangent to the smooth part of $L_\cT \times \R^d$ and $E$ is spanned by those directions tangent to $L_\cT \times \R^d$ at $0$, hence both are independent of the symplectic form $\omega_0$.

The symplectic reduction of $(E,\omega_0|_E)$ is a $2(m-d)$-dimensional symplectic vector space $V_0$. The quotient $\R^{2m}/E$ is canonically identified via the fixed symplectic form $\omega_0$ with the dual of $0 \times \R^{d}$. Therefore there exists a canonical symplectic isomorphism $\Phi$ between $\R^{2m}$ and $V_0 \oplus (F \oplus F^*)$.

Now suppose that we have two such symplectic forms $\omega_0$ and $\omega_1$ on $\R^{2m}$ for which $L_\cT \times \R^d$ is Lagrangian. 

\begin{lemma}\label{lem:linear-contractible} Consider $\cS=\cS(\omega_0,\omega_1)$, the space of linear isomorphisms $\Psi:\R^{2m} \to \R^{2m}$ such that:

\begin{enumerate}
\item $\Psi|_{L_\cT \times \R^{d}} = \text{Id}_{L_\cT \times \R^d}$.
\item $\Psi^*\omega_1$ and $\omega_0$ agree on $F \oplus ( \R^{2m}/ E)$.

\end{enumerate}

Then $\cS$ is nonempty and contractible. 

\end{lemma}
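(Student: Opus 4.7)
The plan is to first exploit condition (1) to pin down most of the structure of any $\Psi \in \cS$, and then verify that condition (2) cuts out a nonempty affine slice. The starting observation is that each smooth sheet of $L_\cT \times \R^d$ passes through the origin with a Lagrangian tangent space contained in $E$ (and containing $F$), and that $E$ is precisely the span of these tangent spaces at the origin. Since $\Psi$ is linear and pointwise fixes every smooth sheet of $L_\cT \times \R^d$, it must pointwise fix each of these tangent spaces, hence $\Psi|_E = \mathrm{Id}_E$. Choosing any complement $G$ to $E$ in $\R^{2m}$, we can therefore parametrize the set of linear isomorphisms satisfying (1) as an open neighborhood of the identity in the affine space of maps $A \in \mathrm{Hom}(G, \R^{2m})$ via $\Psi|_G = \mathrm{Id}_G + A$; openness only involves keeping the $G$-component of $A$ in a neighborhood of the identity, which is contractible.

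Next I would translate condition (2) into a condition on $A$. Because both $\omega_0$ and $\omega_1$ make $F$ isotropic with $F^{\perp_{\omega_i}} = E$, any contribution to the induced pairing $F \otimes (\R^{2m}/E) \to \R$ coming from the component $A_E: G \to E$ of $A$ vanishes, and only the $G$-to-$G$ part of $A$ enters the equation. A direct calculation then shows that (2) is an affine-linear condition on $A$, defining an affine subspace of the parameter space.

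The main obstacle is to show that this affine subspace is nonempty, i.e.\ to exhibit one specific element of $\cS$. For this I would use the canonical symplectic isomorphisms $\Phi_i : \R^{2m} \simeq V_i \oplus (F \oplus F^*)$ associated to $\omega_i$ ($i=0,1$) described before the lemma. Under $\Phi_i$, the subspace $F$ goes to $F \oplus 0$, the coisotropic $E$ goes to $V_i \oplus F \oplus 0$, and $L_\cT \times \R^d$ is realized as the product of a canonical model $L_\cT \subset V_i$ with $F$. A candidate $\Psi_0 \in \cS$ can be produced as $\Psi_0 = \Phi_1^{-1} \circ T \circ \Phi_0$, where $T$ is a linear map of $V \oplus F \oplus F^*$ that is the identity on $V$ (after identifying $V_0$ and $V_1$ via the classification of arboreal singularities, which applies because the induced orientation structures agree by hypothesis of Proposition \ref{prop:symplectic-normal}), the identity on $F$, and adjusts $F^*$ by the difference between the two identifications $\R^{2m}/E \to F^*$ coming from $\omega_0$ and $\omega_1$. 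One checks by construction that $\Psi_0$ is the identity on $L_\cT \times \R^d$ and that $\Psi_0^* \omega_1$ and $\omega_0$ induce the same form on $F \oplus (\R^{2m}/E)$.

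Finally, contractibility follows formally: $\cS$ is the intersection of the open set of $A$'s for which $\mathrm{Id}_G + A_G$ is invertible with the affine subspace cut out by (2). Once we know this intersection is nonempty, it is clearly star-shaped with respect to any chosen base point obtained by scaling $A - A_0$ toward $A_0$, and hence contractible.
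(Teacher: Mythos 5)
Your overall strategy (condition (1) forces $\Psi|_E=\mathrm{Id}_E$, condition (2) is an affine constraint seen only by the $G$-to-$G$ part of $A$, then exhibit one element and conclude) is sound and close in spirit to the paper's argument, but two steps are genuinely gapped as written. First, the nonemptiness construction: you identify $V_0$ with $V_1$ ``via the classification of arboreal singularities,'' importing an orientation-structure hypothesis that Lemma \ref{lem:linear-contractible} does not have (it belongs to Lemma \ref{lem:convex-interpol} and Proposition \ref{prop:symplectic-normal}); more importantly, the classification theorem only supplies a germ of a generally nonlinear symplectomorphism matching the two models \emph{setwise}, so a $T$ built from it need not be linear and will in general move points of $L_\cT$, and then $\Psi_0$ fails condition (1) --- the very point you dismiss with ``one checks by construction.'' No such identification is needed: $V_0$ and $V_1$ are the same vector space $E/F$ carrying different reduced forms, and $\Psi_0$ is not required to be symplectic, only to fix the model pointwise and to correct the two identifications of $\R^{2m}/E$ with $F^*$. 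Even granting $T|_V=\mathrm{id}$, for $\Phi_1^{-1}T\Phi_0$ to fix the model pointwise you must also choose $\Phi_0$ and $\Phi_1$ with the same $E$-part of the splitting; otherwise you only get the identity modulo $F$. The honest (and easier) route inside your own parametrization: condition (2) reads $\omega_1(f,(\mathrm{Id}_G+A_G)g)=\omega_0(f,g)$ for all $f\in F$, $g\in G$; since $\omega_1$ pairs $F$ with $G$ nondegenerately this determines $A_G$ uniquely, and since $\omega_0$ pairs $F$ with $G$ nondegenerately the block $\mathrm{Id}_G+A_G$ is injective, hence invertible; taking $A_E=0$ then gives an explicit element of $\cS$ with no appeal to the $\Phi_i$ or to arboreal classification.

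Second, the contractibility step ``clearly star-shaped'' does not follow from what you wrote: the intersection of an affine subspace with the open set of invertible maps is in general neither star-shaped nor contractible, and the invertibility locus $\{B:\mathrm{Id}_G+B\in \GL(G)\}$ that you describe as ``a neighborhood of the identity, which is contractible'' is neither. What rescues the argument is precisely the uniqueness just observed: condition (2) pins the $G$-to-$G$ block to a single invertible value, so every point of the affine slice is block-triangular with invertible diagonal and hence automatically an isomorphism. Therefore $\cS$ is the entire affine slice, an affine space modeled on $\Hom(G,E)$, and contractibility (indeed convexity) is immediate; in particular your segments toward a base point never meet the singular locus because the $G$-to-$G$ block is constant along them. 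This is exactly the content of the paper's contractibility computation, which shows that for $\Psi_0,\Psi_1\in\cS$ the composition $\Psi_1^{-1}\Psi_0$ lies in a linear space of unipotent block-triangular maps --- the step your write-up skips.
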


\begin{example}
For the smooth arboreal singularity, $\cS$ consists of a single element.
\end{example}

\begin{proof} 
For the existence, write $\R^{2m}=\R^{2n(\cT)} \times \R^d \times \R^d$. Consider a linear isomorphism $\Psi: \R^{2m} \to \R^{2m}$ which is the identity on the first two factors. This induces a linear isomorphism between the $V_0$ and $V_1$ corresponding to $\omega_0$ and $\omega_1$. The further condition that $\Psi$ respects the identification $V_0 \oplus (F \oplus F^*) \simeq V_1 \oplus (F \oplus F^*)$ uniquely determines a linear isomorphism $\Psi$ satisfying (i) and (ii).

For the contractibility, fix $\Psi_0 \in \cS$. For any other $\Psi_1 \in \cS$, we will write the matrix for $\Psi_1^{-1} \Psi_0$ in block form with respect to the decomposition $\R^{2m}=\R^{2n(\cT)} \times \R^d \times \R^d$, with the canonical basis. The matrix will have block form, with the diagonal factors the identity, and with all off-diagonal blocks zero except for the $(3,2)$ block, which is an arbitrary symmetric matrix $A$. The space of such is contractible, so this completes the proof.  \end{proof}

\begin{lemma}\label{lem:convex-interpol}
Let $\omega_0, \omega_1$ be symplectic forms on $\R^{2m}$ such that $L_\cT \times \R^d$ is Lagrangian and which induce isomorphic orientation structures. Let $\Psi \in \cS$. Then $(1-t)\omega_0 + t \Psi^* \omega_1$ is symplectic for all $0 \leq t \leq 1$.
\end{lemma}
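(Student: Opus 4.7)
The plan is to reduce the stabilized statement to the unstabilized case by performing symplectic reduction along the isotropic stabilization direction $F = 0 \times \R^d$, and then to invoke the convexity result of Lemma \ref{lm:arb-lin-alg} together with the identification of connected components provided by Lemma \ref{l:orient reform}.

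First I would set $\omega_1' := \Psi^*\omega_1$, so both $\omega_0$ and $\omega_1'$ are symplectic forms for which $L_\cT \times \R^d$ is a Lagrangian subset (since $\Psi$ preserves $L_\cT \times \R^d$ pointwise). Condition (ii) in the definition of $\cS$ says $\omega_0$ and $\omega_1'$ agree on $F \oplus (\R^{2m}/E)$, where $E = F^{\perp_{\omega_0}}$. Since the pairing between $F$ and $\R^{2m}/E$ is the same for both forms, one checks directly that $F^{\perp_{\omega_1'}} = E$ as well, and in fact this equality of orthogonals persists for every linear combination $\omega_t = (1-t)\omega_0 + t\omega_1'$ provided $\omega_t$ is nondegenerate.

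Next, both $\omega_0|_E$ and $\omega_1'|_E$ have kernel $F$ and so descend to symplectic forms $\bar\omega_0, \bar\omega_1'$ on the reduction $V = E/F$, which is naturally identified with $\R^{2n(\cT)}$ in such a way that the image of $L_\cT \times \R^d$ is the unstabilized model $L_\cT$. The orientation structure induced on $L_\cT \times \R^d$ by $\omega_0$ (resp.\ $\omega_1'$) corresponds naturally to the orientation structure induced on $L_\cT$ by $\bar\omega_0$ (resp.\ $\bar\omega_1'$), since orientation structures are compatible with stabilization in the obvious way. By the hypothesis that the orientation structures agree, the reduced orientation structures on $L_\cT \subset V$ also agree, so Lemma \ref{l:orient reform} places $\bar\omega_0$ and $\bar\omega_1'$ in the same connected component of $\Omega(F({}^{n(\cT)}L))$. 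By Lemma \ref{lm:arb-lin-alg}, this component is convex, so $\bar\omega_t := (1-t)\bar\omega_0 + t\bar\omega_1'$ is symplectic for every $t \in [0,1]$.

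Finally, I would assemble the pieces to see that $\omega_t$ itself is nondegenerate. The form $\omega_t$ vanishes on $F$, has $E$ in its kernel-to-$F$ orthogonal, pairs $F$ with $\R^{2m}/E$ nondegenerately (by the agreement condition, this pairing is the fixed nondegenerate one from $\omega_0$), and reduces on $V = E/F$ to the symplectic form $\bar\omega_t$. A standard block computation then gives $\omega_t^{m} \neq 0$, so $\omega_t$ is symplectic. The main subtlety is verifying the naturality of the correspondence between orientation structures under stabilization and reduction — once that bookkeeping is in place, the proof is a clean two-step reduction to the linear convexity lemma.
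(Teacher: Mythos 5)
Your proof is correct and follows essentially the same route as the paper: both reduce along $F$ to the unstabilized model (the paper via the splitting $\R^{2m}\simeq V_0\oplus(F\oplus F^*)$, you via the reduction $E/F$), use condition (ii) in the definition of $\cS$ to fix the $F$--$(\R^{2m}/E)$ pairing, and invoke the convexity of the connected components of $\Omega(F({}^{n}L))$ identified by the orientation structure. Your final block/kernel computation just makes explicit the nondegeneracy bookkeeping that the paper leaves implicit.
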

\begin{proof}

On $\R^{2m}/V_0$, the symplectic forms induced by $\omega_0$ and $\Psi^* \omega_1$ agree. On $V_0$ the problem is reduced to the non-stabilized case as discussed above: the homotopy $(1-t)\omega_0 + t \Psi^*\omega_1$ will be through symplectic forms whenever the induced orientation structures agree. Indeed, the argument is unaffected by the symplectic isomorphism $\Psi$ since it is the identity on $L_\cT \times \R^d$. 
\end{proof}

Now we are ready to prove Proposition \ref{prop:symplectic-normal}. Let $L \subset V$ be a compact arboreal space which is Lagrangian for two symplectic forms $\omega_0$ and $\omega_1$. By Lemmas \ref{lem:linear-contractible} and \ref{lem:convex-interpol} we may construct a family of linear isomorphisms $\Psi_x:T_xV \to T_xV$, $x \in L$, such that $\Psi_x $ is the identity on directions tangent to $L$ at $x$, and such that $(1-t)\omega_0(x) +t \Psi_x^* \omega_1(x)$ is symplectic for all $x \in X$ and $0 \leq t \leq 1$. We may then integrate the family $\Psi_x$ to a diffeomorphism $\Psi:\Op(L) \to \Op(L)$ such that $\Psi|_L=\text{Id}_L$ and such that $(1-t) \omega_0 + t\Psi^* \omega_1$ is a homotopy of symplectic forms on $\Op(L)$.  This completes the proof.

\subsubsection{Proof of Proposition \ref{prop:hom-to-Moser}}
\begin{lemma}\label{lm:ext-Liouv-restr} 
Let $L\subset T^*M$ be a Lagrangian subset which is the union $M\cup C(\Lambda)$ where $C(\Lambda)$ is the Liouville cone over a Legendrian $\Lambda$.
Let $\lambda$ be a 1-form on $T^*M$ such that $\lambda|_M=0$ and $d\lambda=0$    on $L$.  Then  there exists a smooth function $H:T^*M\to\R$ vanishing on $M$  together with its differential $dH$, and such that
$\lambda$ coincides with $dH$ on vectors tangent to $L$. 
\end{lemma}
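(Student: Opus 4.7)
The plan is to build $H$ by a Poincar\'e-style integration along backward Liouville trajectories, using Cartan's magic formula to isolate an error term that vanishes on $TL$ and along $M$. Write $Z = p\,\partial_p$ for the canonical Liouville vector field of $T^*M$ and $\phi_s(q,p) = (q, e^s p)$ for its flow; then $\phi_s$ fixes $M$ pointwise, preserves $L = M \cup C(\Lambda)$ as a set, and $Z$ is tangent to $L$ everywhere (trivially on $M$ where $Z = 0$, and as the cone generator on $C(\Lambda)$). I will read the hypothesis $\lambda|_M = 0$ as the vanishing of $\lambda$ as a covector at every point of $M$; in local coordinates $\lambda = A_i\,dq^i + B_j\,dp_j$ this means $A_i(q,0) = B_j(q,0) = 0$. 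This strong reading is forced by the conclusion: at a foot point $x_0 \in \pi(\Lambda)$ the cone direction $\nu \in T_{x_0}L$ satisfies $\lambda(\nu) = \sum_j p_{0,j}\, B_j(q_0, 0)$, and this must equal $dH(\nu)|_{x_0} = 0$.

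With this reading I will set
\[
H(x) \;:=\; \int_{-\infty}^{0} \lambda(Z)_{\phi_s(x)}\, ds.
\]
The integrand is $O(e^{2s})$ as $s \to -\infty$ because $B_j(q,0) = 0$, so $H$ is a well-defined smooth function on $T^*M$, and $H|_M = 0$ since $Z$ vanishes along $M$. Combining Cartan's formula $\cL_Z\lambda = d(\lambda(Z)) + \iota_Z d\lambda$ with $\phi_s^*\cL_Z\lambda = \tfrac{d}{ds}\phi_s^*\lambda$, and using that $\phi_s^*\lambda \to 0$ as $s \to -\infty$ (again thanks to the strong reading of $\lambda|_M = 0$), integration yields
\[
\lambda \;=\; d\!\int_{-\infty}^{0}\phi_s^*(\lambda(Z))\, ds \;+\; \int_{-\infty}^{0}\phi_s^*(\iota_Z d\lambda)\, ds \;=\; dH + R,
\]
where $R := \int_{-\infty}^{0}\phi_s^*(\iota_Z d\lambda)\, ds$.

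It remains to verify that $R$ vanishes as a covector along $M$ and vanishes on $TL$. For the first, along $M$ the field $Z$ vanishes and $\phi_s$ fixes $M$ pointwise, so $\phi_s^*(\iota_Z d\lambda)(v)|_x = d\lambda(Z|_x, \phi_{s*}v)|_x = 0$ for every $x \in M$ and $v \in T_x T^*M$; this gives $R|_M = 0$ and hence $dH|_M = \lambda|_M - R|_M = 0$. For the second, since the flow preserves the stratification of $L$, for $v \in T_xL$ one has $\phi_{s*}v \in T_{\phi_s(x)}L$, and $Z|_{\phi_s(x)} \in T_{\phi_s(x)}L$ as well, so
\[
\phi_s^*(\iota_Z d\lambda)(v)|_x \;=\; d\lambda(Z,\, \phi_{s*}v)|_{\phi_s(x)} \;=\; 0
\]
by the closedness hypothesis $d\lambda|_L = 0$; integrating gives $R|_{TL} = 0$, and hence $dH|_{TL} = \lambda|_{TL}$.

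The main conceptual point is the interplay between Liouville-invariance of $L$ and closedness of $\lambda$ on $L$, which forces the error term $R$ to die on $TL$. The only delicate technical point will be the strong reading of $\lambda|_M = 0$, which is what simultaneously guarantees convergence of the integral defining $H$, the limit $\phi_s^*\lambda \to 0$, and the vanishing of $dH$ as a covector along $M$.
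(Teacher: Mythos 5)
Your proof is correct and essentially coincides with the paper's: after substituting $t=e^{s}$, your $H(x)=\int_{-\infty}^{0}\lambda(Z)_{\phi_s(x)}\,ds$ is exactly the paper's primitive $H(p,q)=\int_{\gamma_{p,q}}\lambda$ along the radial paths $\gamma_{p,q}(t)=(tp,q)$, and your Cartan-formula computation is the standard justification of the paper's one-line claim that $dH=\lambda$ on vectors tangent to $L$ because these paths stay in $L$, where $d\lambda$ vanishes. Your strong reading of $\lambda|_M=0$ matches how the lemma is applied (the application supplies $\lambda_t|_{T^*M|_M}=0$) and is indeed what makes $dH$ vanish along $M$, though convergence of your integral and the limit $\phi_s^*\lambda\to 0$ would already follow from the weaker reading in which only the restriction of $\lambda$ to $TM$ vanishes.
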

\begin{proof}
For any point $p\in T^*_qM$ denote by $\gamma_{p,q}$ the path $t\mapsto (tp,q),\;t\in[0,1]$. Define the function $H$ by the formula
$$H(p,q)=\int\limits_{\gamma_{p,q}}\lambda. $$
 For $(p,q)\in L$ the path $\gamma_{p,q}$ is contained in $L $, and hence   on tangent vectors to $L$ the $1$ forms   $dH$ and $\lambda$ coincide.
\end{proof}
\begin{lemma}\label{lm:conic-Darboux}
Let $L\subset T^*M$ be a Lagrangian subset which is the union $M\cup C(\Lambda)$. Let $A\subset M$ be a closed subset and   $\om_t$ be a family of symplectic forms on $T^*M$ such that $\om_0=d(pdq)$, $\om_t|_{\Op A}=\om_0$, and $L$ is $\om_t$-Lagrangian for all $t$.
Then there exists an isotopy $f_t:T^*M\to T^*M$ which is fixed on $M\cup\Op A$, leaves $L$ invariant and such that $f_t^*\om_t=\om_0$.
\end{lemma}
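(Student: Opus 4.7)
The plan is to run a parametric Moser argument tailored to the subset $L$, reducing everything to constructing a primitive $\alpha_t$ of $\eta_t := \dot\omega_t$ with the right three vanishing properties.

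\textbf{Paragraph 1 (setup).} I would look for a time-dependent vector field $v_t$ on $T^*M$ defined by $\iota_{v_t}\omega_t=-\alpha_t$, where $\alpha_t$ is a smooth family of $1$-forms satisfying $d\alpha_t=\eta_t$. The three conditions to impose on $\alpha_t$ are:
\begin{itemize}
\item[(a)] $\alpha_t$ vanishes as a $1$-form (not merely pulled back) at every point of $M$, so that $v_t|_M=0$ by non-degeneracy of $\omega_t$;
\item[(b)] $\alpha_t$ vanishes identically on $\Op A$, so that $v_t$ vanishes there;
\item[(c)] $\alpha_t$ vanishes on vectors tangent to every smooth piece of $L$, which together with the fact that each smooth piece of $L$ is $\omega_t$-Lagrangian ensures that $v_t$ is tangent to $L$ at each smooth point, and hence tangent to the intersection of smooth pieces at any singular point.
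Once (a)--(c) hold, $v_t$ integrates to the desired isotopy $f_t$. Note $\eta_t$ is closed, vanishes on $\Op A$, and pulls back to zero on each smooth piece of $L$ (hence in particular on $M$), because $L$ is Lagrangian for both $\omega_0$ and $\omega_t$.
\end{itemize}

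\textbf{Paragraph 2 (construction of $\alpha_t$).} After shrinking $\Op A$ if necessary, I would assume it is fibrewise radial with respect to some Riemannian metric on $M$, i.e.\ of the form $\{(p,q):q\in V,\ \|p\|<\delta(q)\}$ for a neighborhood $V$ of $A$ in $M$. Then apply the standard fibrewise Poincar\'e homotopy operator along the retraction $\phi_s(p,q)=(sp,q)$, producing
\[
\beta_t \;=\; \int_0^1 \tfrac{1}{s}\,\phi_s^*(\iota_Z\eta_t)\,ds,
\]
where $Z=p\,\partial_p$ is the fibrewise radial vector field. Using that $\eta_t$ pulls back to zero on $M$, the integrand extends smoothly to $s=0$; a direct computation shows $d\beta_t=\eta_t$ and $\beta_t$ vanishes as a $1$-form identically at points of $M$. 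Moreover, since each radial segment $\{\phi_s(p,q)\}_{s\in[0,1]}$ through a point of $\Op A$ stays inside $\Op A$, and $\eta_t$ vanishes there, $\beta_t$ vanishes on $\Op A$. Next, $d\beta_t|_L=\eta_t|_L=0$, so Lemma~\ref{lm:ext-Liouv-restr} produces a function $H_t$ with $H_t|_M=0$, $dH_t|_M=0$, and such that $\beta_t$ agrees with $dH_t$ on vectors tangent to $L$. The explicit formula $H_t(p,q)=\int_{\gamma_{p,q}}\beta_t$ from that lemma is again a fibrewise radial integral, so $H_t$ and $dH_t$ also vanish on $\Op A$. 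Setting $\alpha_t:=\beta_t-dH_t$ gives a primitive of $\eta_t$ satisfying (a)--(c).

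\textbf{Paragraph 3 (main obstacle and conclusion).} The delicate point, and the reason condition (a) needs genuine vanishing rather than pullback vanishing, is that we require $f_t|_M=\mathrm{id}$, not merely $f_t(M)=M$. This forces us to use the specific radial homotopy operator above (so that $\beta_t$ and then $H_t$ are built out of iterated fibrewise integrals whose integrands vanish at $p=0$), rather than an arbitrary primitive. Once $\alpha_t$ is in hand, $v_t=-\omega_t^{-1}(\alpha_t)$ vanishes on $M\cup\Op A$, is tangent to every smooth piece of $L$ (because $\alpha_t$ annihilates $TL$ and each smooth piece is $\omega_t$-Lagrangian, hence its own symplectic complement), and is therefore tangent to $L$ as a subset at every point. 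Its flow $f_t$ on a neighborhood of $L$ is then fixed on $M\cup\Op A$, preserves $L$, and satisfies $f_t^*\omega_t=\omega_0$ by the usual Moser computation $\tfrac{d}{dt}(f_t^*\omega_t)=f_t^*(d\iota_{v_t}\omega_t+\eta_t)=f_t^*(-d\alpha_t+\eta_t)=0$.
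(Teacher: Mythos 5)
Your proof is correct and follows essentially the same route as the paper: both are Moser arguments that hinge on producing a primitive (the paper normalizes primitives $\mu_t$ of $\omega_t$ themselves, you a primitive $\alpha_t$ of $\dot\omega_t$) which vanishes at points of $M$, vanishes on $\Op A$, and annihilates vectors tangent to $L$, obtained by combining a fibrewise-radial construction with the correction $dH_t$ supplied by Lemma~\ref{lm:ext-Liouv-restr}. Your Paragraph 2 in fact writes out explicitly the radial homotopy-operator construction that the paper only asserts when it claims the existence of the normalized family $\lambda_t$.
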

\begin{proof} 
There exists a family of Liouville forms $\lambda_t$ for $\om_t$ such that $\lambda_0=pdq$,
$\lambda_t|_{T^*M|_M}=0$ and $\lambda_t|_{\Op A}=pdq|_{\Op A}$.
By Lemma \ref{lm:ext-Liouv-restr}    there exists a family of functions $H_t$ on $T^*M$ such that $dH_t|_L=\lambda_t|_L$  $H_t|_{ M\cup \Op A}=0$ and $dH_t|_{T^*M|_M}=0$.  Define $\mu_t:=\lambda_t-dH_t$.
Then $\mu_0=pdq$,
$\mu_t|_{T^*M|_M}=0$, $\mu_t|_{\Op A}=pdq|_{\Op A}$ and $\mu_t|_{L}=0$.
Then Moser's homotopical method yields an isotopy $f_t:T^*M\to T^*M$ which is fixed on $M\cup\Op A$, leaves $L$ invariant and such that $f_t^*\om_t=\om_0=d(pdq)$. \end{proof}

Note that any point of an arboreal space which is realized as a Lagrangian subset  of a symplectic manifold $X$ has a neighborhood   which has the conical form of Lemma \ref{lm:conic-Darboux}. Hence, we can take a finite covering of $X$ by such neighborhoods and successively apply Lemma \ref{lm:conic-Darboux} to extend the isotopy.
This concludes the proof of Proposition \ref{prop:hom-to-Moser}, and with it the proof of Theorem \ref{thm:arb-Darboux}.


\subsection{Weinstein thickening of an Ac-building}

\subsubsection{Ac-buildings}
  
Similar to  the  notion  of a Liouvllle or a Weinstein manifold with  corners, by a {\em symplectic manifold with  corners} we mean a possibly non-compact  manifold with 
corners, such  that near  each corner  point it is symplectomorphic to $\R^{2m}\times T^*\R_+^{n-m}$.

An  {\em arboreal Lagrangian with boundary with corners}, or  {\em ac-Lagrangian} for short, is a closed subset of a symplectic manifold $W$  with  corners  such that the germ of  $(W,L)$ at a  point
$x\in L$ is  symplectomorphic to  the  germ of $(\R^{2n-2c}\times T^*\R^c_+, L_\cT\times   \R_+^c)$  at the origin  for some signed rooted  tree $\cT$ and   an  integer    $c\geq 0$.   The $c$ is called the multiplicity of  the corner.   It follows from the definition  that corners of $L$  are contained in the corresponding  corners  of $W$.
As in the case of manifolds with boundary with corners,  we denote by  $\p_jL$ the strata of  corner  points of order  $j\geq 1$, and  write  $\p L=\bigcup_j\p_jL$.
The closure of each  component of  $\p_jL$ is  itself an ac-Lagrangian in a symplectic submanifold of $W$  of codimension $2k$. The  closures of  components of $\p_1L$ are called  {\em boundary faces} of  $L$.

It will also be useful to consider the notion of an {\em Ac-building}, a special kind of Ac-space whose definition is analogous to that of a cotangent building as a special kind of Wc-manifold. Recall that an Ac-space $L$ is the union $L=\bigcup_j L_j$ of smooth pieces $L_j$, and each point of $L$ belongs to the interior of exactly one $L_j$.

\begin{definition}
An {\em Ac-building} is an Ac-space $L$ whose smooth pieces can be ordered $L_1 , \ldots , L_k$ so that:
\begin{enumerate}
\item Each $L_{>j}=\bigcup_{i>j}L_j$ is an Ac-space with boundary and corners.
\item $L_{>j-1}$ is obtained from $L_{>j}$ by gluing a boundary component $P_j$ of $L_{>j}$ to $L_j$ along a map $P_j \to L_j$.
\end{enumerate}
\end{definition}

\begin{lemma}\label{lem:lifts-Ac}
Let $K=\bigcup_j L_j$ be an Ac-building. Then the gluing maps $ P_j \to L_j$ can be canonically lifted to embeddings $P_j \to T^*L_j \setminus L_j $ onto arboreal Legendrians.
\end{lemma}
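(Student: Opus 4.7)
The plan is to produce the lift from the local arboreal models, then use the orientation structure to glue the local lifts into a single global embedding.

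Fix $j$ and a point $x$ in the image of $P_j\to L_j$. Because $K$ is an Ac-building with the given ordering, any pre-Ac chart for $K$ at $x$ has the form $(L(\cT,m,c),\cO(\cT,m,c))$ in which $L_j$ corresponds to the base stratum $L_\rho\times\R^d\times\R^c_{\geq 0}$ and every other smooth piece lies in $L_{>j}$. By the Liouville cone axiom (v) of Definition~\ref{def:arb intr}, the union $\bigcup_{\alpha\neq\rho}L_\alpha$ is the Liouville cone over a finite disjoint union of arboreal Legendrians $\Lambda_1,\dots,\Lambda_k\subset S^*L_\rho$, whose combined front is precisely the local image of $P_j\to L_j$. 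The disjoint union $\bigsqcup_i\Lambda_i$ is therefore the natural candidate for the local Legendrian lift of $P_j$.

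Next, I would use the orientation structure $\kappa$ on $K$ to resolve the $2^k$-fold ambiguity described in Lemma~\ref{lem: ambiguity}. The neutral gluing convention, gluing the positive codirection of the front to the positive Liouville direction, selects a unique coorientation of the front at every point of the chart, and hence a unique Legendrian lift into $T^*L_j\setminus L_j$. This produces a canonical arboreal Legendrian germ in $T^*L_j\setminus L_j$ associated with each chart.

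I would then check compatibility on overlaps: any pre-Ac transition between two such charts preserves the orientation structure $\kappa$, since $\kappa$ is defined globally on $K$, and therefore carries one local Legendrian lift onto the other. Assembling the charts yields a map $P_j\to T^*L_j\setminus L_j$ that is smooth on each smooth piece and everywhere locally modeled on an arboreal Legendrian.

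The main obstacle, and the place where I expect real work, is to verify that the assembled map is an embedding onto an arboreal Legendrian, not merely an immersion. Two points $p,q\in P_j$ mapped to the same point in $L_j$ could in principle come either from two distinct strata meeting along a common piece of the front inside a single local chart, or from pieces accessed through different charts. In the first case the self-transversality and embedding hypotheses of axiom (v) of Definition~\ref{def:arb intr} force distinct strata to lift to distinct conormal directions. In the second, the global consistency of $\kappa$ ensures agreement of the lifts on chart overlaps, excluding spurious identifications. Together these yield the required canonical embedding of $P_j$ onto an arboreal Legendrian in $T^*L_j\setminus L_j$.
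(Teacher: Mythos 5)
Your argument is correct and follows essentially the same route as the paper, whose proof is a one-line appeal to the fact that the orientation structure is characterized by co-orientations of fronts, which in turn determine the positive conormal (Legendrian) lifts. Your write-up simply makes explicit the local-model identification via the Liouville cone axiom, the globalization over chart overlaps, and the self-transversality argument for embeddedness, all of which are implicit in the paper's citation of that characterization.
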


\begin{proof}
This follows from the characterization of orientation structures in terms of co-orientations of fronts, as discussed above.
\end{proof}


\begin{thm}\label{thm:unique-W-for-buildings}\label{sec:unique-W-thick}
 Suppose $(W, \omega)$ is a $2m$-dimensional symplectic manifold, and  let
 $L\subset W$ be an Ac-bulding.  Then   \begin{enumerate}
 \item a neighborhood of $L$ admits a Weinstein structure with  $L$ as its skeleton, 
 \item the germ near $L$ of such a structure is unique up to Weinstein homotopy with fixed skeleton $L$.
 \end{enumerate}
\end{thm}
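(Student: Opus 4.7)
The plan is to prove both parts simultaneously by induction on the number $k$ of smooth pieces of the Ac-building $L=L_1\cup\cdots\cup L_k$, with the inductive statement strengthened to allow Ac-buildings with boundary, so that the intermediate objects $L_{>j}=L_{j+1}\cup\cdots\cup L_k$ are also covered. The base case $k=1$ reduces to the classical Weinstein neighborhood theorem: $L$ is a smooth Lagrangian with corners, a neighborhood $\Op(L)\subset W$ is symplectomorphic to a neighborhood of the zero section in $\sT^*L$ with its canonical Liouville structure $pdq$, and this Wc-structure is unique up to Wc-homotopy fixing $L$.

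For the inductive step on existence, write $L=L_1\cup L_{>1}$, where $L_{>1}$ is an Ac-building with $k-1$ pieces having a distinguished boundary component $P_1$ attached to $L_1$ via the gluing map $P_1\to L_1$. By the inductive hypothesis, a neighborhood of $L_{>1}$ carries a Wc-structure $(\sY,\mu)$ with $L_{>1}$ as its skeleton, and the boundary face of $\sY$ corresponding to $P_1$ has nucleus canonically identified with $\sT^*P_1$. By Lemma \ref{lem:lifts-Ac}, the attaching map lifts canonically to an arboreal Legendrian $\Lambda_1\subset\sT^*L_1\setminus L_1$, whose ribbon is a Wc-hypersurface $\sA\subset\sT^*L_1$ canonically isomorphic to $\sT^*P_1$. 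Vertically gluing $\sY$ to $\sT^*L_1$ along this identification, as described in Section \ref{sec:Lbc-gluing}, produces an abstract Wc-manifold $(X,\lambda)$ whose skeleton is canonically isomorphic to $L$ as an Ac-space with matching orientation structures. Theorem \ref{thm:arb-Darboux} then supplies a diffeomorphism between a neighborhood of this abstract skeleton in $X$ and $\Op(L)\subset W$ fixing $L$ pointwise, along which we transport the Wc-structure.

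For uniqueness, suppose $\lambda^0,\lambda^1$ are two Wc-structures on $\Op(L)$ with $L$ as skeleton. Since both $d\lambda^0$ and $d\lambda^1$ induce the orientation structure on $L$ inherited from the ambient $\omega$, Theorem \ref{thm:arb-Darboux} yields an ambient diffeomorphism fixing $L$ after which we may assume $d\lambda^0=d\lambda^1$. The Weinstein neighborhood theorem then gives a Wc-homotopy of each structure near $L_1$ to the canonical cotangent model on $\sT^*L_1$ relative to $L_1$; the inductive hypothesis applied on $\Op(L_{>1})$ yields a Wc-homotopy of the restrictions there relative to $L_{>1}$; and the canonicity of the Legendrian lift from Lemma \ref{lem:lifts-Ac}, combined with the contractibility of the remaining choices in the gluing (ribbon, collar, Morse-Bott normal form), matches the gluing data up to isotopy. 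The main obstacle is assembling these three homotopies into a single Wc-homotopy keeping $L$ as skeleton throughout --- this requires a parametric, relative version of the Moser-type argument of Proposition \ref{prop:hom-to-Moser}, executed compatibly with the collar structures of Lemma \ref{lm:complex} so that the interpolated Liouville form remains adapted to the cotangent building at every moment of the homotopy.
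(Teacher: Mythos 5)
Your existence argument is essentially the paper's: build the cotangent building $B_k\to\dots\to B_1$ with blocks $\sT^*L_j$ inductively by vertical gluing, using Lemma \ref{lem:lifts-Ac} for the Legendrian lift of the attaching map, and transport the result to $\Op L\subset W$ via Theorem \ref{thm:arb-Darboux}. That half is fine.

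The uniqueness half has a genuine gap, and it sits exactly where you wrote ``the Weinstein neighborhood theorem then gives a Wc-homotopy of each structure near $L_1$ to the canonical cotangent model on $\sT^*L_1$ relative to $L_1$.'' The Weinstein neighborhood theorem only identifies a neighborhood of $L_1$ symplectically with $\Op(L_1)\subset T^*L_1$; after that identification your given structure is $\lambda=pdq-dH$ with $H$ vanishing on $L$, and nothing elementary tells you that you can deform $\lambda$ to $pdq$ near $L_1$ \emph{through Weinstein structures whose skeleton stays equal to $L$}. The danger is dynamical, not cohomological: during the interpolation the Liouville field $Z_t$ must remain tangent to $L$, must still admit a Lyapunov function, and its attractor must remain exactly $L$ rather than growing or shifting. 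This is the actual content of the theorem, and the paper spends most of the section on it: Lemma \ref{lm:local-near-0} gives the normal form of $Z$ at zeros on the root piece, Lemma \ref{lm:arb-core-eigenvalue} uses the arboreal geometry (no two-sided curve in $L$ transverse to the root piece) to bound the eigenvalues of $d_aZ|_{L_1}$ by $1$, and Lemmas \ref{lm:grad-like}, \ref{lm:taming}, \ref{lm:taming2} upgrade this to a metric with $Z\cdot|p|^2\geq\eps|p|^2$ and explicit Lyapunov functions; only then can one run the two-stage deformation ($H_t=(1-t\theta(|p|))H$ followed by $Z_t=p\tfrac{\p}{\p p}+(2-t)v_1$), verify gradient-likeness for the interpolated Lyapunov functions by hand, split off the block $T^*L_1$ along $\{|p|=4\delta\}$, and induct.

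Relatedly, you misidentify the ``main obstacle'' as assembling homotopies via a parametric, relative version of Proposition \ref{prop:hom-to-Moser}. That proposition is a Moser argument about families of \emph{symplectic forms} for which $L$ stays Lagrangian; in the uniqueness problem the symplectic form is fixed once and for all (both structures are primitives of the ambient $\omega$), and the entire difficulty is the homotopy of \emph{Liouville primitives} with control of the skeleton and of Lyapunov functions. No Moser-type argument addresses that, so as written your proof defers the theorem's real content to a tool that cannot supply it.
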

An assumption that $L$ is an Ac-building  rather than a general Ac-space simplifies the proof, but  is not necessary.  We will not need the more general result and will not prove it in this paper.

\subsubsection{Existence}
\begin{proof}[Proof of (i)]
Consider the building presentation $L_k\to L_{k-1}\to\dots\to L_1$ of $L$.
We will inductively construct a cotangent building structure  with $L$ as its skeleton.
 Using Theorem \ref{thm:arb-Darboux} this structure can be transported to $\Op L\subset W$.
 
We argue by induction in the number $k$ of blocks. Suppose that we already constructed  a cotangent  building $W_{>1}:=B_k\to\dots\to B_2$  with the skeleton $L_{>1}$ which consists of blocks $B_j=\sT^*L_j$.
 By Lemma \ref{lem:lifts-Ac} there exists a boundary face $Y$ of $ L_{>1}$ such that  $L$ is obtained by attaching $L_{>1}$ to $L_1$ using a front embedding $Y\to L_1$, i.e. an embedding  which factors  as $Y\mathop{\to}\limits^\phi T^*L_1\setminus L_1$, where $\phi$ as  an embedding onto a Legendrian arboreal. The Ac-space $Y$ serves as the nucleus $Q$ of a boundary face of $W_{>1}$.
 The embedding $\phi$ extends to an embedding $\Phi:Q\to T^*L_1\setminus L_1$ as a Weinstein hypersurface. The required building $L$ can be now obtained by the vertical gluing of $W_{>1}$ to $\sT^*L_1$.
 \end{proof}
 
 \subsubsection{Liouville fields on cotangent bundles}
  
 Before proving the second part of Theorem \ref{thm:unique-W-for-buildings}
  we need to review some facts about Liouville fields on cotangent bundles, see also 
  \cite[Sect. 12.3]{CE12}.

Let $Y$ be an $n$-dimensional bc-manifold, $T^*Y$ its cotangent bundle  with the standard Liouville form $\lambda_{\st} = pdq$ and symplectic form  $\om_{\st}=d(pdq)$, and Liouville field $Z_\st=p\frac{\p}{\p p}$.  Any other Liouville form $\lambda$ has the form $\lambda_\st-dH$  for a function $H:T^*M\to \R$. Respectively, the Liouville field $Z$ of $\lambda$ is given by $Z=Z_\st+X_H$, where $X_H$ is the Hamiltonian field of $H$. 

 We will always suppose in this section  that $Z$ is tangent to $Y$. This is equivalent to the condition $H|_Y=0$. Denote   $Z':=Z|_Y$, $H'=H|_Y$. 
\begin{lemma}
\label{lm:local-near-0}
Let $\lambda=pdq-dH$ a Liouville form on $T^*Y$ with $H|_Y=0$.
Let $a\in Y$ be a zero of $Z'=Z|_Y$. Choose local canonical coordinates $q_1,\dots, q_n,p_1,\dots, p_n$ centered at $a$. Write $H=\frac12\sum\limits_{i,j=1}^na_{ij}p_ip_j
+\sum\limits_{i,j=1}^nb_{ij}p_iq_j+o(|p|^2+|q|^2)$. Then $$Z'=\sum\limits_{i,j=1}^nb_{ij}q_j\frac{\p}{\p q_j}+O(|q|^2), Z=\left(\sum\limits_{i,j=1}^nb_{ij}q_j+\sum\limits_{i,j=1}^na_{ij}p_j\right)\frac{\p}{\p q_i}+\sum\limits_{i,j=1}^n(\delta_{ij}-b_{ij})p_i\frac\p{\p p_j}+O(|p|^2+|q|^2).$$ 
The matrix of  $d_aZ'$ is equal to  $B$ and the matrix of  $d_aZ$ in the basis $\frac{\p}{\p q},\frac{\p}{\p p}$ has the form
$$\left(\begin{matrix} B&A\\0&I-B^T\end{matrix}\right)$$ where $B=(b_{ij})$, $A=(a_{ij})$.
\end{lemma}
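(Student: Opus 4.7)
The plan is to derive everything from the identity $Z = Z_{\st} + X_H$, where $Z_\st = p\,\p/\p p$ is the Liouville field of $pdq$ and $X_H$ is the Hamiltonian vector field of $H$. This identity holds because $\iota_{Z_\st}\om_\st = pdq$ and $\iota_{X_H}\om_\st = -dH$, so $\iota_{Z_\st + X_H}\om_\st = pdq - dH = \lambda$. With the sign convention $\iota_{X_H}\om_\st = -dH$ we get $X_H = (\p_p H)\p/\p q - (\p_q H)\p/\p p$ in canonical coordinates.

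Next I would expand the partials of $H$ to first order at the origin using the hypothesis $H = \tfrac12\sum a_{ij}p_ip_j + \sum b_{ij}p_iq_j + o(|p|^2 + |q|^2)$. Symmetry of the quadratic part in the $p_i$-variables gives
\[
\p_{p_i}H = \sum_j a_{ij}p_j + \sum_j b_{ij}q_j + O(|p|^2 + |q|^2), \qquad \p_{q_j}H = \sum_i b_{ij}p_i + O(|p|^2 + |q|^2).
\]
Substituting these into $Z = p\,\p/\p p + X_H$ yields directly the stated first-order formula for $Z$, with $q_i$-component $\sum_j(b_{ij}q_j + a_{ij}p_j)$ and $p_j$-component $\sum_i(\delta_{ij} - b_{ij})p_i$, plus the quadratic remainder.

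To read off $Z'$, I would restrict to $Y = \{p = 0\}$. The $p$-components automatically vanish to first order on $Y$, consistent with tangency of $Z$ to $Y$ (which is in turn implied by $H|_Y = 0$), and the $q_i$-components reduce to $\sum_j b_{ij}q_j + O(|q|^2)$. Finally, the linearization matrix of $Z$ at $a$ is assembled by taking the partials of the four component groups with respect to $(q, p)$: the $\p_q$-derivatives of the $q$-components give $B$; the $\p_p$-derivatives of the $q$-components give $A$; the $\p_q$-derivatives of the $p$-components vanish; and the $\p_p$-derivatives of the $p$-components give $(\delta_{kj} - b_{kj})$, which is the matrix $I - B^T$. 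This produces the claimed block-triangular form.

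The statement is essentially a bookkeeping exercise, so there is no real obstacle; the only thing to be careful about is adhering consistently to the Hamiltonian sign convention and tracking indices so that the transpose appears correctly in the lower-right block. The explicit upper-triangular block structure (with the zero in the lower-left) is the geometric content worth emphasizing: it reflects the fact that $Y$ is invariant under $Z$, so $d_aZ$ preserves $T_aY$ and descends to $I - B^T$ on the conormal fibre $T^*_aY$.
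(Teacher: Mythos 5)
Your proposal is correct and is precisely the ``direct computation'' that the paper leaves to the reader: decompose $Z = Z_{\st} + X_H$, expand $\p_pH$ and $\p_qH$ to first order, and read off $Z'$, $Z$, and the block-triangular linearization. The sign convention and the appearance of $I-B^T$ in the lower-right block are handled correctly, so there is nothing to add.
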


\begin{proof} Direct computation. \end{proof}

  \begin{definition} A zero $a\in Y$ of $Z'$ is called {\em transversely non-degenerate}
if the matrix $I-B^T$ has no pure imaginary  eigenvalues.
 \end{definition}

\begin{lemma}\label{lm:arb-core-eigenvalue}
Let $(W,Z)$ be a Liouville structure with an arboreal skeleton $X$.
Let  $a\in X$   be a zero of $Z$ and $Y$  the  smooth piece corresponding to the root of the arboreal singularity at the point $a$. Suppose  that  $a$  is  transversely  non-degenerate with respect to $Y$. Denote by $Z'$ the restriction $Z|_Y$. Then the eigenvalues $\lambda_j$ of the differential  $d_aZ'$ have  real parts $<1$. 
\end{lemma}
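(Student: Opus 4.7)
The plan is to apply the stable manifold theorem to $Z$ at its isolated zero $a$. By Lemma \ref{lm:local-near-0}, in the local canonical coordinates the differential $d_aZ$ is block upper-triangular with diagonal blocks $B$ and $I-B^T$, so its spectrum is the union $\{\mu_j\} \cup \{1-\mu_j\}$. To prove the lemma it suffices to rule out $\Re(\mu_{j_0}) > 1$ for any $j_0$, since the case $\Re(\mu_{j_0}) = 1$ is excluded by the transverse non-degeneracy assumption.

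Suppose for contradiction that such a $\mu_{j_0}$ exists, so that $1-\mu_{j_0}$ is an eigenvalue of $d_aZ$ with strictly negative real part. Solving the block eigenvalue equations starting from an eigenvector of $I-B^T$ for $1-\mu_{j_0}$ produces an eigenvector of $d_aZ$ with nonzero $p$-component, so the stable subspace $E^s \subset T_aW$ of $d_aZ$ contains a vector transverse to $T_aY$. The stable manifold theorem then yields a smooth local submanifold $W^s \subset W$ through $a$, tangent to $E^s$, consisting of points whose forward $Z$-orbit converges to $a$; such orbits are bounded, so $W^s \subset \Skel(W,Z) = X$ near $a$. The decisive feature of $W^s$ is that, as a genuine smooth submanifold, it is a full neighborhood of $a$ within itself, hence extends to both sides of $a$ along every vector of $E^s$.

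On the other hand, near $a$ the arboreal skeleton decomposes as $Y \cup \bigcup_{i\neq 0}L_i$, where $Y$ is tangent to the $q$-subspace and each $L_i$ is by definition a Liouville cone, the saturation of a Legendrian by the positive-time flow of a radial Liouville field. Near $a$, each such $L_i$ is therefore one-sided along its cone direction, and by the explicit description of arboreal singularities in \cite{AGEN20a} the cone directions of distinct strata never occupy opposite rays. Since $E^s$ has a direction $v$ transverse to $T_aY$ while $W^s$ extends symmetrically in $\pm v$ about $a$, one of these two half-lines cannot lie in any stratum of $X$, contradicting $W^s \subset X$. Hence $\Re(\mu_j) \leq 1$ for all $j$, and transverse non-degeneracy upgrades this to $\Re(\mu_j) < 1$. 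The main obstacle I expect is this last geometric step: making precise the claim that the Liouville cone directions of an arboreal singularity never come in opposite pairs, which I anticipate reading off from the explicit normal forms of \cite{AGEN20a} where all non-root strata arise as positive conormals of co-oriented fronts.
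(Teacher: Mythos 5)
Your argument is correct and follows essentially the same route as the paper's proof: the block-triangular normal form of Lemma \ref{lm:local-near-0}, the observation that an eigenvalue of $I-B^{T}$ with negative real part would force the stable manifold of $a$ — which lies in the skeleton — to contain a two-sided smooth curve transverse to the root piece $Y$, and the impossibility of such a curve in an arboreal singularity. The geometric point you flag as the main obstacle is exactly what the paper also asserts in a single sentence (the arboreal $X$ contains no two-sided curve transverse to $Y$), justified just as you anticipate by the positive-conormal, transverse-front structure of the local models of \cite{AGEN20a}.
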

\begin{proof}
By assumption  $Z$ is tangent to $Y$, and hence $Z$ can be written near the zero $a$ as $$Z=\left(\sum\limits_{i,j=1}^nb_{ij}q_j+\sum\limits_{i,j=1}^na_{ij}p_j\right)\frac{\p}{\p q_i}+\sum\limits_{i,j=1}^n(\delta_{ij}-b_{ij})p_i\frac\p{\p p_j}+O(|p|^2+|q|^2).$$ The transverse non-degeneracy condition means the matrix $C:=I-B^T=(\delta_{ij}-b_{ji})$ does not have pure imaginary  eigenvalues. If ${\mathrm Re\,} \lambda<0$ for one of eigenvalues of $C$, then the stable manifold of $a$ must contain a curve tangent to  the corresponding eigenvector.  But   the arboreal  $X$  cannot contain  any  (two-sided) curves transverse to the root smooth piece $Y$.
\end{proof}
\begin{lemma}\label{lm:grad-like}
Let $Y$ be a compact manifold, $v$ a vector field, and $\ell:Y\to\R_+$ a positive   Lyapunov function for $v$.
 Let $A$ be the set of critical points of $\ell$. Then for any neighborhood $U\supset A$ and any constant $C>0$ there exists a function $\theta:\R\to\R$ with $\theta'>0$ such that   on $Y\setminus U$ there holds:
 \begin{align}\label{eq:grad}
 d(\theta\circ\ell)(v)>C\, \theta\circ\ell
 \end{align}

\end{lemma}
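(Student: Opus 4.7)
The plan is to take $\theta$ of exponential form. The inequality we want is
\[
d(\theta\circ\ell)(v) = \theta'(\ell)\,d\ell(v) > C\,\theta(\ell),
\]
so it suffices, at each point $x\in Y\setminus U$, to have $\theta'(\ell(x))/\theta(\ell(x)) > C/d\ell(v)(x)$ (with $\theta>0$). The whole point is that on $Y\setminus U$ the quantity $d\ell(v)$ is bounded below by a positive constant, so we can realize this ratio by a single exponential.

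More precisely, first I would observe that since $\ell$ is a Lyapunov function for $v$ and $A$ is the zero set of $d\ell$, the function $d\ell(v)$ is strictly positive on $Y\setminus A\supset Y\setminus U$. Because $Y$ is compact and $Y\setminus U$ is closed in $Y$, hence compact, there exists
\[
\delta:=\min_{Y\setminus U} d\ell(v) > 0.
\]

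Next, I would set $\theta(s):=e^{ks}$ for a constant $k>0$ to be chosen. Then $\theta'>0$ and $\theta'/\theta\equiv k$, so for every $x\in Y\setminus U$ one has
\[
\theta'(\ell(x))\,d\ell(v)(x) \ge k\,\delta\,\theta(\ell(x)).
\]
Choosing any $k>C/\delta$ gives $k\delta>C$, which yields the required strict inequality $d(\theta\circ\ell)(v) > C\,\theta\circ\ell$ on $Y\setminus U$.

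There is really no obstacle: the lemma is a one-line computation once one extracts the uniform lower bound $\delta$ from compactness of $Y\setminus U$ together with the Lyapunov positivity $d\ell(v)>0$ off $A$. The only mild subtlety is that the statement is phrased with a neighborhood $U$ of $A$ rather than of the zero set of $d\ell(v)$, but these coincide because $A$ is precisely where $d\ell$ vanishes and $v$ is gradient-like for $\ell$. Note also that the same argument works verbatim with $\theta(s)=e^{ks}$ independent of the specific $U$ and $C$, though of course the constant $k$ depends on them.
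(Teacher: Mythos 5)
Your proof is correct: the Lyapunov condition gives $d\ell(v)>0$ off $A$ (with the paper's definition (W1), critical points of $\ell$ are exactly the zeros of $v$, so $d\ell(v)$ vanishes only on $A$), compactness of $Y\setminus U$ (or of $Y\setminus\mathrm{int}\,U$, if $U$ is not open) yields a uniform lower bound $\delta>0$ there, and then $\theta(s)=e^{ks}$ with $k>C/\delta$ does the job, since $\theta'(\ell)\,d\ell(v)\ge k\delta\,e^{k\ell}>C\,e^{k\ell}=C\,\theta(\ell)$ on $Y\setminus U$, while $\theta'>0$ holds on all of $\R$ and $\theta\circ\ell>1$ as wanted in the later application. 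This is a genuinely different, and substantially shorter, route than the paper's: there the authors slice $Y$ into slabs between consecutive critical values of $\ell$, parametrize each slab by the $v$-flow, take a level-dependent lower bound $\wt h(u)<\min_x d\ell(v)(x,u)$, and build $\theta$ piece by piece by solving $\frac{d\ln\theta}{du}=C/\wt h(u)$, handling the bands around critical levels separately. Their construction lets the growth rate of $\theta$ adapt to how small $d\ell(v)$ becomes near each critical level, which would matter if one wanted quantitative control as $U$ shrinks toward $A$; but the lemma as stated only asserts existence for a fixed $U$ and $C$, and for that your single global exponential with the crude uniform bound $\delta$ is entirely sufficient, and is also compatible with how the lemma is used in Lemma \ref{lm:taming} (only the inequality on $Y\setminus U$, positivity of $\theta'$, and $\theta\circ\ell\ge 1$ are needed there).
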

\begin{proof}
Let $c_0<\dots<c_k$ be critical values of $Y$.  Choose a sufficiently small  $\sigma>0$ and denote $B_j:=\{c_j-\sigma \leq \ell\leq c_j+\sigma\}$, $C_j:= \{c_j+\sigma \leq \ell\leq c_{j+1}-\sigma\}$. Note that each $C_j$ is a trivial cobordism bounded by regular level sets $\p_- C_j =\{\ell=c_j+\sigma\}$ and
 $\p_+C_j=\{\ell=c_{j+1}-\sigma\}$. Denote   $\delta_0:=[c_0, c_0+\sigma], \Delta_0:=[c_0+\sigma, c_1-\sigma],\delta_1:=[c_1-\sigma,c_1+\sigma],  \Delta_1:=[c_1 + \sigma, c_2-\sigma], \dots, \Delta_{k-1} := [c_{k-1}+\sigma, c_k - \sigma], \delta_k:=[c_k-\sigma, c_k]$. Define a diffeomorphism 
 $\psi_j:\p_-C_j\times\Delta_j\to C_j$ by sending vertical intervals to the corresponding trajectories of $v$ parameterized by $\ell$, so that we have $\ell(x,u)=u,\;\; x\in\p_-C_j,u\in\Delta_j$.
 Denote $h(x,u):=d_{x,u}\ell(v)$. 
We have $d(\theta\circ\ell)(v)=\theta'(u)h(x,u).$ 
We will successively define $\theta$  on intervals. Assuming $\sigma$ so small that $B_0\subset U$ we define $\theta(t)=t$ on $\delta_0$. Identifying $C_0$ with $\p_-C_0\times\Delta_0$ via $\psi_0$, the inequality \eqref{eq:grad} takes the form
$$\theta'(u)h(x,u)>C\theta(u)$$
or
$$\frac{d\ln\theta(u)}{du}>\frac C{h(x,u)}.$$
Choose a  smooth  positive function $\wt h(u)<\mathop{\min}\limits_xh(x,u)$.
Solving the  equation
$$\frac{d\ln\theta(u)}{du}=\frac C{\wt h(u)}.$$ we get
$\theta(s)=\theta(c_0+\sigma)e^{H(u)}$, where $H(u)=\int_0^u\wt h(s)ds.$
 Note that we can assume that all the trajectories of $v$ in  $B_j\setminus U$ begin at $\{\ell=c_j-\eps\}$ and end at $\{\ell=c_j+\eps\}$, i.e. there exists a diffeomorphism $(\p_- B_j\setminus U)\times\delta_j\to  B_j\setminus U$.  Hence we can define $\theta$ on $[c_0,c_k]$, by   repeating the same argument  successively  to $\delta_0,\Delta_0,\delta_1,\Delta_1, \dots,\delta_k. $ 
\end{proof}

\begin{lemma}\label{lm:taming}
 Let $Z=p\frac{\p}{\p p}+X_H$ be a  Liouville field on $T^*Y$. Suppose the vector field $Z'=Z|_Y$   is gradient like for some function $\ell:Y \to \R$.  Suppose for all zeroes $a \in Y$ of the vector field $Z'$ the eigenvalues of the differential $d_aZ'$ have real part $<1$. 
 Then  there exists a Riemannian  metric on $Y$ such that    $Z\dot |p|^2\geq \eps |p|^2$ on a neighborhood of $Y$.   \end{lemma}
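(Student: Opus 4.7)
My plan would be to build the desired metric in two stages. First, I would construct a metric $g$ on $Y$ so that the pointwise estimate $Z \cdot |p|^2_g \geq \delta_a |p|^2_g$ holds in a neighborhood of each zero $a$ of $Z'$, using a linear Lyapunov argument based on the hypothesis on eigenvalues of $d_a Z'$. Second, I would conformally rescale $g$ by a function of $\ell$ so that the estimate extends to the complement, using Lemma~\ref{lm:grad-like} to dominate the Hamiltonian contribution in the bulk.

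For the first stage, near a zero $a$ of $Z'$, use Lemma~\ref{lm:local-near-0} to put $Z$ into local canonical form. The linearization of the vertical part of $Z$ along $Y$ at $a$ is governed by the matrix $I - B^T$, which by the hypothesis on the eigenvalues of $B = d_a Z'$ has all eigenvalues of positive real part. The classical matrix Lyapunov theorem then yields a positive definite symmetric $G_a$ with
\[ G_a(I - B^T) + (I - B^T)^T G_a \geq 2 \delta_a G_a \]
for some $\delta_a > 0$. Choosing the Riemannian cometric near $a$ to be constant equal to $G_a$ in the canonical coordinates, a direct computation using Lemma~\ref{lm:local-near-0} shows that $Z \cdot |p|^2_g \geq \delta_a |p|^2_g$ on a small Darboux neighborhood of $a$, the higher-order error terms from that lemma being absorbed into the positive quadratic lower bound. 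Patch these local choices into a global Riemannian metric $g$ on $Y$ using a partition of unity; the estimate near each zero is unaffected by the patching since the metric is unchanged there.

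For the second stage, fix a small precompact neighborhood $U$ of the zero set of $Z'$ on which the first-stage estimate holds. Writing $Z = p_i \partial_{p_i} + X_H$ and using that $H|_Y = 0$ implies $H_{q_i} = O(|p|)$ while $H_{p_k}|_Y = (Z')^k$, one computes
\[ Z \cdot |p|^2_g = 2 |p|^2_g + X_H \cdot |p|^2_g \geq \bigl(2 - K - K \|Z'\|\bigr)|p|^2_g - O(|p|^3) \]
for some constant $K$ depending only on $g$ and $H$. Since $\|Z'\|$ is bounded on the compact set $Y$, I would then apply Lemma~\ref{lm:grad-like} with any preassigned constant $C > K(1 + \max\|Z'\|) + \eps - 2$ to obtain $\theta$ with $\theta' > 0$ and $\theta'(\ell)(Z' \cdot \ell) > C \theta(\ell)$ on $Y \setminus U$, and replace $g$ by the conformally rescaled metric $\tilde g$ whose cometric is $\theta(\ell) g^*$, so that $|p|^2_{\tilde g} = \theta(\ell) |p|^2_g$. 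The key computation is
\[ Z \cdot |p|^2_{\tilde g} = \theta'(\ell)(Z \cdot \ell) |p|^2_g + \theta(\ell) \, Z \cdot |p|^2_g, \]
where $\ell$ is pulled back to $T^*Y$. Since $Z \cdot \ell = Z' \cdot \ell + O(|p|)$, on $Y \setminus U$ this lower-bounds by $(C + 2 - K - K\|Z'\|) \theta |p|^2_g \geq \eps |p|^2_{\tilde g}$ for $|p|$ sufficiently small. On $U$ the first-stage estimate survives the rescaling: the additional term $\theta'(\ell)(Z \cdot \ell)|p|^2_g$ is non-negative up to an $O(|p|^3)$ correction absorbed into $\delta \theta |p|^2_g$, since $\theta'$ and $\theta$ are bounded above and below by positive constants on a compact neighborhood of the critical set.

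The main obstacle I expect is the bookkeeping of the Hamiltonian contribution in the bulk: the flow of $X_H$ can stretch $|p|^2_g$ at a rate comparable to $\|Z'\|$, which may easily exceed the Liouville expansion rate of $2$, so the raw metric from stage one cannot work globally. The conformal rescaling by $\theta(\ell)$, with logarithmic derivative prescribed by Lemma~\ref{lm:grad-like}, supplies exactly the missing positive contribution on the region where $\|Z'\|$ is large, and the two stages fit together because the rescaling is essentially trivial near the critical locus where both $\theta'(\ell)$ and $Z' \cdot \ell$ are controlled.
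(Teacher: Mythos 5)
Your proposal is correct and follows essentially the same two-step strategy as the paper's proof: a local estimate $Z\cdot|p|^2\geq \eps|p|^2$ near the zeroes of $Z'$ coming from the positivity of the real parts of the eigenvalues of $I-B^T$, followed by a conformal rescaling of the metric by $\theta\circ\ell$ supplied by Lemma~\ref{lm:grad-like} to dominate the bounded distortion rate of $|p|^2$ under $Z$ on the bulk, with compatibility near the critical locus guaranteed by $Z'\cdot\ell\geq 0$ and absorption of $O(|p|^3)$ errors. Your explicit use of the matrix Lyapunov theorem to adapt the metric near each zero is a welcome sharpening of a point the paper's argument leaves implicit, but it is the same route, not a different one.
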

   \begin{proof}
   Let us start with any background metric on $Y$. Recall that the vector field $Z'$ is gradient like for a function $\ell:Y\to\R$, which can be assumed positive.
  According to Lemma \ref{lm:taming}, near each zero we have
$$Z=\left(\sum\limits_{i,j=1}^nb_{ij}q_j+\sum\limits_{i,j=1}^na_{ij}p_j\right)\frac{\p}{\p q_i}+\sum(\delta_{ij}-b_{ij})p_i\frac{\p}{\p p_j}+O(|p|^2+|q|^2).$$  By assumption, all eigenvalues of the matrix $B$ have real parts $<1$, and hence  the matrix $I-B^T$ has eigenvalues with the real part $>0$, which  implies
\begin{align}\label{eq:bound-near-0}Z_\cdot|p|^2>\eps |p|^2
\end{align} on a 
neighborhood $U$ of the  compact set of  all zeroes of $Z$.  

Let $\wh Z$ be the Hamiltonian extension of $Z'$ to $T^*Y$ with the Hamiltonian
$p(Z'(q))$.
Note that $Z=\wh Z+\wt Z$, where $\wt Z=O(|p|)$. Hence $\wt Z\cdot|p|^2=O(|p|^2)$.
We also note
that $\wh Z\cdot |p|^2=O(|p|)^2$. Hence, $Z\cdot |p|^2=O(|p|)^2$. Denote $\Delta_1:=\max(|\frac{Z\cdot|p|^2}{|p|^2}|)$. 
 Given any point $a\in Y\setminus U$ we choose  local coordinates centered at $a$ in a neighborhood $U_a$ and write $|p|^2=\sum c_{ij}(q)p_ip_j$. 

Denote $$\Delta_2 := \mathop{\max}\limits_{a\in Y\setminus U}\mathop{\max} \limits_{i,j; q\in U_a}|Z'\cdot c_{ij}(q)|. $$ Set $K=\max(\Delta_1,\Delta_2)$.
According to Lemma \ref{lm:grad-like} there exists a function $\theta:\R\to[1,\infty]$ such that the function $\theta\circ\ell$ satisfies on $Y\setminus U$ the condition
$$Z'\cdot(\theta\circ\ell)>3K \, \theta\circ\ell.$$
Let us pull-back  the function $\ell$ to $T^*Y$ via the projection $T^*Y\to Y$. We will keep the notation $\ell$ for the extended function. Note that
 $\wh Z(p,q)\cdot \ell=Z'(q)\cdot\ell$ and $\wt Z\cdot\ell=O(|p|)$.

 Then we have
\begin{align*}
|Z\cdot(\theta\circ\ell(q))|p|^2\geq \left(3K\theta\circ\ell(q) -2K\theta\circ\ell(q)\right)|p|^2+\frac{\p}{\p p}\cdot |p|^2+o(|p|^2)\geq \sigma|p|^2,\;\;\sigma>0,
\end{align*}
if $|p|$ is small enough.

We need to check that the conformal scaling did not destroy the estimate  on $U$.\begin{align*}
&Z\dot( \theta\circ\ell|p|^2)\geq \eps \theta\circ\ell|p|^2+ (Z\cdot\theta\circ \ell)|p|^2\\
&= \eps \theta\circ\ell|p|^2+(\wh Z\cdot\theta\circ \ell)|p|^2+(\wt Z \cdot\theta\circ \ell)|p|^2.
\end{align*}
 But $$\wh Z\cdot\theta\circ \ell)(p,q)=\wh Z'\cdot\ell(q)\geq 0,$$ while $$|\wt Z\cdot(\theta\circ \ell)|\leq C|p|.$$
 Hence, we get 
 \begin{align*}
Z  \cdot( \theta\circ\ell |p|^2)\geq  \eps\theta\circ\ell |p^2| -C|p|^3,
 \end{align*}
 and for $|p|<\frac1{2C}$ we get $Z  \cdot( \theta\circ\ell |p|^2)\geq  \eps\theta\circ\ell |p^2|$ for a reduced $\eps$.
Hence,  rescaling the metric $|p|^2$ with the conformal factor $\theta\circ\ell$ and possibly reducing the size of the neighborhood of $Y$  we get the required metric which satisfies the inequality  $Z\dot |p|^2\geq \eps |p|^2$.
 \end{proof}
  
  \begin{lemma}\label{lm:taming2} Let $Z,\ell$  and the norm $
  | \cdot |$  be as in Lemma \ref{lm:taming}
  We pull-back $\ell $ to $T^*Y$  via the  projection $T^*Y\to Y$ and will keep the notation $\ell$ for the extended function.
  Then 
    there exists a constant $K>0$ such that the function $\wh\phi:=\ell+K|p|^2$ is a Lyapunov function for $Z$ on $\Op Y$.
  \end{lemma}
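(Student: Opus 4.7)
The aim is to verify the gradient-like condition (W1) for $\wh\phi = \ell + K|p|^2$: to produce a Riemannian metric on $\Op Y$ and a constant $\delta > 0$ such that $d\wh\phi(Z) \geq \delta(\|Z\|^2 + \|d\wh\phi\|^2)$. The plan is to compute $Z \cdot \wh\phi$ directly, use the taming inequality of Lemma \ref{lm:taming} to control the fiber contribution, and absorb the horizontal cross-terms into the fiber direction via a weighted Young inequality, choosing $K$ large at the end.

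First I would write $Z \cdot \wh\phi = Z \cdot \ell + K\,(Z\cdot |p|^2) \geq Z\cdot\ell + K\eps |p|^2$ using Lemma \ref{lm:taming}. To handle $Z \cdot \ell$, I plan to decompose $Z = \wh{Z'} + \wt Z$ as in the proof of that lemma, where $\wh{Z'}$ is the Hamiltonian lift of $Z'$ with Hamiltonian $H'(q,p) = p\bigl(Z'(q)\bigr)$ and $\wt Z = O(|p|)$ near $Y$. Since $\ell$ is pulled back from $Y$ its differential is purely horizontal, giving $\wh{Z'}\cdot \ell = Z'\cdot \ell$ and the crude bound $|\wt Z \cdot \ell| \leq C|p|\, \|d\ell\|$ on a compact neighborhood of $Y$.

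Combining this with the hypothesis that $\ell$ is gradient-like for $Z'$ on $Y$, which yields a metric $g_Y$ and $\delta_0 > 0$ with $Z' \cdot \ell \geq \delta_0(\|Z'\|^2 + \|d\ell\|^2)$, and applying Young's inequality $C|p|\,\|d\ell\| \leq \tfrac{\delta_0}{2}\|d\ell\|^2 + \tfrac{C^2}{2\delta_0}|p|^2$, one obtains
\[
d\wh\phi(Z) \;\geq\; \delta_0\|Z'\|^2 + \tfrac{\delta_0}{2}\|d\ell\|^2 + \Bigl(K\eps - \tfrac{C^2}{2\delta_0}\Bigr)|p|^2.
\]
Choosing $K > C^2/(2\delta_0\eps)$ makes all three coefficients positive. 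Equipping $T^*Y$ near $Y$ with the product metric induced from $g_Y$ and the fiber norm of Lemma \ref{lm:taming}, the routine bounds $\|Z\|^2 \leq C_1(\|Z'\|^2 + |p|^2)$ and $\|d\wh\phi\|^2 \leq C_2(\|d\ell\|^2 + K^2|p|^2)$ then give (W1) on a possibly smaller neighborhood of $Y$.

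The main obstacle, and the reason $K$ must be chosen large, is the leakage between horizontal and vertical components of $Z$ away from the zero section: near a critical point of $\ell$ the quantity $\|d\ell\|$ is small and the error $\wt Z\cdot \ell$ can be of the same order as the good term $Z'\cdot \ell$, so a naive estimate does not preserve positivity of $d\ell(Z)$. The Young inequality shifts this error into the fiber direction $|p|^2$, where the fiber-direction estimate $Z\cdot|p|^2 \geq \eps|p|^2$ from Lemma \ref{lm:taming}, scaled by a sufficiently large $K$, dominates it. This is exactly the mechanism by which the added fiber potential $K|p|^2$ promotes the base Lyapunov function $\ell$ into a Lyapunov function for the full Liouville field on $\Op Y$.
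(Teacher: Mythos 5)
Your proposal is correct and follows essentially the same route as the paper: decompose $Z$ into the Hamiltonian lift of $Z'$ plus an $O(|p|)$ remainder, use the Lyapunov property of $\ell$ for $Z'$ together with the fiber estimate $Z\cdot|p|^2\geq\eps|p|^2$ from Lemma \ref{lm:taming}, absorb the cross term $|\wt Z\cdot\ell|\leq C|p|\,\|d\ell\|$ by taking $K$ large, and finish with the comparison bounds on $\|Z\|^2$ and $\|d\wh\phi\|^2$. The only cosmetic difference is that you make the absorption explicit via Young's inequality, where the paper simply says ``if $K$ is chosen large enough.''
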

  \begin{proof}
  It is sufficient to check the Lyapunov condition only in s neighborhood of the zero locus $A$ of $Z$. Indeed, elsewhere it is true by assumption along $Y$, and hence, by openness of the taming condition in its neighborhood.
  As in the proof of Lemma \ref{lm:taming} we  extend  $ Z'$   to $T^*Y$ as a Hamiltonian vector  field $\wh Z$ with the Hamiltonian function $\zeta(p,q)=p(Z'(q))$.    Denote $\wt Z:=Z-\wh Z-\frac{\p}{\p p}$. We also assume that  the function    $\ell$
 is extended to $T^*Y$ by pulling it back from $Y$ by the projection $T^*Y\to Y$.
  We have 
  \begin{align*} Z\cdot \wh\phi =KZ\cdot|p|^2+Z\cdot\ell\geq K\eps|p|^2+\wh Z\cdot\ell+
  \wt Z\cdot\ell.  
  \end{align*}
  
  Furthermore, $$\wh Z(p,q)\cdot\ell=Z'(q)\cdot\ell\geq \eps_1(|d\ell|^2+|Z'(q)|^2),$$ as $\ell$  is Lyapunov for  $Z'$.
  
  We also have $$ |\wt Z\cdot\ell|\leq C_1|d\ell||p|,\;\;|\wh Z(p,q)-Z'(q)|\leq C_2|d\ell(q)||p|. $$
  Thus, 
   \begin{align*} &d\wh\phi(Z)\geq  K\eps|p|^2+ \eps_2\left(|Z'(q)|^2+|d\ell(q)|^2\right) -C_3|d\ell| |p|\\
   &\geq \eps_3(|p|^2+|d\ell(q)|^2+|Z'(q)|^2),
  \end{align*}
  if $K$ is chosen  large enough.
  We also have
  $$||Z||^2\leq C_2(|p|^2+|Z'(q)|^2)\;\;\hbox{ and}\;\;
  |d\wh\phi|^2\leq C_3( K^2|p|^2+|d\ell(q)|^2).$$
  Hence,  $$d\wh\phi(Z)>\eps_4(|Z|^2+|d\phi|^2).$$
  \end{proof}
  
     \subsubsection{Deformations}
\begin{proof}[Proof of Theorem~\ref{sec:unique-W-thick}(ii)]

We will construct a Weinstein homotopy between the given Weinstein structure 
$(W, \omega, \lambda, Z, \phi)$ and the cotangent building structure constructed in  the proof of (i).
Again, we will argue by induction in the number $k$ of blocks. The inductive argument will simultaneously establish the base case of the induction, which is simpler.
 
According to Lemmas \ref{lm:arb-core-eigenvalue} and \ref{lm:taming} we have $Z\dot|p|^2>\eps|p|^2$ on $U\setminus L_1$ for a neighborhood $U\supset L_1$. Choose  $\delta>0$  to have $\{|p|\leq6\delta\}\subset U$. Denote $U_\delta:= \{\delta\leq \phi\leq6\delta\}$,

Assuming $\phi>0$ and choosing a sufficiently large $C$ we  can  find a Lyapunov   function $\psi$  for $Z$  such that
  \begin{itemize}
\item[-] $\psi=C_1|p|^2$ on $\{2\delta<|p|<5\delta\}$;
 \item[-]  $\psi =C_2\phi$ on $W\setminus \{|p|>5\delta\}$;
\item[-] $\psi= \phi  $  on  $\{|p|< \delta\}$;
\item[-] $d\psi(Z)>0$ on $U_\delta\setminus \{2\delta<|p|<5\delta\}$.
\end{itemize}
 
 We can apply the same argument to the function $\wh\phi =\ell+K|p|^2$  constructed in Lemma \ref{lm:taming2} to get a function $\wh\psi$ which coincide with $\wh\phi$ on $\{|p|<\delta\}$ and with $C_1|p|^2$ on  $\{2\delta<|p|<3\delta\}$.
 
  A convex combination of Lyapunov functions for $Z$ is again a Lyapunov function for $Z$, and hence there is a fixed on $\{|p|\geq2\delta\}$ deformation of $\psi$ to a Lyapunov function which is equal to $\wt\psi$ on $\{|p|<\delta\}$. We will keep the notation $\psi$ for the deformed function.
 
  Recall that 
  $\lambda|_{\Op L_1}=
 pdq - dH$ for a function $H$ vanishing on $L\cap\Op L_1$. Take a   function $\theta:[0,6\delta]\to\R_+$    which is equal to $0$ on $[0,2\delta]\cup [5\delta, 6\delta]$ and equal to $1$ on $[3\delta, 4\delta]$. Define a   family of functions $H_t:  U_\delta\to\R$  by the formula
 $H_t:=(1-t\theta(|p|))H.$  Thus, $H_0=H$, $H_1=0$  on  $\{3\delta<|p|<4\delta\}$ and $H_t=H$ on $\{0\leq |p|\leq 2\delta\}\cup \{5\delta\leq |p|\leq  6\delta\}$ for all $t\in[0,1]$. Denote  by $v$ the Hamiltonian field of $H$,  by $v_t$ the Hamiltonian field of $H_t$ and by $Z_t$ the Liouville field $v_t+p\frac{\p}{\p p}$. Then  $v_t=v $ on  $\{0\leq |p|\leq 2\delta\}\cup \{5\delta\leq |p|\leq  6\delta\}$ and   $v_t=(1-t\theta(|p|))v+H\theta'(|p|)w$ on   $\{3\delta<|p|<4\delta\}$, where we denoted  by  $w$  the Hamiltonian vector field of the Hamiltonian $|p|^2$. Let us observe that the Liouville field $Z_t$ is tangent to the arboreal $L$.
 Then we have $v_t\cdot\psi=v\cdot\psi $ on  $\{0\leq |p|\leq 2\delta\}\cup \{5\delta\leq |p|\leq  6\delta\}$ and $$v_t\cdot |p|^2=(1-t\theta(|p|))v\cdot|p|^2+H\theta'(|p|)w\cdot |p|^2=(1-t\theta(|p|))v\cdot|p|^2$$  on $\{3\delta<|p|<4\delta\}$.
  On the other hand, on  $\{3\delta<|p|<4\delta\}$  we have 
  $$Z_t\cdot |p|^2=(1-t\theta(|p|))Z\cdot|p|^2+2t\theta(|p|)|p|^2>0.$$ Thus, $Z_t$   is gradient like  for $\psi$ for all $t\in[0,1]$.

  According to Lemma \ref{lm:taming} the vector  field $Z $ also satisfies the condition $Z\cdot|p|^2\geq \eps|p|^2.$  Hence, the vector field $p\frac{\p}{\p p}+v_1$ is gradient like for the function $ |p|^2 + \psi$. Define $Z_t$ for $t\in[1,2]$ on $\{|p|\leq 4\delta\}$ by the formula
  $$Z_t:= p\frac{\p}{\p p}+(2-t)v_1.$$ 
  
   We claim that     the Liouville  field $Z_t$ is  gradient like for the function $\psi_t:= |p|^2+(2-t)\psi)$.    Away from the $0$-section both vector fields,
   $(2-t)Z_1$ and $(t-1)p\frac{\p}{\p p}$ are  gradient like for $|p|^2$, and hence, so is $Z_t=(2-t)Z_1+(t-1)p\frac{\p}{\p p}$. Hence, it is sufficient to check the Lyapunov condition in an arbitrary small neighborhood of the $0$-section.
  
   Note that  we have $p\frac{\p}{\p p}\cdot\ell=0$. Hence,
   \begin{align*}
   &Z_t\cdot\psi_t=((t-1)p\frac{\p}{\p p}+(2-t)Z_1)\cdot(|p|^2+(2-t)\psi)\\
   &= (t-1)K(t)|p|^2+ (2-t)^2Z_1\cdot\psi\geq 2(t-1)K(t)|p|^2+(2-t)^2\eps(|Z_1|^2+|d\psi|^2).
   \end{align*}
  Furthermore,
  $|Z_t|^2\leq 2\left((2-t)^2|Z_1|^2+(t-1)^2|p|^2\right)$ and $|d\psi_t|^2\leq 2\left(|p|^2+(2-t)^2|d\psi_1|^2\right)$. Therefore,
  $$Z_t\cdot\psi_t\geq \eps_1(|Z_t|^2+|d\psi_t|^2).$$
   We also note that the Liouville field $Z_t$ is tangent to the arboreal $L$ for all $t$.

    All functions $\psi_t$ are proportional to  $|p|^2$ on $\{3\delta<|p|<4\delta\}$,
   and hence can be extended as  proportional to  $\psi$ Lyapunov functions for $Z_t$ for all $t\in[0,2]$. 
   
   Finally, we observe that $Z_2$ is equal to $Z_{\st}=p\frac{\p}{\p p}$ on $\{|p|\leq 4\delta\}$,   and therefore one can use splitting procedure, as it is described in Section \ref{sec:Lbc-gluing} to split  the Weinstein manifold along the hypersurface
   $\{|p|=4\}$ into the block $T^*L_1$ and a Wc-manifold with the skeleton $L_{>1}$.
   Hence the induction hypothesis completes the proof. \end{proof}

\section{Positive cotangent buildings}\label{sec:positivity}
In this section we discuss the positivity relation, first for Lagrangian planes and then for cotangent buildings.

\subsection{The positivity relation}

\subsubsection{Polarizations}

Let $(V, \omega)$ be a symplectic vector space and let $\tau, \nu \subset V$ be transverse linear Lagrangian subspaces. We will refer to the pair $(\tau, \nu)$ as a {\it polarization} of $V$, and call $\tau$ and $\nu$ the respective {\it horizontal} and {\it vertical} spaces of the polarization.
A polarization $(\tau, \nu)$ provides a canonical linear isomorphism $\nu \simeq \tau^*$ given by $\nu\ni  v\mapsto \iota(v)\omega|_\tau\in\tau^*,$
and linear symplectomorphisms $V\simeq  \tau \oplus \nu \simeq T^*\tau$, where we define $T^*\tau = \tau \oplus \tau^*$.

Let $L \subset V$ be a Lagrangian subspace transverse to $\nu$.   Via the identification $V \simeq T^*\tau$, we can regard $L$ as the graph of the differential of a quadratic form denoted by $L^{(\tau,\nu)} :\tau\to \R$. By construction, this differential is the composition of the canonical maps
$$dL^{(\tau,\nu)} :\tau\simeq V/\nu \simeq L \to V\to V/\tau \simeq \nu.$$

\begin{definition}
Let $(V, \omega)$ be a symplectic vector space with polarization $(\tau,\nu)$. Let $L \subset V$ be a Lagrangian subspace transverse to the vertical space $\nu$. We write $$L\psucc_\nu \tau  \qquad \text{(resp.}\, \,  L \pprec_\nu \tau)$$
 when the quadratic form $L^{(\tau,\nu)} :\tau\to \R$ is positive (resp. negative) definite. 

\end{definition}

       \begin{figure}[h]
\includegraphics[scale=0.5]{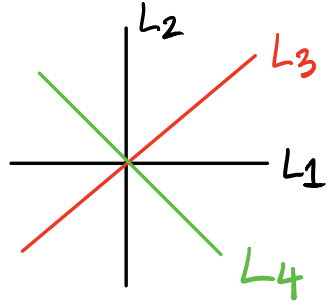}
\caption{The above Lagrangians $L_3$ and $L_4$ in $\R^2$ are given by positive-definite and negative-definite quadratic forms respectively, hence we have $L_3 \psucc_{L_2} L_1$ and $L_4 \pprec_{L_2}L_1$.}
\label{fig:posed}
\end{figure}
Note that $L\psucc_\nu \tau$ or $L \pprec_\nu \tau$  implies that $L$ and $\tau$ are transverse. 

One can check positivity by Hamiltonian reduction to smaller symplectic vector spaces, in particular two-dimensional symplectic planes.
Given a coisotropic $W\subset V$, and  any subspace $L \subset V$, denote the symplectic  reduction by  $[L]^W = (L \cap W)/W^\perp \subset W/W^\perp = [V]^W$.  Note  that  if $L$  is Lagrangian then  $[L]^W$ is Lagrangian in  $[V]^W$  even when the intersection $L\cap  W$  is not transverse.  
\begin{lemma}\label{lm:cont-red} Let $\Lambda(V)$ denote the  Lagrangian Grassmanian of  a symplectic space $V$. Then 
the map $\pi_W:\Lambda(V)\to\Lambda([V]^W)$  defined  by the formula $\pi_W(L)=[L]^W$, $L\in\Lambda(V)$, is continuous.
\end{lemma}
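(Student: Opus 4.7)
Plan: Since continuity is a local property on $\Lambda(V)$, I would fix $L_0\in\Lambda(V)$ and establish continuity of $\pi_W$ at $L_0$ by an explicit computation in a smooth chart. The outline has three steps: set up a good chart on $\Lambda(V)$ near $L_0$, express $\pi_W(L_B)$ in terms of the chart parameter $B$, and verify that the resulting assignment is continuous.

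First I would introduce the standard graph chart on $\Lambda(V)$ near $L_0$. Choose a Lagrangian complement $L_0^\ast\subset V$ with $V=L_0\oplus L_0^\ast$, and use $\omega$ to identify $L_0^\ast\cong L_0^\vee$. Lagrangians near $L_0$ are then parametrized bijectively and smoothly as graphs $L_B:=\{v+B^\sharp(v):v\in L_0\}$ of symmetric bilinear forms $B$ on $L_0$, with $L_0=L_{B=0}$. To make the subsequent reduction transparent, I would choose $L_0^\ast$ to be compatible with the flag $W^\perp\subset W\subset V$; concretely, one can arrange a symplectic splitting $V=W^\perp\oplus [V]^W\oplus K$ in which both $L_0$ and $L_0^\ast$ are block-decomposed.

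Next I would write $[L_B]^W\subset[V]^W$ explicitly as a function of $B$. The crucial dimension identity, which holds for any Lagrangian $L$, is
\[
\dim(L\cap W)-\dim(L\cap W^\perp)=n-k,
\]
which follows from $(L+W)^\perp=L^\perp\cap W^\perp=L\cap W^\perp$ together with $\dim L=n$ and $\dim W=2n-k$. Hence $[L_B]^W$ always has dimension $n-k$ and lies in the Lagrangian locus of $\mathrm{Gr}(n-k,[V]^W)$. In the chart, the condition $v+B^\sharp v\in W$ is a linear condition on $v\in L_0$ whose coefficients depend linearly on $B$, and the image of $L_B\cap W$ in $[V]^W$ can be expressed as the graph of a symmetric bilinear form $\tilde B$ on a fixed Lagrangian of $[V]^W$, with $\tilde B$ obtained from $B$ by the symplectic-reduction operation on forms (essentially a block-matrix construction, similar to a Schur-type complement).

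The main obstacle is the degenerate case $L_0\cap W^\perp\neq 0$, where $\dim(L_B\cap W)$ jumps as $B$ moves away from $0$; the naive coordinate formula for the graph $\tilde B$ then develops apparent singularities. The resolution is that the collapsing directions of $L_B\cap W$ lie in (or asymptotically within) $W^\perp$ and hence project trivially to $[V]^W$, so they do not enter $[L_B]^W$. To make this precise I would decompose $L_0=(L_0\cap W^\perp)\oplus L_0^\flat\oplus L_0^\sharp$, where $L_0^\flat$ is a complement of $L_0\cap W^\perp$ in $L_0\cap W$ and $L_0^\sharp$ is a complement of $L_0\cap W$ in $L_0$, block-decompose $B$ accordingly, and verify that the apparent singularities cancel when passing to the Plücker line (of constant Plücker degree by the dimension identity). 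This reduces continuity to the continuity of a block-matrix construction, completing the proof.
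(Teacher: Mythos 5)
Your plan works only where it is easy, and the step you yourself flag as ``the main obstacle'' is not an apparent singularity that cancels: at Lagrangians $L_0$ with $L_0\cap W^\perp\neq 0$ the map $\pi_W$ is genuinely discontinuous, so no chart or Pl\"ucker argument can close that case. Concretely, take $V=\R^4$ with symplectic basis $e_1,e_2,f_1,f_2$ (the $e$'s and $f$'s spanning isotropic planes, $\omega(f_i,e_j)=\delta_{ij}$), and let $W=\mathrm{span}(e_1,f_1,e_2)$, so that $W^\perp=\R e_2$ and $[V]^W$ is spanned by the classes $\bar{e}_1,\bar{f}_1$. The planes $L_t=\mathrm{span}(e_2+t f_1,\ e_1+t f_2)$ satisfy $\omega(e_2+tf_1,e_1+tf_2)=t\,\omega(e_2,f_2)+t\,\omega(f_1,e_1)=-t+t=0$, hence are Lagrangian, and $L_t\to L_0=\mathrm{span}(e_1,e_2)$ as $t\to 0$. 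For $t\neq 0$ one finds $L_t\cap W=\R(e_2+tf_1)$, whose image in $[V]^W$ is the \emph{fixed} line $\R\bar{f}_1$, while $[L_0]^W=\R\bar{e}_1$. So $\pi_W(L_t)\equiv\R\bar{f}_1\not\to\R\bar{e}_1=\pi_W(L_0)$. This pinpoints the false assertion in your outline: a collapsing direction that is only \emph{asymptotically} in $W^\perp$ does not project trivially; for every $t\neq0$ its projection is an honest line, determined by the first-order deviation from $W^\perp$ (here $f_1$), and that line need not converge to anything inside $[L_0]^W$. Your dimension identity is correct and keeps the image in $\Lambda([V]^W)$, but constancy of dimension does not prevent the jump, and the Pl\"ucker coordinates converge to a different decomposable vector rather than cancelling.

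What your chart computation does prove is that $\pi_W$ is continuous (indeed smooth) on the open dense locus $\{L\in\Lambda(V):\ L\cap W^\perp=0\}=\{L:\ L+W=V\}$, where $[L_B]^W$ is given by a nondegenerate Schur-complement formula; the lemma as literally stated is false off that locus, so the correct conclusion of your work is that a transversality hypothesis must be added (or the map restricted). The paper's own proof is just ``This is clear,'' so there is no argument to compare against; but note that where the lemma is applied, in the proof of Lemma~\ref{lemma:pos classic}, the coisotropics in conditions (ii)--(iii) contain $\nu$, so $W^\perp\subset\nu$ and any $L$ transverse to $\nu$ automatically satisfies $L\cap W^\perp=0$; positivity likewise forces the relevant reductions to be transverse, and the subsequent positive-zone lemma is proved by a direct quadratic-form argument avoiding continuity altogether. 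I would therefore restate and prove the continuity claim on the transversal locus, where your proposal goes through, and treat the degenerate locus as genuinely excluded rather than as a removable singularity.
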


\begin{proof} This is clear. \end{proof}

\begin{lemma}\label{lemma:pos classic}

Let $V$ be a symplectic vector space of dimension $2n>0$, $(\tau,\nu)$ a polarization of $V$ and $L \subset V$ a linear Lagrangian subspace. The following conditions are equivalent:
\begin{enumerate}
\item Positivity: $$L \psucc_\nu \tau.$$
\item Positivity restricted to subspaces: $$[ L]^W \psucc_{[\nu]^W }  [\tau]^W,$$ for all coisotropics $ W\subset V$ containing $\nu$ with $\dim W> n $.

\item Positivity restricted to lines: 
$$[L]^W \psucc_{ [\nu]^W}  [\tau]^W,$$ for all coisotropics $ W\subset V$ containing  $\nu$ with $\dim W = n+1$.

\item Positivity of all reductions:
$$[L]^W \psucc_{ [\nu]^W}  [\tau]^W,$$ for all coisotropics  $W\subset V$.

\item Sylvester's criterion: $$\wedge^{top} [L]^{W_i} \psucc_{\wedge^{top} [\nu]^{W_i}} \wedge^{top}[ \tau]^{W_i},$$
for any flag of coisotropics $$\nu \subset W_{n+1} \subset \cdots \subset W_{2n-1} \subset W_{2n} =  V$$ with $\dim W_i = i$.

\end{enumerate}
\end{lemma}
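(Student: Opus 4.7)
The plan is to translate the whole statement into classical linear algebra of quadratic forms by using the polarization $(\tau,\nu)$ to identify $V\simeq T^*\tau$. Under this identification, any Lagrangian $L\subset V$ transverse to $\nu$ is the graph of $dQ$ for a unique quadratic form $Q=L^{(\tau,\nu)}:\tau\to\R$, and by definition $L\psucc_\nu\tau$ means $Q$ is positive definite. All five conditions can then be read off as variants of positive-definiteness.

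Next I would analyze what symplectic reduction by a coisotropic $W\supset\nu$ does at the level of the polarization and the quadratic form. Since $\nu$ is Lagrangian and $\nu\subset W$, one has $W^\perp\subset\nu^\perp=\nu$ and $\tau\cap W^\perp=0$, so the decomposition $V=\tau\oplus\nu$ descends to a symplectic direct sum $[V]^W=[\tau]^W\oplus[\nu]^W$ with $[\tau]^W=\tau\cap W$ and $[\nu]^W=\nu/W^\perp$. In particular $([\tau]^W,[\nu]^W)$ is a genuine polarization of $[V]^W$, and a short calculation shows $[L]^W$ is the graph of $d(Q|_{\tau\cap W})$, i.e.\ the reduced quadratic form is the restriction $Q|_{\tau\cap W}$. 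Moreover, the assignment $W\mapsto \tau\cap W$ gives a bijection between coisotropics $W\supset\nu$ of dimension $n+k$ and $k$-dimensional subspaces of $\tau$, carrying flags to flags.

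Equipped with this dictionary, (1)$\Leftrightarrow$(2)$\Leftrightarrow$(3) reduces to the classical facts that a quadratic form on $\tau$ is positive definite iff its restriction to every subspace is positive definite iff its restriction to every line is positive definite. The equivalence (1)$\Leftrightarrow$(5) reduces to Sylvester's criterion: after choosing any basis adapted to a full flag $\tau_1\subset\cdots\subset\tau_n=\tau$ of subspaces, the successive leading principal minors $\det(Q|_{\tau_i})$ are precisely the quantities measured by the $\wedge^{\mathrm{top}}[L]^{W_i}\psucc_{\wedge^{\mathrm{top}}[\nu]^{W_i}}\wedge^{\mathrm{top}}[\tau]^{W_i}$ inequalities, once one identifies the one-dimensional Lagrangian $\wedge^{\mathrm{top}}[L]^{W_i}$ inside $\wedge^{\mathrm{top}}[V]^{W_i}$ with the graph of the leading minor.

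The only subtle point, and the step I expect to be the main obstacle, is the equivalence with (4), where one allows arbitrary coisotropic $W\subset V$, not just those containing $\nu$. The implication (4)$\Rightarrow$(2) is a tautology since (2) is the special case $W\supset\nu$. For (1)$\Rightarrow$(4), on the open locus of coisotropics $W$ for which $([\tau]^W,[\nu]^W)$ is still a polarization of $[V]^W$, I would argue by a continuity/density argument: use Lemma \ref{lm:cont-red} (continuity of $\pi_W$) together with the openness of the positivity condition in $\Lambda(V)$, reducing an arbitrary such $W$ to the previously treated case $W\supset\nu$ by deforming $W$ through a path of coisotropics along which $[\tau]^W\pitchfork[\nu]^W$ persists. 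The delicate part is verifying that this path exists and that positivity really is preserved all the way; an efficient way to encode this is to factor the reduction by $W$ through the larger coisotropic $W+\nu\supset\nu$, and then check by hand that a further reduction from a polarized symplectic quotient to a smaller symplectic quotient takes positive Lagrangians to positive Lagrangians whenever the target polarization is well-defined.
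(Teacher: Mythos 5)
Your handling of (i), (ii), (iii) and (v) is correct and is essentially the paper's own argument: for a coisotropic $W\supset\nu$ one has $[\tau]^W\simeq\tau\cap W$, $[\nu]^W=\nu/W^\perp$, the reduced Lagrangian $[L]^W$ is the graph of $d\bigl(Q|_{\tau\cap W}\bigr)$ for $Q=L^{(\tau,\nu)}$, and $W\mapsto\tau\cap W$ is a bijection onto subspaces of $\tau$ carrying flags to flags; conditions (ii), (iii) then say that $Q$ is positive on all subspaces, resp.\ all lines, and (v) is Sylvester's criterion. So far you match the paper.

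The genuine gap is exactly the step you flag, $(1)\Rightarrow(4)$, and the ``check by hand'' you defer there is in fact false: reduction by a coisotropic containing neither $\nu$ nor $\tau$ does \emph{not} preserve the positivity relation. Take $V=\R^4$ with $\omega=dp_1\wedge dq_1+dp_2\wedge dq_2$, $\tau=\{p=0\}$, $\nu=\{q=0\}$, and the coisotropic hyperplane $W=\{p_1=q_1\}$, so $W^\perp=\Span(\partial_{q_1}+\partial_{p_1})$. Here $[\tau]^W$ and $[\nu]^W$ are the transverse images of $\Span(\partial_{q_2})$ and $\Span(\partial_{p_2})$, so the reduced pair is an honest polarization and your restricted version of (4) applies. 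For $L=\{p=Aq\}$ with $A=\begin{pmatrix} a & b\\ b & c\end{pmatrix}$, solving $p_1=q_1$ gives $q_1=\tfrac{b}{1-a}q_2$ and the reduced Lagrangian is the graph over $[\tau]^W$ of the number $c+\tfrac{b^2}{1-a}$, i.e.\ the Schur complement of $A-\diag(1,0)$; this has no fixed sign on the positive cone. Concretely, $a=2$, $b=1.4$, $c=1$ gives $A$ positive definite ($ac-b^2=0.04$), hence $L\psucc_\nu\tau$, while the reduced form equals $-0.96$, so $[L]^W\pprec_{[\nu]^W}[\tau]^W$. Moreover at $a=1$, $b\neq0$, $c>b^2$ one gets $[L]^W=[\nu]^W$, so even transversality of $[L]^W$ with $[\nu]^W$ fails inside the positive zone. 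This defeats both of your routes: along a path (of coisotropics or of Lagrangians) the reduced form can pass through infinity and change signature, so openness-plus-connectedness does not close (and Lemma \ref{lm:cont-red} gives continuity in $L$ for fixed $W$, not in $W$); and when $W+\nu=V$ your two-stage factorization is empty, the ``second stage'' being literally the original claim. Be aware that this is also the weak point of the paper's own one-line justification, which invokes pairwise transversality of $[L]^W,[\tau]^W,[\nu]^W$ for arbitrary coisotropic $W$ -- the same example contradicts it. What your dictionary does prove is (iv) for all coisotropics containing $\nu$ (restriction of $Q$) and, dually, for all coisotropics containing $\tau$ (Schur complement of $Q$, again positive definite); for (iv) as literally stated you should restrict the class of $W$ in this way rather than hope a continuity or factorization argument rescues the general case.
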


\begin{proof} Equivalence  of (i) with (ii) (resp. (iii)) means that 
  a  quadratic form is  positive definite if and only if it is   positive definite on  all  subspaces (resp. on  all lines), which is clear.
Clearly  (iv)  implies (iii), and the converse implication  follows from Lemma  \ref{lm:cont-red} together with the two fact that  the positivity condition    $L \psucc_\nu \tau$ implies  that  $[\nu]^W, [L]^W$ and $[\tau]^W$ are pairwise transverse. Finally, (v) is a reformulation of
  Sylvester's criterion that a quadratic form is positive definite if and only if its leading principal minors have positive determinant.
\end{proof}


\subsubsection{Cyclic symmetries}

Note that given a polarization $(\tau, \nu)$ of $V$, we also have the polarization $(\nu, \tau)$.
Additionally, if $L \subset V$ is transverse to $\nu$, we  can regard $(L, \nu)$ as a polarization.

\begin{lemma}\label{lemma:pos basic} The following properties hold.
\begin{enumerate}

\item Duality of polarization: $L\psucc_\nu \tau \iff  L \pprec_\tau \nu$.

\item Exchange of graphs: $L\psucc_\nu \tau \iff  \tau \pprec_\nu L$.

\item Transitivity: suppose $K \subset V$ is  another Lagrangian subspace transverse to $\nu$. Then
$$ K\psucc_\nu L , \, \, \, L\psucc_\nu \tau  \implies K\psucc_\nu \tau$$

\end{enumerate}

\end{lemma}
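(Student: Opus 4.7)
The plan is to reduce all three claims to standard facts about symmetric bilinear forms by working in coordinates adapted to the polarization. Once we fix the vertical Lagrangian $\nu$, the set of Lagrangians transverse to $\nu$ forms an affine space canonically parametrized by symmetric bilinear forms on any chosen base. Writing $V \simeq T^*\tau$ via the polarization $(\tau, \nu)$, a Lagrangian $L$ transverse to $\nu$ is the graph of $dQ_L$ for a quadratic form $Q_L : \tau \to \R$, and $L \psucc_\nu \tau$ iff $Q_L$ is positive definite. In matrix terms, taking $Q_L(q) = \tfrac12 q^T A q$, we have $L = \{(q, Aq) : q \in \tau\}$ with $A > 0$.

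For (i), the strategy is to change polarization: identify $V \simeq T^*\nu$ via a map of the form $(q, p) \mapsto (p, -q)$ (the sign being dictated by matching $\omega$ with the canonical symplectic form on $T^*\nu$). Under this identification, $L = \{(q, Aq)\}$ becomes $\{(p, -A^{-1}p)\}$, which is the graph of $d\tilde Q_L$ for $\tilde Q_L(p) = -\tfrac12 p^T A^{-1} p$. Since $A^{-1}$ is positive definite, $\tilde Q_L$ is negative definite, giving $L \pprec_\tau \nu$. The reverse implication is symmetric.

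For (ii), I will work directly in the polarization $(L, \nu)$. Decomposing $(q, 0) \in \tau$ as $(q, Aq) - (0, Aq)$ with first summand in $L$ and second in $\nu$, the Lagrangian $\tau$ appears as the graph of a linear map from $L$ to $\nu$. A short calculation using the canonical identification $\nu \simeq L^*$ induced by $\omega$ translates this into the differential of the quadratic form $R(q_L) = -\tfrac12 q_L^T A q_L$ on $L$, which is negative definite, yielding $\tau \pprec_\nu L$.

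For (iii), the same decomposition applied to a general Lagrangian $K = \{(q, Bq)\}$ transverse to $\nu$ will show that the quadratic form representing $K$ in the polarization $(L, \nu)$ is $\tfrac12 q_L^T (B - A) q_L$. Thus the relation $K \psucc_\nu L$ translates exactly to the partial order $B > A$ on symmetric matrices. Transitivity then becomes formal: from $K \psucc_\nu L$ and $L \psucc_\nu \tau$ we obtain $B > A$ and $A > 0$, hence $B > 0$, i.e.\ $K \psucc_\nu \tau$. The main obstacle in the argument is nothing more than careful bookkeeping of signs in parts (i) and (ii); once (ii) is in place, (iii) follows immediately from the transitivity of the partial order on symmetric forms.
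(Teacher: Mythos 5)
Your proposal is correct and follows essentially the same route as the paper: part (i) comes from the fact that swapping the roles of $\tau$ and $\nu$ inverts the graph map and introduces a sign (your $-A^{-1}$ is the coordinate form of the paper's identity $L^{(\nu,\tau)}(Y)=-L^{(\tau,\nu)}(dL^{(\tau,\nu)}Y)$), while (ii) and (iii) rest on the additivity $K^{(\tau,\nu)}=K^{(L,\nu)}+L^{(\tau,\nu)}$, which is exactly your computation that $K$ is represented by $B-A$ in the polarization $(L,\nu)$. The only difference is that you work in explicit matrix coordinates where the paper argues invariantly, and your sign bookkeeping checks out.
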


\begin{proof}
%

Let $\omega$ denote the symplectic form of $V$. For (i), by construction, $L^{(\tau, \nu)}(X) = \omega( dL^{(\tau,\nu)} X , X)$, $X \in \tau$. Since $dL^{(\tau, \nu)} = (dL^{(\nu, \tau)})^{-1}$, for $Y \in \nu$ we have 
$$L^{(\nu,\tau)}(Y) = \omega( (dL^{(\tau,\nu)})^{-1}Y,Y) = - \omega(Y, (dL^{(\tau,\nu)})^{-1} Y) = -L^{(\tau,\nu)}(dL^{(\tau,\nu)}Y).$$

 Hence if $L^{(\tau,\nu)}$ is positive definite, then $L^{(\nu,\tau)}$ is negative definite and vice versa. Next, let $K \subset V$ be a Lagrangian  transverse to $\nu$. Note that $K^{(\tau, \nu)} = K^{(L, \nu)} + L^{(\tau, \nu)}$ under the 
  isomorphism $\tau\simeq V/\nu \simeq L$. Both (ii) and (iii) follow immediately (for (ii) take $K=\tau$ so that $K^{(\tau,\nu)}=0$ and hence $\tau^{(L,\nu)} = - L^{(\tau,\nu)}$).
\end{proof}



\begin{lemma}\label{lemma:pos sym}
Suppose $L_1, L_2, L_3 \subset V$ are Lagrangian subspaces transverse to~$\nu$.
Then for any permutation $\sigma \in \Sigma_3$ we have
$$
L_1 \psucc_{L_3} L_2 \iff
L_{\sigma(1)} \psucc^\sigma_{L_{\sigma(3)}} L_{\sigma(2)}
$$ 
where $\psucc^\sigma$ denotes $\psucc$ for $\sigma$ an even composition of transpositions, and $\pprec$ for $\sigma$ an odd composition of transpositions.
\end{lemma}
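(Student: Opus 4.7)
The plan is to deduce this lemma directly from parts (i) and (ii) of Lemma \ref{lemma:pos basic} by presenting $\Sigma_3$ via its two adjacent transpositions $(1\,2)$ and $(2\,3)$ and checking that each of them flips the sign of the positivity relation. No new idea is required; the work is purely bookkeeping of parities.

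First I would rewrite the conclusions of the previous lemma as statements about the ternary relation $\psucc$ on ordered triples. Reading part (ii), $L \psucc_\nu \tau \iff \tau \pprec_\nu L$, with $L_1, L_2, L_3$ playing the roles of $L, \tau, \nu$, one sees that swapping the first two arguments of $(L_1, L_2, L_3)$ flips the sign:
\[
L_1 \psucc_{L_3} L_2 \iff L_2 \pprec_{L_3} L_1.
\]
Similarly, part (i), $L \psucc_\nu \tau \iff L \pprec_\tau \nu$, says that swapping the last two arguments also flips the sign:
\[
L_1 \psucc_{L_3} L_2 \iff L_1 \pprec_{L_2} L_3.
\]
Thus the two generators $(1\,2)$ and $(2\,3)$ of $\Sigma_3$ each act by a sign flip on the positivity relation.

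Next I would conclude by iteration. Given an arbitrary $\sigma \in \Sigma_3$, decompose it as a product of $k$ adjacent transpositions drawn from $\{(1\,2),(2\,3)\}$, where $k$ and $\sigma$ have the same parity. Applying the two equivalences above $k$ times in succession yields
\[
L_1 \psucc_{L_3} L_2 \iff L_{\sigma(1)} \psucc^\sigma_{L_{\sigma(3)}} L_{\sigma(2)},
\]
with the sign flipped an even or odd number of times according to the parity of $\sigma$. For concreteness, one can verify the three-cycle case $\sigma = (1\,2\,3) = (1\,2)(2\,3)$ yields two sign flips and hence preserves $\psucc$, while the transposition $(1\,3) = (1\,2)(2\,3)(1\,2)$ yields three flips and hence switches $\psucc$ to $\pprec$.

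There is no genuine obstacle here, and the only minor point to note is that the positivity $L_1 \psucc_{L_3} L_2$ already implies all three Lagrangians $L_1, L_2, L_3$ are pairwise transverse (since $L_1$ must be the graph of a positive-definite quadratic form on $L_2$ relative to the polarization $(L_2, L_3)$), so each of the rewritten relations is itself well-posed as a polarization statement, and no transversality condition is lost along the chain of equivalences.
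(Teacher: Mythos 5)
Your proof is correct and is essentially the paper's argument: the paper likewise observes that parts (i) and (ii) of Lemma \ref{lemma:pos basic} give the sign-flip for the two adjacent transpositions $(2\,3)$ and $(1\,2)$, and then concludes because these generate $\Sigma_3$. Your version merely spells out the parity bookkeeping and the transversality well-posedness, which the paper leaves implicit.
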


\begin{proof}
The case of $\sigma = (23)$ (resp. $\sigma = (12)$) is part (i) (resp. (ii)) of Lemma~\ref{lemma:pos basic}.
These permutations generate.
\end{proof}

One can interpret Lemma~\ref{lemma:pos basic}, and in turn Lemma~\ref{lemma:pos sym}, as saying  the ternary relation $ \psucc$  provides a {\em partial cyclic order} on triples of Lagrangian subspaces.

\begin{definition}
We say an ordered list $L_1, \ldots, L_m \subset V$ of Lagrangian planes is {\em $\psucc$-cyclically ordered} (resp.  {\em $\pprec$-cyclically ordered})
when they are pairwise transverse and satisfy
$$ 
\xymatrix{
L_{i_2} \psucc_{L_{i_1}} L_{i_3}  & \mbox{(resp. $L_{i_2} \pprec_{L_{i_1}} L_{i_3}$)}  
}$$
whenever $ i_1,i_2,i_3 \in \{1, \ldots , m\} $ are cyclically ordered in $\Z/m$. We will also write $L_1 \psucc L_2 \psucc \cdots \psucc L_m$ (resp. $L_1 \pprec L_2 \pprec \cdots \pprec L_m$).
\end{definition}

         \begin{figure}[h]
\includegraphics[scale=0.5]{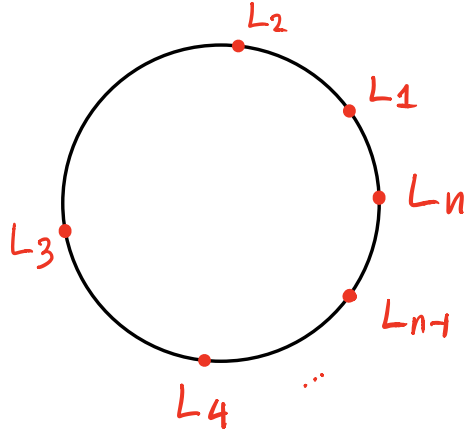}
\caption{A cyclic ordering on a tuple $(L_1, \ldots , L_n)$ can be thought of as an embedding into $S^1$ preserving the cyclic order of all triples.}
\label{fig:circle}
\end{figure}

We record the following useful assertions for future reference. The assertions and their proofs hold in any partial cyclic order.
  
\begin{lemma}\label{lemma:pos basic} The positivity relation satisfies the following properties.

\begin{enumerate}
\item

Suppose $L_1 \pprec_\nu L_2$ and either $\tau \pprec_\nu L_1$ or $L_2 \pprec_\nu \tau$.
Then $L_1 \pprec_\tau L_2$.

\item

Suppose
  $L_3\psucc_{L_4} L_2 \psucc_{L_4}L_1$.
  Then  $L_2 \psucc_{L_3}L_1$. If additionally $L \psucc_{L_3} L_2$ then $L \psucc_{L_4} L_1$.

\item

If $L_- \psucc_{L_2} L_1$ and $L_+\psucc_{L_1} L_2$, then $L_-\psucc_{L_2}L_+$.

\end{enumerate}

\end{lemma}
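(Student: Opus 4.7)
My plan is to treat the positivity relation $\psucc$ as the partial cyclic order it provides on pairwise transverse Lagrangian planes (via Lemma \ref{lemma:pos sym}), picturing the relevant planes as points on a circle so that $A\psucc_B C$ encodes the counterclockwise cyclic ordering $(B,A,C)$. Each hypothesis in (i)--(iii) then gives a cyclic ordering on a triple of planes; the task is to assemble these local cyclic orderings into a consistent global cyclic ordering on all planes in sight and to read off the conclusion as a sub-triple. The formal tools are the cyclic symmetry of $\psucc$ (Lemma \ref{lemma:pos sym}) and the earlier transitivity $K\psucc_\nu L,\ L\psucc_\nu\tau \implies K\psucc_\nu\tau$ from the previous Lemma \ref{lemma:pos basic}(iii). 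Each ``assembly'' step is formally justified by rewriting the two hypotheses via cyclic symmetry so that they share a common base plane, then applying transitivity.

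For (i), using Lemma \ref{lemma:pos sym} the hypothesis $L_1\pprec_\nu L_2$ becomes the cyclic order $(\nu,L_2,L_1)$, while $\tau\pprec_\nu L_1$ (respectively $L_2\pprec_\nu\tau$) becomes $(\nu,L_1,\tau)$ (respectively $(\nu,\tau,L_2)$). Either overlap of adjacent pairs forces the total cyclic order $(\nu,L_2,L_1,\tau)$; the sub-triple $(\tau,L_2,L_1)$ then gives $L_2\psucc_\tau L_1$, equivalently $L_1\pprec_\tau L_2$. Formally, after rewriting both hypotheses as $\psucc$-statements based at $\nu$, one application of transitivity produces an auxiliary relation, which after one more cyclic rewriting yields the conclusion.

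For (ii), the two hypotheses $L_3\psucc_{L_4} L_2$ and $L_2\psucc_{L_4} L_1$ share the base $L_4$, so I will apply transitivity directly, together with a cyclic rotation, to extract $L_2\psucc_{L_3} L_1$; pictorially this is reading the sub-triple $(L_3,L_2,L_1)$ inside the cyclic order $(L_4,L_3,L_2,L_1)$. Adding the third hypothesis $L\psucc_{L_3} L_2$ inserts $L$ between $L_3$ and $L_2$, producing the cyclic order $(L_4,L_3,L,L_2,L_1)$; the sub-triple $(L_4,L,L_1)$ yields $L\psucc_{L_4} L_1$. Formally this second conclusion is obtained by chaining two transitivity steps (first with base $L_3$ to compare $L$ with $L_1$, then recasting with base $L_4$) after suitable cyclic rewritings.

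For (iii), the hypotheses $L_-\psucc_{L_2} L_1$ and $L_+\psucc_{L_1} L_2$ give cyclic orders $(L_2,L_-,L_1)$ and $(L_1,L_+,L_2)$ sharing the pair $(L_1,L_2)$ traversed in opposite directions, which forces the total cyclic order $(L_1,L_+,L_2,L_-)$; the sub-triple $(L_2,L_-,L_+)$ is then the desired $L_-\psucc_{L_2} L_+$. The only real obstacle in any of these arguments is notational bookkeeping, since each ternary relation admits six equivalent rewritings under cyclic symmetry. The cleanest presentation is to fix the cyclic-order dictionary once at the start and then exhibit each conclusion as a sub-triple of an explicitly constructed total cyclic order; the formal justification via Lemma \ref{lemma:pos sym} and transitivity is then mechanical.
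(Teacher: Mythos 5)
Your overall strategy---read $\psucc$ as a partial cyclic order and derive each part from the symmetries of Lemma \ref{lemma:pos sym} together with the transitivity statement---is exactly the paper's, but the explicit formal recipes you commit to in (i) and (ii) do not work, and the failure is precisely in the bookkeeping you dismiss as mechanical. In (i), rewriting both hypotheses as $\psucc$-statements based at $\nu$ and applying transitivity yields $L_2\psucc_\nu\tau$ (from $L_2\psucc_\nu L_1$ and $L_1\psucc_\nu\tau$), a relation among $\nu,L_2,\tau$ only; since $L_1$ no longer appears, no cyclic rewriting of this auxiliary relation can produce $L_1\pprec_\tau L_2$. Likewise in (ii), applying transitivity ``directly'' at the shared base $L_4$ gives $L_3\psucc_{L_4}L_1$, whose cyclic rewritings involve only $L_4,L_3,L_1$ and therefore cannot yield $L_2\psucc_{L_3}L_1$. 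These examples expose the one nontrivial point of the lemma: because the cyclic order is only partial, your claim that two overlapping triples ``force'' a cyclically ordered quadruple is not automatic---it is the content to be proved---and it requires choosing the base of the transitivity step to be a plane occurring in the desired conclusion, arranged so that the plane absent from the conclusion is the middle element that gets eliminated.

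The repair is the paper's argument, and with it your picture becomes correct. For (i), rotate both hypotheses to base $L_1$: $L_2\pprec_{L_1}\nu$ and $\nu\pprec_{L_1}\tau$ give $L_2\pprec_{L_1}\tau$ by transitivity, hence $L_2\psucc_\tau L_1$, i.e.\ $L_1\pprec_\tau L_2$ (the second case is the same with base $L_2$). For the first claim of (ii), rotate to base $L_2$: $L_3\pprec_{L_2}L_4$ and $L_4\pprec_{L_2}L_1$ give $L_3\pprec_{L_2}L_1$, hence $L_2\psucc_{L_3}L_1$; for the second claim, chain $L\psucc_{L_3}L_2$ with $L_2\psucc_{L_3}L_1$ at base $L_3$ to get $L\psucc_{L_3}L_1$, note $L_3\psucc_{L_4}L_1$ (transitivity at base $L_4$ applied to the original pair), and then one more transitivity at base $L_1$ gives $L\pprec_{L_1}L_4$, i.e.\ $L\psucc_{L_4}L_1$. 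Your treatment of (iii) is fine provided the transitivity is performed at base $L_2$ (after rotating the second hypothesis to $L_1\psucc_{L_2}L_+$); performing it at the other shared plane $L_1$ only yields $L_+\psucc_{L_1}L_-$, which is not the conclusion. With these base choices made explicit, your proof coincides with the paper's.
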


\begin{proof}
(i) 
By cyclic symmetry applied to $L_1 \pprec_\nu L_2$, we have $L_2 \pprec_{L_1} \nu$ and 
$\nu \pprec_{L_2} L_1$.
Suppose $\tau \pprec_\nu L_1$. Then by cyclic symmetry, we have  $\nu \pprec_{L_1} \tau$ and hence
by transitivity 
$L_2 \pprec_{L_1} \tau$. By transpositional symmetry, we conclude 
$L_2 \psucc_{\tau} L_1$.

Similarly, suppose $L_2 \pprec_\nu \tau$. Then by cyclic symmetry, we have  $\tau \pprec_{L_2} \nu$ and hence
by transitivity
$\tau \pprec_{L_2}L_1$. By transpositional symmetry, we conclude 
$L_1 \pprec_{\tau} L_2$.

(ii) For the first assertion, $L_3\psucc_{L_4} L_2$ (resp. $L_2\psucc_{L_4} L_1$) implies $L_3\pprec_{L_2} L_4$ (resp. $L_4\pprec_{L_2} L_1$) by transpositional symmetry. Hence by transitivity we have $L_3 \pprec_{L_2} L_1$, and so by transpositional symmetry  $L_2 \psucc_{L_3}   L_1$.

  For the second, suppose $L$ satisfies $L \psucc_{L_3} L_2$. 
  By the previous part, we have
  $L_2 \psucc_{L_3} L_1$,  hence  $L \psucc_{L_3}  L_1$ by cyclic symmetry. By transpositional symmetry, we then have $L \pprec_{L_1}  L_3$.
  By assumption, we have $L_3 \psucc_{L_4}  L_1$ and hence by transpositional symmetry $L_3 \pprec_{L_1}  L_4$ so by transitivity $L \pprec_{L_1}  L_4$. Finally, by transpositional symmetry, we obtain $L \psucc_{L_4} L_1$.
  
  (iii) By cyclic symmetry, we have $L_1 \psucc_{L_2} L_+$, and hence by transitivity  $L_- \psucc_{L_2} L_+$.
\end{proof}

\subsubsection{Positive zones}
It is useful to reformulate positivity more geometrically.
  
\begin{definition}
Let $(V, \omega)$ be a symplectic vector space with polarization $(\tau,\nu)$.
Let $\Lambda(V)$ be the Lagrangian Grassmannian of $V$. Denote  by   $C(\tau,\nu)\subset \Lambda(V)$ the subset consisting of those Lagrangians $L\subset V$ transverse to $\nu$
and satisfying $L\psucc_{\nu} \tau$.  
We refer to   $C(\tau,\nu)$   as a {\em positive zone},
 and its closure, denoted by  $\oC(\tau,\nu)$, as a {\em closed positive zone}. 
 
\end{definition}

       \begin{figure}[h]
\includegraphics[scale=0.6]{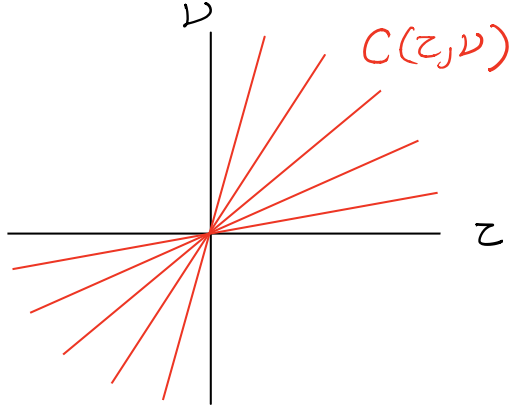}
\caption{The {\em positive zone} of a polarization $(\tau,\nu)$}
\label{fig:Positivezone}
\end{figure}

 \begin{remark}
 As justified by part (ii) of the following lemma, we will regard any single Lagrangian $L \in \Lambda(V)$ itself as a (degenerate) closed positive zone. 
 \end{remark}

\begin{lemma} The positive zone $C(\tau,\nu) \subset \text{Gr}(V)$ satisfies the following properties.
\begin{enumerate}
\item $C(\tau,\nu)$ (resp. $\oC(\tau,\nu)$) is non-empty, open (resp. closed), and geodesically convex for any homogenous metric on $\mathrm Gr(V)$, hence contractible.

\item If $\tau$ approaches $\nu$ along a geodesic in $\Lambda(V)$, then  $C(\tau,\nu)$ and $\oC(\tau,\nu)$ limit to $\nu$ itself.

\end{enumerate}
\end{lemma}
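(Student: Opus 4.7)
The plan is to give a concrete description of $C(\tau,\nu)$ in the affine chart of Lagrangians transverse to $\nu$ as a cone of positive definite forms, from which non-emptiness, openness, closedness and contractibility follow immediately, and then to handle the geodesic-convexity claim via the symmetric-space structure of $\Lambda(V)$.

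The polarization $(\tau,\nu)$ identifies $V$ with $T^*\tau$ and the open chart $U_\nu\subset\Lambda(V)$ of Lagrangians transverse to $\nu$ with the vector space $\mathrm{Sym}^2(\tau^*)$ via $L\mapsto L^{(\tau,\nu)}$. In this affine chart $C(\tau,\nu)$ corresponds to the open convex cone of positive definite forms and $\oC(\tau,\nu)\cap U_\nu$ to the closed convex cone of positive semi-definite forms. Non-emptiness is immediate from any inner product on $\tau$. Openness of $C(\tau,\nu)$ in $\Lambda(V)$ holds since $U_\nu$ is open and positive definiteness is an open condition. Closedness of $\oC(\tau,\nu)$ in $\Lambda(V)$ holds because any sequence of positive semi-definite forms either stays bounded (converging in $U_\nu$ to a positive semi-definite form) or has unbounded operator norm, in which case the corresponding Lagrangians converge to $\nu$, which lies in the closure. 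Convexity of the positive definite cone gives contractibility by straight-line homotopy in the affine chart.

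For geodesic convexity with respect to a homogeneous metric on $\Lambda(V)$, identify $\Lambda(V)\simeq U(n)/O(n)$, $n=\tfrac12\dim V$, as a compact symmetric space; up to scale the $U(n)$-invariant metric is unique. Fix a compatible complex structure $J$ on $V$ with $J\tau=\nu$; then each $L\in U_\nu$ writes uniquely as $L=e^{iS}\tau$ for a real symmetric operator $S$ on $\tau$ with eigenvalues in $(-\pi/2,\pi/2)$, and $L\in C(\tau,\nu)$ iff the eigenvalues of $S$ lie in $(0,\pi/2)$ (equivalently $A=\tan S$ in the graph parametrization). Geodesics through $\tau$ are of the form $t\mapsto e^{itS}\tau$, so the preimage of $C(\tau,\nu)$ under this exponential parametrization is the convex open set of $S$ in the operator-order interval $(0,(\pi/2)I)$, which already shows that $\tau$ itself sits in the closure of arcs of $C(\tau,\nu)$. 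For two general points in $C(\tau,\nu)$, simultaneous diagonalization of the corresponding positive definite pair by a $\mathrm{GL}(\tau)$-change of basis, together with the transitive action of the stabilizer of $\nu$ in $U(n)$ on $C(\tau,\nu)$, reduces the geodesic convexity to the commuting case, where the geodesic becomes the linear interpolation in $\mathrm{Sym}(\tau)$ and stays in the convex positive cone.

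For (ii), work in the fixed affine chart of Lagrangians transverse to $\nu$. If $\tau_s$ and $L$ correspond to symmetric forms $Q_{\tau_s}$ and $Q_L$ on a reference $\tau_0$, a direct computation gives $L^{(\tau_s,\nu)}=Q_L-Q_{\tau_s}$, so $L\psucc_\nu\tau_s$ iff $Q_L-Q_{\tau_s}$ is positive definite. As $\tau_s\to\nu$ along a geodesic, $Q_{\tau_s}\to\infty$ in operator norm, forcing $Q_L\to\infty$ and hence $L\to\nu$, so $C(\tau_s,\nu)$ and $\oC(\tau_s,\nu)$ collapse to $\nu$. The main obstacle is the geodesic convexity assertion, since the positive zone is manifestly convex in the affine chart but geodesics of a homogeneous metric on $\Lambda(V)$ are not straight lines in that chart; the resolution via the Cayley-type parametrization $e^{iS}\tau$ combined with simultaneous diagonalization of positive definite pairs is the key technical step.
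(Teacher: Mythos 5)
Your chart description of $C(\tau,\nu)$ as the open cone of positive definite forms, and the resulting non-emptiness, openness and contractibility (via straight-line convexity in the chart), are correct and are exactly what the paper means by ``usual statements about quadratic forms''; closedness of $\oC(\tau,\nu)$ is automatic since it is defined as a closure. The two places where you go beyond the paper's two-line proof are also where your argument has genuine gaps.

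First, the geodesic convexity step. Your reduction of an arbitrary pair in $C(\tau,\nu)$ to the commuting case does not work as stated: the stabilizer of $\nu$ in $U(n)$ is exactly $O(n)$ (it automatically stabilizes $\tau$ as well) and acts on the chart by conjugation $A\mapsto OAO^{T}$, which is not remotely transitive on positive definite forms; while the $\mathrm{GL}(\tau)$-changes of basis that do simultaneously diagonalize a positive definite pair lie in $Sp(V)$ but not in $U(n)$, hence are not isometries of any homogeneous metric and do not carry geodesics to geodesics. So your argument only covers the case of commuting forms, where the two points lie in a common maximal flat through $\tau$ and $\nu$ and the coordinatewise short arcs suffice. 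For a non-commuting pair one needs a different idea, e.g.\ working in the symmetric-unitary model $\Lambda(V)\simeq\{W\in U(n):W=W^{T}\}$, where $C(\tau,\nu)=\{W:\operatorname{spec}W\subset\{e^{i\theta}:0<\theta<\pi\}\}$, and exhibiting a geodesic between the two endpoints whose eigenvalue angles never hit $0$ or $\pi$; the isometries preserving the pair $(\tau,\nu)$ are too small to normalize the endpoints, so this requires an actual eigenvalue/monotonicity argument. (Two side points: the invariant metric on $U(n)/O(n)$ is not unique up to scale, since the isotropy representation splits as $\R\cdot I\oplus \operatorname{Sym}^2_0$; what saves your setup is that all invariant metrics have the same geodesics, because the point symmetry acts as $-\mathrm{id}$ on the tangent space and hence is an isometry for any of them. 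Also, the ``hence contractible'' conclusion, which is all the paper later uses, already follows from chart convexity, so geodesic convexity is the only part of (i) your proof leaves genuinely open.)

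Second, part (ii). The chain ``$Q_{\tau_s}\to\infty$ in operator norm, forcing $Q_L\to\infty$ and hence $L\to\nu$'' is a non sequitur, and the conclusion fails for a general geodesic from $\tau$ to $\nu$: in $T^*\R^2$ take $\tau_s=e^{is\,\mathrm{diag}(\pi/2,-\pi/2)}\tau$, a geodesic reaching $\nu$ at $s=1$, whose graph form is $Q_{\tau_s}=\mathrm{diag}(t_s,-t_s)$ with $t_s=\tan(s\pi/2)\to+\infty$. Then $C(\tau_s,\nu)$ contains the graph of $\mathrm{diag}(t_s+1,0)$ for every $s$ (the difference is $\mathrm{diag}(1,t_s)\succ0$), and these Lagrangians converge to $\operatorname{span}(\p_{p_1},\p_{q_2})\neq\nu$, so the zones do not collapse to $\nu$ even though $\|Q_{\tau_s}\|\to\infty$. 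What makes the statement true is a positive approach: if the smallest eigenvalue of $Q_{\tau_s}$ tends to $+\infty$ (e.g.\ $Q_{\tau_s}=\tan(s\pi/2)I$), then $Q_L\succ Q_{\tau_s}\succeq c_sI$ with $c_s\to+\infty$, so $Q_L^{-1}\to0$ and the graphs over $\nu$ collapse to $\nu$. This restriction is implicit in the paper's ``standard position \dots contracting positive definite quadratic forms towards zero'' and is all that is needed in the applications, but your operator-norm argument does not deliver it; note also that operator-norm blow-up of $Q_L$ alone never implies $L\to\nu$, which is the same slip that appears in your closedness discussion (harmless there only because $\oC(\tau,\nu)$ is closed by definition).
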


\begin{proof}
(i) is immediate from usual statements about quadratic forms. For (ii), we can put everything into a standard position so that we are contracting positive definite quadratic forms towards zero.
\end{proof}


\begin{lemma}\label{lemma:pos reform} The following properties hold.

\begin{enumerate}
\item

Suppose $L_2 \in C(L_1, \nu)$ and either $L_1 \in C(\tau, \nu)$ or $L_2 \in C(\tau, \nu) $.
Then $L_2\in C(L_2, \tau)$.

\item

Suppose
  $L_3 \in C( L_2, L_4), L_2\in C(L_1, L_4)$.
  Then  $L_2 \in C(L_1, L_3)$ 
  and $C(L_2,L_3)\subset C(L_1,L_4)$.

\item

If $L_- \in C(L_1, L_2)$ and $L_+ \in C(L_2, L_1)$, then $L_- \in C(L_+, L_2)$.

\end{enumerate}

\end{lemma}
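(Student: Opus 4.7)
The plan is to observe that each of the three assertions is nothing more than a geometric reformulation of the corresponding assertion in Lemma \ref{lemma:pos basic}, obtained by unfolding the definitional equivalence $L \in C(\tau,\nu) \iff L \psucc_\nu \tau$ and then freely translating between $\psucc$ and $\pprec$ using the transpositional and cyclic symmetries recorded in Lemma \ref{lemma:pos sym}. So there is no new geometric content; the entire task is bookkeeping in the partial cyclic order on Lagrangian planes.

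For part (i), I would rewrite $L_2 \in C(L_1,\nu)$ as $L_1 \pprec_\nu L_2$, and rewrite the two alternative hypotheses as $\tau \pprec_\nu L_1$ and (using transpositional symmetry applied to $L_2 \psucc_\nu \tau$) as $L_2 \pprec_\tau \nu$-type statements that both match one of the two alternatives in Lemma \ref{lemma:pos basic}(i), up to an application of Lemma \ref{lemma:pos sym}. The conclusion $L_2 \in C(L_1,\tau)$ (reading the obvious typographical correction of the displayed conclusion) unfolds to $L_1 \pprec_\tau L_2$, which is exactly the conclusion of Lemma \ref{lemma:pos basic}(i).

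For part (ii), the two hypotheses $L_3 \in C(L_2,L_4)$ and $L_2 \in C(L_1,L_4)$ unfold respectively to $L_3 \psucc_{L_4} L_2$ and $L_2 \psucc_{L_4} L_1$, which is verbatim the hypothesis of Lemma \ref{lemma:pos basic}(ii). The first conclusion there is $L_2 \psucc_{L_3} L_1$, i.e.\ $L_2 \in C(L_1,L_3)$, which gives the first half. For the zone inclusion $C(L_2,L_3) \subset C(L_1,L_4)$, I would take an arbitrary $L \in C(L_2,L_3)$, unfold to $L \psucc_{L_3} L_2$, and apply the ``additionally'' clause of Lemma \ref{lemma:pos basic}(ii) to obtain $L \psucc_{L_4} L_1$, i.e.\ $L \in C(L_1,L_4)$. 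Part (iii) is an immediate translation: $L_- \in C(L_1,L_2)$ and $L_+ \in C(L_2,L_1)$ become $L_- \psucc_{L_2} L_1$ and $L_+ \psucc_{L_1} L_2$, so Lemma \ref{lemma:pos basic}(iii) yields $L_- \psucc_{L_2} L_+$, which is exactly $L_- \in C(L_+,L_2)$.

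The only real obstacle is notational: keeping the cyclic conventions straight when converting $C(\tau,\nu)$ into the ternary relation $\psucc_\nu$, since the symbol $C(\tau,\nu)$ records which Lagrangian plays the role of ``vertical'' and which of ``horizontal,'' whereas the $\psucc$-notation implicitly distinguishes the middle argument. Once the dictionary provided by Lemma \ref{lemma:pos sym} is fixed, no further work is required.
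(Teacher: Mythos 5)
Your treatment of parts (ii) and (iii) is correct and is exactly the paper's argument: Lemma \ref{lemma:pos reform} is proved there by precisely the unwinding you describe, and for those two items the zone statements are verbatim translations of Lemma \ref{lemma:pos basic}(ii),(iii), including deducing the inclusion $C(L_2,L_3)\subset C(L_1,L_4)$ from the ``additionally'' clause applied to an arbitrary $L\in C(L_2,L_3)$.

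Part (i) is where your proposal has a genuine gap, exactly at the step you wave through with ``up to an application of Lemma \ref{lemma:pos sym}''. The hypothesis $L_2\in C(\tau,\nu)$ unfolds to $L_2\psucc_\nu\tau$, whose full orbit under the symmetries of Lemma \ref{lemma:pos sym} is $L_2\psucc_\nu\tau$, $\nu\psucc_\tau L_2$, $\tau\psucc_{L_2}\nu$, $\tau\pprec_\nu L_2$, $L_2\pprec_\tau\nu$, $\nu\pprec_{L_2}\tau$. None of these is the second alternative $L_2\pprec_\nu\tau$ of Lemma \ref{lemma:pos basic}(i): that is the \emph{opposite} relation (negative rather than positive definiteness of $L_2^{(\tau,\nu)}$), and no permutation symmetry reverses the relation while keeping the same middle entry; in particular your ``$L_2\pprec_\tau\nu$-type'' rewriting, while equivalent to $L_2\psucc_\nu\tau$, is not one of the two alternatives of Lemma \ref{lemma:pos basic}(i). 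So only the first alternative matches, and the bookkeeping cannot be closed for the second. Indeed, with the conclusion corrected to $L_2\in C(L_1,\tau)$ as you propose, the statement is false under the printed second alternative: in $(\R^2,dp\wedge dq)$ take $L_1=\{p=0\}$, $\tau=\{p=q\}$, $L_2=\{p=2q\}$, $\nu=\{q=0\}$; then $L_2\psucc_\nu L_1$ and $L_2\psucc_\nu\tau$, but a direct computation gives $L_2\pprec_\tau L_1$, so $L_2\notin C(L_1,\tau)$. The faithful unwinding of Lemma \ref{lemma:pos basic}(i) requires the second alternative to read $\tau\in C(L_2,\nu)$ (equivalently $L_2\pprec_\nu\tau$); in other words, the statement of part (i) contains a second misprint besides the one you corrected, and your proof should have flagged it rather than asserting that the dictionary matches. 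As written, your argument for (i) does not go through.
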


\begin{proof} This follows immediately from Lemma~\ref{lemma:pos basic} by unwinding the definition. \end{proof}

 \begin{lemma}\label{lm:extend-cone2}
 Let $\cL \subset \Lambda(V)$ be a non-empty   set of Lagrangian planes each transverse to a fixed Lagrangian plane $L \subset V$. The set $\cS(\cL, L) \subset \Lambda(V)$ of Lagrangian planes $L_- \subset V$ such that $\cL\subset C(L_-,L)$
  is convex. If $\cL$ is compact, then $\cS(\cL, L)$ is non-empty, hence contractible.
 \end{lemma}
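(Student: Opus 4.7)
The approach is to translate the positivity conditions defining $\cS(\cL, L)$ into a system of open, linear inequalities on quadratic forms, after which both claims become essentially immediate.

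To set this up, fix an auxiliary Lagrangian $\tau_0$ transverse to $L$. The polarization $(\tau_0, L)$ yields a symplectic identification $V \simeq T^*\tau_0$, and every Lagrangian $L'$ transverse to $L$ is the graph of the differential of a unique quadratic form $Q_{L'} : \tau_0 \to \R$. By the decomposition identity $K^{(\tau,\nu)} = K^{(L_-,\nu)} + L_-^{(\tau,\nu)}$ used in the proof of Lemma~\ref{lemma:pos basic}, we have $(L')^{(L_-, L)} = Q_{L'} - Q_{L_-}$ for any two such Lagrangians $L', L_-$, so
$$L' \psucc_L L_- \iff Q_{L_-} < Q_{L'}$$
as quadratic forms on $\tau_0$.

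Under this identification, the set $\cS(\cL, L)$ becomes the intersection, over $L' \in \cL$, of the open half-spaces $\{Q : Q < Q_{L'}\}$ inside the affine space of quadratic forms on $\tau_0$. Each such half-space is convex, hence so is their intersection; in particular, $\cS(\cL, L)$ is convex without any compactness hypothesis on $\cL$.

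For non-emptiness when $\cL$ is compact, note that the map $L' \mapsto Q_{L'}$ is continuous, so $\{Q_{L'} : L' \in \cL\}$ is a compact subset of the (finite-dimensional) vector space of quadratic forms on $\tau_0$. Pick any positive-definite quadratic form $P$ on $\tau_0$. By compactness, for $N$ sufficiently large the form $Q_{L'} + NP$ is positive definite for every $L' \in \cL$, uniformly in $L'$. Taking $Q_{L_-} := -NP$ then exhibits a Lagrangian $L_- \in \cS(\cL, L)$. Since a non-empty convex subset of an affine space is contractible, this finishes the proof. The one step requiring care is the translation from the positivity relation to the quadratic form inequality, but this has already been set up in the preceding discussion, so no real obstacle is anticipated.
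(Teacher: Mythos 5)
Your proof is correct and is essentially the paper's argument: both fix a Lagrangian transverse to $L$, identify Lagrangians transverse to $L$ with quadratic forms, translate the positivity condition into the inequalities $Q_{L_-}<Q_{L'}$ (the paper first passes through the cyclic symmetry $T\in C(L_-,L)\iff L_-\in C(L,T)$, but this lands in the same place), and then use convexity of these half-spaces plus boundedness of $\{Q_{L'}\}$ under compactness to produce a form below all of them.
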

 \begin{proof}
 The condition $T\in C(L_-,L)$ is equivalent to $L_-\in C(L,T)$, and thus
  $$
 \cS(L, \cL) = \bigcap\limits_{T\in\cL}C(L,T).
 $$
 Fix a Lagrangian plane $H\subset V$ transverse to $L$. Then we can identify Lagrangian planes $T\subset V$
  transverse to $L$ with quadratic forms $Q_T$ on $H$, and specifically, the set of Lagrangian planes
  $C(L,T) \subset \Lambda(V)$ with the convex   set of those quadratic forms $Q$ on $H$ satisfying $Q<Q_T$. 
  Hence  $ \cS(L, \cL)$ is  an intersection of convex sets so itself convex. 
  
  Finally,  when $\cL$ is compact, the set of quadratic forms $Q_T$, for $T\in\cL$, is bounded, hence we may choose a quadratic form $Q_-$ such that  $Q_-<Q_T$, for all $T\in\cL$. Thus 
  $ \cS(L, \cL)$ is non-empty.
   \end{proof}


\begin{lemma} Let $W\subset V$ be a coisotropic subspace. For any polarization $(\tau,\nu)$ of $V$, the reduction map $\pi_W$ projects the positive zone $C(\tau,\nu) \subset \Lambda(V)$ to the positive zone $C([\tau]^W, [\nu]^W) \subset \Lambda([V]^W)$.
\end{lemma}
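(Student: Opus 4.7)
The plan is to deduce the statement directly from the equivalence $(i)\Leftrightarrow(iv)$ of Lemma~\ref{lemma:pos classic}. Suppose $L \in C(\tau,\nu)$, so that $L \psucc_\nu \tau$ by definition. Part (iv) of that lemma asserts $[L]^{W'} \psucc_{[\nu]^{W'}} [\tau]^{W'}$ for \emph{every} coisotropic $W' \subset V$. Specializing to $W' = W$ yields $[L]^W \psucc_{[\nu]^W} [\tau]^W$, which is precisely the statement $[L]^W \in C([\tau]^W, [\nu]^W)$.

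Before invoking the equivalence I would verify that the target positive zone $C([\tau]^W, [\nu]^W)$ is well-defined, which requires $([\tau]^W, [\nu]^W)$ to be a polarization of the reduced space $[V]^W$. Both $[\tau]^W$ and $[\nu]^W$ are Lagrangian by the standard fact that symplectic reduction sends Lagrangians to Lagrangians; their transversality is packaged into the relation $\psucc$ itself, since by definition $L \psucc_\nu \tau$ forces $L, \nu, \tau$ to be pairwise transverse (so that the quadratic form $L^{(\tau,\nu)}$ is both defined and non-degenerate). Thus the transversality of $[\tau]^W$ and $[\nu]^W$ is automatic once we have $[L]^W \psucc_{[\nu]^W} [\tau]^W$.

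Since the argument is a one-line consequence of Lemma~\ref{lemma:pos classic}(iv), there is no substantive obstacle. A self-contained alternative, should one wish to avoid appealing to (iv), is to choose a symplectic identification $V \simeq T^*\tau$ sending $\nu$ to the cotangent fibre; under this identification $L$ becomes the graph of a positive definite quadratic form $Q$ on $\tau$, and a direct computation shows that $[L]^W$ is the graph of a Schur-complement-type reduction of $Q$, which is again positive definite. This elementary linear algebra handles both the case $\nu \subset W$ (restriction of $Q$ to a subspace) and the general case (Schur complement of $Q$ relative to the image of $W$ in $\tau$).
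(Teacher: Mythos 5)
Your main argument is correct and non-circular within the paper's logical order: Lemma~\ref{lemma:pos classic} is established before this statement, and the present lemma is essentially the implication (i)$\Rightarrow$(iv) of that lemma rewritten in positive-zone language, so specializing (iv) to the given $W$ does the job. It is, however, a different route from the paper's. The paper proves the lemma directly, without invoking Lemma~\ref{lemma:pos classic}: it asserts that the reduced quadratic form $([L]^W)^{([\tau]^W,[\nu]^W)}$, pre-composed with the surjection $\tau\cap W\to[\tau]^W$, coincides with $L^{(\tau,\nu)}$, so positive definiteness descends along a surjection. Your citation route instead outsources the content to the (iii)$\Rightarrow$(iv) step of Lemma~\ref{lemma:pos classic}, which the paper only sketches via continuity of reduction (Lemma~\ref{lm:cont-red}) and a transversality claim; what you gain is brevity, what the paper's direct argument gains is self-containedness. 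Your ``self-contained alternative'' is in fact the one closest to the paper's proof, and on one point it is sharper: the paper's asserted identity (pull-back of the reduced form equals $L^{(\tau,\nu)}$) is literally correct when $\nu\subset W$, where the reduced form is the restriction of the form to $\tau\cap W$; for general coisotropic $W$ the reduced form is rather a Schur complement (equivalently, a fiberwise minimum/partial Legendre transform) of $L^{(\tau,\nu)}$ in the directions coming from $W^\perp$, which is again positive definite --- precisely the case distinction you make. One caveat shared by your appeal to (iv) and by the statement itself: for a completely arbitrary coisotropic $W$ the reduced pair $([\tau]^W,[\nu]^W)$ need not be transverse (this can fail independently of $L$), so both the lemma and part (iv) of Lemma~\ref{lemma:pos classic} are implicitly about those $W$ for which the reduced pair is an honest polarization; your remark that the relation $[L]^W\psucc_{[\nu]^W}[\tau]^W$ carries the needed transversality with it is consistent with the paper's conventions but does not remove this caveat, and your Schur-complement argument would in any case need that transversality to speak of $[L]^W$ as a graph.
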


\begin{proof}
For any $L \in \Lambda(V)$ transverse to $\nu$, the pre-composition of the quadratic form $([L]^W)^{([\tau]^W, [\nu]^W)}$ with the reduction map $\tau \to [\tau]^W$ is equal to the quadratic form $L^{(\tau,\nu)}$. Since $\tau \to [\tau]^W$ is surjective, the lemma follows.
\end{proof}


 \subsection{Positivity of polarizations}
 
 \subsubsection{Polarized Legendrians}\label{sec:Leg-in-blocks}

Recall that a Legendrian embedding   $\Lambda \subset \sT^*M\setminus M$   is said to be
  {\em adapted}  to the block $\sT^*M$ if    $\Op\p \Lambda \subset \Op\p M$ and  for each stratum $P\subset\p_kM$  we have $\Lambda \cap(\Op P=\sT^*P\times \sT^*\II^k) =\Lambda_k\times \II^k $ for a Legendrian $\Lambda_k\subset \sT^*P$. Recall also that an adapted Legendrian $\Lambda$ is called {\em regular} if $\pi|_\Lambda:\Lambda\to M$ is an   immersion with transverse self-intersections, where $\pi:\sT^*M\to M$ is the cotangent projection.

A {\em polarization}  of a Lagrangian submanifold $L$ of a symplectic manifold $(M,\omega)$ is a field $\eta \subset TM|_L$ of Lagrangian planes transverse to $L$. A polarization of a Legendrian $\Lambda$ in a contact manifold $(V,\xi)$  is a Lagrangian field  $\tau \subset \xi|_\Lambda$ transverse to $\Lambda$ (recall that $\xi$ has a well-defined conformal symplectic structure; hence, it makes sense to talk about Lagrangian fields in $\xi$).  

\begin{definition}A polarization of   a Legendrian $\Lambda\subset \sT^*M\setminus M$ is a field of  Lagrangian planes $\mu \subset \sT^*M|_\Lambda$ which projects to a polarization of the  projection of $\Lambda$ to $S^\infty M$. 
\end{definition}
Note that $\mu$ has to be tangent to the Liouville field $Z$.

In particular, the Lagrangian distribution $\nu_M$ which projects to   the Legendrian distribution $\ell_M$ tangent to  the spherical Legendrian fibration  of $S^\infty M$, defines a canonical polarization of any {\em regular} Legendrian in $\sT^*M\setminus M$. We will refer to this polarization as   {\em tautological}. 
  
The space of polarizations of a given Legendrian  is contractible. Any given polarization of a Legendrian admits a ribbon, whose tangent planes to fibers along the $0$-section form the given polarization. The space of germs of ribbons for a given polarization is contractible. Hence the space of germs of ribbons for a given Legendrian is contractible.
   
  \begin{definition}A polarization $\mu$ of a {\em regular} Legendrian $L\subset \sT^*M\setminus M$ is called {\em positive} if $\ell_M \in C(TL,\mu)$ at any point of $L$, where $\ell_M$ is the tautological polarization. \end{definition}
  
 \subsubsection{Positivity implies transversality of conormals}  
 
 \begin{definition} Let $(W, \lambda)$ be a Wc-manifold. A {\em polarization} of $W$ is a global field of Lagrangian planes $ \eta \subset TW$. \end{definition}
 
 Let $T^*M$ be the cotangent bundle of a compact manifold with corners, $\tau = TM$ the tangent field along $M$ and $\nu = \ker(d \pi)$ the vertical field, where $\pi: T^*M \to M$ is the cotangent bundle projection.
  
 \begin{lemma}\label{cor:cone-over-regular}
Let $L\subset  \sT^*M\setminus M$ be the Lagrangian cone over a regular Legendrian, and $\eta$ a polarization of $\sT^*M$ whose restriction to the $0$-section $M$ is positive with respect to the polarization $(\tau, \nu)$. Then $\eta$ is transverse to  the  cone $L$  on a sufficiently small neighborhood of $M\subset \sT^*M$.
\end{lemma}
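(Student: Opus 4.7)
The plan is to reduce the statement to a pointwise check on the zero section $M$ and then extend by continuity. Since transversality of two Lagrangian planes is an open condition, and both $\eta$ and the tangent planes to the smooth branches of $L$ vary continuously near $M$, it suffices to verify that $\eta_p$ is transverse to every tangent plane of $L$ at every $p \in M$. I would then handle this pointwise in two cases.

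At a point $p \in M$ not lying over the front projection $\pi(\Lambda)$, the cone $L$ coincides with $M$ near $p$, so $T_p L = \tau_p$; the positivity hypothesis $\eta_p \in C(\tau_p,\nu_p)$ already entails $\eta_p \pitchfork \tau_p$. At a point $p \in M$ lying over $\pi(\Lambda)$, regularity of $\Lambda$ ensures that near $p$ the front projection is a finite union of coorientable smooth hypersurfaces $H_1, \ldots, H_k \subset M$ meeting transversely at $p$. Consequently $L$ is locally the union $M \cup N^+H_1 \cup \cdots \cup N^+H_k$ of the zero section and the positive conormals. The tangent plane to $N^+H_i$ at $p$ is the conormal Lagrangian $L_{W_i} := W_i \oplus \mathrm{ann}(W_i) \subset \tau_p \oplus \nu_p$, where $W_i = T_pH_i \subset \tau_p$ and $\mathrm{ann}(W_i) \subset \nu_p \simeq \tau_p^*$. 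So it suffices to show $\eta_p$ is transverse to each $L_{W_i}$.

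This reduces to the following linear-algebraic fact, which is the heart of the matter: for any symplectic vector space $V$ with polarization $(\tau,\nu)$, any subspace $W \subset \tau$, and any Lagrangian $\eta \in C(\tau,\nu)$, the Lagrangians $\eta$ and $L_W := W \oplus \mathrm{ann}(W)$ are transverse. To prove it, use the symplectic pairing to identify $\nu \simeq \tau^*$; then $\eta$ is the graph of a positive-definite symmetric bilinear form $Q\colon \tau \to \tau^*$. An element of $\eta \cap L_W$ has the form $v + Q(v)$ with $v \in W$ and $Q(v) \in \mathrm{ann}(W)$, which forces $Q(v)(v) = 0$ and hence $v = 0$ by positive definiteness.

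Applying this fact with $W = T_pH_i$ for each branch at each $p$ completes the pointwise verification. The neighborhood statement then follows from openness of transversality together with continuity of $\eta$ and of the finitely many branches of $TL$ along $M$; the main (very mild) obstacle is simply to keep track of the finitely many branches uniformly, which is handled by working locally near each point and patching via compactness of the relevant portion of $L$ over a compact piece of $M$.
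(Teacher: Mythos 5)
Your proposal is correct and follows essentially the same route as the paper: the paper likewise identifies the cone near $M$ with the positive conormal of the immersed co-oriented front and invokes Lemma \ref{lm:pos-quadr-form}, whose proof is exactly your quadratic-form computation ($Q(v)(v)=0$ forces $v=0$ for a definite form), with the neighborhood statement obtained by the same continuity/openness of transversality near the zero section. Your only (harmless) variations are stating the linear-algebra fact for arbitrary subspaces $W\subset\tau$ rather than hyperplanes and spelling out the pointwise-plus-compactness bookkeeping that the paper leaves implicit.
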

 The statement is a corollary of the following linear algebra lemma.
\begin{lemma}\label{lm:pos-quadr-form}
Let $Q$ be a  quadratic form and $L_Q=\{q=Ap\}\subset T^*\R^n$ the corresponding linear Lagrangian.
If $Q$ is positive or negative definite then for any co-oriented hyperplane $H \subset\R^n$ through the origin, its conormal $T^*_H\R^n \subset T^*\R^n$ is transverse to $L_Q$. Conversely, if $L_Q$ is transverse to
conormal $T^*_H\R^n$ for all co-oriented hyperplanes $H \subset\R^n$ through the origin, then $Q$ is positive or negative  definite.
\end{lemma}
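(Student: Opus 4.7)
The plan is to reduce the statement to a direct computation of the intersection $L_Q \cap T^*_H\R^n$ and then to a connectedness argument on the unit sphere.

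First, I would fix notation. Write $Q(p)=\tfrac12 p^T A p$ for a symmetric matrix $A$, so that $L_Q=\{(q,p)\in T^*\R^n : q=Ap\}$. Given a co-oriented hyperplane $H\subset\R^n$ through the origin with unit normal vector $n$, the conormal is
\[
T^*_H\R^n=\{(q,p): q\in H,\ p=tn\ \text{for some}\ t\in\R\}.
\]
Both $L_Q$ and $T^*_H\R^n$ are linear Lagrangian $n$-planes in $T^*\R^n$, so transversality is equivalent to $L_Q\cap T^*_H\R^n=\{0\}$.

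Next I would compute this intersection explicitly. A point $(q,p)$ in the intersection must satisfy $p=tn$ and $q=Ap=tAn$, together with $q\in H$, i.e.\ $\langle q,n\rangle=0$. Substituting gives the single scalar condition $t\,\langle An,n\rangle=0$, that is $t\cdot 2Q(n)=0$. Hence the intersection is $\{0\}$ exactly when $Q(n)\neq 0$, and is the line spanned by $(An,n)$ when $Q(n)=0$. This observation is the heart of both directions of the lemma.

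For the first implication, if $Q$ is positive or negative definite then $Q(n)\neq 0$ for every unit vector $n$, so transversality holds for every co-oriented hyperplane. For the converse, suppose that $L_Q$ is transverse to $T^*_H\R^n$ for every co-oriented hyperplane $H$. Then $Q(n)\neq 0$ for every $n\in S^{n-1}$. For $n\geq 2$ the sphere $S^{n-1}$ is connected, so by continuity the function $n\mapsto Q(n)$ has constant sign on $S^{n-1}$, forcing $Q$ to be definite; the case $n=1$ is immediate. This completes the argument, and there is no real obstacle: the lemma is essentially the statement that a quadratic form is (semi-)definite detected by its zero locus, translated into symplectic language via conormals.
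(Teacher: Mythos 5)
Your proof is correct and follows essentially the same route as the paper's: the paper's (one-line) argument is exactly the observation that a nonzero point of $L_Q\cap T^*_H\R^n$ exists if and only if $Q$ vanishes on the conormal direction to $H$, which is your computation $t\,Q(n)=0$. Your added connectedness argument on $S^{n-1}$ for the converse just makes explicit what the paper leaves implicit.
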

 
\begin{proof}
Indeed,  the condition $(p,q)\in L_Q\cap\tau^*$ and $(p,q)\neq0$ is equivalent  to $Q(p)=0$, \end{proof}

\begin{proof}[Proof of Lemma \ref{cor:cone-over-regular}] Indeed, the Liouville cone is the conormal of the immersed front projection of $\Lambda$, and hence we can apply Lemma \ref{lm:pos-quadr-form} in a sufficiently small neighborhood of the $0$-section. \end{proof}

      \begin{figure}[h]
\includegraphics[scale=0.5]{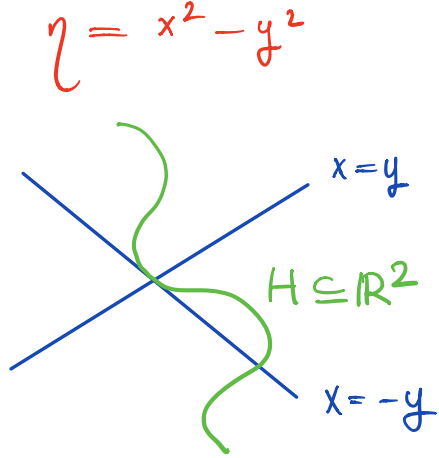}
\caption{For a non positive-definite quadratic form such as $\eta = x^2-y^2$, the conormal $T^*_H\R^2$ of a smooth hypersurface $H \subset \R^2$ will be tangent to $\eta$ whenever $TH$ is tangent to the light-cone $\{x=y\}$.}
\label{fig:positivity}
\end{figure}

 \subsection{Positive cotangent buildings}\label{sec:pos-complexes}
 
\subsubsection{Reductions}

 Let  $W$ be a cotangent building with blocks $B_j  = \sT^*M_j, j\in J$.   The notions we introduce in this section will depend on sizes of blocks, so we deviate here from the point of view of germs and take $B_j$ to be a fixed defining domain for $T^*M_j$.
 Set $B_j^\circ = B_j \setminus M_j$, for $j\in J$. For each $i \in J$,  consider  the line bundle  $\Span(Z_i) \subset T B_i^\circ$ generated by $Z_i$,
 let $\zeta_i = \Span(Z_i)^\perp/\Span(Z_i) \to B_i^\circ$ denote the corresponding symplectic normal bundle, and consider the natural reduction diagram
 $$
 \xymatrix{
 T B_i^\circ & \ar@{_(->}[l]_-\fri \Span(Z_i)^\perp \ar@{->>}[r]^-\frp & \zeta_i 
 }
 $$
 Given a linear subspace $\nu \subset TB_i^\circ$, let $[\nu]^i:=[\nu]^{\Span(Z_i)^\perp }\subset \zeta_i$ be  the reduction
 along the above correspondence
 $$
 [\nu]^i = \frp(\fri^{-1}(\nu)) = ((\nu + \Span(Z_i)) \cap  \Span(Z_i)^\perp)/\Span(Z_i).
 $$
We will  refer to $ [\nu]^i $ as the $i$-reduction of $\nu$. 
 More generally,  given a multi-index $I=(i_1<\dots<i_m) \subset J$ (which could be empty), 
consider the natural reduction diagram
$$
 \xymatrix{
 T (B_{i_1}^\circ \cap \cdots \cap B^\circ_{i_m}) & \ar@{_(->}[l]_-\fri \Span(Z_{i_1},\ldots,  Z_{i_m})^\perp \ar@{->>}[r]^-\frp & \zeta_I 
  }
 $$
 $$
\zeta_I  = \Span(Z_{i_1},\ldots,  Z_{i_\ell})^\perp/ \Span(Z_{i_1},\ldots,  Z_{i_m}).
 $$
 Given a linear subspace  $\nu \subset TB_i^\circ$, we refer to $[\nu]^I:=[\nu]^{ \Span(Z_{i_1},\ldots,  Z_{i_\ell})^\perp} = \frp(\fri^{-1}(\nu)) \subset \zeta_I$ as the $I$-reduction of $\nu$.  If $I=\varnothing$, then $[\nu]^I=\nu$.

 \subsubsection{Positivity}
 
 Let $W=\bigcup_j B_j$ be a cotangent building. Since the sizes of the blocks $B_j$ are fixed, we can assign a {\em type} to any point of $\Skel(W)=\bigcup_j \mM_j$ as follows:

   \begin{definition}
 We say that a point $a\in \mM_i$ is of {\em  type} $I=(i_1<\dots<i_m),$ where
$ i_m<i$, if $a\in B_{i_1}\cap\dots \cap B_{i_m} $
and $a\notin B_j$, for $j\notin I$ and $j<i$.    \end{definition}
 
 Note that we allow the   type $I$ to be empty.  For any  multi-index $I=(i_1<\dots<i_m) \subset J$ and integer $s$, where $1\leq s  \leq m$, denote $I(s)=(i_s<\dots<i_{t}) \subset I$.

\begin{definition} A cotangent building $W=\bigcup_{j=1}^k B_j$  is  {\em positive} if 
  for     any point $a\in\mM_j $  of type $I=(i_1<\dots<i_m )$, $I\neq\varnothing$ and any $s\leq m$  the tuple of Lagrangian planes
$$[T_aM_j]^{I(s)},[\nu_j(a)]^{I(s)},[\nu_{i_m}(a)]^{I(s)},\dots,  [\nu_{i_s}(a)]^{I(s)} $$ is $\pprec$-cyclically ordered.
\end{definition}

         \begin{figure}[h]
\includegraphics[scale=0.65]{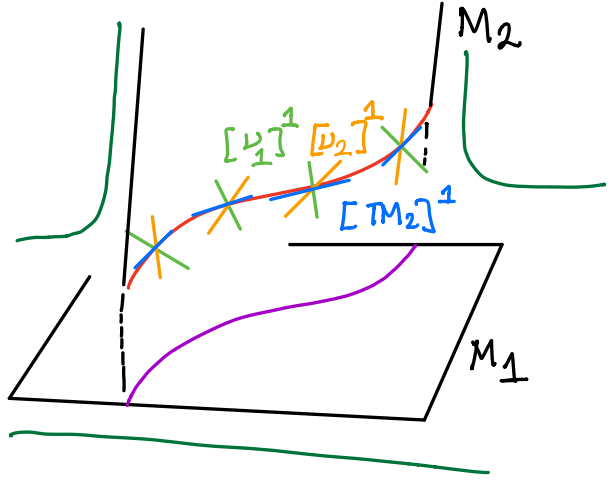}
\caption{The positivity condition for the interaction of two blocks.}
\label{fig:alt positivityblocks}
\end{figure}

 \begin{definition} Let $W$ be a cotangent building $W$ with the notation as above. A Lagrangian distribution $\eta$ on $W$ is called {\it  positive}  if it is transverse to all the 0-sections $M_i$  and at any point $a\in\mM_i$ of type $I$ (which could be empty), we have $[\nu_i(a)]^{I }  \in C([T_aM_{i}]^{I } , [\eta(a)]^I).$
\end{definition}



\subsubsection{Existence and uniqueness of positive distributions} Positive distributions exist and are unique up to a contractible choice on any positive cotangent building. Before we prove this we need two lemmas.

   \begin{lemma}\label{lm:tau-angle}
  Let $B=\sT^*M$ be a cotangent block and $\tau$ any Lagrangian distribution extending
  $TM$. Let $\Sigma\subset \sT^*M$ be a conormal of a co-oriented hypersurface $H \subset M$. For any Lagrangian plane $T\in T_aB$, $a\in B\setminus M$ denote by $\ol T:=[T]^{\Span(Z)^\perp}$ the  reduction with respect to the conormal of the Liouville field $Z$.
  Then   the angle between $ \ol{T\Sigma(a)}$ and $\ol{\tau(a)}$ converges to $0$ as $a\to M$.
    \end{lemma}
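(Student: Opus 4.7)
The plan is to compute both reduced Lagrangians explicitly using the natural horizontal/vertical splitting of $T(T^*M)$ along $M$, observe that they have a common limit along $\Sigma$, and conclude by continuity of $\tau$.

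First I would exploit the conicity of $\Sigma$. Since $\Sigma$ is the conormal to a hypersurface, it is invariant under the Liouville flow, so $Z(a) \in T_a \Sigma$ for every $a \in \Sigma$. Because $T_a\Sigma$ is Lagrangian and therefore isotropic, containing $Z(a)$ forces $T_a\Sigma \subset \Span(Z(a))^{\perp}$. Consequently $\overline{T_a\Sigma} = T_a\Sigma / \Span(Z(a))$, without needing to first intersect with $\Span(Z(a))^{\perp}$.

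Next I would pin down $\overline{T_a\Sigma}$ geometrically. Writing $a = (q, p)$ with $q \in H$ and $0 \neq p \in \nu^*_q H$, and using the canonical splitting $T_a(T^*M) = T_qM \oplus T^*_qM$ along the zero section, I have $T_a\Sigma = T_qH \oplus \nu^*_q H$. Since $H$ is a hypersurface, $\nu^*_q H$ is one dimensional, and $Z(a)$ lies in the vertical direction spanned by $p$; thus $\Span(Z(a)) = \nu^*_q H$ (as subspaces of the fiber). Therefore
\[
\overline{T_a\Sigma} = T_a\Sigma / \Span(Z(a)) \cong T_qH,
\]
viewed inside the reduced symplectic space $\Span(Z(a))^{\perp}/\Span(Z(a))$.

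Now I would analyze $\overline{\tau(a)}$. Since $\tau$ extends $TM$ and is Lagrangian, for $a$ close to $M$ the plane $\tau(a)$ is transverse to the vertical distribution $\nu$, and I can write it as the graph of a symmetric linear map $A_a : T_qM \to T^*_qM$ with $A_a \to 0$ as $a \to M$. A direct computation shows $\tau(a) \cap \Span(Z(a))^{\perp} = \{v + A_a v : v \in \ker p = T_qH\}$, and quotienting by $\Span(Z(a)) = \nu^*_q H$ in the vertical direction yields
\[
\overline{\tau(a)} = \{v + \pi(A_a v) : v \in T_qH\},
\]
where $\pi : T^*_qM \twoheadrightarrow T^*_qM / \nu^*_q H \cong T^*_qH$ is the natural projection.

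Finally, as $a \to M$ we have $A_a \to 0$, hence $\pi \circ A_a|_{T_qH} \to 0$ and $\overline{\tau(a)} \to T_qH = \overline{T_a\Sigma}$, so the angle between these Lagrangians tends to zero. The main (small) technical obstacle is that the reduction map itself degenerates at $M$ since $Z$ vanishes there; this is circumvented by observing that along rays in $\Sigma$ the line $\Span(Z(a))$ has a well-defined constant direction (that of $p$), so the reduced symplectic spaces can be compared continuously as $a \to M$.
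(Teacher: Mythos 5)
Your proof is correct and follows essentially the same route as the paper: both arguments are a local computation of the two reductions in the horizontal/vertical splitting of $T(T^*M)$ along the conormal, using that $Z$ is tangent to $\Sigma$ so that $\ol{T\Sigma}$ is just $T_qH$, and concluding that the reductions coincide (or converge) as $a\to M$. The only cosmetic difference is that the paper first observes it suffices to treat one particular extension and takes $\tau=\Span(\p_{q_1},\dots,\p_{q_n})$, for which $\ol{T\Sigma}$ and $\ol{\tau}$ are literally identical along $\Sigma$, whereas you handle a general $\tau$ directly as the graph of a map $A_a\to 0$; this is the same computation.
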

  
  \begin{proof} 
The statement is local so we can assume that   $M=\R^n$, $H=\{q_n=0\}$, $a=0$ and $\Sigma = \{ q_n = 0, \, p_j = 0 , j<n  \}$. Moreover, it is sufficient to prove the lemma for a single particular extension $\tau$, and hence we can choose $\tau = \Span(\p_{q_1} , \ldots , \p_{q_n})$. The Liouville field is $Z= \sum_i p_i \p_{p_i}$, hence $Z|_\Sigma = p_n \p_{p_n}$. Therefore $\Span(Z)^{\perp}|_\Sigma = \Span( \partial_{q_1}, \ldots , \partial_{q_{n-1}}, \partial_{p_1} , \ldots,  \partial_{p_n} )$, $T\Sigma \cap Z^{\perp}|_\Sigma = \Span(\p_{q_1} , \ldots , \p_{q_{n-1}} , \partial_{p_n})$ and $\tau  \cap Z^{\perp}|_\Sigma = \Span( \p_{q_1} , \ldots , \p_{q_{n-1}})$ so in fact $\ol{T \Sigma}$ and $\ol{\tau}$ are identical along $\Sigma$.
  \end{proof}  

\begin{lemma}\label{lm:transitivity-of-positivity}
Let $B_1 = \sT^* M_1$ be the base block of a positive cotangent building $W$ and $\eta$ a distribution on $\Op M_1$ such that at any point  $a\in M_1$ the triple $T_aM_1,\nu_1(a),\eta(a)$ is cyclically $\pprec$-ordered. Then $\eta$ is positive for $W$ on $\Op M_1$.
\end{lemma}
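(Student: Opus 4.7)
The plan is to verify the positivity condition for $\eta$ at every point $a \in \Skel(W) \cap \Op M_1$, splitting by the type of $a$. For $a \in \mM_1$, the type is empty and the positivity condition $\nu_1(a) \in C(T_aM_1, \eta(a))$ is exactly the hypothesis, after rearrangement via the cyclic and transpositional symmetries of Lemma \ref{lemma:pos sym}. The substantive case is $a \in \mM_j$ for $j > 1$ with $a \in B_1$, in which case the type $I = (i_1 < \cdots < i_m)$ necessarily begins with $i_1 = 1$ since $a \in B_1$.

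For such $a$, I will first invoke positivity of the building at $s = 1$ to obtain the $\pprec$-cyclic ordering of the $(m+2)$-tuple $[T_aM_j]^I,[\nu_j(a)]^I,[\nu_{i_m}(a)]^I,\dots,[\nu_1(a)]^I$. Extracting the cyclic triple formed by the first, second, and last entries yields the $\pprec$-cyclic ordering of $([T_aM_j]^I,[\nu_j(a)]^I,[\nu_1(a)]^I)$. The task then reduces to replacing $[\nu_1(a)]^I$ by $[\eta(a)]^I$, i.e. showing that $([T_aM_j]^I,[\nu_j(a)]^I,[\eta(a)]^I)$ remains $\pprec$-cyclic on a sufficiently small neighborhood of $M_1$.

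The input for the substitution is the hypothesis on $\eta$ along $M_1$. I will apply Lemma \ref{lm:tau-angle} iteratively along the conormal chain $M_j \to M_{i_m} \to \cdots \to M_{i_2} \to M_1$ supplied by the building: since each $M_{i_r}$ arises as a Lagrangian Liouville cone over a Legendrian in $S^*M_{i_{r-1}}$ (with $M_{i_1} = M_1$), the $Z_1$-reduction of $T_aM_j$ is asymptotically equal to that of any Lagrangian field extending $TM_1$, with convergence as $a \to M_1$. Combining this with the hypothesis at $M_1$, the openness of positive zones, and the compatibility of symplectic reduction with positive zones, I will conclude that after shrinking $\Op M_1$ the triple $([T_aM_j]^I,[\nu_j(a)]^I,[\eta(a)]^I)$ is $\pprec$-cyclic, which is the required positivity.

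The main obstacle will be that the full $I$-reduction collapses along $M_1$ itself, since $Z_1$ vanishes there and the reduction is only defined off $M_1$. I plan to circumvent this by working throughout in $B_1 \setminus M_1$: perform the $Z_1$-reduction first, using Lemma \ref{lm:tau-angle} to control the continuity of $[T_aM_j]^{(1)}$, $[\nu_j(a)]^{(1)}$, and $[\eta(a)]^{(1)}$ up to $M_1$, and only then proceed with the successive reductions by $Z_{i_2},\dots,Z_{i_m}$, each of which remains nondegenerate at $a\in\mM_j$. Verifying that the convergence of these iterated reductions is uniform enough to conclude cyclic ordering (via openness of positive zones) on a uniform neighborhood of $M_1$ will be the most delicate point, and is where one genuinely uses the building's hierarchical conormal structure.
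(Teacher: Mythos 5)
Your proposal follows essentially the same route as the paper's proof: handle the empty-type case by the hypothesis, and for points of type $I=(1<i_2<\dots<i_m)$ combine the building's positivity chain with Lemma \ref{lm:tau-angle} (which makes the $Z_1$-reduction of $T_aM_j$ approach that of an extension of $TM_1$), the hypothesis along $M_1$ together with openness of positive zones on a thin block, and compatibility of reduction with positive zones, exactly as in the paper. Your staged reduction (first by $Z_1$, then by $Z_{i_2},\dots,Z_{i_m}$) is just a more explicit rendering of the paper's passage from the $[\cdot]^1$-reduced triple $[T_aM_j]^1,[\nu_1]^1,[\eta]^1$ to the $I$-reduced conclusion, so the argument is correct and not genuinely different.
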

    \begin{proof}
     Take any  extension $\tau$ of the distribution  $TM_1$ to $B_1=\sT^*M_1$.  For any point $a\in B_1\cap\mM_j$ of type $I=(i_1=1<i_2<\dots<i_k), i_k<j,$ the tuple $$[T_aM_j]^I, [\nu_j(a)]^I,\dots,  [\nu_1]^I $$ is cyclically  $\pprec$-ordered. On the other hand, if the block $B_1$ is sufficiently thin then the triple  $$[\tau(a)]^1,[\nu_1(a)]^1, [\eta(a)]^i$$ is also  cyclically  $\pprec$-ordered.
     Hence Lemma  \ref{lm:tau-angle} implies that $$[T_aM_j]^1,[\nu_1(a)]^1, [\eta(a)]^i$$ is cyclically  $\pprec$-ordered as well. Therefore,
     $$[T_aM_j]^I, [\nu_j(a)]^I,\dots,  [\nu_1]^I,[ \eta(a)]^I$$ is cyclically  $\pprec$-ordered, i.e.~$\eta$ is positive for $W$ on $B_1$.     \end{proof}

Let us denote by $\Pos(W)$ the space of positive distributions on a positive cotangent building~$W= \bigcup_j B_j$.    
     \begin{prop}
 \label{prop:distr-pos} 
For any positive cotangent building $W$ the space $\Pos(W)$ is non-empty and (weakly) contractible.  \end{prop}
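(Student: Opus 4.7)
The plan is to prove both non-emptiness and contractibility simultaneously by induction on the number $k$ of blocks, building a positive distribution $\eta$ block-by-block starting with the base block $B_1$. The core idea is that at each inductive step the positivity condition carves out, at each point of the newly-added zero section, a non-empty convex (hence contractible) subset of the Lagrangian Grassmannian; the space of compatible extensions is then the space of sections of a fibre bundle with contractible fibres over a paracompact base, which is itself contractible by standard arguments.

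For the base case $k=1$ the skeleton is just $M_1$ and every point has empty type, so the positivity condition at $a\in \mathring M_1$ reads $\nu_1(a)\in C(T_a M_1,\eta(a))$. By the cyclic and transpositional symmetries of Lemma~\ref{lemma:pos sym}, this is equivalent to demanding that $\eta(a)$ lie in the non-empty open convex positive zone $C(\nu_1(a),T_a M_1)\subset \Lambda(T_a W)$. The space of sections of this convex fibre bundle over $M_1$ is contractible, and any such section extends to $\Op M_1$ by openness of the positivity condition, using Lemma~\ref{lm:tau-angle} to see that positivity propagates into a neighbourhood of the zero section. The space of extensions to all of $B_1$ is again a convex fibre bundle (the positivity constraint is empty off the skeleton), so contractibility at $k=1$ follows.

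For the inductive step, put $W' = (B_k \to \cdots \to B_2)$. Then $W'$ is again a positive cotangent building (of $k-1$ blocks), and a positive distribution on $W$ restricts to a positive distribution on $W'$, so by the inductive hypothesis $\Pos(W')$ is non-empty and contractible. The task is, given $\eta'\in\Pos(W')$, to extend it across $B_1$. At each $a\in \mathring M_1$ the type $I\subset\{2,\dots,k\}$ is a (possibly empty) multi-index recording which higher blocks contain $a$, and the positivity condition asks that $[T_aM_1]^I,\ [\nu_1(a)]^I,\ [\eta(a)]^I$ fit coherently into the cyclically $\pprec$-ordered tuple supplied by $\eta'$ together with the positive-cotangent-building hypothesis. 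Because the latter hypothesis guarantees that the pre-existing tuple is already cyclically $\pprec$-ordered after reduction at every level $s$ of $I$, Lemma~\ref{lm:extend-cone2} applied to the corresponding symplectic reduction produces a non-empty convex set of admissible values for $[\eta(a)]^I$, and this convex set lifts through the reduction map to a non-empty convex set of admissible values for $\eta(a)$ itself. The space of such pointwise extensions over $M_1$ is thus the section space of a convex fibre bundle, hence contractible. Once $\eta$ is chosen on $M_1$, Lemma~\ref{lm:transitivity-of-positivity} propagates positivity to a neighbourhood of $M_1$ inside $B_1$, and a further contractible convex choice extends $\eta$ to all of $B_1$; combining the two steps gives a contractible space of extensions of $\eta'$.

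The main obstacle is the consistency check at points of non-empty type: we must verify that the constraints coming from the higher blocks (through $\eta'$ and the reductions $[\,\cdot\,]^I$) are compatible with the newly-introduced positivity constraints at $M_1$, so that the relevant intersections of closed positive zones are actually non-empty. This is precisely the role of the hypothesis in the definition of a positive cotangent building: for every $s$ the tuple $[T_aM_j]^{I(s)},[\nu_j(a)]^{I(s)},[\nu_{i_m}(a)]^{I(s)},\dots,[\nu_{i_s}(a)]^{I(s)}$ is cyclically $\pprec$-ordered, which together with the transitivity and convexity properties of positive zones recorded in Lemmas~\ref{lemma:pos basic} and~\ref{lm:extend-cone2} supplies the required non-emptiness. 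With this compatibility established, contractibility of $\Pos(W)$ follows from the fibration $\Pos(W)\to\Pos(W')$ whose fibres are the above contractible extension spaces, using the standard principle that a fibre bundle with contractible fibres over a paracompact base has contractible space of sections, applied iteratively along the chain of blocks.
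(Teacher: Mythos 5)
Your induction runs in the wrong direction, and this is not a cosmetic difference: it rests on a misreading of where the positivity constraints of a positive cotangent building live. By definition, the type of a point $a\in\mM_i$ records only the \emph{lower}-indexed blocks containing $a$ (indices $j<i$); in particular every point of $\mM_1$ has empty type, and $M_1$ is in fact disjoint from the higher blocks altogether, since $B_j$ for $j\ge 2$ meets $B_1$ only away from $\Skel(B_1)$ (see Lemma~\ref{lm:complex}). So the constraints you propose to solve at points of $\mathring M_1$ via Lemma~\ref{lm:extend-cone2} and reductions indexed by ``which higher blocks contain $a$'' do not exist: along $M_1$ the only condition is the unreduced one $\nu_1(a)\in C(T_aM_1,\eta(a))$. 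The genuine coupling conditions between $B_1$ and the rest of the building sit instead at the points $a\in\mM_i\cap B_1$ with $i\ge 2$, where the $W$-condition is $[\nu_i(a)]^{I}\in C([T_aM_i]^{I},[\eta(a)]^{I})$ with $1\in I$. In your scheme the distribution at all such points is already frozen equal to $\eta'\in\Pos(W')$, so these conditions are never addressed; moreover your ``fibration'' $\Pos(W)\to\Pos(W')$ is not even well defined, because at such a point $W$-positivity is the condition reduced by the \emph{larger} multi-index $I\ni 1$, and positive definiteness on the smaller (more reduced) space does not imply the less-reduced $W'$-condition. Hence neither surjectivity of restriction nor contractibility of its fibres can be quoted, and the inductive step collapses.

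This is exactly why the paper's induction is bottom-up: one starts with $B_1$ and, having built $\eta$ on $W_{<m}$, adds $B_m$, so that all new constraints sit along the zero section $M_m$ of the block just added, where $\eta$ is still free except on $\mM_m\cap W'_{<m}$ for slightly thinned earlier blocks. There the key observation is that choosing $\eta$ along $M_m$ to be a field of positive definite quadratic forms on $\nu_m$ satisfies every type-reduced condition at once (positive definiteness on all of $\nu_m$ implies it on each subspace a type singles out), with compatibility of the already prescribed part guaranteed by positivity of the building, Lemma~\ref{lm:transitivity-of-positivity}, and the thinning of blocks; convexity then gives non-emptiness, and a parametric rerun gives weak contractibility. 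Part of your step could be salvaged---the lemma that symplectic reduction maps positive zones to positive zones does show that a suitable extension over $B_1$ of an $\eta'\in\Pos(W')$ lands in $\Pos(W)$---but that only exhibits a subset of $\Pos(W)$, which cannot yield contractibility of $\Pos(W)$ itself, so the top-down organization has to be abandoned or substantially reworked.
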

\begin{proof}
We will  show   that $\Pos(W)\neq\varnothing$ using the convexity of the space of positive definite quadratic forms. The weak contractibility claim is a parametric version of the same argument. We argue by induction on the number of blocks.

For $W_{\leq 1}=\sT^*M_1$ a  positive distribution  from $\Pos(W)$ restricted to $M_1$ can be viewed 
as a field of positive  definite quadratic forms on the canonical polarization $\nu_1$. Take any such field $\eta$ and its arbitrary extension to $\sT^*M_1$.   According to Lemma \ref{lm:transitivity-of-positivity} the distribution $\eta$  satisfies the positivity condition  for $W$ if the block $B_1$ is chosen sufficiently thin.

Suppose that we have already constructed $\eta$ on $W_{<m}=\bigcup_{j<m}B_j$ and let us consider the next block $B_m=\sT^*M_m$. Recall that $M_m$ is a manifold with corners. Its corners are enumerated by the types of points in their neighborhood, i.e. if $b\in\p_kM_m$ then points $a\in\mM_m$ in a sufficiently small neighborhood of $b$ have a fixed type $I=(i_1<\dots<i_k)$, where $i_k<m$.
The Liouville fields $Z_{i_j}$ of blocks $B_{i_j}, j=1,\dots, k$, are tangent to $\mM_m$ and yield the  canonical splitting $\Op_{M_m}b=\Op_{\p_k M_m}\times \cI^k$. 
 Consider the restriction of $\eta$ to $\mM_m\cap W_{<m}$. It is  transverse to $M_m$ and can therefore be  viewed as  a field of quadratic forms on $\nu_m$.     The positivity  condition means that:
 
 \begin{itemize}
 
 \item[(\dag)]
 for any  point  $a\in \mM_m$   of type $I$, the quadratic form $\eta(a)$ is positive definite on the subspace of $\nu_m(a)$ dual to $\Span(Z_{i_1}(a),\dots, Z_{i_k}(a))$.
\end{itemize}

 Recall that the notion of  type  depends on the thickness of the blocks $B_1,\dots, B_{m-1}$. Let us take slightly thinner blocks $B_i'\Subset B_i$ and choose a Lagrangian field along $\mM_m$ that
  
 \begin{itemize}
 \item[(i)] coincides on $\mM_m\cap\left(W'_{<m}:= \bigcup_1^{m-1}B_i'\right)$ with the restriction of $\eta$ from $W'_{<m}$,
 \item[(ii)] is given by a field of positive  definite quadratic forms on $\nu_m$.
 \end{itemize} 
 Note in particular that (ii) implies that condition (\dag) holds on $W_{<m}\cap\mM_m$. Then choosing any extension to $\Op\mM_m$, and choosing the block $B_m$ sufficiently thin, we get the required  positive distribution on $W_{\leq m}$.\end{proof}

 \subsection{Positive cotangent buildings and positive arboreals}\label{sec:pos-W-arb}
 
  We   prove  in this  section that the skeleton of a positive cotangent building is generically a positive arboreal,
 where throughout this section {\em generically} means after a $C^\infty$-small perturbation of the building structure. This corresponds to a $C^\infty$-small perturbation of the underlying Weinstein structure.
 Conversely, if a cotangent building has a positive  arboreal skeleton, we show the building structure can be adjusted to be made positive without changing the skeleton. 
 
 \subsubsection{From positive buildings to positive arboreals}
 
The key definition is the following:
 
\begin{definition}\label{def:pos lag dist} Given an arboreal Lagrangian $L$ in a symplectic manifold $X$, a Lagrangian distribution $\eta$ along $L$ is called {\em positive with respect to $L$} if the following condition is satisfied. Take any singular point   $a\in L$. Let $T$ be the tangent space to the root Lagrangian at $a$, and $T'$ a tangent plane to any other smooth piece adjacent to $a$. Then $T'\in C(T,\eta(a)).$ \end{definition}

We will later have use for the following reduced variant:

 \begin{definition}\label{def: red pos lag dist} Let $L\subset X$ be a  positive arboreal Lagrangian and $Z$ a non-zero Liouville vector field tangent to  $L$. A Lagrangian distribution $\mu$ along $L$ is called reduced positive with respect to $(L,Z)$ if for 
  any singular point   $a\in L$  the following  condition is satisfied.
  Let $T$ be the tangent space to the root Lagrangian at $a$, and $T'$ a tangent plane to any other smooth piece adjacent to $a$. Then $[T']^{\zeta}\in C([T]^{\zeta},[\eta(a)]^{\zeta}), $ where $\zeta=\Span(Z)^{\perp_\om}.$ \end{definition}


  \begin{lemma}\label{lm:arb-skel-charact}
 Let $W$  be a  W-complex $B_k\to\dots\to B_0$ and denote by $\nu_j$ the polarization of the block $B_j$,  $j=0,\dots, k$.
\begin{enumerate}
\item If the skeleton $\Skel(W)$ is  arboreal, then   for each  point  $a\in  \mB_j \cap M_i$,  $0\leq j<i \leq k$, the Lagrangian  distribution $[\nu_j]^j$ is transverse to $[TM_j]^j$. Conversely, if  for each  point  $a\in  \mB_j \cap M_i$,  $0\leq j<i \leq k$, the Lagrangian  distribution $[\nu_j]^j$ is transverse to $[TM_j]^j$ then $\Skel(W)$ is  generically arboreal. 
\item If the  skeleton $\Skel(W)$ is positive arboreal, then for each point $a\in  \mB_j\cap M_i  \cap M_\ell$,  $0\leq j<i<\ell\leq k$, the triple  $[T_aM_\ell]^N, [T_aM_i]^N, [\nu_j]^N$, where $N:=\Span(T_aM_i,T_aM_\ell)$, is $\pprec$-cyclically ordered. Conversely, if for each point $a\in  \mB_j\cap M_i  \cap M_\ell$,  $0\leq j<i<\ell\leq k$, the triple  $[T_aM_\ell]^N, [T_aM_i]^N, [\nu_j]^N$ is $\pprec$-cyclically ordered, then $\Skel(W)$ is generically positive arboreal.
\end{enumerate}
 \end{lemma}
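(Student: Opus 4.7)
The plan is to analyze the local model at each point of the skeleton $\Skel(W) = \bigcup_\ell M_\ell$ separately, exploiting the fact that at a point $a \in \mathring B_j \cap M_i$ with $j<i$, the stratum $M_i$ appears inside the block $B_j = \sT^*M_j$ as the Liouville cone over an adapted Legendrian $\Lambda_i \subset S^*M_j$. In particular $Z_j(a) \in T_aM_i$, so $T_aM_i$ descends through the Liouville reduction $[\cdot]^j$, and the condition that $M_i$ projects regularly to $M_j$ near $a$ translates to a transversality statement among reduced Lagrangian planes in $\zeta_j$. Throughout, ``generic'' means after a $C^\infty$-small perturbation of the attaching data of the higher blocks (implemented as Legendrian isotopies of the $\Lambda_i$), which changes the Weinstein structure inside its homotopy class without moving the skeleton strata essentially.

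For part (i), forward direction: assuming $\Skel(W)$ is arboreal, by \cite{AGEN20a} the germ at $a$ is symplectomorphic to a standard model $L_\cT$ with $M_j$ serving as the root smooth piece. In that model every non-root piece is the positive conormal of a co-oriented front in $M_j$, and direct inspection shows such a conormal meets the vertical distribution only along the Liouville direction $Z_j$, yielding the transversality after reduction. For the converse, the assumed reduced transversality is precisely the statement that the attaching Legendrian $\Lambda_i$ projects as an immersion to $M_j$ near $a$; a generic $C^\infty$-small Legendrian adjustment of the attaching data of higher blocks makes the collection of fronts $\{\pi_j(\Lambda_i)\}_{i>j}$ pairwise self-transverse, and then axiom (v) of Definition \ref{def:arb intr} assembles the zero section $M_j$ with the Liouville cones over the $\Lambda_i$ into an arboreal Lagrangian locally at $a$.

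For part (ii), at a triple point $a \in \mathring B_j \cap M_i \cap M_\ell$ with $j<i<\ell$, the subspace $N := \Span(T_aM_i,T_aM_\ell)$ is coisotropic (the symplectic complement of a sum of Lagrangians is their intersection, which lies inside $N$), so the reduction $[\cdot]^N$ collapses the local configuration to a $2$-dimensional symplectic quotient in which all three Lagrangians in sight descend to lines. In the forward direction, positivity of the arboreal at $a$ (Definition \ref{def:pos lag dist}) places each non-root reduced tangent in the closed positive zone $\oC([T_aM_j]^N,[\nu_j]^N)$, and the positive sign $\varepsilon=+1$ on the $\cA_3$-subtree with root $M_j$, middle vertex $M_i$ and tip $M_\ell$ pins down the order in which these two planes sit in the zone, forcing the specific $\pprec$-cyclic ordering of $[T_aM_\ell]^N, [T_aM_i]^N, [\nu_j]^N$. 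The converse combines (i) with the characterization of orientation structures on arboreal singularities in terms of cyclic-order data (Lemma \ref{lem: ambiguity} and Remark \ref{rem: pos char}): after (i) produces an arboreal skeleton, the assumed cyclic ordering of the reduced triple is precisely the defining datum of the neutral, i.e.\ positive, element $\kappa(\cA_3,2,0)\in K(\cA_3,2,0)$ for each relevant $\cA_3$-subtree of the singularity.

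The hardest step is the last identification in part (ii): one must verify that in the $2$-dimensional reduced symplectic quotient, the neutral element $\kappa(\cA_3,2,0)$ is characterized by nothing more than the $\pprec$-cyclic order of the three reduced tangent lines together with the vertical $[\nu_j]^N$. This amounts to an explicit computation in the linear arboreal model $L_{\cA_3}\subset T^*\R^2$: there the root tangent, the two higher-stratum tangents, and the vertical form a quadruple of lines in the symplectic plane whose cyclic order is both invariant under the symplectomorphisms preserving $L_{\cA_3}$ and sufficient, by the classification of Lemma \ref{lem: ambiguity}, to distinguish the positive orientation structure from the others. Once this 2-dimensional computation is in hand, the global statement follows immediately by naturality of symplectic reduction.
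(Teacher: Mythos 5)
Part (i) of your proposal is in substance the paper's own argument: reduced transversality says exactly that the relevant fronts in $M_j$ are immersed, a generic perturbation makes them self-transverse, and the Liouville-cone axiom (v) of Definition~\ref{def:arb intr} assembles the arboreal. One structural caveat: axiom (v) only applies when the Legendrians $\Lambda_i$ are already \emph{arboreal} Legendrian germs, and at the stage of block $B_j$ this is known only if the skeleton of the part of the building above $B_j$ has already been shown to be arboreal. Your purely pointwise framing omits this; the paper handles it by a top-down induction over the blocks, starting with the top block (whose attaching Legendrian is smooth) and descending, so that at each step the attaching Legendrian --- the intersection of the already-arborealized upper skeleton with the attaching hypersurface --- is arboreal before axiom (v) is invoked. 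This is easily repaired, so I do not count it as a genuine gap.

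The genuine gap is in part (ii), precisely at what you call the hardest step. You identify positivity with the neutral orientation structure $\kappa(\cA_3,2,0)$, but these are different pieces of data. The sign $\eps$ defining the positive class lives on the edges of the tree \emph{not} adjacent to the root and is a symplectomorphism invariant of the singularity: $A_3^+$ and $A_3^-$ are genuinely distinct local models. The orientation structure classified in Lemma~\ref{lem: ambiguity} records instead co-orientation data attached to the edges \emph{adjacent} to the root (see the remark following that lemma), and \emph{every} arboreal Lagrangian, positive or not, carries the canonical (neutral) orientation structure induced by the ambient symplectic form. So your converse argument --- ``the assumed cyclic ordering is precisely the defining datum of the neutral, i.e.\ positive, element'' --- establishes the wrong thing: neutrality of the orientation structure gives no information about whether $\eps=+1$, and carrying out the computation you describe (matching cyclic orders to orientation structures) would not detect the sign at all. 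What is actually needed, and what the paper does, is an inspection of the $A_3^\pm$ normal forms of \cite{AGEN20a}: in the two-dimensional reduction, the cyclic position of $[\nu_j]^N$ relative to $[T_aM_i]^N$ and $[T_aM_\ell]^N$ records how the cone over the attaching Legendrian sits relative to the vertical distribution of $B_j$, and it is this relative position --- exactly the phenomenon of Figure~\ref{fig:POSneg} --- that distinguishes the positive from the negative model. Relatedly, in your forward direction you invoke Definition~\ref{def:pos lag dist} with $\eta=\nu_j$, but positivity of $\nu_j$ with respect to the skeleton is not a hypothesis of the lemma (it is close to what part (ii) is meant to deliver, cf.\ Proposition~\ref{prop:two-positivities}), and $[T_aM_j]^N$ is not defined at $a\in\mB_j\setminus M_j$; the correct comparison is with the root piece at the limit point of the cone on $M_j$, again read off from the normal form.
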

 \begin{proof}  The top block $B_k$ is attached to $B_{k-1}$ along a ribbon of a smooth Legendrian, and the transversality condition implies that the front projection to $M_{k-1}$ is an immersion. Generically it has transverse self-intersections, and hence $\Skel(W_{\geq k-1})$ is arboreal in the case (i) and positive arboreal in the case (ii). Continuing by induction we will assume that $\Skel(W_{>j})$ is arboreal. Then the skeleton of the attaching hypersurface $V_j$ of $W_{>j}$ to $B_j$ is arboreal as well. By the axiom on transverse Liouville cones in the definition of arboreal singularities, the transversality condition then  implies  that generically $\Skel(W_{\geq j})$ is arboreal. In the case (ii) the skeleton is positive arboreal, as can be seen directly from inspection of the normal forms given in \cite{AGEN20a}.
 The converse direction is straightforward.
   \end{proof}

\begin{cor}\label{cor:posW-posL}
The skeleton of a positive cotangent building is generically a positive arboreal.
\end{cor}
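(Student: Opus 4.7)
My strategy is to apply the converse implications of Lemma~\ref{lm:arb-skel-charact}. Thus the task reduces to verifying two conditions at every relevant skeletal intersection point of a positive cotangent building $W=\bigcup_j B_j$: (a) transversality of $[\nu_j]^j$ with $[TM_j]^j$ at every point $a\in \mB_j\cap M_i$, $j<i$, and (b) the $\pprec$-cyclic ordering of the triple $[T_aM_\ell]^N,[T_aM_i]^N,[\nu_j(a)]^N$ at every point $a\in \mB_j\cap M_i\cap M_\ell$, $j<i<\ell$, where $N=\Span(T_aM_i,T_aM_\ell)$. Once these two conditions are in place, a generic $C^\infty$-small perturbation of the building makes the successive front projections self-transverse at each inductive stage, so axiom (v) of Definition~\ref{def:arb intr} can be applied repeatedly and Lemma~\ref{lm:arb-skel-charact} yields the result.

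To extract (b), I would regard $a$ as lying in $\mM_\ell$ with type $I=(i_1<\cdots<i_m)$. Because $a\in B_j$ and $a\in M_i\subset B_i$, both $j$ and $i$ occur in $I$. By the very definition of positivity for $W$, for each truncation $I(s)$ the tuple
\[
[T_aM_\ell]^{I(s)},\,[\nu_\ell(a)]^{I(s)},\,[\nu_{i_m}(a)]^{I(s)},\,\ldots,\,[\nu_{i_s}(a)]^{I(s)}
\]
is $\pprec$-cyclically ordered. Choosing $s$ so that $I(s)$ starts at the position of $j$ in $I$, this ordering contains as a sub-tuple the reductions featuring $[T_aM_\ell]$, $[\nu_i(a)]$, and $[\nu_j(a)]$; since the relevant $\nu_{i_s}$'s reduce along their Liouville directions to the tangent spaces of the corresponding zero sections at $a$, a reduction to the coisotropic perpendicular of $N$ (which is compatible with the reductions involved, by Lemma~\ref{lm:cont-red}) converts this into a $\pprec$-cyclic ordering of $[T_aM_\ell]^N,[T_aM_i]^N,[\nu_j(a)]^N$, which is exactly (b). Condition (a) is obtained in the same way but is easier: pairwise transversality of adjacent entries of the cyclically ordered tuple gives transversality of $[T_aM_i]^{I(s)}$ with $[\nu_j(a)]^{I(s)}$, and reduction-passing translates this to transversality of $[TM_j]^j$ with $[\nu_j]^j$.

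The main obstacle, and the only nontrivial point in the argument, is the passage from positivity in the reductions $[-]^{I(s)}$ (the data provided by the definition of positive cotangent building) to the reductions $[-]^j$ and $[-]^N$ (the data required by Lemma~\ref{lm:arb-skel-charact}). This is essentially the same mechanism used in the proof of Lemma~\ref{lm:transitivity-of-positivity}: the $\pprec$-relation is open, symplectic reduction varies continuously by Lemma~\ref{lm:cont-red}, and by shrinking the blocks slightly one may use Lemma~\ref{lm:tau-angle} to control the comparison between the extended horizontal distributions $\tau_j$ and the true tangent spaces $T_aM_i$. Once this reduction-passing is in place, both (a) and (b) are immediate consequences of positivity, and the corollary follows from Lemma~\ref{lm:arb-skel-charact} after a generic $C^\infty$-small perturbation to ensure self-transverse front projections.
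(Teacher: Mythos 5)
Your overall route coincides with the paper's intent: the corollary is stated with no separate argument, precisely because it is meant to follow from the converse direction of Lemma~\ref{lm:arb-skel-charact} together with a generic perturbation, which is your scaffolding as well. The problem is the step where you convert the building-positivity tuples (reductions by spans of Liouville fields) into the triple required by part (ii) of the lemma (reduction by $N=\Span(T_aM_i,T_aM_\ell)$). Your justification is the claim that ``the relevant $\nu_{i_s}$'s reduce along their Liouville directions to the tangent spaces of the corresponding zero sections,'' which you then use to replace $[\nu_i(a)]$ by $[T_aM_i]$ inside the cyclically ordered tuple. That identification is false. In the local model at a point $a\in M_i\cap M_\ell$ (coordinates on $B_i=T^*\R^n$ with $M_i=\{p=0\}$, front $H=\{q_n=0\}$, and $M_\ell$ the positive conormal cone of $H$) one has $T_aM_\ell=\Span(\p_{q_1},\dots,\p_{q_{n-1}},\p_{p_n})$, $N^{\perp}=\Span(\p_{q_1},\dots,\p_{q_{n-1}})$, and hence
$$[\nu_i(a)]^N=\R\,\p_{p_n}=[T_aM_\ell]^N,\qquad [T_aM_i]^N=\R\,\p_{q_n},$$
so after the $N$-reduction it is $\nu_i$ and the \emph{later} tangent plane $T_aM_\ell$ that become equal, while $[\nu_i]^N$ and $[T_aM_i]^N$ are transverse, not equal. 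Consequently the sub-tuple you extract from the positivity condition at $a\in\mM_\ell$ (featuring $[T_aM_\ell],[\nu_i],[\nu_j]$) degenerates under the $N$-reduction --- two of its three entries coincide --- and cannot by itself yield the required non-degenerate cyclic ordering of $[T_aM_\ell]^N,[T_aM_i]^N,[\nu_j]^N$. A correct derivation has to bring in the positivity condition at $a$ regarded as a point of $\mM_i$ (this is where $T_aM_i,\nu_i,\nu_j$ all occur as entries), combine it with the identity $[T_aM_\ell]^N=[\nu_i(a)]^N$, use that $Z_j(a)$ and the other relevant Liouville vectors lie in $N^{\perp}$ so that the $N$-reduction factors through the Liouville-span reductions, and then deal with the fact that $Z_i$ vanishes along $M_i$: the positivity tuples whose reducing set contains $i$ only make sense at nearby points of $\mB_i^\circ$, so one must pass to a limit, where strict relations a priori degenerate; openness of $\pprec$ works against you there (Lemma~\ref{lm:cont-red} and Lemma~\ref{lm:tau-angle} give convergence, not strictness), so strictness has to be re-established, e.g.\ from the transversality of $[T_aM_i]^N$ and $[T_aM_\ell]^N$ and the positivity data at $a\in\mM_i$.

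A smaller instance of the same issue occurs in your treatment of condition (a): the lemma asks for transversality after reducing by $Z_j$ alone, whereas the building positivity at a point of type $I$ gives relations only after reducing by \emph{all} the Liouville fields $Z_{i_s}$, $i_s\in I(s)$; transversality of the more-reduced planes does not formally imply transversality of the less-reduced ones, so ``reduction-passing translates this'' needs an actual argument (using that the collapsed directions are spanned by Liouville fields tangent to $M_i$), unless $j$ happens to be the largest index of the type, in which case the single-index reduction is literally one of the positivity conditions. So: right strategy, but the pivotal reduction-translation step as written is incorrect and leaves a genuine gap.
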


\subsubsection{More on polarizations}
 
 We will need the following lemma  from linear algebra.
 \begin{lemma}\label{lm:pos-red-space}
 Let $\tau,\nu$ denote  $0$-section and the cotangent fiber in  $T^*\R^n$. Given a
collection of  hyperplanes $\Pi=\{\Pi_1,\dots, \Pi_k\}\subset\R^n$  denote by $\nu_1,\dots, \nu_k$ their Lagrangian conormals, and set $P_j:=\Span(\tau,\nu_j)$.
Denote by $\cP(\Pi)$ the space of 
  Lagrangian planes $\eta$ transverse to $\tau$ such  that  the triples
of lines $([\tau]^{P_j}, [\eta]^{P_j},[\nu]^{P_j})$ are $\prec$-cyclically ordered for
all $j=1,\dots,k$. Denote by $\cP_+$ the subspace of $\cP(\Pi)$ consisting 
   of  those Lagrangian planes such that the triple  $(\tau,\nu, \eta)$ is  $\prec$-cyclically ordered.
Then   $\cP(\Pi)$ and $ \cP_+\subset \cP(\Pi)$ are non-empty  convex subsets of the affine space of Lagrangian planes transverse to $\tau$.
\end{lemma}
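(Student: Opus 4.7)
The plan is to identify the affine space of Lagrangian planes $\eta \subset V = T^*\R^n$ transverse to $\tau$ with the linear space $\mathrm{Sym}(\nu)$ of symmetric bilinear forms on $\nu$, via the chart $B \mapsto \eta_B := \{(Bp, p) : p \in \nu\}$ centered at $\tau$, where $B$ is viewed as a symmetric map $\nu \to \tau$ using the identification $\tau \simeq \nu^*$ induced by $\omega$. The key observation is that in this parametrization each defining condition of $\cP(\Pi)$ turns into a \emph{linear} inequality on $B$, making convexity manifest.

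The main computation is the reduction $[\eta_B]^{P_j}$ in the $2$-dimensional symplectic quotient $[V]^{P_j}$. Fix a nonzero $n_j \in N_j \subset \nu$; since $P_j = \tau \oplus N_j$ and $(P_j)^\perp = \Pi_j \subset \tau$, a point $(Bp, p) \in \eta_B$ lies in $P_j$ iff $p \in N_j$, so that
\[
\eta_B \cap P_j = \{(\beta B n_j,\, \beta n_j) : \beta \in \R\}.
\]
Projecting to $[V]^{P_j}$ equipped with its transverse pair of polarizing lines $([\tau]^{P_j}, [\nu]^{P_j})$, the reduced line $[\eta_B]^{P_j}$ coincides with $[\nu]^{P_j}$ exactly when the scalar $B(n_j, n_j)$ vanishes, and otherwise has slope equal to $1/B(n_j, n_j)$ (up to a fixed sign depending on conventions). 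Using the cyclic and transpositional symmetries from Lemma~\ref{lemma:pos sym}, the requirement that $([\tau]^{P_j}, [\eta_B]^{P_j}, [\nu]^{P_j})$ be $\prec$-cyclically ordered (in particular pairwise transverse) then translates into the single linear inequality $B(n_j, n_j) < 0$ in $\mathrm{Sym}(\nu)$.

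It follows immediately that $\cP(\Pi)$ is the intersection of the $k$ open half-spaces $\{B(n_j, n_j) < 0\}$ in $\mathrm{Sym}(\nu)$, hence open, convex, and non-empty, since any negative definite $B$ satisfies all of them. Similarly $\cP_+$ corresponds under the same parametrization to those $B$ for which $(\tau, \nu, \eta_B)$ is pairwise transverse and $\prec$-cyclic, which (again by Lemma~\ref{lemma:pos sym}) unwinds to $B$ itself being negative definite, giving an open convex cone contained in $\cP(\Pi)$. The essential subtlety is the choice of parametrization: under the more familiar graph-of-$A$ description with $A = B^{-1}$ on $\tau$ (valid only on the smaller chart where $\eta_B$ is also transverse to $\nu$), the same conditions take the form of Schur-complement inequalities in $A$, which are genuinely non-convex -- one can even produce a straight-line path in $A$-coordinates between two elements of $\cP(\Pi)$ along which the Schur complement with respect to some $\Pi_j$ changes sign. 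Convexity only becomes manifest in the dual chart $\mathrm{Sym}(\nu)$ centered at $\tau$, where the conditions are linear.
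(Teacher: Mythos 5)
Your proof is correct and follows essentially the same route as the paper: you identify Lagrangian planes transverse to $\tau$ with quadratic forms on $\nu$, so that the conditions defining $\cP(\Pi)$ become sign conditions on the values at the conormal directions $n_j$ (linear inequalities in that chart) and $\cP_+$ becomes the set of definite forms, whence non-emptiness and convexity are immediate. The only differences are cosmetic: your overall sign (negative rather than positive definite) just reflects the choice of identification $\tau\simeq\nu^*$, and your explicit reduction computation in $[V]^{P_j}$ spells out what the paper's one-line proof asserts.
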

   \begin{proof}
   Any  Lagrangian plane   transverse to $\tau$ can be viewed as  a quadratic form on $\nu$. The subspace $\cP_+$ consists of positive definite quadratic forms, while $\cP(\Pi)$ consists of quadratic forms which take positive values on 
 the covectors $A_j\in\nu=\tau^*$ which define the hyperplanes $\Pi_i$, $j=1,\dots, k$.
Both spaces are non-empty and convex.
    \end{proof}
 
 Below in this section we view   polarizations of a  Lagrangian as transverse Lagrangian foliations  on its neighborhood.  Similarly, we assume that polarizations of a Legendrian   $\Lambda$ 
 are extended to its ribbon, i.e. a given embedding of $\sT^*\Lambda$ as  a Weinstein hypersurface.
 \begin{lemma}\label{lm:char-pol}
 Let us fix a reference polarization $\nu$ of a Lagrangian $L_0 \subset (X,\omega)$.
 Then the space $\Pol(L_0)$ of all polarizations on $L_0$ can be viewed as the space of  germs along $L_0$ of symplectomorphisms $T^*L_0\to T^*L_0$ which  fix $L_0$, or alternatively as the space of germs along $L_0$ of functions on $T^*L_0$ which vanish on $L_0$ together with their differential.
 \end{lemma}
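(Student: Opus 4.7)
The plan is to establish the two asserted identifications separately, each by a standard application of Weinstein's neighborhood theorem, first to the Lagrangian $L_0$ itself and then to the diagonal of $T^*L_0 \times \overline{T^*L_0}$.

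For the first identification, I would begin by fixing once and for all, via Weinstein's Lagrangian neighborhood theorem applied using the reference polarization $\nu$, a germ of a symplectomorphism between a neighborhood of $L_0$ in $(X,\omega)$ and a neighborhood of the zero section in $(T^*L_0, d(pdq))$ that sends the leaves of $\nu$ to the cotangent fibers. This reduces the problem to analyzing polarizations of the zero section inside $T^*L_0$. Given any other polarization $\mu$, extend it to a germ of a Lagrangian foliation near $L_0$ and apply the foliated version of Weinstein's theorem to produce a germ of a symplectomorphism $\Phi_\mu : T^*L_0 \to T^*L_0$ fixing $L_0$ pointwise and pushing the vertical foliation onto $\mu$. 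Conversely, any germ of a symplectomorphism fixing $L_0$ transports $\nu$ to a new polarization. One then checks this is a bijection up to the contractible space of foliated Weinstein straightenings, hence an equivalence of spaces of germs.

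For the second identification, I would use the generating function correspondence. Regard a germ of a symplectomorphism $\Phi: T^*L_0 \to T^*L_0$ fixing $L_0$ as its graph $\mathrm{Gr}(\Phi)$, a Lagrangian germ in $T^*L_0 \times \overline{T^*L_0}$ which is a small deformation of the diagonal $\Delta$ and which contains the partial diagonal $\Delta|_{L_0} = \{(q,q) : q\in L_0\}$. Weinstein's theorem identifies a neighborhood of $\Delta$ with a neighborhood of the zero section in $T^*\Delta \simeq T^*(T^*L_0)$. Under this identification $\mathrm{Gr}(\Phi)$ is the graph of an exact $1$-form $dH$, where $H$ is a germ of a function on $T^*L_0$ unique up to an additive constant. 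The condition that $\Phi$ fixes $L_0$ pointwise translates, via $\mathrm{Gr}(\Phi)\supset \Delta|_{L_0}$, exactly to $dH|_{L_0}=0$; after normalizing the constant this says that $H$ vanishes on $L_0$ together with its differential. This provides the desired bijective correspondence between germs of symplectomorphisms fixing $L_0$ and germs of functions on $T^*L_0$ vanishing on $L_0$ to first order.

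The main technical nuisance, rather than a true obstacle, lies in the first step: one must verify that the straightening symplectomorphism $\Phi_\mu$ can be chosen to depend smoothly on $\mu$, so that the bijection is genuinely a homeomorphism of spaces of germs and not just a bijection of underlying sets. This can be arranged via a Moser-type construction, linearly interpolating between $\nu$ and $\mu$ through a family of Lagrangian distributions along $L_0$ and integrating the resulting time-dependent vector field on a neighborhood of $L_0$. Once this is done the second step is purely formal, as the generating function dictionary for Lagrangian germs close to the zero section is standard.
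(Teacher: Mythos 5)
Your first identification is essentially the paper's: the paper also straightens a polarization (which in this section of the paper is taken to be a germ of a transverse Lagrangian \emph{foliation}, so no extension step is needed) by the foliated Weinstein theorem, producing a symplectomorphism germ of $T^*L_0$ fixing $L_0$ and carrying $\nu$ onto the given foliation. One point you can sharpen: the straightening is not merely well defined up to a contractible choice, it is unique. If $\psi$ fixes $L_0$ pointwise and preserves the cotangent foliation, then each fiber meets $L_0$ in its fixed basepoint, so $\psi$ covers the identity on $L_0$; a fiber-preserving symplectomorphism covering the identity is translation by a closed $1$-form, which must vanish because $L_0$ is fixed, so $\psi=\mathrm{id}$. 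Hence the correspondence is an honest bijection, and the Moser-type smooth-dependence argument you flag as the main nuisance becomes essentially unnecessary.

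Your second identification takes a genuinely different route from the paper, and as written it contains an unjustified step. The paper simply sets $\phi^*(p\,dq)=p\,dq+dH$ with $H|_{L_0}=0$: the difference of Liouville primitives is closed and vanishes identically at points of the zero section (since $\phi$ fixes $L_0$ pointwise and $p\,dq$ vanishes there), hence is exact on a germ neighborhood with $H$ vanishing on $L_0$ together with $dH$; conversely, a function vanishing to first order along $L_0$ has Hamiltonian field vanishing on $L_0$, so its time-one flow fixes $L_0$. Your graph-over-the-diagonal dictionary is workable, but a symplectomorphism germ fixing $L_0$ pointwise is \emph{not} in general a small deformation of the diagonal (consider a large fiberwise shear $(q,p)\mapsto(q+Ap,p)$), and its graph is only controlled near $\Delta|_{L_0}$, not along all of $\Delta$. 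What rescues the argument at the germ level is that $d\Phi$ at each point of $L_0$ is a unipotent shear (symplectic and equal to the identity on $TL_0$), so $-1$ is not an eigenvalue and the graph is transverse to the antidiagonal fibers along $\Delta|_{L_0}$; hence it is graphical over a germ neighborhood of $\Delta|_{L_0}$, which is all you need. You should also note why the resulting closed $1$-form is exact (it vanishes along $\Delta|_{L_0}$ and the germ neighborhood retracts onto $L_0$), and, conversely, why the Lagrangian generated by an $H=O(|p|^2)$ is again the graph of a symplectomorphism germ fixing $L_0$ (same shear computation). With those remarks added your version is correct; the paper's primitive-difference trick obtains the same dictionary with no graphicality or exactness bookkeeping.
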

 \begin{proof}
   For any   polarization $\eta\in\Pol(L_0)$ there exists  a unique symplectomorphism of $T^*L_0$ which fixes $L_0$ and which sends $\nu$ onto $\eta$ as foliations. Each function on $H$ on $T^*L_0$ vanishing on $L_0$ together with its differential generates a symplectomorphism fixing $L_0$ as the time 1 map of its Hamiltonian flow. Conversely,   each symplectomorphism germ $\phi$ fixing $L_0$ defines the required function $H$ by the conditions $\phi^*pdq= pdq+dH$, 
   $H|_{L_0}=0$.
  \end{proof}

Let   $L=C(\Lambda)\cup L_0$  be an arboreal Lagrangian, where  $\Lambda\subset \p_\infty T^*L_0$ is an arboreal Legendrian.  A polarization $\eta$ is called {\em positive} for $L$ if for any  point $\lambda\in L\cap \ol{C(\Lambda)}$ and any Lagrangian plane $T\in T_\lambda(T^*L_0)$ tangent to one  of  smooth pieces of $L$   different from $L_0$ the triple $([T_\lambda L_0]^N,[T]^N,[\eta(\lambda)]^N)$ is  $\prec$-cyclically ordered, where  $N=\Span(T_\lambda L_0, T)$. Such a polarization is automatically transverse to $L$ on $\Op L_0$ (comp.
Lemma \ref{cor:cone-over-regular}).
 
  \begin{lemma}\label{lm:distr-pos-Lag}
 Let   $L=C(\Lambda)\cup L_0$  be an arboreal Lagrangian, where  $\Lambda\subset \p_\infty T^*L_0$ is an arboreal Legendrian and  $\eta$    a  positive polarization of $L_0$.  Denote   by $\Pol(L,\eta)$ the space of polarizations  $\mu$ of $L_0$  for which the corresponding Liouville field is tangent to the cone  $C(\Lambda)$ and the  triple $(T_\lambda L,\mu(\lambda),\eta(\lambda))$ is  $\prec$-cyclically ordered for any $\lambda\in L_0$.
 Then $\Pol(L,\eta)$ is  non-empty and convex.
 \end{lemma}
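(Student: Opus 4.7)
The strategy is to reduce the global construction to a pointwise application of Lemma \ref{lm:pos-red-space} at each $\lambda\in L_0$, then assemble the pointwise data into a smooth distribution using convexity.

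First, I would use the reference polarization $\eta$ and Lemma \ref{lm:char-pol} to identify a neighborhood of $L_0$ with a neighborhood of the zero section in $T^*L_0$, sending $\eta$ to the vertical foliation. Since $\eta$ is positive for $L$, in this chart the arboreal cone $C(\Lambda)$ decomposes, locally near each $\lambda\in L_0$, as a union of positive conormals $N^*\Pi_1,\dots, N^*\Pi_k$ of co-oriented smooth hypersurfaces $\Pi_j\subset L_0$ passing through $\lambda$; these $\Pi_j$ are the front projections of the smooth pieces of $\Lambda$ whose Liouville cones pass through $\lambda$. A polarization $\mu$ of $L_0$ is then, pointwise, a Lagrangian plane $\mu(\lambda)\subset T_\lambda T^*L_0$ transverse to $\tau:=T_\lambda L_0$.

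Next, I would analyse the two conditions defining $\Pol(L,\eta)$ pointwise at $\lambda$. Letting $\nu_j\subset T_\lambda T^*L_0$ denote the Lagrangian tangent plane to $N^*\Pi_j$ and $P_j:=\Span(\tau,\nu_j)$, the Liouville field $Z_\mu$ vanishes at $\lambda$ and its linearization is the projection onto $\mu(\lambda)$ along $\tau$; a first-order computation then shows that the condition ``$Z_\mu$ is tangent to $N^*\Pi_j$'' is equivalent to the $\prec$-cyclic ordering of $([\tau]^{P_j},[\mu(\lambda)]^{P_j},[\eta(\lambda)]^{P_j})$ in the $2$-dimensional symplectic reduction $[T_\lambda T^*L_0]^{P_j}$ (using that $[\eta(\lambda)]^{P_j}=[\nu_j]^{P_j}$ in the $\eta$-chart, after matching the role of $\nu$ in Lemma \ref{lm:pos-red-space}). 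The second condition, namely that $(T_\lambda L,\mu(\lambda),\eta(\lambda))$ is $\prec$-cyclically ordered, is precisely the condition defining $\cP_+$. Hence the set of admissible $\mu(\lambda)$ is exactly $\cP(\Pi)\cap\cP_+$ in the language of Lemma \ref{lm:pos-red-space}, which is non-empty and convex in the affine space of Lagrangians transverse to $\tau$.

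Finally, to promote this pointwise data to a smooth global polarization, I would identify Lagrangians transverse to $\tau$ with quadratic forms on $\tau$ and observe that at each $\lambda\in L_0$ the admissible set is an open convex set of quadratic forms; such fibrewise convex conditions admit smooth global sections by a standard partition of unity argument. The same fibrewise convexity, applied to any pair of admissible smooth polarizations, shows that their convex combinations remain in $\Pol(L,\eta)$, proving that $\Pol(L,\eta)$ is itself convex and in particular contractible.

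The main obstacle will be verifying the linear-algebraic equivalence in the second step, namely that the tangency of $Z_\mu$ to each local branch $N^*\Pi_j$ of $C(\Lambda)$ is precisely captured by the reduced cyclic-order condition of Lemma \ref{lm:pos-red-space}; this requires carefully tracking how a change of polarization transforms a conical Lagrangian and verifying that no higher-order data are needed.
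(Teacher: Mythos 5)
Your reduction to pointwise linear algebra along $L_0$ misses the essential difficulty of this lemma. The condition ``the Liouville field corresponding to $\mu$ is tangent to $C(\Lambda)$'' is not a condition on the plane field $\mu(\lambda)$ at points $\lambda\in L_0$: under the identification of Lemma \ref{lm:char-pol}, $\mu$ corresponds to a germ of a function $H$ on $T^*L_0$ vanishing on $L_0$, and tangency of its Liouville field to the cone is equivalent to $H$ vanishing on \emph{all} of $L$ near $L_0$ --- a closed, equation-type condition on the full germ of $\mu$ along the (curved, and for deeper arboreal strata singular) front, not an open positivity condition. For this reason the equivalence you propose in your second step, ``$Z_\mu$ tangent to each branch of $C(\Lambda)$ $\iff$ a reduced $\prec$-cyclic ordering,'' cannot hold: one side is a codimension-infinity constraint, the other an open one. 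The obstacle you flag at the end (``verifying that no higher-order data are needed'') is precisely where the argument fails: higher-order data \emph{are} needed, since matching first (or second) order information against the tangent cone --- which is indeed a union of conormals of hyperplanes, cf.\ the paper's proof --- does not force tangency to the actual cone, whose front is in general only piecewise smooth (e.g.\ the $A_3$ model). Consequently your final step, gluing pointwise admissible Lagrangian planes by a partition of unity, produces a plane field with the right positivity but gives no control on tangency of its Liouville field to $C(\Lambda)$; the convexity that makes gluing legitimate lives in the affine space of functions vanishing on $L$, not in the fibres of the Lagrangian Grassmannian over $L_0$.

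The paper's route is genuinely different at this point. Convexity is indeed straightforward once one works in the function picture ($H|_L=0$ is linear, positive definiteness of the quadratic part of $\eta-\mu$ is convex). For non-emptiness, one starts from the canonical polarization $\nu$, whose Liouville field is tangent to $L$ by construction, chooses an auxiliary $\eta'$ with $(TL_0,\nu,\eta')$ cyclically ordered, and uses Lemma \ref{lm:pos-red-space} only to build a homotopy $\eta_t$ from $\eta$ to $\eta'$ staying transverse to $L$ (via the tangent-cone reductions). This homotopy is realized by an ambient Hamiltonian isotopy $\psi_t$ fixing $L_0$ and carrying $\eta_t$ to $\eta$; the deformed cones $\psi_t(L)$ are then brought back to $L$ near each point, preserving $\eta$, by the \emph{parametric stability theorem for arboreal singularities} (Corollary 3.14 of \cite{AGEN20a}). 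Pushing $\nu$ forward by these maps yields honest local elements of $\Pol(L,\eta)$, which are finally glued by a partition of unity in the space of functions vanishing on $L$. The stability theorem is the missing ingredient in your plan: without some mechanism of this kind there is no way to pass from correct pointwise position relative to the tangent cone to actual tangency of the Liouville field to the arboreal cone.
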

 \begin{proof} Viewing  $\mu\in\Pol(L,\eta)$ as a function on $T^*L_0$  the condition
 $\mu\in\Pol(L,\eta)$ means that $\mu$ vanishes on $L$ and that the quadratic part of $\eta-\mu$ along $L_0$ is positive definite on cotangent fibers. Hence, the convexity claim for $\Pol(L,\eta)$ is straightforward.   Let us  show that the space $\Pol(L,\eta)$ is non-empty.
Take any polarization $\eta'$ such that the triple $(T_\lambda L_0, \nu(\lambda_0),\eta'(\lambda))$ is $\prec$-cyclically ordered for all $\lambda\in L_0$. In particular, $\zeta'$ is transverse to $L$ on $\Op L_0$.

The tangent cone to $\ol{L\setminus L_0}$ at a point $\lambda\in L_0$ is the union of  Lagrangian conormals  $\nu_{j,\lambda}$ to a collection  $T_{j,\lambda} $ of transverse hyperplanes    $\Pi_{j,\lambda}\subset  T_\lambda L_0$, $j=1,\dots, k$. Using Lemma \ref{lm:pos-red-space} we can find a homotopy $\eta_t$ connecting $\eta_0=\eta$ and $\eta_1=\eta'$ such that the triple $([T_\lambda]^{N_{j,\lambda}},[\nu]^{N_{j,\lambda}},[\eta_t]^{N_{j,\lambda} }) $ is $\prec$-cyclically ordered for all $0 \leq t \leq 1$. Here we denoted
$N_{j,\lambda}:=\Span(T_\lambda L_0,\nu_{j,\lambda})$.

 In particular, $\eta_t$ is transverse to the germ of $L$  along $L_0$ for all $t\in[0,1]$.  There exists a Hamiltonian isotopy $\psi_t$  defined on $\Op L_0$   such that $\psi_t|_{L_0}=L_0$ and  $\psi_t(\eta_t)=\eta$, $t\in[0,1]$.
Denote $L^t:=\psi_t(L)$. Using a parametric version of the stability theorem which keeps track of polarizations (this is Corollary 3.14 in \cite{AGEN20a}) we can construct    for each point $\lambda\in L$   a Hamiltonian isotopy  $\phi_{t,\lambda}$ on $\Op\lambda$ such that \begin{itemize}
\item [(i)] $\phi_{t,\lambda}|_{L_0\cap\Op\lambda}=\Id$;
\item[(ii)] $\phi_t(\eta|_{\Op\lambda})=\eta|_{\Op\lambda}$;
\item[(iii)] $\phi_{t,\lambda}(L^t\cap\Op\lambda)=L\cap \Op\lambda$.
 \end{itemize}
Then the polarization $\mu_{\lambda}:=\phi_{1,\lambda}\circ \psi_1(\mu)$ belongs to the space $\Pol(L\cap\Op\lambda,\eta|_{\Op\lambda}).$ In view of compactness of $L_0$ we can cover $L_0$ by finitely many balls  $U_j$, $j=1,\dots, N,$ centered at the points $\lambda_j$ such that $\mu_{\lambda_j}\in \Pol(L\cap U_j,\eta|_{U_j})$. Given a partition of unity $\sum_1^N\theta_j=1$ subordinate to the covering, we define the required element of $\Pol(L,\eta)$ by the formula
$\mu=\sum_1^N\theta_j\mu_{\lambda_j}$.
 \end{proof}

    
    \subsubsection{From positive arboreals to positive buildings}

  \begin{prop}\label{prop:two-positivities}
 If the  skeleton of a cotangent     building is   positive  arboreal,  then the cotangent building structure can be deformed to be positive without changing the skeleton.  Similarly, if $\eta$ is a Lagrangian distribution which is  positive   for the skeleton,  then it can be made positive for the building without changing the skeleton. \end{prop}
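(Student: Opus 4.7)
The plan is to prove both statements together by induction on the number $k$ of blocks, attaching the base block $B_1$ as the inductive step. The base case $k=1$ is vacuous since no nontrivial types $I$ arise. For the inductive step, assume $W_{\geq 2}=B_k\to\cdots\to B_2$ has been arranged to be positive. The new positivity conditions introduced by adjoining $B_1$ arise only at points $a\in\mM_j\cap B_1$ of type $I=(1=i_1<\cdots<i_m)$, $i_m<j$; the reductions $[-]^{I(s)}$ with $s\geq 2$ yield tuples whose cyclic ordering is guaranteed by the inductive hypothesis, so the only genuinely new constraint is the $s=1$ tuple $([T_aM_j]^I,[\nu_j]^I,[\nu_{i_m}]^I,\ldots,[\nu_1]^I)$.

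The key observation is that positivity of the arboreal skeleton, by Lemma~\ref{lm:arb-skel-charact}(ii), already gives the required $\pprec$-cyclic orderings for the tangent planes to strata meeting at points of $M_1$. Lemma~\ref{lm:tau-angle} then implies that, in a sufficiently thin tubular neighborhood of $M_1$, the reduced tangent plane $[TM_j]^1$ of any stratum passing near $M_1$ approaches the reduced $[TM_1]^1$, so the arboreal conditions propagate into approximate positivity conditions for the building on $\Op M_1$. The distribution $\nu_1$ can then be specified on $M_1$ within a convex, non-empty family by Lemma~\ref{lm:pos-red-space}; after shrinking $B_1$ enough that the approximation of Lemma~\ref{lm:tau-angle} is tight, the strict inequalities hold and the building becomes positive. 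The modification of $\nu_1$ is realized by adding to $\lambda_1$ the differential of a function $H$ vanishing to second order along $M_1$ (Lemma~\ref{lm:char-pol}), which preserves the skeleton. The second statement follows by the same inductive scheme, using Lemma~\ref{lm:distr-pos-Lag} to construct compatible positive polarizations locally near each singular point on $M_1$, and a partition-of-unity globalization exactly as in the proof of that lemma and of Proposition~\ref{prop:distr-pos}.

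The main obstacle will be arranging simultaneous positivity for all reductions $[-]^{I(s)}$, $s=1,\ldots,m$, at points of high type, since the adjustment needed for one value of $s$ could in principle clash with that needed for another. This is handled by working with nested sequences of thinner blocks $B_1'\Subset B_1$ and invoking the convexity of the positive zones (Lemmas~\ref{lm:pos-red-space} and~\ref{lm:distr-pos-Lag}) so that locally consistent choices can be globalized. The inductive hypothesis ensures that modifications to $B_1$ do not disrupt the already-established positivity of $W_{\geq 2}$, since these modifications affect only reductions involving $Z_1$.
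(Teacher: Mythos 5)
Your induction runs in the direction that the positivity conditions do not permit. You freeze the polarizations $\nu_2,\dots,\nu_k$ of $W_{\geq 2}$ and propose to satisfy every newly created condition by choosing $\nu_1$ and shrinking $B_1$. But the new ($s=1$) condition at a point $a\in\mM_j\cap B_1$ of type $I=(1<i_2<\dots<i_m)$ is the $\pprec$-cyclic ordering of the \emph{whole} tuple $[T_aM_j]^{I},[\nu_j(a)]^{I},[\nu_{i_m}(a)]^{I},\dots,[\nu_{i_2}(a)]^{I},[\nu_1(a)]^{I}$ in $\zeta_I$, i.e.\ after the additional reduction by $Z_1$, and most of the triples in this tuple do not contain $[\nu_1]^{I}$ at all. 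Those are constraints on data you have already frozen, subjected to a reduction that was absent from the inductive hypothesis (which only orders the tuple after reducing by $Z_{i_2},\dots,Z_{i_m}$). They cannot be repaired by any choice of $\nu_1$; they do not disappear when you shrink $B_1$, because the strata $\mM_j$ and the thin blocks $B_{i_s}$ descend as cones all the way to $M_1$, so points of every type persist arbitrarily close to $M_1$; and none of the lemmas you invoke controls them: Lemma \ref{lm:arb-skel-charact}(ii) constrains tangent planes of skeleton strata reduced by spans of tangent planes, not the mutual position of the blocks' vertical polarizations after reduction by $Z_1$; Lemma \ref{lm:tau-angle} compares $[T_aM_j]^1$ with an extension of $TM_1$ but says nothing about $[\nu_{i_s}(a)]^1$; and Lemma \ref{lm:pos-red-space} is pointwise linear algebra along $M_1$. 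Since transversality, hence cyclic order, can be destroyed by an extra coisotropic reduction, there is no formal reason these frozen conditions hold, and if one of them fails you have no freedom left to fix it. This is a genuine gap, not a presentational one.

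The paper avoids exactly this trap by inducting bottom-up and always keeping the polarization of the \emph{newly attached} block as the free parameter: assuming $W_{<j}$ positive, it deforms $\nu_j$ near the attaching faces $P_m,\dots,P_1$ of $B_j$ (an inner downward induction using Lemma \ref{lm:distr-pos-Lag}, with convexity and cut-offs to match the choices on overlaps $\Op(P_s\cap P_{s-1})$), keeping the Liouville field tangent to $L_j$ so the skeleton is unchanged; with this order of operations every newly created tuple contains the free element $[\nu_j]$, and the compatibility of the already-fixed part of each tuple at corners is precisely what the established positivity of $W_{<j}$ supplies. Note also that your thin-block/angle mechanism (Lemmas \ref{lm:tau-angle} and \ref{lm:transitivity-of-positivity}) is the engine of Proposition \ref{prop:distr-pos}, i.e.\ for extending a positive \emph{distribution} over a building that is already positive; it does not make the building positive. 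Your sketch of the second statement via Lemma \ref{lm:distr-pos-Lag} and patching is in the right spirit (in the paper it is a final deformation of $\nu_j$ fixed on $W_{<j}\cap B_j$), but it presupposes the first statement, which your induction does not establish.
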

 
 \begin{proof}
  We  argue by induction on blocks beginning with  the bottom block $B_1$. For a cotangent building consisting of a single block $B_1=\sT^*M_1$ the positivity condition is vacuous and to make $\eta$ positive we just need to choose the polarization  $\nu_1$, so that
  along $M_1$ we have  $\nu_1\in C(TM_1,\eta)$.

  Suppose we  have  already constructed a positive building structure on $W_{<j}=\bigcup_{i<j}B_i$ such that $\eta|_{W_{<j}}$ is positive.  
Let $L=\Skel(W)$ and denote $L_j:=L\cap (B_j=T^*M_j)$.  Suppose the block $B_j$ intersects with blocks $B_{j_1},\dots, B_{j_m}$, $1\leq j_1<\dots<i_m<j$, and let $P_i$ be the face of $M_j$ such that $B_j$ is attached to  $B_{j_i}$ along a ribbon $V_i=\sT^*P_i\subset B_{j_i}\setminus M_{j_i}$. The arboreal Lagrangian $L_j$ intersects the ribbon $V_i$ along an arboreal Legendrian $\Lambda_i$ containing $P_i$. Denote by $\mu_i$ the polarization of $P_i$ induced by the polarization $\nu_j$ of $M_j$ and by $\ol\nu_{j_i}$ the polarization obtained  by reducing the polarization $\nu_{j_i}$ of $B_{j_i}$ with respect to the symplectic conormal of the Liouville field $Z_{j_i}$.

Arguing by induction beginning with $i=m$, we first use Lemma \ref{lm:distr-pos-Lag} to deform $\mu_{m}$ to a polarization $\mu'_{m}$ of $P_m$ such that its Liouville field is tangent to $\Lambda_j$ and the triple $(TP_m,\mu'_m, \ol\nu_{j_m})$ is $\prec$-cyclically ordered. We extend $\mu'_m$ to $B_j$ as a polarization such that its Liouville field is tangent to $L_j$, which we will still denote by $\nu_j$, forgetting the previous polarization of $M_j$.
Note that on $\Op P_m\cap P_{m-1}$ the reduced triple $([TP_{m-1}]^{\zeta_m}, [\mu_{m-1}]^{\zeta_{i_m}}, [\nu_{j_{m-1}}]^{\zeta_{i_m}})$ is $\prec$-ordered, thanks to the positivity of the building $W_{<j}$.
Applying Lemma  \ref{lm:distr-pos-Lag}  again, we can deform the polarization $\mu_{m-1}$ to a polarization $\mu_{m-1}'$ whose Liouville field is tangent to $\Lambda_{j-1}$, which coincides with 
$\ol\nu_j:=[\nu_j]^{\zeta_{i_{m-1}}}$ on a neighborhood $U\supset P_m\cap P_{m-1}$, and such that  the triple $(TP_{m-1},\mu'_{m-1}, \ol\nu_{j_{m-1}})$ is $\prec$-cyclically ordered outside $U$. The extension claim follows from  convexity: one  first constructs $\mu'_{m-1}$ satisfying the positivity condition in the complement of a neighborhood $U'\Subset U$. Then, using a cut-off function $\theta$ equal to $1$ on $U'$ and $0$ outside $U$, one redefines $\mu_{m-1}$ as $(1-\theta)\mu_{m-1}' +\theta\ol\nu_j$. Again we extend to a polarization of $B_j$ such that its Liouville field is tangent to $L_j$ and we still call the new polarization $\nu_j$. Continuing this process, we obtain a   polarization $\nu_j$ on $B_j$ making the building $W_{\leq j}$ positive. Finally, to ensure that $\eta$ is positive along $M_j$ we can further deform $\nu_j$ via a homotopy fixed on $W_{<j} \cap B_j$ by final application of Lemma \ref{lm:distr-pos-Lag}.
\end{proof}

  \section{Ridgification of Lagrangians}\label{sec: ridgy}
  
  In this section we recall the ridgification theorem, as well as its formal analogue, and describe the canonical Wc-structure in the neighborhood of a ridgy Lagrangian.

   \subsection{Geomorphology of Lagrangian ridges}\label{sec:geo}
   
   \subsubsection{Ridgy Lagrangians}
As a stepping stone towards arboreal skeleta it will be useful to consider a class of singular Lagrangian and Legendrian submanifolds, called {\it ridgy}, which were introduced in \cite{AGEN19}. Let us recall their definition. In the standard  symplectic $(\R^2, dx\wedge dy)$ consider the subset
    $R=R_{1,2}=\{xy=0, x\geq 0,y\geq 0\}$.
\begin{definition} The {\em model ridge} $R_{k,n} \subset \R^{2n}$ of order $k$ is the product 
  $R_{k,n}=R^k\times\R^{n-k}\subset (T^*\R)^k\times T^*\R^{n-k}=T^*\R^{n}$. \end{definition}
  
  \begin{example}
The order $n$ model ridge $R_{n,n} \subset T^*\R^n$ is the union to all the inner conormals of the faces of a quadrant in $\R^n$, hence is the union of the $2^n$ linear Lagrangians $\{ p_j=q_k =0, \, q_j, p_k \geq 0 , \, j \in I, \, k \not \in I \}$, where $I \subset \{ 1, \ldots , n \}$. 
\end{example}

 \begin{definition} An $n$-dimensional {\it ridgy Lagrangian} in a symplectic manifold $(X,\omega)$ is
  a closed subset $L\subset X$ which is covered  by open neighbohoods $U_i$ such that each $(U_i,U_i\cap L)$ is symplectomorphic to some $(B, B\cap R_{k,n})$, $0 \leq k \leq n$.  \end{definition}
  
          \begin{figure}[h]
\includegraphics[scale=0.5]{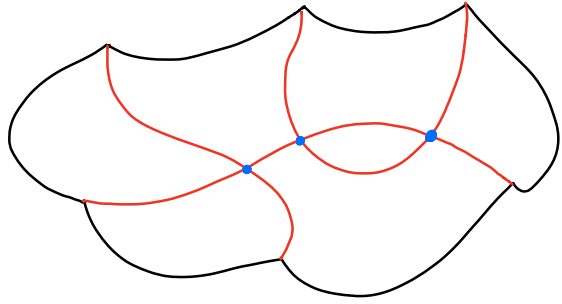}
\caption{A 2-dimensional ridgy Lagrangian has order 1 ridges along a union of separating curves, which intersect in order 2 ridges along a discrete set of points.}
\label{fig:ridgylagrangians}
\end{figure}

 A ridgy Lagrangian $L$ admits a natural stratification 
according to the order of ridges, so that the top dimensional stratum is the smooth part of the ridgy Lagrangian. 
Any ridgy Lagrangian can be viewed as the limit as $\eps \to 0$ of a family of smooth Lagrangians $L(\eps)$ homeomorphic to $L=L(0)$; they are obtained by using the model $R(\eps)=\{xy=\eps, x\geq 0,y\geq 0\}$ instead of $R$. 

\begin{definition} Given a contact manifold $(Y,\xi)$, a ridgy Legendrian $\Lambda\subset \xi$ is defined as a ridgy Lagrangian in a Weinstein hypersurface $\Sigma\subset Y$. 
\end{definition}

Note that $R_{k,n}\subset \R^{2n}$ is invariant with respect to the radial contracting vector field $Z_n= \sum_{j=1}^n q_j \p/\p q_j -  p_j\p/\p p_j $. Hence its link $\p R_{k,n}:=R_{k,n}\cap S^{2n-1}$ in the  standard contact sphere is an $(n-1)$-dimensional ridgy Legendrian.  Similarly, $R_{k,n}\subset\R^{2k}\times T^*\R^{n-k}$ is invariant with respect to the contracting field $Z_k -\sum_{j=1}^{n-k}  p_j \p/\p p_j$, where $\sum_{j=1}^{n-k}  p_j \p/\p p_j$ is the canonical Liouville field for the factor $T^*\R^{n-k}$.

         \begin{figure}[h]
\includegraphics[scale=0.6]{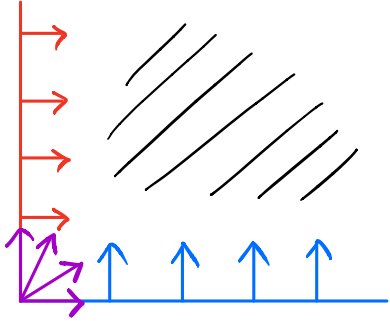}
\caption{The 2-fold model ridge $R^2=R \times R \subset T^*\R \times T^*\R$ is symplectomorphic to the union in $T^*\R^2$ of the first quadrant in $\R^2$, the inner conormals to the positive $x$ and $y$ axes and the quarter-conormal to the origin lying in the first quadrant.}
\label{fig:altmodel}
\end{figure}







 \subsubsection{The formal ridgification theorem}\label{thm:ridgy-main}
First, we recall from \cite{AGEN19} the notion of a tectonic field, which is the formal (i.e. non-integrable) analogue of a ridgy Lagrangian.
  Let $M$ be a manifold with corners and $B=\sT^*M$ the corresponding cotangent block.
  Recall that a Lagrangian plane field $\eta$ along $M$ which is transverse   to the vertical distribution $\nu$ can be viewed as a field of quadratic forms on $TM$. To simplify the notation we will use the same symbol to denote the Lagrangian distribution and the corresponding field of quadratic forms.
 
 By a  {\em dividing hypersurface} $N\subset M$ we mean a  properly embedded, co-oriented codimension 1 submanifold with corners such that $(N,\p N)\subset (M,\p M)$ is homologically trivial.   A collection of  dividing hypersurfaces $\{N_j\}_j$ is said to be in {\em in general position} if each $N_j$ is transverse to the intersection $N_{i_1} \cap \dots \cap N_{i_k}$ of any subset of the other hypersurfaces.
 
  Given a  collection  of  dividing hypersurfaces $N_1,\dots, N_k \subset M$  in general position, a  {\em tectonic field}  $\lambda$ with {\it faults} along $\{N_j\}_j$ is  a collection of smooth graphical Lagrangian plane fields $\{\sigma_Q\}_Q$ defined   over the closures $\overline{Q}$ of the  connected components $Q$ of $M \setminus \bigcup_{j=1}^kN_j$, such that  for every $j=1,\dots, k$ there exists a field  $\ell_j$ of  of non-vanishing 1-forms  along $N_j$ for which the following conditions are satisfied:   
     \begin{itemize}
 
\item[(i)] For each component  $P$ of $N_i \setminus \bigcup_{j\neq i} N_j$  adjacent to components $Q_\pm$  of $M \setminus \bigcup_j N_j$ we have that the difference $\lambda_{Q_+} - \lambda_{Q_-}$ is the rank $1$ quadratic form $\eta_i=\ell_i^2$, where the co-orientation of $N_j$ points into $Q_+$;
 
\item[(ii)] Along each intersection $N_{j_1} \cap \dots \cap N_{j_m}$ the hyperplane fields $\tau_{j_s}=\ker(\eta_{j_s})$, $s=1, \ldots ,m$, are transverse to all possible intersections of the $\tau_{j_r}$, $r \neq s$.
\end{itemize}

 A tectonic field $\eta$  is called {\it $\eps$-small} if it  deviates from $TM$ by an angle $<\eps$, where a fixed but arbitrary Riemannian metric is understood.
 A tectonic field $\lambda$ is called {\it collared} if for each $k$-face $F$ of $M$ the tectonic field $\lambda$ splits as a product of a tectonic field on $F$ and the trivial (tangent) field in the collar directions. In terms of quadratic forms, this means that the quadratic form is the sum of a quadratic form on $TF$ and the zero form in the collar directions. In particular this gives the notion of a smooth Lagrangian plane field which is collared with respect to a corner structure. We can now state the formal version of the ridgification theorem.

\begin{thm}[\cite{AGEN19}] \label{prop:formal-ridgy}
Let $M$ be a manifold with corners and  $\eta$ any Lagrangian plane field in $\sT^*M$.  Then there exists a tectonic field $\lambda$ transverse to $\eta$. 
\end{thm}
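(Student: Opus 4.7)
The plan is to reduce the theorem to a problem about piecewise-smooth fields of quadratic forms, and then solve it by induction on a fine cellular decomposition of $M$, using the fact that any quadratic form admits a signed rank-one decomposition.

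First I would pass to the quadratic-form picture. Fix a Riemannian metric on $M$; this gives the reference horizontal polarization $TM \subset \sT^*M$, and every Lagrangian plane field transverse to $\nu$ along $M$ is uniquely the graph of a symmetric bilinear form on $TM$. Write $A(x)$ for the symmetric form corresponding to $\eta$. A tectonic field $\lambda$ becomes a piecewise-smooth family of symmetric forms $B(x)$ on $TM$, with jumps across each fault $N_j$ equal to a rank-one positive form $\ell_j^2$. Transversality of $\lambda$ and $\eta$ at $x$ is equivalent to non-degeneracy of $B(x)-A(x)$. So the theorem reduces to: \emph{given any smooth family $A(x)$ of symmetric forms on $TM$, construct a piecewise-smooth $B(x)$ whose jumps across a general-position collection of homologically trivial dividing hypersurfaces $\{N_j\}$ are rank-one positive forms, and such that $B(x)-A(x)$ is non-degenerate everywhere.}

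The main construction would proceed by induction on a sufficiently fine smooth triangulation $\cT$ of $M$ that is compatible with the corner structure (in the sense that each top-cell $\Delta$ is so small that $A$ is almost constant on it and that non-degenerate forms of a fixed signature on $\Delta$ form a convex cone). On the interior of each top-cell $\Delta_\alpha$ choose a constant symmetric form $B_\alpha$ with $B_\alpha-A$ non-degenerate on $\Delta_\alpha$; this is possible since non-degenerate forms are open and dense and $A$ is approximately constant. Across each codimension-one face $F = \Delta_\alpha \cap \Delta_\beta$ of $\cT$ we must install finitely many parallel faults inside a thin collar $F\times(-\varepsilon,\varepsilon)$ that convert $B_\alpha$ into $B_\beta$ by a sequence of rank-one positive jumps, while keeping $B-A$ non-degenerate in the collar. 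To do this, write the prescribed net difference $B_\beta-B_\alpha$ in diagonal form, $B_\beta-B_\alpha = \sum_{i=1}^{p}(\ell_i^+)^2 - \sum_{j=1}^{q}(\ell_j^-)^2$, and realize each signed square by a single co-oriented fault parallel to $F$ inside the collar, ordered along $(-\varepsilon,\varepsilon)$. By choosing the coefficients small (rescaling the collar) each intermediate partial sum $B_\alpha + \sum_{i\leq k}\pm\ell^2$ stays close to $B_\alpha$ and hence the difference with $A$ remains non-degenerate throughout the collar. To ensure the general-position condition (ii) on intersections of the hyperplane fields $\tau_j=\ker\ell_j$, after this local construction I would perturb the $\ell_j^\pm$ inside each collar by a small generic rotation; transversality is an open condition so the non-degeneracy of $B-A$ survives. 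Finally, one extends the construction over codimension $\geq 2$ faces of $\cT$ inductively: the faults coming from adjacent codim-one faces meet along codim-two cells of $\cT$, and after generic perturbation meet transversally; homological triviality of the individual faults $N_j$ is arranged automatically by working inside the interior of cells, since each $\ell^2$-jump is supported in a thin collar whose frontier is a regularly embedded hypersurface that extends to a globally homologous boundary piece of $M$. Corner compatibility is obtained by choosing $\cT$ with collared cells at $\partial M$ and by taking the local forms $B_\alpha$ and fault directions to be collared as well.

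The hard part will be the codimension-two (and higher) intersections of the fault hypersurfaces. The naive construction sketched above produces faults coming from adjacent codimension-one strata of $\cT$ that meet along the codimension-two strata, and one has to check that (a) the hyperplane fields $\tau_j=\ker\ell_j$ intersect transversely in the required sense and (b) the prescribed rank-one jumps on the various incoming faces are mutually consistent around the corner (one cannot just freely prescribe a net form $B_\beta-B_\alpha$ on each face; the sum of jumps around any codim-two cell must close up). The cleanest way to manage this is to \emph{orient the induction by the skeleton}: first build $B$ on a tubular neighborhood of the $0$-skeleton of $\cT$, extend over a neighborhood of the $1$-skeleton using transitions supported in disjoint tubes around $1$-cells, and continue inductively over $k$-skeleta, at each stage using that the space of non-degenerate symmetric forms of given signature is connected through paths realizable as sums of rank-one corrections of controllable sign. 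Homological triviality of the resulting fault hypersurfaces is then a consequence of the fact that each was built inside a contractible tubular neighborhood. This inductive organization also handles boundary and corner faces of $M$ simultaneously by working relative to already constructed data there.
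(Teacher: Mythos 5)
This statement is not proved in the present paper at all: it is imported verbatim from \cite{AGEN19}, so the comparison has to be with what the theorem must actually accomplish rather than with a proof printed here. Measured against that, your proposal has a fatal gap at the very first step. You declare that $\eta$ ``is uniquely the graph of a symmetric bilinear form $A(x)$ on $TM$'', i.e.\ you assume $\eta$ is transverse to the vertical distribution $\nu$. The theorem assumes no such thing -- $\eta$ is an \emph{arbitrary} Lagrangian plane field -- and the graphical case is precisely the case where there is nothing to prove: if $\eta$ is everywhere transverse to $\nu$, the single smooth graphical field given by the graph of $A+g$ ($g$ the background metric) is already transverse to $\eta$, and one needs no faults whatsoever. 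The entire content of the theorem lives on the locus where $\eta$ has tangencies with $\nu$. There the tectonic pieces are still forced to be graphical (that is part of the definition), so one needs planes transverse to both $\eta(x)$ and $\nu(x)$; this set is nonempty but disconnected (already in the $2$-dimensional model $\Lambda(1)\simeq \mathbb{RP}^1$ it is the complement of two points), and globally there is a Maslov-type winding obstruction to making a continuous graphical choice -- this is exactly why a \emph{smooth} transverse graphical field need not exist and why the positive rank-one jumps across the faults are introduced: a jump by $\ell^2$ moves the plane across the ``forbidden'' wall in the positive direction and absorbs the obstruction. Your argument never sees this obstruction, because your reduction has already trivialized it away; consequently the elaborate induction over a triangulation is solving a problem whose solution is one line, while the actual theorem remains untouched.

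Two secondary points, in case you rework the argument in the correct generality. First, where $\eta$ is not graphical, ``transversality of $\lambda$ and $\eta$'' is not equivalent to nondegeneracy of $B-A$; you need a formulation valid at the tangency locus. Second, the walls you install inside collars $F\times(-\varepsilon,\varepsilon)$ of codimension-one faces of a triangulation are not dividing hypersurfaces in the sense required by the definition: they are hypersurfaces with boundary sitting in the interior of $M$, and closing them up into properly embedded, co-oriented, homologically trivial hypersurfaces -- consistently with the prescribed jumps around the codimension-two strata, and with a single co-orientation (hence a fixed sign of the jump) along each connected fault -- is exactly the global consistency problem you acknowledge and then defer; saying that each wall lies in a contractible neighborhood does not produce the required closed-up faults or the required general-position intersections of the kernels $\ker\ell_j$.
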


\begin{remark}\label{rmk: extension-form-formal-ridgy}
The formal transversalization theorem holds in several variants, which will be needed for our applications below,.
\begin{itemize}
\item[(i)] $C^0$-control: for any fixed $\eps>0$ we may demand that $\lambda$ is $\eps$-small.
\item[(ii)] Relative form: if for a compact set $A \subset  M$ we are already given an $\eps$-small tectonic field $\lambda_0$ over $\Op A$ transverse to $\eta$, then $\lambda$ can be chosen equal to $\lambda_0$ over $\Op A$. In this case the $C^0$-control means that $\lambda-\lambda_0$ can be made $\eps$-small.
\item[(iii)] There is also an extension formulation as follows: if $K_1,K_2 \subset M$ are disjoint compact subsets and $\zeta$ is any tectonic field, then we may find a $\lambda$ satisfying the conclusion of the theorem on $Op(K_1)$ and such that $\lambda=\zeta$ on $Op(K_2)$. In this case the $C^0$-control means that $\lambda-\zeta$ can be made $\eps$-small. 
\item[(iv)] The relative and extension forms can be combined if we assume that $\lambda_0=\zeta$ on $Op(A \cap K_2)$.
\item[(v)] Collared version: if we assume that in the neighborhood $F \times \cI^k$ of a $k$-face $F$ the Lagrangian field $\mu$ splits as the product of $\mu_F \subset T^*F$ and the cotangent (vertical) distribution on $T^*\cI^k$, then $\lambda$ can be taken to be collared. The collared version holds with $C^0$-control and with the relative and extension forms.
\end{itemize}
\end{remark}

A tectonic field is called {\em aligned} if for each fault $N_i$ we have $\ker(\eta_i)=TN_i$, where $\eta_i$ is the rank 1 form giving the discontinuity of the tectonic field along $N_j$. Given a Lagrangian plane field $\eta$ in $\sT^*M$ it is in general not possible to find an aligned tectonic field which is transverse to $\eta$, but it is possible to find an aligned tectonic field which is transverse to a Lagrangian field homotopic to $\eta$. More precisely, we have:

\begin{thm}[\cite{AGEN19}]\label{prop:formal-ridgy-aligned}
Let $M$ be a manifold with corners and  $\eta$ any Lagrangian field in $\sT^*M$.   Then there exists an aligned tectonic field $\lambda$ transverse to a Lagrangian field $\wh \eta$ homotopic to $\eta$. 
\end{thm}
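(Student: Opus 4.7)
First, apply Theorem~\ref{prop:formal-ridgy} to obtain a (non-aligned) tectonic field $\lambda^0 = \lambda$ transverse to $\eta$, with faults $\{N_j\}$ carrying 1-forms $\ell_j$. The plan is then to modify $\lambda$ locally near each fault to align it, and simultaneously homotope $\eta$ so that transversality is preserved throughout. At $s=1$ one recovers an aligned tectonic field transverse to $\widehat\eta := \eta^1$.

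At each fault $N_j$, choose tubular coordinates $(x,t)$ with $N_j = \{t=0\}$ and decompose $\ell_j|_{N_j} = a(x)\, dt + \beta(x)$, where $\beta$ is a 1-form on $N_j$; alignment requires killing $\beta$. Let $\chi(t)$ be a smooth cutoff with $\chi(0) = 1$ supported in $[0,\delta)$, and let $\widetilde\beta$ be a smooth extension of $\beta$ to the tubular neighborhood. For $s\in[0,1]$, define the family of quadratic corrections on the positive side of $N_j$ by
\[
Q_j^s := -\chi(t)\bigl(2sa\,\widetilde\beta\, dt + s(2-s)\,\widetilde\beta^2\bigr),
\]
and set $\lambda_+^s := \lambda_+ + Q_j^s$. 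A direct computation shows that the jump across $N_j$ in the resulting tectonic field has 1-form $a\, dt + (1-s)\beta$, interpolating between $\ell_j$ at $s=0$ and the aligned 1-form $a\, dt$ at $s=1$. Applying these modifications at every fault simultaneously yields a continuous family of tectonic fields $\{\lambda^s\}_{s\in[0,1]}$, with $\lambda^1$ aligned; condition (ii) in the definition of a tectonic field is preserved at $s=1$ since $\ker(a\,dt) = TN_j$ and the $\{N_j\}$ are in general position.

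Next, lift the continuous path $\{\lambda^s\}$ to a continuous path $\{\eta^s\}$ of smooth Lagrangian fields with $\eta^0 = \eta$ and $\eta^s$ transverse to $\lambda^s$ on each region of the fault complement. For each $s$ and each region $Q$, the space of Lagrangian fields on $Q$ transverse to $\lambda_Q^s$ is the space of sections of a bundle over $Q$ whose fiber at a point is the complement in the Lagrangian Grassmannian of the Lagrangians non-transverse to $\lambda_Q^s$, which is an affine space of quadratic forms. Sections of such a bundle with contractible fibers form a non-empty contractible space, so a standard homotopy lifting argument produces the desired family $\{\eta^s\}$, and one sets $\widehat\eta := \eta^1$.

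The main obstacle is the compatibility of the various corrections at intersections $N_j \cap N_k$: one must check that $Q_k^s$ does not disturb the alignment achieved by $Q_j^s$ at $N_j$. This follows from a direct computation in coordinates adapted to the product structure near the intersection: since $Q_k^s$ is supported in a tubular neighborhood of $N_k$ and added uniformly to all fields on the positive side of $N_k$, a region which meets both sides of $N_j$ transversely, the correction $Q_k^s$ cancels in the jump at $N_j$. Hence each fault is aligned independently of the others, and this bookkeeping extends to higher-codimension intersections using the general position hypothesis on $\{N_j\}$.
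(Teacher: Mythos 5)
First, a point of reference: the present paper does not prove this statement at all — it is quoted from \cite{AGEN19} — so your argument can only be judged on its own terms, and as written it has a genuine gap.

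The fatal step is the alignment interpolation. Writing $\ell_j = a\,dt+\beta$ along a fault $N_j$, your correction $Q_j^s$ does change the jump covector to $a\,dt+(1-s)\beta$, but the definition of a tectonic field requires the jump covector to be \emph{non-vanishing}, and an aligned field needs it to be a non-vanishing multiple of $dt$. Theorem \ref{prop:formal-ridgy} gives no control whatsoever on the normal component $a$: it is just some function on $N_j$, and wherever $a=0$ (which you cannot exclude) your limiting jump form $a^2\,dt^2$ vanishes, so $\lambda^1$ is not a tectonic field there. This is not a removable technicality. Once the faults $N_j$ and the jump covectors $[\ell_j]$ are fixed by the non-aligned theorem, deforming $[\ell_j]$ into the conormal direction through non-vanishing covectors is obstructed homotopy-theoretically (already for a fault circle in a surface, the section $[\ell_j]$ of the projectivized cotangent bundle along $N_j$ may wind differently from the conormal section, and changing its homotopy class forces a zero at some intermediate time). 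The very reason the aligned statement only produces transversality to a homotoped $\wh\eta$, rather than to $\eta$ itself, is this obstruction; in your scheme the homotopy of $\eta$ enters only a posteriori, to restore transversality, and so cannot remove it. A proof of the aligned theorem has to choose the faults and the jump directions with alignment built in from the start — in general different from those produced by Theorem \ref{prop:formal-ridgy} — trading transversality to the original $\eta$ for transversality to a homotoped $\wh\eta$; reducing it to the non-aligned theorem by local post-processing does not work.

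A second, independent problem is the homotopy-lifting step. Transversality of $\eta^s$ to the tectonic field $\lambda^s$ at a fault point means transversality to \emph{both} adjacent pieces (and to all adjacent pieces at intersections of faults), and the set of Lagrangian planes transverse to two or more fixed planes is not contractible: in $T^*\R$ the Lagrangian Grassmannian is a circle, and the complement of two points is disconnected. Your region-by-region description via an ``affine space of quadratic forms'' ignores the matching of the global field $\eta^s$ along the faults, which is exactly where the lifting is needed, so the claimed contractibility argument fails there. Openness of transversality only allows you to keep $\eta$ unchanged for small $s$, not along the whole path. Finally, for the intermediate fields $\lambda^s$ to be tectonic fields at all you must also verify condition (ii) of the definition for the kernels $\ker(a\,dt+(1-s)\beta)$ at fault intersections for every $s$, which you check only at $s=1$.
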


\begin{remark}\label{rmk: extension-form-formal-ridgy-aligned}
The aligned formal transversalization theorem holds in several variants, which will be needed for our applications below,.
\begin{itemize}
\item[(i)] $C^0$-control: for any fixed $\eps>0$ we may demand that $\lambda$ is $\eps$-small.
\item[(ii)] Relative form: if for a compact set $A \subset  M$ we are already given an $\eps$-small aligned tectonic field $\lambda_0$ over $\Op A$ transverse to $\eta$, then $\lambda$ can be chosen equal to $\lambda_0$ over $\Op A$. Moreover we may assume that the homotopy between $\wh \eta $ and $\eta$ is constant on $Op(A)$. In this case the $C^0$-control means that $\lambda-\lambda_0$ can be made $\eps$-small.
\item[(iii)] There is also an extension formulation as follows: if $K_1,K_2 \subset M$ are disjoint compact subsets and $\zeta$ is any aligned tectonic field, then we may find a $\lambda$ satisfying the conclusion of the theorem on $Op(K_1)$ and such that $\lambda=\zeta$ on $Op(K_2)$. In this case the $C^0$-control means that $\lambda-\zeta$ can be made $\eps$-small. 
\item[(iv)] The relative and extension forms can be combined if we assume that $\lambda_0=\zeta$ on $Op(A \cap K_2)$.
\item[(v)] Collared version: if we assume that in the neighborhood $F \times \cI^k$ of a $k$-face $F$ the Lagrangian field $\mu$ splits as the product of $\mu_F \subset T^*F$ and the cotangent (vertical) distribution on $T^*\cI^k$, then $\lambda$ can be taken to be collared and we may moreover assume the homotopy between $\wh \eta$ and $\eta$ to be through collared distributions. The collared version holds with $C^0$-control and with the relative and extension forms.
\end{itemize}
\end{remark}

\subsubsection{The ridgification theorem}

Given a tectonic field $\lambda$ in $\sT^*M$, a ridgy Lagrangian $ L\subset \sT^*M$ is called $\delta$-close to $\lambda$ if for any point $a\in L$ and any tangent plane $T$ to $L$ at $a$ there exists a non-fault point $b\in M$, $\delta$-close to $a$ such that the angle between $\lambda(b)$ and the plane $T$ parallel transported to $b$ along a geodesic is $<\delta$. As before, a fixed but otherwise arbitrary Riemannian metric on $M$ is understood.

A ridgy Lagrangian in $ L\subset \sT^*M$ is called {\it collared} if for each collar $Q \times \cI^k \subset M$, where $Q$ is a $k$-face, we have $L \cap T^*(Q \times \cI^k)  = L_Q \times \cI^k$ for $L_Q \subset T^*Q$ a ridgy Lagrangian and $\cI^k \subset T^*\cI^k$ the zero section.

\begin{remark} If $L \subset \sT^*M$ is a collared ridgy Lagrangian, then it is adapted for the Wc-structure of $\sT^*M$ inherited from the collar structure of $M$.
\end{remark}

The ridgy Lagrangians we will consider are of a special type: they are all obtained from the zero section $M \subset \sT^*M$ by means of a ridgy isotopy, which is defined as follows.

\begin{enumerate}
\item
Let $N_1 , \ldots , N_m \subset M$  be co-oriented separating   hypersurfaces   defined by equations $\phi_j=0$ for some $C^\infty$-functions $\phi_j:M\to\R$ without critical points on $N_j$.  We assume that the $N_j$ are co-oriented by the outward transversals to the domains $\{\phi_j\leq 0\}$. Denote $\phi_j^+=\max(\phi_j,0)$ and choose a cut-off function $\theta_j$ which is equal to 1 on $N_j$ and to $0$ outside a neighborhood of $N_j$.  Define a function $\Phi:M \to \R$ (which is $C^1$ and piecewise $C^\infty$) by the formula
 $$\Phi:=\sum\limits_{j=1}^m\theta_j\left(\phi_j^+\right)^2.$$ An {\em earthquake isotopy} with faults $N_j$  is defined as a family of Lagrangians $L_t \subset \sT^*M$ given by the homotopy of generating functions $t\Phi$, i.e. $L_t=\{p=td\Phi\}$, $t\geq 0$.
\item A {\em ridgy isotopy} is   an earthquake isotopy followed by an ambient Hamiltonian isotopy.
 \end{enumerate}
 
           \begin{figure}[h]
\includegraphics[scale=0.5]{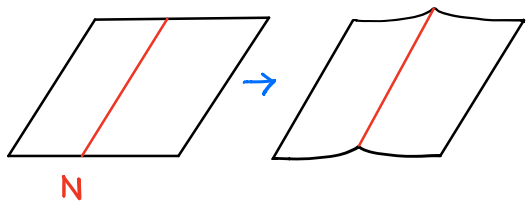}
\caption{An earthquake isotopy.}
\label{fig:ridgyisotope}
\end{figure}

Note that $L_0=M$ and the earthquake isotopy can be realized by an ambient Hamiltonian homotopy beginning from any $t>0$. We can now state the integrable version of the ridgification theorem.

\begin{thm}[\cite{AGEN19}]\label{thm:ridgification}
For any Lagrangian field $\eta$ in $\sT^*M$ there exists a ridgy isotopy of $M$ to a ridgy Lagrangian $L$ which is transverse to $\eta$.
  \end{thm}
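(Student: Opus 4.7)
The plan is to lift the formal data produced by formal ridgification to an honest ridgy Lagrangian, and then upgrade transversality with a homotoped field to transversality with $\eta$ itself. First I would apply the aligned formal ridgification theorem (Theorem \ref{prop:formal-ridgy-aligned}) to obtain an aligned tectonic field $\lambda$ in $\sT^*M$ which is transverse to a Lagrangian field $\wh\eta$ homotopic to $\eta$. Let $N_1,\dots,N_m$ be the faults of $\lambda$, and let $\eta_j = \ell_j^2$ be the rank-one jump of $\lambda$ across $N_j$; the aligned condition means $\ker(\ell_j) = TN_j$, so that $\ell_j = c_j\, d\phi_j$ for a positive function $c_j$ on $N_j$ and any co-oriented defining function $\phi_j$ of $N_j$. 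After absorbing $c_j$ into a rescaling of $\phi_j$ we may assume $\ell_j = d\phi_j|_{N_j}$.

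Next I would realize the jumps of $\lambda$ geometrically by an earthquake isotopy. With $\Phi = \sum_j \theta_j(\phi_j^+)^2$ for cutoffs $\theta_j \equiv 1$ near $N_j$, the Lagrangian $L_t = \{p = t\, d\Phi\}$ is a ridgy Lagrangian whose tangent cone along $N_j$ (away from intersections with other faults) is the union of two half-planes differing by exactly the rank-one form $t\,\ell_j^2$, precisely matching the jump of $t\lambda$ across $N_j$. The smooth piece of the tectonic field realized by $L_t$ over a connected component $Q$ of $M \setminus \bigcup_j N_j$ is $t\,\Hess(\Phi)|_Q$, which in general disagrees with $t\lambda_Q$. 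This discrepancy must be corrected on each $Q$ separately without affecting the jumps already arranged.

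To carry out this correction I would proceed component by component. On each $Q$, both $t\lambda_Q$ and $t\,\Hess(\Phi)|_Q$ are smooth Lagrangian plane fields graphical over $Q$ which agree along a collar of $\partial Q \subset \bigcup_j N_j$ (after reducing to the smooth strata of the faults by induction on the stratification order). The space of such Lagrangian plane fields is affine, hence a linear interpolation is again a graphical Lagrangian plane field, and by integrating along $Q$ (possible since we are working with graphs of closed 1-forms up to Hamiltonian adjustment) this can be realized by a compactly-supported Hamiltonian isotopy of $\sT^*Q$ pushing the smooth piece of $L_t$ to a Lagrangian section whose tectonic field is $\delta$-close to $t\lambda_Q$. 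Gluing these corrections, with supports disjoint from the faults, produces a ridgy Lagrangian $L$ that is $\delta$-close to $t\lambda$ for any prescribed $\delta > 0$. Since $\lambda$ is transverse to $\wh\eta$ and transversality is open, $L$ is transverse to $\wh\eta$; absorbing the homotopy from $\wh\eta$ to $\eta$ into an additional Hamiltonian isotopy at the end (using openness of transversality for a small enough homotopy) yields transversality to $\eta$.

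The main obstacle will be global consistency of the smooth-piece correction at higher-order ridge strata, where several faults $N_{j_1},\dots,N_{j_k}$ meet and the tangent cones to $L_t$ must organize into the model $R_{k,n}$. The aligned condition guarantees that the $\ker(\ell_{j_i}) = TN_{j_i}$ meet transversally, so the formal picture is correct, but the inductive scheme of supports for the correcting Hamiltonians must be arranged so that earlier corrections near lower-dimensional strata do not destroy the ridge model when we move to larger strata. Managing this inductive bookkeeping, together with the corner structure of $M$ (so that the final $L$ is collared in the sense needed for adaptation to the Wc-structure of $\sT^*M$), is the technical heart of the proof and is carried out in detail in \cite{AGEN19}.
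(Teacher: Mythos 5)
There is a genuine gap, in fact two, and they sit exactly where the real content of the theorem lies. First, your correction step over the open top-dimensional components $Q$ of $M\setminus\bigcup_j N_j$ does not work: the smooth piece $\lambda_Q$ of a tectonic field is an arbitrary (in general non-integrable) Lagrangian plane field over a \emph{full-dimensional} region, and no Hamiltonian isotopy can push a Lagrangian section to one whose tangent planes are uniformly $\delta$-close to $\lambda_Q$ on all of $Q$ — "integrating a linear interpolation of graphical plane fields" is precisely the statement of holonomic approximation, which is only valid on neighborhoods of sets of positive codimension after a $C^0$-small wiggling. This is why the actual argument in \cite{AGEN19} (summarized in the paper via Lemma \ref{lem: intermediate} and Lemma \ref{lemm: holonomic-approx-extension}) uses the refined holonomic approximation of \cite{AG18a} only \emph{near the ridge locus}, and then must invoke the wrinkling theorem for Lagrangians \cite{AG18b} in the complement, finally trading the resulting wrinkles for additional order $1$ ridges. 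In particular the output $L$ generally has more ridges than the faults of the formal solution (cf.\ the caption of Figure \ref{fig:ridgy}); a construction like yours, whose ridge locus is exactly the prescribed fault set, cannot produce the theorem in general.

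Second, your closing move — "absorbing the homotopy from $\wh\eta$ to $\eta$ into an additional Hamiltonian isotopy at the end, using openness of transversality" — is not available. Theorem \ref{prop:formal-ridgy-aligned} only provides an aligned tectonic field transverse to \emph{some} field $\wh\eta$ homotopic to $\eta$, and that homotopy is in no sense small (the aligned condition $\ker\eta_i=TN_i$ is exactly what forces one to give up transversality to $\eta$ itself; the version transverse to $\eta$, Theorem \ref{prop:formal-ridgy}, is not aligned and cannot be integrated). Closing the gap between $\wh\eta$ and $\eta$ is the h-principle part of the proof: the refined holonomic approximation achieves honest transversality to $\eta$ near the (positive-codimension) ridge locus, which then supplies the formal data needed to apply the wrinkling theorem relative to the ridge locus in the open complement; transversality there is obtained only at the cost of new singularities, which are subsequently converted to order $1$ ridges. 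If transversality to $\eta$ could be restored at the end by a small Hamiltonian isotopy as you propose, one would obtain an h-principle without preconditions for \emph{smooth} Lagrangians away from the prescribed faults, which is false — it is exactly the obstruction that ridgification is designed to circumvent. So while your first step (aligned formal ridgification) and the earthquake-isotopy realization of the jumps match the paper's strategy, the passage from the formal solution to genuine transversality to $\eta$ cannot be carried out by interpolation and openness; it requires the holonomic-approximation-plus-wrinkling mechanism of \cite{AGEN19}.
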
  
  
  \begin{remark}
  The ridgification theorem also holds in several variants:
  \begin{enumerate}
  \item $C^0$-control: we may assume that the ridgy isotopy is $C^0$-close to the identity.
  \item Relative form: if $\eta$ is transverse to $M$ on $\Op A$ for $A \subset M$ a closed subset, then we may assume that the ridgy isotopy is constant on $\Op A$.
  \item Collared version: if we assume that in the neighborhood $F \times \cI^k$ of a $k$-face $F$ the Lagrangian field $\mu$ splits as the product of $\mu_F \subset T^*F$ and the cotangent (vertical) distribution on $T^*\cI^k$, then we may demand that the ridgy isotopy is collared.
  \end{enumerate}
  \end{remark}
  
   
The proof of Theorem \ref{thm:ridgification} is given in \cite{AGEN19}. The argument is roughly the following: first one constructs an aligned tectonic field $\lambda$ transverse to $\eta$ using Theorem \ref{prop:formal-ridgy-aligned}. Then one integrates $\lambda$ to a ridgy Lagrangian which is transverse to $\eta$ along its ridge locus using the refined holonomic approximation \cite{AG18a}. In the complement of the ridge locus there is no homotopical obstruction to removing the tangencies so one can achieve transversality using the wrinkling theorem for Lagrangian submanifolds \cite{AG18b}. Finally, one exchanges the wrinkles for order 1 ridges to complete the proof. 

For our applications in the present article it will be useful to first inductively apply the holonomic approximation step in extension form and only wrinkle at the end, so we quote here Lemma 4.8 from \cite{AGEN19} which we will use below.

\begin{lemma}[\cite{AGEN19}]\label{lemm: holonomic-approx-extension}
Let $\Lambda \subset T^*L$ be a ridgy Lagrangian, $R \subset \Lambda$ its ridge locus and $\gamma$ a Lagrangian field. Suppose that $\gamma$ is homotopic to a Lagrangian field $\wh \gamma$ which is transverse to $\Lambda$. Then there exists a Hamiltonian isotopy $\Lambda_t$ of $\Lambda$  and a Lagrangian field $\widetilde{\gamma}$ such that: \begin{enumerate}
\item $\Lambda_1 \pitchfork \gamma$ on $Op(R)$.
\item $ \Lambda_1 \pitchfork \widetilde \gamma$ everywhere.
\item $\widetilde{\gamma}$ is homotopic to $\gamma$ by a homotopy fixed on $Op(R)$.
\end{enumerate}
\end{lemma}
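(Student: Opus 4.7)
The plan is to split the problem into two regimes governed by the ridge locus $R$: near $R$ we will adjust $\Lambda$ by a Hamiltonian isotopy to achieve transversality with the given field $\gamma$ on $\Op(R)$, while in the complement of $R$ we will leave $\Lambda$ essentially alone and instead deform $\gamma$ to a new field $\widetilde\gamma$ which is transverse to $\Lambda_1$ everywhere and which equals $\gamma$ on $\Op(R)$. The two regimes overlap on an annular region where we will use contractibility of the space of Lagrangians transverse to a fixed Lagrangian to glue.

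First I would work locally around $R$. Each point of $R$ has a neighborhood in which $\Lambda$ is modelled on $R_{k,n}\times\R^{n-k}\subset T^*\R^n$, i.e.\ a finite union of smooth Lagrangian sheets meeting cleanly along the ridge stratum. The Lagrangian field $\gamma$, restricted to such a chart, is smooth, and on each individual sheet transversality with $\gamma$ is an open, generic condition. Because the sheets are independent off the ridge, one can tilt them independently by a $C^\infty$-small Hamiltonian supported near $R$. To turn this heuristic into a Hamiltonian isotopy $\Lambda_t$ I would apply the refined holonomic approximation machinery of \cite{AG18a} along the stratification of $R$ inductively by strata of increasing dimension, producing at each stage a small Hamiltonian isotopy, supported in a progressively smaller neighborhood, that makes every tangent plane of $\Lambda$ along that stratum transverse to $\gamma$. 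Openness of transversality then propagates off the ridge into all of $\Op(R)$, giving condition (i).

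Next I would work on the smooth part $\Lambda\setminus\Op(R)$. The hypothesis provides a Lagrangian field $\wh\gamma$ transverse to $\Lambda$ everywhere, together with a homotopy $\gamma_s$ from $\gamma_0=\gamma$ to $\gamma_1=\wh\gamma$ through arbitrary (possibly non-transverse) Lagrangian fields. The set of Lagrangian planes at a point transverse to a fixed Lagrangian plane is open and convex (it is an affine space of quadratic forms), so the bundle over $\Lambda\setminus\Op(R)$ whose fiber at $x$ is the space of Lagrangian planes transverse to $T_x\Lambda$ has contractible fibers and admits sections (namely $\wh\gamma$). Starting from $\gamma$ on $\Op(R)$, where it is already transverse to $\Lambda_1$ by the first step, and from $\wh\gamma$ on the complement of a slightly larger neighborhood $\Op'(R)\Supset\Op(R)$, a standard obstruction-theoretic argument using contractibility of the fibers extends these two partial sections to a global transverse section $\widetilde\gamma$. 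The required homotopy from $\widetilde\gamma$ to $\gamma$ fixed on $\Op(R)$ in (iii) is obtained by concatenating the fibrewise straight-line interpolation on the annulus $\Op'(R)\setminus\Op(R)$ with the given homotopy $\gamma_s$ on the complement, all reparametrized to be constant on $\Op(R)$.

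The main obstacle I expect is in the first step: arranging that the near-ridge perturbation is simultaneously Hamiltonian, supported near $R$, and small enough that it preserves the ridgy normal form of $\Lambda$ rather than creating new singularities in the smooth sheets. A naive independent tilting of sheets is only a formal modification; integrating it to an ambient Hamiltonian isotopy requires the refined holonomic approximation tool (rather than wrinkling, which would introduce new cusp-type caustics in the smooth sheets and break the clean interface needed for the gluing with the second step). Once this step is done carefully, the rest of the argument is soft and follows from the convex geometry of transverse Lagrangian Grassmannian fibers.
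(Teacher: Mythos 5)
This lemma is not proved in the present paper: it is imported verbatim from \cite{AGEN19} (Lemma 4.8 there), and the surrounding discussion of the ridgification strategy indicates the mechanism: one feeds the formal solution supplied by the hypothesis (the homotopy $\gamma_s$ from $\gamma$ to $\wh\gamma\pitchfork\Lambda$, equivalently a formal homotopy of the Gauss map of $\Lambda$ into transverse position) into the refined holonomic approximation theorem along the positive-codimension ridge locus, which returns \emph{both} the $C^0$-small Hamiltonian wiggle and a compatible deformation of the formal data. Your near-ridge step has a genuine gap precisely here: it never uses the hypothesis. Transversality of $\Lambda$ to a Lagrangian field is not an ``open, generic'' matter achievable by a $C^\infty$-small tilt: the tangency locus with $\gamma$ is a codimension-one subset of $\Lambda$, while the ridge strata have codimension $\geq 1$, so a generic small perturbation only makes the tangency locus transverse to $R$ -- it does not push it off a whole neighborhood $\Op(R)$, which is what (i) demands. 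The tool that does achieve this is the $C^0$-small but $C^1$-large wiggling of holonomic approximation, and that theorem does not create transversality; it realizes a \emph{given} formal homotopy of $T\Lambda$ near $R$. Such a formal homotopy over $\Op(R)$ is exactly what the hypothesis provides, and it can be obstructed without it (relative Maslov-type obstructions along $R$), so step one as written is not a proof of (i).

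The second gap is in how you obtain (ii) and (iii) together. Extending $\widetilde\gamma$ over the transition annulus by obstruction theory in the bundle of planes transverse to $T\Lambda_1$ (contractible fibers) does give a field satisfying (ii), provided the isotopy is supported in $\Op'(R)$ so that $\wh\gamma\pitchfork\Lambda_1$ outside; but condition (iii) is then not automatic. The ``fibrewise straight-line interpolation'' between $\gamma$ and $\widetilde\gamma$ on the annulus is not defined -- the two fields are not both graphical over a common polarization there -- and concatenating with the given homotopy $\gamma_s$, which is not fixed anywhere, does not produce a homotopy that is constant on $\Op(R)$; there are genuine relative obstructions (relative Maslov classes measured against the subbundle of transverse planes) to deforming $\gamma$ into transverse position rel $\Op(R)$. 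Conversely, the naive cut-off $\widetilde\gamma(x)=\gamma_{\rho(x)}(x)$ gives (iii) for free but loses (ii) on the annulus, where the wiggle (whose support is strictly larger than the region where (i) holds) may have destroyed transversality to the interpolated planes. The actual argument resolves this tension because the refined holonomic approximation is applied to, and outputs a controlled deformation of, the formal data: the rel-$\Op(R)$ homotopy in (iii) is part of its conclusion, not something recovered afterwards by softness and contractibility of fibers alone. So the missing idea is to carry the hypothesis through the wiggle, rather than invoking it only in the soft gluing step.
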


\begin{remark}
We have the following variants:
\begin{enumerate}
\item  Relative version: if $A \subset \Lambda$ is a closed subset and the homotopy between $\gamma$ and $\wh \gamma$ is fixed on $Op(A)$, then the Hamiltonian isotopy and the homotopy between $\widetilde \gamma$ and $\gamma$ can both be chosen to be fixed on $Op(A)$.
\item Extension form: if $K_1,K_2 \subset L$ are closed subsets we may find $\Lambda_t$ and $\widetilde \gamma$ such that the conclusions of the lemma are satisfied on $Op(K_1)$, the ridgy isotopy is constant on $Op(K_2)$ and the homotopy between $\widetilde{\gamma}$ and $\gamma$ is constant on $Op(K_2)$. Furthermore, we can combine this extension version with the relative version. 
\item Collared version: if we assume that in the neighborhood $F \times \cI^k$ of a $k$-face $F$ the Lagrangian field $\gamma$ splits as the product of $\gamma_F \subset T^*F$ and the cotangent (vertical) distribution on $T^*\cI^k$ and we moreover assume the homotopy between $\gamma $ and $\wh \gamma$ is through such distributions, then we may assume that $\Lambda_t$ is collared and the homotopy between $\widetilde{\gamma}$ and $\gamma$ is collared.
\end{enumerate}
\end{remark}
  
We also record for future reference the intermediate step in which one integrates an aligned tectonic field, before applying holonomic approximation.

\begin{lemma}[\cite{AGEN19}]\label{lem: intermediate}
Let $\gamma$ be a Lagrangian field on $T^*L$ and let $\lambda$ be an aligned tectonic field on $L$ which is transverse to a Lagrangian field homotopic to $\gamma$. Then there exists a ridgy isotopy $L_t$ such that $L_1$ is transverse to a Lagrangian field homotopic to $\gamma$ on a neighborhood of its ridge locus.
\end{lemma}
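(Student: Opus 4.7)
The plan is to produce $L_1$ directly via an earthquake isotopy built from the fault data of $\lambda$. Setting $\Phi = \sum_j \theta_j (\phi_j^+)^2$, where $\phi_j$ is a defining function for the fault $N_j$ of $\lambda$ and $\theta_j$ is a cutoff supported near $N_j$, the earthquake $L_t = \{p = t\,d\Phi(q)\}$ at $t = 1$ produces a ridgy Lagrangian $L_1 \subset T^*L$ whose ridge locus $R$ projects to $\bigcup_j N_j$. The smooth tangent plane field of $L_1$ is given fiberwise by the Hessian of $\Phi$ on each component $Q$ of $L \setminus \bigcup_j N_j$, and the rank-one jumps of this tangent plane field across the ridges are positive multiples of $\ell_j^2 = \eta_j$, matching the aligned jump structure of $\lambda$ (here the aligned hypothesis $\ker(\eta_j) = TN_j$ is crucial for the match to be consistent).

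The next step is to interpolate between the tangent plane field of $L_1$ (pulled back to $L$ via the vertical projection near $R$) and $\lambda$ itself through a continuous family of aligned tectonic fields with the same fault set. The interpolation on each smooth component $Q$ is simply a linear homotopy in the affine space of quadratic forms on $TQ$, and the alignment condition is preserved throughout since the ranks and kernels of the jump forms are constant in the family. This gives a path in the space of aligned tectonic fields starting at the tangent plane field of $L_1$ and ending at $\lambda$.

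By hypothesis $\lambda$ is transverse to some Lagrangian plane field $\wh\gamma$ homotopic to $\gamma$. Combining this transversality with the interpolation and a small generic adjustment of $\wh\gamma$ near $R$ (using the fact that Lagrangian planes transverse to a given one form an open dense subset of the Grassmannian, so the failure locus is of codimension one and can be dodged in each fiber), we obtain a Lagrangian plane field $\gamma_1$ on a neighborhood of $R$ in $T^*L$ that is transverse to both tangent planes of $L_1$ at every point of its smooth part in this neighborhood, and homotopic to $\gamma$. The required ridgy isotopy is then just the earthquake itself (with trivial ambient Hamiltonian).

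The main obstacle will be carrying out the interpolation and perturbation consistently near the ridges, ensuring that the resulting $\gamma_1$ is a genuine Lagrangian plane field on an ambient open set in $T^*L$, homotopic to $\gamma$, and transverse to both tangent planes of $L_1$ at each ridge point simultaneously. This depends on the aligned condition $\ker(\eta_j) = TN_j$ matching the normal form of the ridges of $L_1$ and on keeping careful track of the tubular structure of $\Op R$ in $T^*L$, so that the genericity argument can be applied uniformly across the ridge locus.
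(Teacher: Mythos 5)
There is a genuine gap, and it is precisely the ingredient the paper attributes to \cite{AG18a}: refined holonomic approximation. Your earthquake $L_1=\{p=d\Phi\}$ has tangent planes near the ridge locus governed by $\Hess \Phi$, which (up to the matching rank-one jumps $\sim(d\phi_j)^2$) is close to the tangent planes of the zero section; it bears no relation to the smooth pieces $\sigma_Q$ of $\lambda$, which are arbitrary, generally non-holonomic fields of quadratic forms. Your proposed fix --- a formal interpolation through aligned tectonic fields from the tangent field of $L_1$ to $\lambda$ --- does not change $L_1$ and does not transport transversality: transversality of $\lambda$ to $\wh\gamma$ is a property of the endpoint of that homotopy, and the linear interpolation of quadratic forms can perfectly well cross the wall of tangency with $\wh\gamma$, so nothing is gained for the actual tangent planes of $L_1$. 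The paper's cited argument instead realizes the formal data honestly: after a $C^0$-small wiggle of the Lagrangian near the (codimension-one) ridge locus, holonomic approximation makes the genuine tangent planes $C^0$-close to $\lambda$ there, and then transversality to $\wh\gamma$ follows by openness. That wiggle is exactly the nontrivial ambient Hamiltonian isotopy in the definition of a ridgy isotopy; your remark that ``the required ridgy isotopy is then just the earthquake itself (with trivial ambient Hamiltonian)'' is the symptom of the missing step.

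The second fatal point is the ``small generic adjustment of $\wh\gamma$ near $R$''. The locus of Lagrangian planes not transverse to a given one is a codimension-one (Maslov-type) subvariety of the Lagrangian Grassmannian, and you need $\gamma_1(x)$ to avoid it for all $x$ in an $n$-dimensional set (a neighborhood of the ridge locus inside $L_1$, not just the ridge points). A section over an $n$-dimensional base cannot be pushed off a codimension-one wall by a generic perturbation; there are genuine homotopical obstructions, and circumventing them is the entire reason the tectonic/ridgy formalism and the holonomic approximation step exist. So the genericity argument in your third paragraph cannot be carried out, and with it the conclusion that $L_1$ is transverse to a field homotopic to $\gamma$ near its ridge locus collapses.
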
  

As before, this holds in $C^0$-small, relative, extension and collared versions.

\subsection{Wc-structures for ridgy Lagrangians}\label{sec:ridgy-Wc-building}

\subsubsection{Wc-building structure associated with a ridgy Lagrangian}

Next we discuss a Darboux/Weinstein type theorem for the symplectic structure in the neighborhood of a ridgy Lagrangian.  Denote by $L_k$ the locus of $k$-dimensional locus of $(n-k)$-fold ridges in $L$. This stratifies $L$ as a union of smooth, relatively open submanifolds $L=L_0 \cup \cdots \cup L_n$.

Given a manifold with corners $M$ we denote by $M^{\triangle,j}$ the manifold with corners obtained from $M$ by truncating all corners of  dimension $\leq  j$. Thus, each $i$-face $P$ from $\p_iM$ for $i<j$ yields a 1-face $P^{\blacktriangle, i}:=
 P^{\triangle,i}\times\Delta^{j-i}\subset\p_1M^{\triangle,j}$, where  we denote by $\Delta_{j-i}$ an open $({j-i})$-dimensional simplex. Denote by $\sR^{2k}$ the germ of the standard symplectic $\mathbb{R}^{2k}$ at the origin.

\begin{lemma}
\label{lm:ridgy-W-structure}
Let $L$ be a ridgy Lagrangian in a symplectic manifold $(X, \omega)$ such that for each $k \geq 1$ the symplectic normal bundle to the order $k$ ridge locus is trivial. Then a neighborhood of $L$ admits a structure of a Wc-building $W=(U_0\to\dots \to U_n)$, where $$U_j:=\bigcup\limits_Q\sT^*{Q^{\triangle,j-1}}\times\sR^{2j}$$
  and the union is taken over all components $Q$ of $L_j$, such that the following properties hold: 
  \begin{itemize}
\item[(i)] The inclusion of $W$ into $X$ is a symplectic embedding.
\item[(ii)] The skeleton of $W$ is $L$.
\item[(iii)] The Weinstein structure underlying $W$ is Weinstein homotopic to the cotangent bundle structure $\sT^*L(\eps)$ for a smoothing $L(\eps)$ of $L$. 
\end{itemize}
\end{lemma}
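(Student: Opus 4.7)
The plan is to prove the lemma by downward induction on $j$, building up the Wc-building structure block by block, starting from the deepest ridges $L_0$ and attaching successively smoother strata via vertical gluings along Wc-hypersurfaces. The whole argument is anchored on a local Darboux-type analysis for the model ridge together with the hypothesis that each symplectic normal bundle is trivial.

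First I would carry out the local model analysis. Near a component $Q$ of the $j$-dimensional stratum $L_j$, the ridgy Lagrangian $L$ is locally modeled on $R_{n-j,n} = R^{n-j} \times \R^j \subset (T^*\R)^{n-j} \times T^*\R^j$. The key point is that the model ridge $R \subset T^*\R$ is invariant under the contracting field $Z_1 = q\partial_q - p\partial_p$; hence $R^{n-j}$ is invariant under the sum of $n-j$ commuting copies of $Z_1$, while $\R^j \subset T^*\R^j$ is invariant under the canonical radial vertical field. The sum of all these contracting fields is Liouville and has $R_{n-j,n}$ as its skeleton. Using the assumed triviality of the symplectic normal bundle to the ridge stratum of order $n-j$, this local product model globalizes to a canonical symplectic tubular neighborhood of (the interior of) $Q$ in $X$ of the form described in the statement, with the corner structure on $Q^{\triangle,j-1}$ inherited from the adjacency of $Q$ to lower-dimensional strata. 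Taking the disjoint union over all components $Q$ of $L_j$ yields the block $U_j$ as a Wc-manifold.

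Next I would organize the inductive attachment. The deepest block $U_0$ (sitting over the isolated $n$-fold ridge points $L_0$) has no faces attaching downward and serves as the topmost block. Assuming the building $U_0 \to \cdots \to U_{j-1}$ has been constructed with skeleton $\bigcup_{i<j} \bigcup_Q Q^{\triangle,i-1}$, I would attach $U_j$ as the new base block. The relevant Wc-hypersurfaces come from the codimension-one faces of $U_j$: in the local model, a codimension-one face of $Q^{\triangle,j-1}$ records an approach to a higher-order ridge stratum $L_{j'}$ with $j' < j$, and the ribbon of the associated arboreal Legendrian sits inside the corresponding component of $U_{<j}$. The vertical gluing construction from Section~\ref{sec:conversion} then attaches $U_{<j}$ on top of $U_j$ along these Wc-hypersurfaces, using the local product identifications coming from the Darboux analysis to match up the data. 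Collectively this produces the Wc-building $(U_0 \to \cdots \to U_n)$.

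The main obstacle is verifying compatibility at multi-fold ridges, where several symplectic normal bundles must be trivialized simultaneously and the corresponding contracting fields must be made to commute globally. This requires carefully coordinating the Darboux choices at different strata so that the collars match the corner structure of $Q^{\triangle,j-1}$; concretely, a point in the closure of a component of $L_j$ that also lies in the closure of several components of strata $L_{j'}$, $j' < j$, must receive compatible local models. This is handled by performing the Darboux normalization inductively downward on $j$, extending a given normalization near the higher-order ridges into collars of the lower-order strata using the Darboux-Weinstein theorem for symplectic neighborhoods of isotropic submanifolds.

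Once the building is constructed, items (i)--(iii) follow with little further work. The embedding $W \hookrightarrow X$ is symplectic by construction since every block is produced from a symplectic Darboux chart inside $X$, and the gluing maps are given by intrinsic Liouville data on the overlaps. The skeleton of $W$ is $\bigcup_j \bigcup_Q Q^{\triangle,j-1}$, which coincides with $L$ by definition of the stratification. Finally, for (iii), the smoothing $L(\eps) \to L$ as $\eps \to 0$ is induced by the defining family $R(\eps)$ of the model ridge; working within the local models, the cotangent Weinstein structures $\sT^*L(\eps)$ interpolate continuously to the Wc-building structure on $W$, and Theorem~\ref{thm:unique-W-for-buildings}(ii) applied at the level of underlying Weinstein manifolds upgrades this interpolation to the required Weinstein homotopy.
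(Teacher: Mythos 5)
Your proposal is correct and follows essentially the same route as the paper's proof: stratify $L$ by ridge order, take the blocks $U_j$ to be truncated tubular neighborhoods of the strata carrying the fiberwise radial contracting field of the model ridge (with triviality of the symplectic normal bundles giving the product form $\sT^*Q^{\triangle,j-1}\times\sR^{2(n-j)}$), assemble them by successive vertical gluings in which boundary-face nuclei of the smoother blocks are identified with ribbons of the link Legendrians sitting inside the deeper blocks, and get (iii) by rerunning/interpolating the construction for the smoothing $L(\eps)$. The only cosmetic caveats are that your ``base/top'' labels are swapped relative to the paper's building convention (the ribbons live in the deeper blocks, so those are the blocks onto which the smoother ones are vertically glued), and that Theorem~\ref{thm:unique-W-for-buildings}(ii) is neither needed nor literally applicable for (iii) (it concerns arboreal spaces with a fixed skeleton), the paper settling (iii) exactly by your interpolation-within-local-models argument.
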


 \begin{figure}[h]
\includegraphics[scale=0.45]{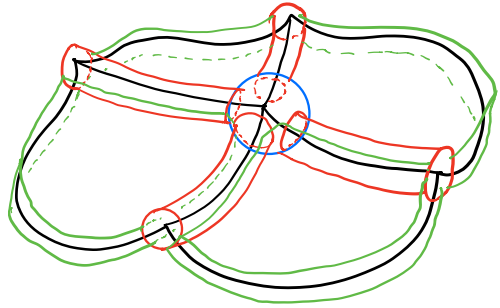}
\caption{The Wc-building structure in the neighborhood of a ridgy Lagrangian}
\label{Wcstructure}
\end{figure}

\begin{remark} 
The hypothesis on the symplectic normal bundles is included only for the sake of simplicity. It is obviously satisfied by the ridgy Lagrangians produced by the ridgification theorem and these are the only ridgy Lagrangians we care about for our applications. In general one should replace $\sT^*{Q^{\triangle,j-1}}\times\sR^{2j}$ by the appropriate symplectic bundle over $\sT^*{Q^{\triangle,j-1}}$ and the result is proved in the same way. Note that in either case the Wc-building structure is {\em not }a cotangent building.
\end{remark}

\begin{proof} 
Recall that $L_k$ denotes  the locus of $k$-dimensional locus of $(n-k)$-fold ridges in $L$. Denote by $U_0$ the union of so small    closed disjoint balls  $B_j$ centered at the points of $L_0$ that $L \cap B_j$ is invariant with respect to the radial contracting vector field inherited from the local model. Denote $L_j^{>0}:=L_j\setminus U_n$. Choose a small (in a similar sense) closed tubular neighborhood $U_{1}$ of $\oL_{1}^{>0}$. Set $L_j^{>1}=L_j^{>0}\setminus U_{1}$ for $ j>1$. 

Continuing this process we define $U_j\supset \oL_j^{>j-1}$ for $j=0,1,\dots, n$. Note that $\oL_j^{>j-1}$ is a manifold with corners of dimension $j$. Note also that $U_j$ can be presented as a symplectic fibration $\pi_j:U_j\to \sT^*\oL_j^{>j-1}$. The restriction $\pi_j|_{\oL_j^{j-1}}$ contains a subfibration $L\cap U_j\to\oL_j^{j-1}$ with the fiber
$R_{n-j,n-j}=R^{n-j}j$. 

The structural group of the fibration $\pi_j$ reduces to the discrete group of symmetries of the model ridge $R^{n-j}$. These symmetries commute with the action of the radial contracting field $Z_j$, and hence $U_j$ admits a global contracting field $Z_j^{j-1}$ which  restricts to the radial field $Z_j$ on fibers over $\oL_j^{j-1}$ and which vanishes along $\oL_j^{j-1}$.
This makes  $(U_j, Z_j^{>j-1})$ a Wc-manifold.

The required Wc-structure on $\Op L$ can now be constructed by successive vertical gluing.  Starting with $(U_0, Z_0)$ we vertically attach $(U_{1}, Z_{1}^{>0})$ using a Wc-hypersurface $U_0$, namely the ribbon of $\p_\infty U_0\cap L$, and the similar hypersurface in $\p U_{1}$ which plays the role of the nucleus of one of the boundaries of $\p U_{1}$.
Continuing this process we obtained the required Wc-structure.

Properties (i) and (ii) are immediate from the construction and for (iii) the homotopy between the constructed Wc-structure and $\sT^*L(\eps)$ can be obtained by repeating the whole construction for the smoothed $L(\eps)$ with the stratification inherited from the order of ridges in $L$. Finally the description of the $U_j$ given in the statement of the lemma follows from the usual Darboux/Weinstein theorem for isotropic submanifolds with trivial symplectic normal bundles.
\end{proof}


  \subsubsection{Ridgy Lagrangians in Wc-buildings}\label{sec:ridgy-W}
   Let $W= \bigcup_j B_j$ be a cotangent building, with $\nu_j$ the vertical distribution for $B_j$.   
  Given a Liouville cone $L$ over an   adapted  Legendrian in $W \setminus \Skel(W)$ we say that
  $L$ is {\em reduced transverse} to all $\nu_i$ defined near its corners
  if for each tangent plane $T$ to $L\cap B_{i_1}\cap\dots \cap B_{i_m}$ at a point $a\in B_{i_1}\cap\dots \cap B_{i_m}$ the reduction $[T]^{\zeta_I}\subset\zeta_I$ is transverse to the reductions $[\nu_{i_s}]^{\zeta_I}$, where $I=(i_1<\cdots <i_m)$.
 
  \begin{prop}\label{prop:main-ridgy}
 Let $\wh W$ be a symplectic manifold and  $W\subset \wh W$ a symplectic embedding of a  positive cotangent building. Let $L\subset \wh W  \setminus \Skel ( W)$ be a Lagrangian with corners such that $L\cap W$ is a Liouville cone over an adapted Legendrian in $W\setminus\Skel(W)$.
Let  $\nu_{-1}$  be a positive distribution which is     extended to $\wh W$.
  Then there exists a $C^0$-small ridgy isotopy which keeps $L\cap W$ adapted   and deforms $L$ to a ridgy Lagrangian $\wh L$ which is transverse   to $\nu_{-1}$ and reduced transverse
  to all the corresponding distributions $ \nu_{i} $ defined near its corners.   
  \end{prop}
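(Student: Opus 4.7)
The strategy is to deform $L$ inductively over a natural filtration of its corner strata, reducing at each stage to a ridgification problem in a reduced symplectic space, and then to exploit positivity of $W$ so that all multi-way reduced transversality conditions hold automatically once pairwise transversality has been arranged. First I would exploit the adapted hypothesis to obtain a product structure at corners: at a point $a \in L \cap B_{i_1} \cap \cdots \cap B_{i_m}$ with $I = (i_1 < \cdots < i_m)$, the adapted condition provides a local splitting $\wh W \simeq \zeta_I \times T^*\cI^m$ under which $L = L_I \times \cI^m$, with $L_I \subset \zeta_I$ itself the Liouville cone over an adapted Legendrian in the reduced block. Consequently, reduced transversality to $\nu_{i_1}, \ldots, \nu_{i_m}$ at $a$ translates into ordinary transversality of $L_I$ to the reductions $[\nu_{-1}]^I, [\nu_{i_s}]^I$ inside $\zeta_I$.

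Next, I would proceed by induction on corner depth, from deepest to shallowest. Suppose a suitable ridgy structure has already been built on a neighborhood of all corner strata of type $J$ with $|J| > m$. On a depth $m$ stratum, I apply the extension/relative form of the formal aligned ridgification theorem (Theorem~\ref{prop:formal-ridgy-aligned}), carried out inside the reduced bundle $\zeta_I$, to produce an aligned tectonic field that is transverse to the relevant reduced distributions and agrees with the tectonic structure already constructed on the deeper strata. I then integrate this aligned field to a ridgy deformation of $L_I$ via Lemma~\ref{lem: intermediate} and Lemma~\ref{lemm: holonomic-approx-extension}, again in their extension and relative forms. The collared/product versions of these results ensure that the deformation respects the decomposition $L = L_I \times \cI^m$, so the resulting ridgy isotopy lifts to a ridgy isotopy of $L$ that keeps $L \cap W$ adapted, and the $C^0$-control of the isotopy is inherited from the $C^0$-control of the tectonic field. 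After all corner strata have been processed, the classical ridgification theorem (Theorem~\ref{thm:ridgification}) in its relative form, held fixed on the already-built neighborhoods of corners, achieves transversality to $\nu_{-1}$ on the remaining smooth part of $L$.

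The main obstacle is ensuring compatibility at deep corners, where several distributions must simultaneously be made transverse to $\wh L$; without additional structure one would expect homotopical obstructions to satisfying all conditions at once. This is precisely where positivity of $W$ is essential. By definition of a positive cotangent building, at any point of type $I$ the tuple
\[
[T_aM_j]^{I(s)}, \; [\nu_j]^{I(s)}, \; [\nu_{i_m}]^{I(s)}, \; \ldots, \; [\nu_{i_s}]^{I(s)}
\]
is $\pprec$-cyclically ordered for every $s$. By the partial cyclic order machinery (Lemmas~\ref{lemma:pos basic} and \ref{lemma:pos reform}) combined with Lemma~\ref{cor:cone-over-regular}, once $\wh L$ has been made pairwise transverse to $\nu_{-1}$ and to each $\nu_{i_s}$ separately in the reduced spaces, the remaining multi-way reduced transversality conditions follow automatically from the cyclic ordering and the fact that a positive distribution is automatically transverse to the conormal of any hypersurface near the zero section. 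Hence the induction only ever has to invoke ridgification pairwise, and all higher interactions are free. This is exactly what allows the extension and relative forms of Theorems~\ref{prop:formal-ridgy-aligned} and \ref{thm:ridgification} to be combined consistently at each inductive step without accumulating incompatible conditions.
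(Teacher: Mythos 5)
Your toolkit is the paper's (formal and aligned ridgification in extension/collared form, Lemma \ref{lem: intermediate}, Lemma \ref{lemm: holonomic-approx-extension}, a final wrinkling-type step, and positivity via Lemma \ref{cor:cone-over-regular}), but there is a genuine gap in how you deploy positivity. You claim that once $\wh L$ is made \emph{pairwise} transverse to $\nu_{-1}$ and to each $\nu_{i_s}$ separately, the multi-way reduced transversality conditions ``follow automatically from the cyclic ordering.'' That is not a formal implication: transversality of $[T]^{\zeta_{i}}$ to $[\nu_{i}]^{\zeta_{i}}$ for each single index does not imply transversality of the deeper reductions $[T]^{\zeta_I}$ to $[\nu_{i_s}]^{\zeta_I}$ in $\zeta_I$. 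What actually makes the unhandled conditions come for free is the combination of (a) the $\eps$-smallness and collaredness of the deformation, so that near the corners the deformed object stays $C^0$-close to the conormal cone over an immersed front in each (multi-)reduced base, and (b) the positivity of the building, which places the remaining reduced distributions in positive zones for the reduced polarization; Lemma \ref{lm:pos-quadr-form} (through Lemma \ref{cor:cone-over-regular}) then gives transversality near the zero section. This must be re-verified after \emph{every} deformation step, both at the tectonic stage and after each application of Lemma \ref{lemm: holonomic-approx-extension}; this is exactly why the paper's induction runs face-by-face, treating one distribution per step by the formal theorem and getting all the others from smallness plus positivity, rather than deducing the deep-corner conditions a posteriori from shallow ones as your depth-first scheme proposes.

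The second gap concerns adaptedness and the endgame. Collaredness of the tectonic fields and of the ridgy isotopies only guarantees compatibility with the corner structure of $L$ inside $\sT^*L$; it does not make $\wh L\cap W$ invariant under the ambient Liouville fields $Z_j$, i.e.\ a Liouville cone over an adapted Legendrian. After the holonomic-approximation stage this conical structure is in fact destroyed, and the paper restores it by an explicit interpolation at the level of generating functions (exploiting the $C^0$-smallness of all previous deformations), after which the reduced transversalities and the homotopical hypothesis for $\nu_{-1}$ have to be re-checked and Lemma \ref{lemm: holonomic-approx-extension} applied once more; your proposal treats this as automatic. Relatedly, your final appeal to Theorem \ref{thm:ridgification} relative to the corners is not quite the right tool: by that stage the Lagrangian is already ridgy, whereas that theorem deforms the smooth zero section. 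The correct conclusion is the one the paper uses: achieve transversality along the ridge locus via Lemma \ref{lemm: holonomic-approx-extension}, then apply the relative wrinkling theorem away from the boundary and ridge locus and trade the wrinkles for order $1$ ridges.
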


\begin{proof} Consider $L$ as the  $0$-section of a block $\sT^*L$. Let $P_1$ be a face of $L$ which is a Legendrian in one of the blocks $B_k$. The distribution $\nu_k$ is adapted for this block structure and is invariant (and tangent) to the contracting field $Z_{P_1}$  adapted to this block. 
We apply the collared version of the formal ridgification theorem \ref{prop:formal-ridgy} to deform the horizontal distribution on $Op(P_1)=P_1\times \cI$ to an $\eps$-small collared tectonic field $\sigma'_1$ in $\sT^*P \times \sT^*\cI$, achieving transversality to the product of $[\nu_k]^{\zeta_{1}}$ and the vertical distribution on $T^*\cI$. Note in particular that $\sigma'_1$ is reduced transverse to $\nu_k$, i.e. $[\sigma'_1]^{\zeta_1}$ is transverse to $[\nu_k]^{\zeta_1}$. Furthermore, by the extension form of the formal transversalization theorem we may assume that $\sigma'_1=0$ outside of a neighborhood of $P_1$ in $L$.

 Note that  according to Lemma \ref{cor:cone-over-regular}, for any other index $j$ we have that $[\sigma_1]^{\zeta_j}$ is transverse to $[\nu_j]^{\zeta_j}$, or in other words, that
 $\sigma_1$ is reduced transverse to $\nu_j$. Using the extension form of the formal ridgification theorem \ref{prop:formal-ridgy} we can further inductively deform the tectonic field $\sigma_1$ into $\sigma_2,\dots, \sigma_k$ which is $\eps$-small, collared and reduced transverse to $\nu_{k-1}, \dots, \nu_1$ and finally construct an $\eps$-small collared tectonic field  $\sigma$ which is in addition is transverse to $\nu_{-1}$.
 
 To conclude the proof we want to apply the ridgification theorem to integrate the tectonic field $\sigma$. It will be easier to work one boundary face of $L$ at a time. We first note that by the same argument as before we can inductively use Theorem \ref{prop:formal-ridgy-aligned} instead of Theorem \ref{prop:formal-ridgy} to replace $\sigma$ with an $\eps$-small, collared, aligned tectonic field $\wh \sigma$, which is no longer reduced transverse to the $\nu_i$ or transverse to $\nu_{-1}$ but  has the property that there exists a family of symplectic bundle isomorphisms $\Phi_t : T(T^*L) \to T(T^*L)$ such that $\wh \sigma$ is reduced transverse to each $\Phi_1(\nu_j)$ and transverse to $\Phi_1(\nu_{-1})$. Then we integrate $\wh \sigma$ to a ridgy Lagrangian $\wh L$ using Lemma \ref{lem: intermediate} which still has the property that it is reduced transverse to each $\Phi_1(\nu_j)$ and transverse to $\Phi_1(\nu_{-1})$. 
 
 Now, starting with $P_1$, we may use Lemma \ref{lemm: holonomic-approx-extension} to first deform $\wh L$ along its ridge locus in a neighborhood of $P_1$ so that it becomes reduced transverse to $\nu_k$ in a neighborhood of the ridge locus. The deformation is achieved by a Hamiltonian isotopy which is $C^0$-close to the identity. Moreover, by the collared version of that lemma we may assume that the resulting ridgy Lagrangian, which now and it what follows we abuse notation and still denote by $\wh L$, is still collared. 
 
 As before, Lemma \ref{cor:cone-over-regular} assures us that $\wh L$ is reduced transverse to $\nu_j$ near the boundary of the next face $P_2$ in a neighborhood of its ridge locus, and our previous application of Lemma \ref{lemm: holonomic-approx-extension} guaranteed that the homotopical condition necessary to keep applying holonomic approximation is still satisfied, even in its collared form. Therefore we can use Lemma \ref{lemm: holonomic-approx-extension} once more to deform $\wh L$ along its ridge locus in a neighborhood of $P_2$ so that it becomes reduced transverse to $\nu_j$ in a neighborhood of its ridge locus and we can continue the process until we have achieved reduced transversality to $\nu_{k-1}, \cdots , \nu_1$ along the ridge locus in a neighborhood of the boundary for a collared $\wh L$. The deformation is still $C^0$-small. Moreover, again by Lemma \ref{cor:cone-over-regular} we have that $\wh L$ is transverse to $\nu_{-1}$ near its boundary, and $\nu_{-1}$ is homotopic to a Lagrangian field transverse to $\wh L$ by a homotopy fixed near the boundary.
 
 At this point we have a ridgy Lagrangian $\wh L \subset T^*L$ which is reduced transverse to the $\nu_i$ and which is collared with respect to the cotangent bundle structure $T^*L$. However, the intersection of $\wh L$ with $W$ is no longer a Liouville cone over an adapted Legendrian, so we need to fix this. Since all the above steps can be performed by $C^0$-small perturbations, recall that we may assume that $\wh L$ is in an $\eps$-neighborhood of the zero section in $T^*L$. Therefore, by a straightforward cutoff at the level of generating functions one may interpolate between our old $\wh L$ and a new $\wh L$ whose intersection with $W$ is a cone over an adapted Legendrian, while maintaining the reduced transversality to the $\nu_{k-1}, \ldots , \nu_1$. Moreover, the new $\wh L$ still has the property that it is transverse to a Lagrangian field homotopic to $\nu_{-1}$, with the homotopy fixed in a neighborhood of its boundary.
  
By the same argument as before, we may use Lemma \ref{lemm: holonomic-approx-extension} to further deform $\wh L$ to obtain a ridgy Lagrangian which is collared, reduced transverse to $\nu_{k-1}, \ldots , \nu_1$ and transverse to $\nu_{-1}$ on a neighborhood of the ridge locus, and such that $\nu_{-1}$ is homotopic to a Lagrangian field transverse to $\wh L$ via a homotopy fixed in a neighborhood of the boundary of $ \wh L$ and fixed in a neighborhood of its ridge locus. Hence we may apply the wrinkling theorem \cite{AG18b} in its relative form to further deform $\wh L$ relative to its boundary and ridge locus to achieve transversality to $\nu_{-1}$ everywhere by the introduction of wrinkles, which can easily be exchanged for order 1 ridges, see for example Corollary 4.13 of \cite{AGEN20a}, or Figure \ref{swap} below.  \end{proof}

         \begin{figure}[h]
\includegraphics[scale=0.5]{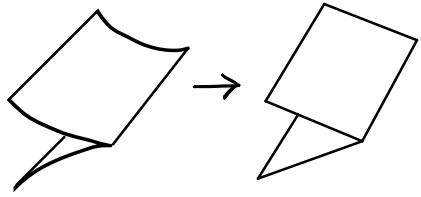}
\caption{The wrinkling theorem allows us to achieve transversality by the introduction of cuspidal singularities, but these can be easily replaced by order 1 ridges by simply replacing the unique local models.}
\label{swap}
\end{figure}



\section{Arborealization of skeleta}\label{sec:corner-arboreal}

In this final section we complete the proof of our main result: a Weinstein manifold admitting a polarization can be deformed to have a positive arboreal skeleton.

   \subsection{Immersions into arboreals}\label{sec:genuine-trans}
   
In this section we will present an inductive scheme to arborealize a ridgy Lagrangian. The most direct way of doing so produces an arboreal Lagrangian with boundary, i.e.~whose boundary is itself arboreal. However, we can do slightly better by inductively capping off the new boundary components that arise  so that the end result is an arboreal Lagrangian with smooth boundary. We therefore begin with a preliminary discussion which describes the capping process.

\subsubsection{Capping arboreal Legendrians}
   
   \begin{lemma} Let $M$ be a smooth manifold, possibly with boundary and corners, and $\Lambda\subset (\sT^*M\setminus M) \cap \sT^* \mM$ an arboreal Legendrian with smooth boundary $\p \Lambda$. Suppose that $\Lambda$ is positive and that it is also positive with respect to the vertical polarization $\nu$ of $\sT^*M$.
   Let $\wh\Lambda\supset\Lambda$ be an arboreal space without boundary such that $\wh \Lambda\setminus \Lambda$ is a manifold. Suppose that there exists an immersion $h:\wh\Lambda\setminus\Lambda\to\mM$, transverse to $\pi(\Lambda)$, extending the co-oriented front projection $\pi:\Op\p\Lambda\to\mM$. Let $H\subset \sT^*M\setminus M$ be the Legendrian positive conormal lift of the immersion $h$.
   Then the  closed arboreal Legendrian $\wt\Lambda:=\Lambda\cup H\subset \sT^*M\setminus M$ is positive arboreal and is positive with respect to $\nu$.
   \end{lemma}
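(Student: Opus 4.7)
My plan is to verify the claim stratum by stratum on $\wt\Lambda=\Lambda\cup H$. I would begin by cataloguing the singular points of $\wt\Lambda$: (a) interior singular points of $\Lambda$, (b) points of $H$ where the immersion $h$ has self-intersections (which, since $\wh\Lambda$ is an arboreal space, must lie in transverse position), (c) transverse crossings of $h(\wh\Lambda\setminus\Lambda)$ with the front $\pi(\Lambda)$, and (d) points on $\p\Lambda$. At points of type (d), the hypothesis that $h$ extends $\pi|_{\Op\p\Lambda}$ as a co-oriented front guarantees that $\Lambda$ and $H$ fit together smoothly, so no new singularity appears there.

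Next I would verify that $\wt\Lambda$ is an arboreal Legendrian by applying axiom (v) (Liouville cones) of Definition \ref{def:arb intr} at the points of types (b) and (c). The required transversality of the relevant front projections is precisely the immersion hypothesis on $h$ combined with its transversality to $\pi(\Lambda)$. This exhibits each local germ of $\wt\Lambda$ as a finite union of Liouville cones over lower-dimensional arboreal Legendrians satisfying the self-transversality condition of axiom (v), so $\wt\Lambda$ is arboreal.

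For positivity of the arboreal structure of $\wt\Lambda$, the key observation is that $H$ is by construction the Legendrian positive conormal lift of a co-oriented immersion. The local analysis of \cite{AGEN20a} (cf.\ Remark \ref{rem: pos char}) identifies the positive conormal of a co-oriented immersed hypersurface with a tree edge labelled $+$. Hence every new edge contributed by $H$ to the signed rooted tree at a singular point of $\wt\Lambda$ carries a $+$ sign, while all pre-existing edges inherited from $\Lambda$ already carry $+$ signs by hypothesis; thus $\wt\Lambda$ is positive arboreal. For the positivity of $\wt\Lambda$ with respect to $\nu$, at singular points of type (a) the condition is part of the hypothesis; at points of types (b) and (c), the new tangent planes contributed by $H$ are positive conormal directions of a co-oriented smooth immersion, so an application of Lemma \ref{lm:pos-quadr-form} after symplectic reduction along the Liouville direction of the relevant stratum of $\Lambda$ yields the required $T'\in C(T,\nu(a))$ for each non-root tangent plane $T'$ through such a singular point.

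The main obstacle is the combinatorial bookkeeping in the third step: one must carefully track how the local signed rooted tree of $\wt\Lambda$ at a new singular point of type (c) is obtained from that of $\Lambda$ by attaching vertices corresponding to the sheets of $H$ meeting $\pi(\Lambda)$ there, and then confirm that under the canonical classification of \cite{AGEN20a} the new edges are indeed labelled $+$. Once this combinatorial picture is in place, the two positivity assertions follow immediately from the defining positive co-orientation of $H$ together with the iterative positive-conormal description of positive arboreal singularities.
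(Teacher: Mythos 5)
Your conclusion is correct, and the pieces of your argument that actually bear on this lemma --- the interior singularities of $\Lambda$ (your type (a)) are positive and positive with respect to $\nu$ by hypothesis, the gluing along $\p\Lambda$ (your type (d)) is smooth because $h$ extends the co-oriented front projection there, and $H$ is a smooth Legendrian whose front is an immersion and hence is transverse to $\nu$ --- constitute exactly the paper's proof, which is the one-sentence observation that one has merely added to $\Lambda$ a smooth Legendrian transverse to $\nu$.

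The rest of your proposal, however, rests on a misidentification of where the singularities of $\wt\Lambda$ live. At a transverse self-crossing of $h$, or at a transverse crossing of $h(\wh\Lambda\setminus\Lambda)$ with $\pi(\Lambda)$, the sheets involved have distinct tangent hyperplanes in $M$, hence distinct conormal directions, so their Legendrian lifts pass through distinct points of $\sT^*M\setminus M$; your points of types (b) and (c) therefore contribute no singular points of $\wt\Lambda$ at all. In particular, axiom (v) of Definition \ref{def:arb intr} is not the relevant tool here: it produces Lagrangian germs that contain the zero section, and the front crossings you describe only become arboreal singularities after one adjoins the zero section and passes to Liouville cones, i.e.\ for the Lagrangian $M\cup\Cone(\wt\Lambda)$. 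That analysis (including the positivity and transversality input of Lemma \ref{lm:pos-quadr-form} and Proposition \ref{lm:imm-to-pos-arb}) is the business of the subsequent statement, Lemma \ref{lm:bound-bound}, not of this one. Consequently the ``combinatorial bookkeeping'' of new $+$-labelled edges that you single out as the main obstacle is vacuous for the present lemma: the singular locus of $\wt\Lambda$ is exactly the interior singular locus of $\Lambda$, and beyond the smooth gluing along $\p\Lambda$ and the transversality of the new smooth stratum $H$ to $\nu$ there is nothing to check.
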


   \begin{proof} Indeed, we just added   to $\Lambda$ a smooth Legendrian transverse to $\nu$. 
   \end{proof}
   
   \begin{remark}
   To be more precise, we should say that there exists a Legendrian lift $H \subset \sT^*M \setminus M$ of $h$ such that the conclusion of the lemma holds, or equivalently we could work in the idealized boundary $S^*M$ from the onset, where the lift $H$ is unique.
   \end{remark}
   
   \subsubsection{Genuine transversality}
   
   Let  $\eta$ be  a Lagrangian distribution in a symplectic manifold  $W$. We introduce a notion of transversality for piecewise smooth Lagrangians which can be intuitively thought of as ``transversality even after smoothing''. 
   
 \begin{definition} A piecewise smooth Lagrangian $L \subset W$ is called {\em genuinely transverse} to $\eta$ if for each point  $a\in L$ there exist Darboux coordinates $(p,q)$ near $a$, with $\eta( a )$ is  tangent to the cotangent fiber $q=0$ at the origin $0$, such that the Lagrangian $ \Op a\subset L $ is generated by a $C^1$-function $H(q)$.    
 \end{definition}

 \begin{remark}
 The ridgy Lagrangians produced by the ridgification theorem are transverse to the distribution used as input in the theorem, but in general are not genuinely transverse.
 \end{remark}
 
 Similarly, a piece-wise smooth Lagrangian map $f:L\to W$, i.e. not necessarily an embedding, is said to be genuinely transverse to a Lagrangian distribution $\eta$ if for every point $a\in L$ there exists a neighborhood $\Op a\subset L$   such that $f|_{\Op L}$ is a piecewise smooth Lagrangian embedding and $f(\Op L)$ is genuinely transverse to $\eta$. Note that any piece-wise smooth Lagrangian which is genuinely transverse  to $\eta$ can be approximated   by a family of  Lagrangians
    $L_t$, $t>0$, which are transverse to $\eta$ and such that   $L_t\mathop{\to}\limits^{C^0}L$ as $t \to 0$, where convergence  is smooth where $L$ is smooth.   Indeed, this follows from the corresponding approximation property for $C^1$-smooth generating functions.
 %

\begin{definition}
Let $L$ be an arboreal space and $M$ a smooth manifold of the same dimension $n$. A map $f:M \to L$ is called an {\em immersion} if there exists a stratification $M=M_0\cup M_1\cup\cdots \cup M_n$ by manifolds  $M_j$ of dimension $(n-j)$ such that $f$ is an immersion on each stratum.
\end{definition}

Recall that as an arboreal space, $L$ is equipped with an orientation structure $\kappa$, which is a line bundle equipped with identifications with the orientation line bundles of the smooth pieces and compatibility at the singularities determined by symplectic geometry. The pullback $f^*\kappa$ is a line bundle on $M$, with fixed identifications with the orientation bundle $\land^n TM_0$ for $P$ on $M_0$.

\begin{definition}
An immersion $f:M \to L$ is {\em oriented} if $f^* \kappa$ is isomorphic to the orientation bundle of $M$ relative to the fixed identification on $M_0$ . \end{definition}

The condition that $f:M \to L$ is oriented can be understood as follows. Let $A$ be a component of $M_1$ and $P,Q$ be components of $M_0$ adjacent to $A$. At a point $f(x) \in L$ in the closure of both $f(P)$ and $f(Q)$, the singularity of $L$ is modeled on signed rooted tree $\cT$. If $f(P)$ is contained in a smooth piece of $L$ closer to the root $\rho$ of $\cT$ than $f(Q)$ then the outward boundary coorientation of $A\subset\p P$ coincides with the coorientation of $f(A)$ in $P$. See also Remark~\ref{rem: pos char}.   
   
    
Let $L \subset W$ be a positive arboreal Lagrangian and $\eta$ a positive distribution for $L$. Then the symplectic vector bundle $TW|_L \to L$ is isomorphic to $\eta \oplus \eta^*$. We can realize $\eta^*$ as a Lagrangian plane field on $TW|_L$ which is transverse to $\eta$ and the space of such is contractible.

\begin{lemma}
Let $L\subset W$ be a positive arboreal Lagrangian and $\eta$  a positive distribution for $L$. For any oriented immersion $f:N \to L$ and any point $x \in N$, the germ of $f(L)$ at $f(x)$ is graphical with respect to the polarization $(\eta^*,\eta)$. 
\end{lemma}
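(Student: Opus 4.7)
The plan is to reduce the statement to an inductive computation in Darboux coordinates adapted to the polarization $(\eta^{*},\eta)$. Choose such coordinates $(q,p)$ near $y=f(x)$ so that $\eta^{*}$ corresponds to the zero section $\{p=0\}$ and $\eta$ to the cotangent fibers $\{q=\mathrm{const}\}$. In these coordinates ``graphical with respect to $(\eta^{*},\eta)$'' means exactly that $f(N)$ is, near $y$, the (piecewise smooth) graph of the differential of a continuous function $H$ on a neighborhood of $0$ in $\eta^{*}$.

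First I would analyze each smooth sheet separately. Because $\eta$ is positive for $L$, at every point of each smooth piece $L_{\alpha}$ of $L$ the tangent plane $T L_{\alpha}$ lies in the closed positive zone $\bar C(T L_{\rho},\eta)$ relative to the root sheet $L_{\rho}$. By Lemma~\ref{lm:pos-quadr-form} (or Lemma~\ref{cor:cone-over-regular}), this positivity implies $L_{\alpha}\pitchfork\eta$. Hence in the chosen Darboux chart each $L_{\alpha}$ is graphical: there exists a smooth function $H_{\alpha}\colon U_{\alpha}\to\R$ defined on an open subset $U_{\alpha}\subset\eta^{*}$ with $L_{\alpha}=\{(q,dH_{\alpha}(q))\,:\,q\in U_{\alpha}\}$. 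By the structure of positive arboreal singularities, for any edge of the tree joining $\alpha$ (closer to the root $\rho$) to a child $\beta$, the smooth piece $L_{\beta}$ is the positive conormal lift of a cooriented front $F_{\alpha\beta}\subset\pi(L_{\alpha})$; one may therefore normalize the additive constants in the $H_{\alpha}$'s so that $H_{\beta}=H_{\alpha}$ on $F_{\alpha\beta}$ and $H_{\beta}\geq H_{\alpha}$ on the side of $F_{\alpha\beta}$ prescribed by the positive coorientation.

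Next I would use the orientation hypothesis on $f$ to show that the union of the relevant graphs is in fact the graph of a single continuous function. Since $f\colon N\to L$ is an immersion, each stratum $N_{j}$ of $N$ maps immersively into a smooth piece of $L$. Crossing a component $A\subset N_{1}$ of the codimension-one stratum of $N$ between two strata $P,Q\subset N_{0}$ that map into adjacent smooth pieces $L_{\alpha},L_{\beta}$ of $L$, the orientation condition (i.e.~$f^{*}\kappa\simeq\wedge^{n}TN$ compatibly with the identification on $N_{0}$, as reviewed just before the statement and in Remark~\ref{rem: pos char}) says precisely that the outward coorientation of $A$ from $P$ agrees with the coorientation of $F_{\alpha\beta}$ prescribed by the orientation structure of $L$. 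Consequently, under the projection $\pi\colon W\to \eta^{*}$ along $\eta$, the images $\pi\circ f(P)$ and $\pi\circ f(Q)$ abut along $F_{\alpha\beta}$ from opposite sides. Iterating this over the finite local decomposition of $N$ near $x$ into strata (and inducting over the vertices of the tree $\cT$ describing the arboreal singularity at $y$, peeling off leaves first), one obtains a neighborhood $V\subset\eta^{*}$ of $0$ and a continuous function $H\colon V\to\R$, smooth on the complement of finitely many hypersurfaces of $V$, whose pieces coincide with the corresponding $H_{\alpha}$'s, and such that $f(N)$ near $y$ equals $\{(q,dH(q))\,:\,q\in V\}$.

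The main obstacle I expect is bookkeeping the orientation condition across several strata meeting at a point, to be sure that the images $\pi\circ f$ of the top-dimensional strata of $N$ tile a full neighborhood of $0\in\eta^{*}$ without overlap. This is handled by inducting on the signed rooted tree $\cT$ of the arboreal germ at $y$: the leaves correspond to outermost sheets, whose fronts divide a neighborhood of $\pi(y)$ in the parent sheet into two regions, and the orientation hypothesis forces $f$ to map into the two regions from opposite sides of the corresponding codimension-one stratum of $N$. Applying this leaf-by-leaf, one reduces to the smooth case and concludes.
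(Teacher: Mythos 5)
Your proposal is correct and takes essentially the same route as the paper's proof: positivity makes each smooth sheet transverse to $\eta$ (hence locally graphical), and at each codimension-one ($A_2$-type) wall the orientation hypothesis selects exactly the configuration in which the two adjacent branches project to opposite sides of the front, with deeper strata handled by induction over the signed rooted tree. The paper's argument is just a terser version of this same case analysis (two possible germs of $f$ at an $A_2$ wall, of which only the oriented one is graphical, then induction or inspection of the explicit local models), so your generating-function bookkeeping fills in rather than replaces its approach.
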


\begin{proof}
We first check the condition along $M_1$. Let $x \in M_1$, so $f(x) \in L$ is an $A_2$ type singularity. There are two possibilities for the germ of $f:M \to L$ near $x$, excluding the trivial case where $f(\Op(x))$ stays in the smooth part of $L$, i.e. the zero section piece corresponding to the root $\rho$ of the signed rooted tree $\cT$ which classified the singularity of $L$ at $f(x)$. The first possibility is that $f(\Op(x)$ is the conormal of a cooriented hyperplane together with the half-plane lying in the direction of the coorientation. The second possibility is to take the other half-plane. By inspection of the local model, the former is graphical with respect to $(\eta^*,\eta)$ and the latter is not. On the other hand, the former gives an oriented immersion while the latter does not. 
The condition on $M_k$, $k >1$ can be verified inductively using the above argument, or more directly one can inspect the explicit local model, in which the polarization $\eta$ can also be taken to be canonical as proved in \cite{AGEN20a}.
\end{proof}

 \begin{proposition}\label{lm:imm-to-pos-arb}  Let $L\subset W$ be a $n$-dimensional positive arboreal Lagrangian and $\eta$  a positive distribution for $L$. Then for any oriented immersion $f:N\to L$ of an $n$-dimensional manifold $N$ the piecewise smooth  immersion $N\mathop{\to}L\hookrightarrow W$ is genuinely transverse to $\eta$.
\end{proposition}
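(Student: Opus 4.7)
The statement is local at each point $x \in N$, so my plan is to work in Darboux coordinates adapted to $\eta$. The preceding lemma guarantees that the germ of $f(N)$ at $f(x)$ is graphical with respect to the polarization $(\eta^*, \eta)$. Hence I will choose Darboux coordinates $(q_1,\dots,q_n,p_1,\dots,p_n)$ near $f(x)$ in which $\eta$ coincides with the cotangent fiber direction $\ker(dq)$, so that the definition of genuine transversality reduces to writing $f(N)$ as the graph of $dH$ for a $C^1$ function $H=H(q)$.

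With that set-up, I would proceed in two steps. First, since $f(N)$ is graphical, the projection to $q$-space is injective, and the image is an open neighborhood $U$ of $0$ in $\R^n$. The stratification of $L$ pulls back through $f$ to give a stratification of $N$, which under the $q$-projection gives a stratification of $U$ by relatively open subsets $U_i$, each of whose closures $\overline{U_i}$ carries a smooth piece of $f(N)$ as a Lagrangian graph $\{p=dH_i(q)\}$ for a smooth function $H_i$ on $\overline{U_i}$. Second, I need to show that the $H_i$ fit together into a single $C^1$ function $H$ on $U$; this is where the oriented immersion and positivity assumptions enter, through the preceding lemma.

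The crucial observation is that along a shared codimension-one wall $\Sigma_{ij}\subset \overline{U_i}\cap \overline{U_j}$, the two smooth pieces of $f(N)$ both lie on the intersection stratum of $L$, which is a well-defined subset of $W$ with uniquely determined $p$-coordinates. Because $f(N)$ is graphical, the $p$-values $dH_i(q)$ and $dH_j(q)$ must agree on $\Sigma_{ij}$, i.e.\ $dH_i|_{\Sigma_{ij}}=dH_j|_{\Sigma_{ij}}$. Once this is established, integrating along paths in $U$ and fixing constants appropriately lets me define $H\in C^0(U)$ with $H|_{\overline{U_i}}=H_i$ (up to constant), and then the equality of differentials on each wall promotes $H$ to a $C^1$ function. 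The resulting identity $f(N)=\{p=dH(q)\}$ is exactly genuine transversality to $\eta$ at $f(x)$.

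The main obstacle I anticipate is organizing the constants consistently when three or more smooth pieces of $f(N)$ meet at a higher-codimension stratum of the arboreal singularity; this is really a combinatorial question about the signed rooted tree and the orientation structure, which I expect to resolve by an induction on the depth of the tree, using the inductive description of positive arboreal models in~\cite{AGEN20a}. Shrinking $U$ if necessary to a contractible neighborhood reduces the problem of choosing constants to local compatibility along each wall, so that the key geometric content really is the equality $dH_i=dH_j$ on $\Sigma_{ij}$ established above.
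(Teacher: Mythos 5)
Your proposal is correct and takes essentially the same approach as the paper: the whole content is the preceding lemma that oriented immersions have germs graphical with respect to $(\eta^*,\eta)$, from which the paper likewise concludes that near each point the image is generated by a ($C^1$, piecewise smooth) function on $\eta^*$ in Darboux coordinates adapted to $\eta$. Your version simply spells out what the paper leaves implicit -- the equality $dH_i=dH_j$ along walls forced by single-valuedness of the graph and the integration of constants -- while the paper's terse proof additionally mentions smoothing the generating function via a partition of unity, which is not needed for the statement itself.
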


\begin{proof}
Near each point $x \in N$ the germ of $f(L)$ is graphical with respect to $(\eta^*,\eta)$ so near $x$ we can generate $f(L)$ by a piecewise smooth function on $\eta^*$. Moreover, the generating function can be smoothed and the result is still transverse to $\eta$ since it remains graphical with respect to $(\eta^*,\eta)$. To achieve a global smoothing of $L$ one can patch up the local smoothings with a partition of unity.
\end{proof}

         \begin{figure}[h]
\includegraphics[scale=0.5]{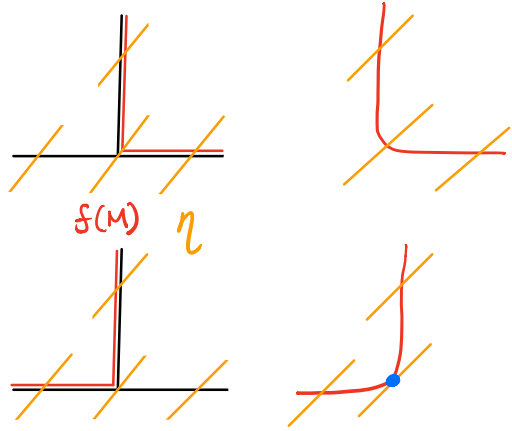}
\caption{On the left: the upper immersion is oriented, while the lower immersion is not. On the right: the upper smoothed Lagrangian is transverse to $\eta$, while the lower one is not.}
\label{fig:genuinetransversality}
\end{figure}

\subsubsection{Capping arboreal Lagrangians}
 
Let $L$ be an arboreal space     and $\Lambda$ a   smooth  component of its boundary $\p L$.  We call the component  $\Lambda$ {\em bounding} if there exists
a manifold $M$ with $\p M=\Lambda$  and an oriented immersion $M\to L$ extending the inclusion $\Lambda\hookrightarrow L$.

\begin{lemma}\label{lm:bound-bound}
Let $\Lambda\subset \sT^*M\setminus M$ be a positive arboreal Legendrian which is positive with respect to the vertical distribution $\nu$.  Suppose that the boundary of $\Lambda$ is    smooth and  bounding.
Then there exists a closed  arboreal Legendrian $\wh\Lambda\subset \sT^*M\setminus M$ which is transverse to $\nu$ and satisfies the following properties, where  we denote by $\wh L \subset \sT^*M$ the arboreal Lagrangian formed by the union of the Liouville cone  of $\wh \Lambda$ and the $0$-section.
\begin{itemize}
\item[(i)] $\Lambda':=\wh\Lambda\setminus \Lambda$ is smooth; 
\item[(ii)] $\wh \Lambda$ bounds in $\wh L$ an immersed submanifold which is genuinely transverse to any Lagrangian distribution   $\eta$    in $\sT^*M$ such that $TM,\nu,\eta$ are $\pprec$-ordered. 
\end{itemize}
\end{lemma}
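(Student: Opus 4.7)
The strategy is to feed the bounding of $\partial \Lambda$ into the preceding lemma to produce the capping $\wh\Lambda$, and then to reuse the same bounding data to exhibit the immersed submanifold required by (ii). First, fix an oriented immersion $f\colon B \to \Lambda$ from an $(n-1)$-manifold $B$ with $\partial B = \partial \Lambda$ extending the inclusion $\partial \Lambda \hookrightarrow \Lambda$, provided by the bounding hypothesis. Set $h := \pi \circ f\colon B \to \mM$; since $\Lambda$ is positive with respect to $\nu$, Lemma~\ref{cor:cone-over-regular} applied stratum-by-stratum gives that $\pi|_\Lambda$ is a regular immersion on each smooth piece, so $h$ is an immersion. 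A $C^\infty$-small generic perturbation of $f$ inside the space of oriented immersions extending $\partial \Lambda \hookrightarrow \Lambda$ then arranges that $h$ is transverse to $\pi(\Lambda)$ and has transverse self-intersections, while still agreeing with $\pi$ on $\Op \partial B$.

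Next, apply the preceding lemma to the abstract arboreal space $\Lambda \cup_{\partial \Lambda} B$ together with the immersion $h$. This yields a closed arboreal Legendrian $\wh\Lambda := \Lambda \cup H \subset \sT^*M \setminus M$, where $H$ is the positive conormal lift of $h$ with coorientation determined by the orientation structure of $f$; the preceding lemma moreover asserts that $\wh\Lambda$ is positive arboreal and positive, hence transverse, with respect to $\nu$. Conclusion (i) is immediate because $\Lambda' = \wh\Lambda \setminus \Lambda$ coincides with the smooth positive conormal lift $H \setminus \partial H$ of the immersion $h$.

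For (ii), I would construct the oriented immersion as follows. Let $N := B \cup_{\partial B = \partial H} H$ be the closed smooth $(n-1)$-manifold obtained by gluing $B$ and $H$ along the common boundary $\partial \Lambda$, and let $g\colon N \to \wh\Lambda$ be $f$ on $B$ and the inclusion on $H$. Stratifying $N$ with $\partial B = \partial H$ as its codimension-one stratum, $g$ is an immersion of a smooth manifold into an arboreal space in the sense defined earlier in the section. Set $D := N \times [0, 1)$ and define $F\colon D \to \wh L$ by $F(x, t) := Z^{-s(t)}(g(x))$ for any reparametrization $s\colon [0, 1) \to [0, \infty)$ with $s(0) = 0$, so that $F$ flows $g$ by the Liouville field toward $M$. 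Then $D$ is an $n$-manifold with $\partial D = N$ mapping to $\wh\Lambda$ via $g$, and $F$ is a piecewise-smooth immersion into $\wh L$; the orientation on $D$, obtained from the orientation of $N$ (inherited from $f$ on $B$ and from the positive-conormal coorientation on $H$) together with the Liouville direction, matches the orientation structure of the positive arboreal $\wh L$ along the $A_2$-singularity at $\partial \Lambda$. Proposition~\ref{lm:imm-to-pos-arb} then delivers the desired genuine transversality of $F$ to every Lagrangian distribution $\eta$ for which $(TM, \nu, \eta)$ is $\pprec$-cyclically ordered, since this is exactly positivity of $\eta$ along the zero-section root of $\wh L$.

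The main difficulty will be the orientation check in the last step: one must verify that the coorientation used to produce $H$ as the \emph{positive} conormal lift of $h$ combines with the orientation structure of $f$ as an oriented immersion into $\Lambda$ to yield an oriented immersion $g\colon N \to \wh\Lambda$, and that the Liouville cone construction subsequently propagates this to an oriented immersion $F\colon D \to \wh L$ with respect to the canonical orientation structure of the positive arboreal $\wh L$. Once this orientation matching is in place, genuine transversality is an automatic consequence of Proposition~\ref{lm:imm-to-pos-arb}.
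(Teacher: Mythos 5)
There is a genuine gap at the capping step. You set $h=\pi\circ f$ and propose to perturb $f$ inside $\Lambda$ so that $h$ becomes transverse to $\pi(\Lambda)$; this is impossible, since however you perturb $f$ within $\Lambda$ the image of $\pi\circ f$ stays inside the front $\pi(\Lambda)$, and a map whose image lies in $\pi(\Lambda)$ cannot be transverse to $\pi(\Lambda)$. Worse, near $\p\Lambda$ your $h$ reparametrizes the front of $\Lambda$ itself (on the same side of $\pi(\p\Lambda)$), so its conormal lift $H$ either coincides with $f(B)\subset\Lambda$ near the boundary (same coorientation), in which case $\wh\Lambda=\Lambda\cup H$ still has boundary $\p\Lambda$ and is not closed, or is the antipodal lift near the boundary (opposite coorientation), in which case $\p H$ is the antipodal copy of $\p\Lambda$ in the cosphere fibers and again nothing closes up. The hypothesis of the preceding capping lemma asks for an $h$ that \emph{continues} the cooriented front of $\Lambda$ past $\pi(\p\Lambda)$, not one that folds back onto it. The paper's proof handles exactly this point by a push-off: $\Sigma=\pi\circ f(N)$ is a $C^1$ immersed cooriented hypersurface (this already uses Proposition~\ref{lm:imm-to-pos-arb}), one chooses a $C^\infty$ push-off $\Sigma'$ of $\Sigma$ in the coorientation direction, fixed along $\p\Sigma$, so that $\Sigma\cup\Sigma'$ bounds a thin immersed domain $\Omega\subset M$ and the conormal lift $\Lambda'$ of $\Sigma'$ fits together with $\Lambda$ along $\p\Lambda$; transversality to the remaining front sheets is then arranged by genericity of the push-off, not by moving $f$.

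The second gap is in (ii). Your $D=N\times[0,1)$ is not a compact submanifold bounded by $\wh\Lambda$: as $t\to 1$ the backward Liouville flow limits onto the front $\pi(g(N))$ inside the zero-section, so the closure of your cylinder acquires extra boundary there, and with the half-open interval the object does not ``bound'' $\wh\Lambda$ in $\wh L$ in any usable sense. This is precisely where the push-off is used a second time: the bounding oriented immersion is obtained by gluing the downward cone cylinder over $f(N)$ (from $\Lambda$ down to $\Sigma$), the domain $\Omega$ between $\Sigma$ and $\Sigma'$ inside the zero-section, and the downward cone cylinder over $\Lambda'$ (down to $\Sigma'$); its genuine transversality to every $\eta$ with $TM,\nu,\eta$ $\pprec$-ordered is then Proposition~\ref{lm:imm-to-pos-arb}, exactly as you invoke it. So your final appeal to Proposition~\ref{lm:imm-to-pos-arb} is the right mechanism, but without the push-off neither the closed arboreal Legendrian $\wh\Lambda$ nor the bounding submanifold exists in your construction, and both conclusions of the lemma fail as written.
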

\begin{proof}
Let $f:N\to\Lambda$ be an immersion bounding $\p\Lambda$ in $\Lambda$. Denote by $\pi:\sT^*M\to M$ the front projection. Then the image $\Sigma:=\pi\circ f(N)\subset M$ is a $C^1$-smooth immersed  cooriented hypersurface. Hence it has a $C^\infty$-smooth, fixed on $\p\Sigma$   push-off $  \Sigma'       $ in the direction of the co-orientation, which together with $\Sigma$ bounds an immersed domain $\Omega\subset M$ with a $C^1$-smooth boundary $\p\Omega=\Sigma\cup \Sigma'$.  Let $\Lambda'$ be the  conormal lift   of $  \Sigma'$. Then $\wh \Lambda:=\Lambda\cup   \Lambda'$ is the required Legendrian, where property (ii) follows from Proposition \ref{lm:imm-to-pos-arb}, 
\end{proof}

          \begin{figure}[h]
\includegraphics[scale=0.5]{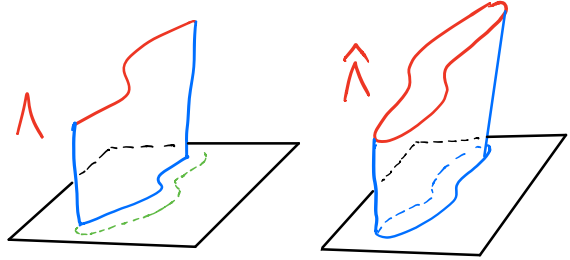}
\caption{Closing up a positive Legendrian whose boundary is smooth and bounding.}
\label{fig:closingup}
\end{figure}

\subsection{Cones over arboreals}\label{sec:cones-arb}

We now discuss the key lemma which will be used for the inductive arborealization of ridgy Lagrangians.

\subsubsection{Arborealization of radial cones on arboreals}

\begin{definition} An arboreal Lagrangian in a Wc-manifold $W$ is called {\it asymptotically conical} if it coincides  outside of a neighborhood of $\Skel(W)$ with  a  Liouville cone over a Legendrian. \end{definition}

Note that the boundary of an asymptotically conical  arboreal Lagrangian $L$ is also asymptotically conical. We call a smooth component $C\subset\p L$ {\it bounding} if there exists  a possibly non-compact manifold $N$  with $\p N=C$ and an  asymptotically conical immersion $f:N\to L$ bounding $C$. 

\begin{lemma}\label{lm:arb-vertex}
 Let $\Lambda$ be a positive arboreal Legendrian in $S^{2n-1}$ endowed with the standard contact structure.   Let $L\subset\R^{2n}$ be the Lagrangian cone over $\Lambda$ centered at $0$, i.e. with respect to the radial Liouville structure on the unit ball $B \subset \R^{2n}$. Let $(\tau,\nu)$ be a polarization of $\R^{2n}$ such that every Lagrangian plane $T$ tangent to $L$ at $0$ satisfies the condition $T^{\tau,\nu}>0$.
 Let $L_0=\tau(0)$ be the Lagrangian  plane through the origin, and $\Lambda_0\subset S^{2n-1}$ its Legendrian  link.
 Suppose that  each   boundary component  of $\Lambda$ is bounding.
 Then
 there exist:
 \begin{itemize}
 \item[(i)] a closed  arboreal Legendrian $\wh\Lambda\supset \Lambda$   such that  $\Lambda':=\wh\Lambda\setminus\Lambda$ is smooth and such that  every Lagrangian plane $T$ tangent to the Liouville cone  $L'$  over $\Lambda'$ satisfies the condition $T^{\tau,\nu}>0$;
 \item[(ii)] an asymptotically conical arboreal Lagrangian $\wh L$ in $\R^{2n}$ endowed with the standard Liouville structure such that:
 \begin{itemize}
 \item  $\wh L$ coincides at infinity with the Lagrangian cone  over  $\Lambda\cup \Lambda_0$;
 \item $\wh L\supset\wh \Lambda$ and $\wh L \supset L_0$;
 \item $\wh L$ has smooth boundary and every boundary component is bounding;
 \item  $\wh L $ is transverse  to any Lagrangian distribution   $\eta\in C(\nu,\tau)$, i.e negative with respect to $(\tau, \nu)$.
 \end{itemize}
 \item[(iii)] a compactly supported homotopy $\lambda_t=\lambda_0 + d f_t$ of $\lambda_0=pdq$ such that the Wc-manifold obtained from $(\R^{2n}, \lambda_1)$ by converting the ribbons of the Legendrians $\Lambda \cup \Lambda_0$ into nuclei of boundary faces has the structure of a positive cotangent building with skeleton $\wh L$.
 \end{itemize}
 \end{lemma}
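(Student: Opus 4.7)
The plan is to implement a blow-up of the radial Lagrangian cone $L$, replacing it with the fibrewise Liouville cone over an appropriate capping of $\Lambda\cup\Lambda_0$ with respect to the canonical Liouville structure on $T^*L_0$ induced by the polarization $(\tau,\nu)$.

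First I would apply Lemma \ref{lm:bound-bound} inductively to cap off every (by hypothesis bounding) boundary component of $\Lambda$, producing the closed positive arboreal Legendrian $\wh\Lambda = \Lambda\cup\Lambda'$ of (i). The smooth extension $\Lambda'$ is constructed as the conormal lift of a $C^\infty$-smooth push-off of an immersed bounding hypersurface in $L_0$, and this makes its tangent planes automatically positive with respect to $(\tau,\nu)$ by Lemma \ref{lm:pos-quadr-form} applied near the origin.

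Next, the polarization $(\tau,\nu)$ identifies $(\R^{2n},d\lambda_0)$ with $T^*L_0$ so that $\nu$ becomes the fibrewise direction. The positivity condition $T^{\tau,\nu}>0$ for Lagrangian planes tangent to $L$ at $0$ guarantees that the front projection $\pi|_{\wh\Lambda}:\wh\Lambda\to L_0$ is an immersion near the origin. After a $C^1$-small perturbation of $\wh\Lambda$, which preserves both positivity conditions since they are open, one can assume $\pi|_{\wh\Lambda\sqcup\Lambda_0}$ is self-transverse. Set $\wh L := L_0 \cup \Cone^+(\wh\Lambda) \cup \Cone^+(\Lambda_0)$, where the cones are now taken with respect to the fibrewise Liouville field $p\,\partial_p$ on $T^*L_0$. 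By axiom (v) of Definition \ref{def:arb intr}, $\wh L$ is an asymptotically conical arboreal Lagrangian; its positivity as an arboreal (and its positivity with respect to negative Lagrangian distributions) follows from Lemma \ref{cor:cone-over-regular} together with the inductively established positivity of $\wh\Lambda$.

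The remaining properties of (ii) are direct verifications: outside a compact neighborhood of the origin the blow-up coincides with the original radial cone and with the new radial cone on $\Lambda_0$; the boundary of $\wh L$ consists of the cone over $\partial\Lambda$ and the smooth component $\Lambda_0$, both bounding via the cones of the bounding immersions used in Step~1 extended to $L_0$; and transversality to any $\eta\in C(\nu,\tau)$ follows from Lemma \ref{lm:pos-quadr-form}, since such $\eta$ is a negative definite quadratic form on $\tau$ which is therefore transverse to $L_0$ and to the positive conormal of any smooth immersed front in $L_0$.

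Finally, for (iii) I would interpolate Liouville structures: the two primitives $\tfrac12(pdq-qdp)$ (radial) and $pdq$ (canonical) differ by the exact form $d(\tfrac12 pq)$, so after suitably cutting off this primitive outside a large ball one obtains the required compactly supported homotopy $\lambda_t=\lambda_0+df_t$. Applying Theorem \ref{thm:unique-W-for-buildings} to the positive arboreal Lagrangian $\wh L$ produces a neighborhood Wc-building structure with skeleton $\wh L$, and Proposition \ref{prop:two-positivities} upgrades it to a positive cotangent building without changing $\wh L$. The arboreal Darboux--Weinstein theorem \ref{thm:arb-Darboux} identifies this neighborhood with the one seen by $(\R^{2n},\lambda_1)$ after hypersurface-to-nucleus conversion along the ribbons of $\Lambda\cup\Lambda_0$. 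The main technical obstacle is the perturbation in the blow-up step: I must simultaneously realize self-transversality of the front and preserve the delicate cyclic $\pprec$-orderings at every point of the new singular locus along $L_0$, but since both positivity and transversality conditions are $C^1$-open and the required perturbation is localized away from the origin, this can be arranged.
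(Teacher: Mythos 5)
Your blow-up idea---trade the radial cone for the fibrewise cone over its front in $T^*L_0$, after capping the boundary with push-offs of the bounding immersions---is indeed the idea behind the paper's proof, but two of your steps fail as written. First, $\wh\Lambda$ is Legendrian for the \emph{radial} structure, so on $T\wh\Lambda$ one has $pdq=\tfrac12 d(pq)\neq 0$ in general; hence $\wh\Lambda$ is not a Legendrian of the cotangent block $\sT^*L_0$ in the sense of the paper, Lemma \ref{lm:bound-bound} cannot be applied to it directly, and your set $\Cone^+(\wh\Lambda)$ taken with respect to $p\,\p_p$ is not even Lagrangian. The paper gets around this by slicing $L$ along a level set $\{pq=\eps\}$ (a level set of the adapted function $G$), which is simultaneously Legendrian for $pdq$ and for $\tfrac12(pdq-qdp)$, and by introducing the hybrid Liouville form $\lambda=\tfrac12(pdq-qdp)+\tfrac12 d(\alpha(G))$, equal to $pdq$ only on $\{G<\eps\}$, i.e.\ on a small neighborhood of $L_0$, and equal to the radial form outside $\{G\leq\wt\eps\}$.

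Second, your $\wh L$ carries the fibrewise cone all the way out to the unit sphere. There it does not coincide with the radial cone over $\Lambda\cup\Lambda_0$ (so the first bullet of (ii) fails, and the construction would not glue into the inductive scheme of Proposition \ref{prop:slanted-ridgy-arb2}, which needs everything fixed near the boundary sphere), and it is not transverse to a fixed $\eta\in C(\nu,\tau)$: Lemmas \ref{lm:pos-quadr-form} and \ref{cor:cone-over-regular} are statements about a sufficiently small neighborhood of the zero section, and the cone over a curved front becomes tangent to a constant negative definite graphical distribution at fiber heights comparable to the inverse curvature of the front. In the paper the fibrewise cone is confined to $\{G\leq\eps\}$, the original radial cone is untouched outside, the new piece $\wt L$ is defined as the saturation of $L\cap\{G=\wt\eps\}$ by the hybrid Liouville flow---so tangency of the new Liouville field to the new skeleton is automatic, something your cut-off of $\tfrac12 pq$ outside a large ball never guarantees for your $\wh L$ in the transition region---and transversality in the transition collar is verified by the explicit computation that the tangent planes there are graphs of $Q+\tfrac{t}{1-t}\ell^2$ with $Q$ positive definite. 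Finally, the cap itself must be produced compatibly with this hybrid structure: the paper flows the bounding immersion down to a $C^1$ front in $L_0$ by the hybrid flow, pushes it off, and realizes the push-off as the backward-flow limit of a genuine radial Legendrian $\breve N\subset S^{2n-1}$; this replaces your direct appeal to Lemma \ref{lm:bound-bound}, which is not available at the unit sphere.
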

 
          \begin{figure}[h]
\includegraphics[scale=0.5]{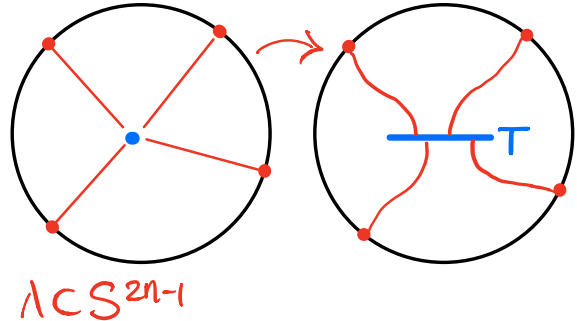}
\caption{The goal of Lemma \ref{lm:arb-vertex} when $n=1$.}
\label{fig:arborealization}
\end{figure}

         \begin{figure}[h]
\includegraphics[scale=0.5]{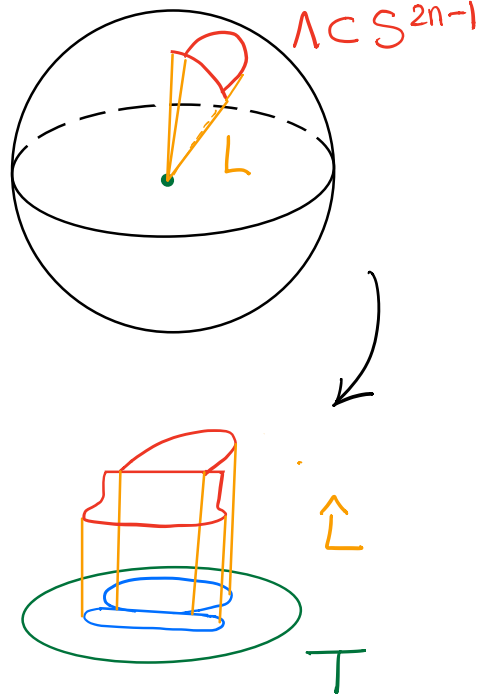}
\caption{The goal of Lemma \ref{lm:arb-vertex} when $n=2$.}
\label{fig:Bounding}
\end{figure}
  
 \begin{proof}
 (i) Let $p,q$ be canonical coordinates for the polarization $(\tau,\nu)$
 Note that the function $H=pq$ restricts to   $L\setminus 0$ as a positive  function which satisfies the condition $H|_L\geq c|p|^2$ for some $c>0$.  Take the function $G=\max(H,\frac{1}{2}c|p|^2)$. Then $\{H=\eps\}\cap L=\{G=\eps\}\cap L$ for any $\eps>0$. The function  $G$ has  star-shaped level sets with corners away from $L$. Hence, it can be smoothed to a function with the same property keeping the restriction to $L$ equal to $pq|_L$. We abuse notation and denote de smoothed function by the same symbol $G$. Note that in particular we have $$(pdq=\frac12(pdq-qdp)+d(pq))|_{L\cap\{G=\eps\}}=0,$$ i.e.  $L\cap\{G=\eps\}$ is Legendrian in the level  set $G=\eps$ for both contact structures, defined by the Liouville forms $pdq$ and   $\frac12(pdq-qdp)$.
 Consider the Liouville  form
 $$\lambda=\frac12(pdq-qdp)+\frac{1}{2}d(\alpha(G))=\frac12(pdq-qdp)+ \frac{1}{2}\alpha'(G) d(pq),$$ where
 $\alpha:\R_+\to[0,1]$ a monotone cut-off function which is equal to the identity on $[0,\eps]$ and to $0$ on $[ \wt\eps:=\eps+\delta,\infty)$ for $\delta\ll\eps$. Then $\lambda=pdq$ in  $\{G<\eps\}$ and $\lambda=\frac12d(pdq-qdp)$ in $\{G\geq\wt\eps \}$.  Let $\wt L=\wt L_{\eps,\delta}=
 $ be the saturation        of $\Lambda_{\wt\eps}=L\cap\{G=\wt\eps\}$ by the forward and backward  Liouville trajectories of  the Liouville field of the form $\lambda$.
   
  As $\delta\to 0$, note that $\wt L_{\eps,\delta}$ converges to the union of the  forward $pdq$-cone  and the backwards $\frac12(pdq-qdp)$ cone over $\Lambda_\eps$.  Hence for small $\delta$ a plane $\wt T$  tangent to $\wt L$ near $\wt\Lambda_\eps$ is close to a plane spanned by a  tangent plane $\eta$ to $\wt \Lambda_\eps$ and  a convex linear combination
  $(1-t)R+tV$, $0\leq t\leq 1$ of the contracting vector fields  $R=-\frac12(p\frac{\p}{\p p}+q\frac{\p}{\p q})$ and $V=-p\frac{\p}{\p p}$.  When $t=0$ the plane $\wt T$ coincides with a plane $T$ tangent to $L$, and hence corresponds to  a positive definite quadratic form $Q$. Increasing $t$ corresponds to adding to $Q$ the quadratic form $\frac{t}{1-t}\ell^2$, where the hyperplane  $\{\ell=0\}$ is the front projection of $\eta$. On the other hand, the distribution $\nu$ is given by our  assumption by a negative definite  quadratic form $P$. The intersection of $\wt T\cap \nu$ corresponds to critical points of the quadratic form $Q+  \frac{t}{1-t}\ell^2-P$ which is positive definite. This assures the transversality of $\nu$ to $\wt L$.
  
  Recall that each  boundary component $C=\p\Lambda$ is bounding. Let $f:N\to \Lambda$, where $\p N=C$, be the bounding immersion. Let $ Z^t$ be the Liouville flow of the Liouville vector field dual to $\lambda$.
  Consider the conical immersion  $F:N\times\R\to \R^{2n}$   given by
  $$F(x,t)= Z^t(x),\; x\in N, t\in\R.$$ Then the limit $$F_\infty(\cdot)=\lim\limits_{t\to-\infty}F(\cdot,t)$$ defines a cooriented $C^1$-smooth immersion $F_\infty: N\to L_0$. Let
  $F_\infty':N\to L_0$ be a $C^\infty$-smooth normal push-off of $N$ in the direction of its co-orientation. By modifying $F_\infty'$ near $\p N=C$ we can arrange that  the immersion $F_\infty'$ together with $F'$ define a $C^1$-smooth immersion $\wh F:=F_\infty\cup F_\infty':\wh N:=N\mathop{\cup}_C N\to L_0$ of the closed manifold $\wh N$ obtained by gluing two copies of  $N$  along the boundary $C$. The manifold $\wh N$ bounds a manifold $M$, namely the product $N\times[0,1]$ with a smoothed boundary. Note that we can arrange that the immersion $\wh F$ extends to $M$. There exists a Legendrian embedding $\phi:N\to S^{2n-1}$ such that $\phi(\p N)=C$ and such that
  $$\lim\limits_{ \t \to \infty} Z^{-t}\circ\phi = 
   F_\infty'.$$ Set $\breve N:=\phi(N)$ and denote by  $\breve L$ the backward $\lambda$-Liouville cone of $\breve N$.
  Then $\wh L:= \wt L\cup\breve L\cup L_0$ is the required  arboreal Lagrangian with boundary.  Its boundary (which can be smoothed) is the union of $\Lambda_0$ and  the forward Liouville cone of $C$ glued along $C$ to $\breve N$.  

The required Liouville form $\lambda_1$ can then be constructed as follows: starting with $\lambda$, convert the ribbon of $\wh \Lambda$ to a boundary nucleus and then back to a Wc-hypersurface. This has the effect of adding the backwards Liouville cone of $\wh \Lambda$ to the skeleton. Then, up to smoothing, further conversion of the ribbons of $\Lambda \cup \Lambda_0$ to boundary nucleus will produce a Wc-structure with skeleton equal to $\wh L$. The positive cotangent building structure is obtained by stabilizing the positive cotangent building structure for the ribbon of $\Lambda$ and adding the block $\sT^*\Lambda_0$. \end{proof}

\subsubsection{Parametric version}
Lemma \ref{lm:arb-vertex} also holds in a parametric form. Namely:

\begin{lemma}\label{lm:arb-vertex-param}
 Let $\Lambda^z\subset S^{2n-1}$ be a a family of positive arboreal Legendrians in $S^{2n-1}$ endowed with the standard contact structure, parametrized by $z \in Z$ for $Z$ a compact manifold. Let $L^z\subset\R^{2n}$ be the Lagrangian cone over $\Lambda^z$ centered at $0$, i.e. with respect to the radial Liouville structure on the unit ball $B \subset \R^{2n}$. Let $(\tau_z,\nu_z)$ be a family of polarizations of $\R^{2n}$ such that every Lagrangian plane $T_z$ tangent to $L^z$ at $0$ satisfies the condition $T_z^{\tau_z,\nu_z}>0$.
 Let $L^z_0=\tau_z(0)$ be the Lagrangian  plane through the origin, and $\Lambda^z_0\subset S^{2n-1}$ its Legendrian  link.
 Suppose that  each   boundary component  of $\Lambda^z$ is bounding and moreover that the bounding manifolds $N^z$ form a fibre bundle over $Z$.
 Then
 there exist:
 \begin{itemize}
 \item[(i)] a family of closed  arboreal Legendrians $\wh\Lambda^z \supset \Lambda^z$   such that  ${ \Lambda'}^z= \wh\Lambda^z \setminus\Lambda^z $ is smooth and such that  every Lagrangian plane $T_z$ tangent to the Liouville cone  ${L'}^z$  over ${\Lambda'}^z$ satisfies the condition $T_z^{\tau_z,\nu_z}>0$;
 \item[(ii)] a family of asymptotically conical arboreal Lagrangians $\wh L^z$ in $\R^{2n}$ endowed with the standard Liouville structure such that:
 \begin{itemize}
 \item  $\wh L^z$ coincides at infinity with the Lagrangian cone  over  $\Lambda^z\cup \Lambda^z_0$;
 \item $\wh L^z\supset\wh \Lambda^z$ and $\wh L^z \supset L^z_0$;
 \item $\wh L^z$ has smooth boundary and every boundary component is bounding, moreover with bounding manifolds forming a fibre bundle over $Z$;
 \item  $\wh L^z $ is transverse  to any family of Lagrangian distributions   $\eta_z\in C(\nu_z,\tau_z)$, i.e negative with respect to $(\tau_z, \nu_z)$.
 \end{itemize}
 \item[(iii)] a family of compactly supported homotopies $\lambda^z_t=\lambda_0 + d f^z_t$ of $\lambda_0=pdq$ such that the Wc-manifold obtained from $(\R^{2n}, \lambda^z_1)$ by converting the ribbons of the Legendrians $\Lambda^z \cup \Lambda^z_0$ into nuclei of boundary faces is a positive cotangent building with skeleton $\wh L^z$.
 \end{itemize}
 \end{lemma}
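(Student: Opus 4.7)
The strategy is to run the non-parametric argument of Lemma~\ref{lm:arb-vertex} fibrewise over $Z$, using that every key construction in that proof is either pointwise open (so parametric by standard patching) or convex (so parametric by averaging), while the one genuinely global input is precisely the fibre-bundle hypothesis on the bounding manifolds $N^z$. First I would upgrade the Hamiltonian interpolation step: for each $z\in Z$ set $H_z=p\cdot q$ in canonical coordinates adapted to the polarization $(\tau_z,\nu_z)$ and $G_z=\max(H_z,\tfrac12 c_z|p|^2)$ where $c_z>0$ can be chosen to vary continuously in $z$ thanks to compactness of $Z$ and the open positivity condition $T_z^{\tau_z,\nu_z}>0$. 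The smoothing of $G_z$ with level sets that are star-shaped and agree with $\{H_z=\eps\}$ on $L^z$ is parametric by a partition-of-unity argument over $Z$, and likewise for the cut-off $\alpha$ and the interpolating Liouville forms $\lambda^z$.

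The transversality verification in part (i) is the easiest to parametrize, because the sign of the reduced quadratic form $Q_z+\tfrac{t}{1-t}\ell_z^2-P_z$ (in the notation of Lemma~\ref{lm:arb-vertex}) depends continuously on $z$ and $t$, so positive-definiteness is an open condition that is preserved under small parameters $\delta_z=\delta$ taken uniformly in $z$. At this stage the family $\wt L^z$ is produced by saturating $\Lambda_{\wt\eps}^z$ under the Liouville flow of $\lambda^z$, which depends smoothly on $z$ and therefore assembles into a family of arboreal Lagrangians with the required positivity.

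The capping step in part (ii) is where the bundle hypothesis enters decisively. Let $f^z:N^z\to\Lambda^z$ be the family of bounding immersions; by assumption the $N^z$ form a fibre bundle $\cN\to Z$, and after possibly reparametrizing we may take the $f^z$ to vary smoothly in $z$. The conical immersion $F^z:N^z\times\R\to\R^{2n}$ by the Liouville flow and its limit $F_\infty^z:N^z\to L_0^z$ then vary smoothly in $z$. The normal push-off $F_\infty'^z$ of $F_\infty^z$ in the direction of its co-orientation can be constructed in a parametric manner using any fibrewise Riemannian metric on $\cN$ which is invariant near the boundary, yielding a family of $C^1$ smooth immersions $\wh F^z:\wh N^z\to L_0^z$ on the doubles, and a family of bounding manifolds $M^z=N^z\times[0,1]$ (smoothed along the boundary) whose total space is again a fibre bundle over $Z$. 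The Legendrian embeddings $\phi^z:N^z\to S^{2n-1}$ and their backward Liouville cones $\breve L^z$ then assemble into the family $\wh L^z=\wt L^z\cup \breve L^z\cup L_0^z$.

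The hard part will be part (iii), maintaining the positive cotangent building structure parametrically. The plan is to perform, for each $z$, the two successive conversions of Lemma~\ref{lm:arb-vertex}: first convert the ribbon of $\wh\Lambda^z$ to a boundary nucleus and back to a Wc-hypersurface, and then convert the ribbons of $\Lambda^z\cup\Lambda_0^z$ to boundary nuclei. Each conversion depends on a contractible space of auxiliary choices (Morse--Bott normal forms, collar germs, cutoff functions), so by Proposition~\ref{prop:distr-pos} and the contractibility of positive distributions the conversions can be carried out smoothly in $z$. The positive cotangent building structure is built from the block $\sT^*\Lambda_0^z$ together with the stabilized positive cotangent building structure on the ribbon of $\Lambda^z$, both of which are functorial in $z$. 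The homotopies $\lambda_t^z=\lambda_0+df_t^z$ are likewise built by the same cutoff procedure parametrized by $z$, compactly supported in $\R^{2n}$, yielding the required family.
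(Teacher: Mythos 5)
Your proposal is correct and takes essentially the same route as the paper, which simply observes that the proof of Lemma \ref{lm:arb-vertex} goes through by adding a parameter everywhere; you have merely spelled out why each step parametrizes (openness of the transversality/positivity conditions, convexity/contractibility of the auxiliary choices, and the fibre-bundle hypothesis on the bounding manifolds $N^z$ for the capping step).
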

 
 \begin{proof} The proof proceeds exactly as in the non-parametric case by adding a parameter everywhere. \end{proof}
 
  \subsection{From ridgy to arboreal}\label{sec:arb-rdges}
   
  
  \subsubsection{Recap on reduced positivity}
 We recall the Definition \ref{def: red pos lag dist} of reduced positivity. Let $L\subset X$ be a  positive arboreal Lagrangian and $Z$ a non-zero Liouville vector field tangent to  $L$. A Lagrangian distribution $\mu$ along $L$ is called reduced positive with respect to $(L,Z)$ if for 
  any singular point   $a\in L$  the following  condition is satisfied.
  Let $T$ be the tangent space to the root Lagrangian at $a$, and $T'$ a tangent plane to any other smooth piece adjacent to $a$. Then $[T']^{\zeta}\in C([T]^{\zeta},[\eta(a)]^{\zeta}), $ where $\zeta=\Span(Z)^{\perp_\om}.$

\subsubsection{The distribution $\nu_\infty$}\label{sec:nu-infty-2}
  We consider in this section the following setup.
  Let $M$ be a bc-manifold and $X=\sT^*M$. Suppose the boundary faces of $M$ are ordered: $\p_1M=P_1\cup\dots\cup P_k$. Let $u_j$ be defining coordinates near faces $P_j$.
  Suppose we are given the following objects for each $j=1,\dots, k$:
  \begin{itemize}
  \item[-] a neighborhood  $V_j\supset P_j$;
   \item[-] the canonical Liouville  fields $Z_j$  on $\sT^*P_j$ and the Liouville field   $\wh Z_j:=u_j\frac{\p}{\p u_j} + Z_j$,    on $V_j$.
\item[-] a  Lagrangian distribution  $\nu_j$ on $ V_j$,    which is tangent to $\wh Z_j$ and  invariant with  respect to its negative flow.

\end{itemize}
Suppose, in addition, we are given
\begin{itemize}
\item[-] a ridgy Lagrangian $L\subset \sT^*M$    which on $V_j\cap L$ is tangent to $\wh Z_j$;
\item[-]  a Lagrangian distribution $\nu_{-1}$   on $\sT^*M$ which is transverse  to $L$ and all  distributions $\nu_j$. 
\end{itemize}
Suppose that the following conditions are satisfied:
  \begin{itemize}
  \item[-] for any multi-index $I=\{i_1<\dots< i_\ell\}$ we have
   $$ [\nu_{i_\ell}(a)]^{I'}\pprec \dots\pprec  [\nu_{i_1}(a)]^{I'}\pprec [\nu_{-1}(a)]^{I'}\;\; \hbox{on}\;\;
   \bigcap\limits_{1}^\ell V_{i_j}\setminus \bigcup\limits_{j\notin I}V_j;$$ where $I'=\{i_1<\dots<i_{\ell-1}\}$.
  \item[-]  
 for any point $a\in L\cap V_j$ and any tangent plane $T$ to $L$ at $a$ we have
 $$[T]^j\pprec [\nu_j]^j\pprec [\nu_{-1}]^j.$$\end{itemize}
 
           \begin{figure}[h]
\includegraphics[scale=0.7]{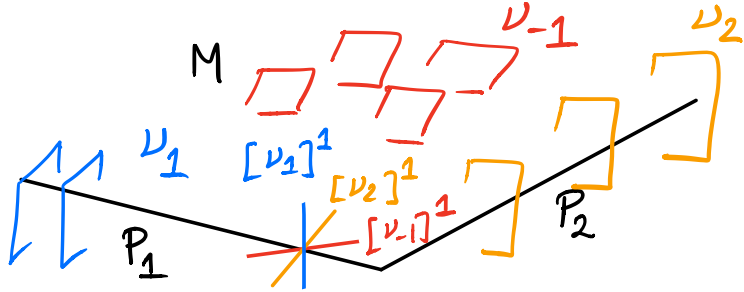}
\caption{The condition on $[\nu_{-1}]^1$.}
\label{fig:hypoth}
\end{figure}

         \begin{figure}[h]
\includegraphics[scale=0.7]{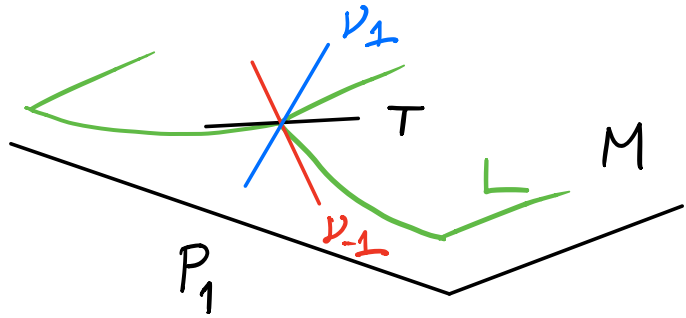}
\caption{The condition on a Lagrangian plane $T$ tangent to $L$.}
\label{fig:hypoth2}
\end{figure}
 
\begin{lemma}\label{lm:nu-infty-2} 
There exist neighborhoods $V_j\Supset V_j'\supset P_j $, $j=1,\dots, k$, 
and a distribution $\nu_\infty$ on $\sT^*M$ which satisfies the following conditions:
\begin{enumerate}
\item $\nu_\infty=\nu_m$ on $V'_{m}\setminus   \bigcup\limits_{j>m} V_j$, $m=1,\dots, k;$
\item for any  $1\leq i<j\leq k$ and any point $a\in V'_{i}\cap (V_{j}\setminus V'_j) $ 
 we have $$[\nu_\infty(a)]^I\in C([\nu_i(a)]^I,[\nu_j(a)]^I),$$ where $I$ is the multi-index $\{i_0=i<i_1<\dots<i_\ell\}, i_\ell<j$, such that  $$a\in \bigcap \limits_1^\ell V_{i_s}\setminus \bigcup \limits_{m\notin I, m<j}V_m .$$
  \item  $\nu_\infty$ is transverse to $\nu_{-1}$ and for any point $a\in L\setminus\bigcup\limits_{1}^k V_j$ and any tangent plane $T$ to $L$ at $a$ we have $T\pprec\nu_\infty(a) \pprec\nu_{-1}(a).$
\end{enumerate}
\end{lemma}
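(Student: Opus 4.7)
I will construct $\nu_\infty$ by reverse induction on the face index $m$, starting from $m=k$ and descending to $m=1$, and then extending the result from $\bigcup_j V_j$ to all of $\sT^*M$. Choose intermediate neighborhoods $V_j'\Subset V_j''\Subset V_j$ and cut-off functions $\chi_j\colon\sT^*M\to[0,1]$ with $\chi_j\equiv 1$ on $V_j'$ and $\supp\chi_j\subset V_j''$. At each stage I will have a Lagrangian distribution $\nu_\infty^{(m)}$ defined on $W_m:=\bigcup_{j\geq m}V_j''$ satisfying the analogues of (i) and (ii) for indices $\geq m$. The key technical ingredients are that closed positive zones $\oC(L_1,L_2)$ are convex subsets of the affine chart of Lagrangian planes transverse to $L_2$, and that both the positivity relation and $\pprec$-cyclic ordering descend under symplectic reduction, as established in Section~\ref{sec:positivity}.

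\textbf{Inductive step.} Set $\nu_\infty^{(k)}:=\nu_k$ on $V_k''$. Given $\nu_\infty^{(m+1)}$ on $W_{m+1}$, define $\nu_\infty^{(m)}$ to be $\nu_m$ on $V_m'\setminus W_{m+1}$, to be $\nu_\infty^{(m+1)}$ on $W_{m+1}\setminus V_m''$, and on the overlap $V_m''\cap W_{m+1}$ to be the affine combination $(1-\chi_m)\nu_\infty^{(m+1)}+\chi_m\nu_m$, taken in the affine chart of Lagrangian planes transverse to the most-reduced distribution occurring at the point. Explicitly, at $a\in V_{i_0}\cap\cdots\cap V_{i_\ell}$ with $m=i_0<i_1<\cdots<i_\ell$, the hypothesis $[\nu_{i_\ell}]^{I'}\pprec\cdots\pprec[\nu_{i_0}]^{I'}$ together with convexity of the closed positive zones forces the interpolation to remain in $\oC([\nu_{i_0}]^I,[\nu_{i_s}]^I)$ for every $1\leq s\leq\ell$ simultaneously, which is exactly condition (ii) for the pair $(i_0,i_s)$. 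Outside $V_m''$ nothing is changed, so the previously verified conditions on $\nu_\infty^{(m+1)}$ are preserved.

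\textbf{Extension to $\sT^*M$.} Once the induction terminates, $\nu_\infty^{(1)}$ is defined on $\bigcup_jV_j''$. To satisfy condition (iii), observe that at each $a\in L\setminus\bigcup_jV_j$ the set of Lagrangian planes $\mu$ with $T\pprec\mu\pprec\nu_{-1}(a)$ for every tangent plane $T\in T_aL$ is a nonempty open convex subset of the affine chart at $\nu_{-1}(a)$, by the parametric form of Lemma~\ref{lm:extend-cone2} applied to the compact family of tangent planes at $a$. A partition of unity argument then extends $\nu_\infty^{(1)}$ to a global Lagrangian distribution, and a $C^0$-small perturbation made within the open positive zones already witnessed restores transversality to $\nu_{-1}$ everywhere without disturbing (i) or (ii).

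\textbf{Main obstacle.} The delicate point is at higher-order intersections: a single interpolation at $a\in V_{i_0}\cap\cdots\cap V_{i_\ell}$ must simultaneously witness condition (ii) for every pair $(i_r,i_s)$, and each such verification takes place in a different symplectic reduction $[\,\cdot\,]^I$. Convexity of each individual closed positive zone is not by itself enough; one needs the zones $\oC([\nu_{i_0}]^I,[\nu_{i_s}]^I)$ for varying $s$ to be nested along a single interpolation path. The cyclic ordering hypothesis, together with the reduction-compatibility of the $\pprec$-relation, guarantees that the innermost positive zone — transverse to the most-reduced $[\nu_{i_\ell}]^{I'}$ — is contained in all the others, so performing the convex combination in that innermost affine chart makes one formula verify every required positivity relation at once.
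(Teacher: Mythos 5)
Your overall strategy (induction over the faces plus convexity of positive zones) is the same as the paper's, but as written the construction does not prove condition (ii), and the failure occurs exactly where the real content of the lemma lies. You set $\nu_\infty^{(m)}=\nu_m$ on $V_m'\setminus W_{m+1}$ with $W_{m+1}=\bigcup_{j>m}V_j''$, and you only modify the distribution on the inner collars $V_m''\cap W_{m+1}$. But condition (ii) is required on all of $V_i'\cap(V_j\setminus V_j')$, a region that reaches out to the outer boundary of the \emph{given} neighborhood $V_j$; whenever the faces $P_i$ and $P_j$ meet at a corner this region contains points of $P_i\subset V_i'$ lying in the outer shell $V_j\setminus V_j''$ and away from every other $V_{m'}''$. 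At such a point $a$ your construction gives $\nu_\infty(a)=\nu_i(a)$, and then $[\nu_\infty(a)]^I=[\nu_i(a)]^I$ cannot lie in the open zone $C([\nu_i(a)]^I,[\nu_j(a)]^I)$: membership in a positive zone forces transversality to both arguments, and no Lagrangian plane is transverse to itself (the reduction is positive-dimensional as soon as $|I|<n$). No later stage of your induction revisits these points, and your final $C^0$-small perturbation is invoked only for transversality to $\nu_{-1}$; repairing (ii) on the shells is not a perturbation problem but an extension problem with values in the prescribed cones. That extension is precisely what the paper's proof consists of: it prescribes $\nu_\infty=\nu_m$ only on $V_m'\setminus\bigcup_{j>m}V_j$ (the complement of the full outer neighborhoods), prescribes an admissible value outside all the $V_j$ to handle (iii), and then successively extends over the shells $(V_j\setminus V_j')\cap V_i'$ region by region, using that the constraints of (ii) cut out a nonempty open convex, hence contractible, fiber. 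Your cutoff interpolation replaces this step only on the inner collars, so what you actually prove is the weaker statement in which $V_j\setminus V_j'$ is replaced by $V_j''\setminus V_j'$. (There is also a small bookkeeping slip: the three pieces in your inductive step cover $V_m'\cup W_{m+1}$, not the claimed $W_m=V_m''\cup W_{m+1}$.)

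A secondary soft spot is the interpolation formula itself. The ``affine chart of Lagrangian planes transverse to the most-reduced distribution occurring at the point'' is not well defined: $[\nu_{i_\ell}]^{I'}$ lives in a symplectic reduction, not in $T_a(\sT^*M)$, and the multi-index $I$ jumps as $a$ crosses the boundaries of the various $V_j$, so the chart would change discontinuously. Moreover, at a multi-fold point the different instances of (ii) are taken in \emph{different} reductions (the multi-index depends on the pair $(i,j)$), whereas nesting statements such as Lemma \ref{lemma:pos reform}(ii) compare zones inside one fixed symplectic vector space; your claim that a single affine path certifies all of them simultaneously is asserted rather than proved. The paper's region-by-region extension sidesteps this: on each region of its decomposition the relevant multi-index is constant, and the conditions of (ii) are imposed directly as the convex fiber of a section-extension problem rather than verified a posteriori for one interpolation formula.
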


         \begin{figure}[h]
\includegraphics[scale=0.6]{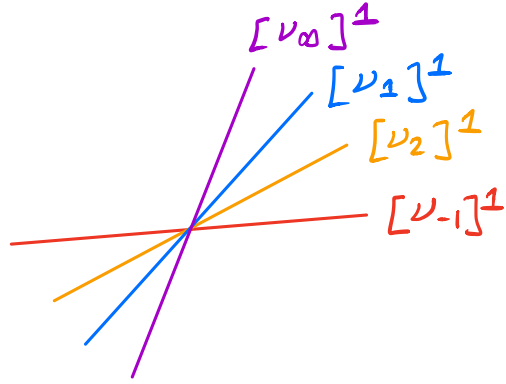}
\caption{The conclusion on $[\nu_\infty]^1$.}
\label{fig:conc1}
\end{figure}

         \begin{figure}[h]
\includegraphics[scale=0.6]{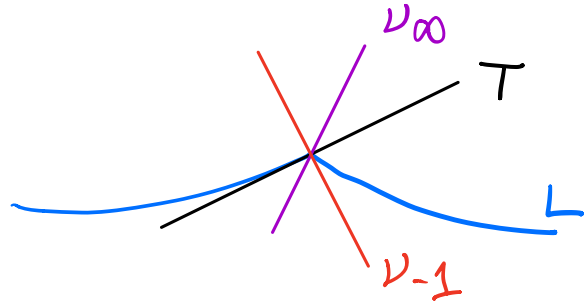}
\caption{The conclusion on a Lagrangian plane $T$ tangent to $L$.}
\label{fig:conc2}
\end{figure}

\begin{proof}
Define the required distribution as equal to $\nu_m$ on $V'_{m}\setminus\bigcup_{j>m} V_j$, $m=1,\dots, k,$ and any  distribution  on 
$\sT^*M\setminus\bigcup_1^k V_j$ which is transverse to $\nu_{-1}$ and satisfies condition (iii) along $L$.
Next, we  successively extend $\nu_\infty$ to $V_k$:   first to $(V_k\setminus V_k')
\cap V'_{k-1}$  to satisfy the condition $[\nu_\infty]^{k-1}\in C([\nu_{k}]^{k-1},[\nu_{k-1}]^{k-1})$, then continuing the process by extending  to $V_k\setminus (V_k'\cup V_{k-1}'))\cap V_{k-2}'$
 to satisfy the condition  $[\nu_\infty]^{k-1}\in C([\nu_{k}]^{k-2,k-1},[\nu_{k-1}]^{k-2,k-1})$, etc. Next, we similarly  successively extend $\nu_\infty$ to $V_{k-1}\setminus (V_{k-1}'\cap V_k)$, $V_{k-2}\setminus(V_{k-2}'\cup V_{k-1}\cup V_k),\dots$, $V_1\setminus (V_1'\cup V_2\cup\dots\cup V_k)$.
   Each extension  is possible because it amounts to  an extension of a  section of a fibration with open convex (and therefore contractible) fiber.
\end{proof}
\subsubsection{Arborealization of ridges}
Recall that for an isotropic submanifold $C$ of a symplectic manifold, its {\em symplectic normal bundle},   is defined
as $(TC)^{\perp_\om}/TC$. Denote by $\pi_C=[\cdot]^C$ the reduction map. We consider a ridgy Lagrangian $L \subset \sT^*M$ with the notation as in Section \ref{sec:nu-infty-2} above. Consider the stratification $L=L_0\cup L_1\cup\dots\cup L_n$, where $L_j$ is the $j$-dimensional locus of order $(n-j)$ ridges.
 Thus the top-dimensional stratum $L_n$ is the smooth locus of $L$.
 
For each component $C_j$ of $L_j$, we denote  by  $N(C_j)$ the  $2j$-dimensional symplectic normal bundle to $\oC_j$. A Lagrangian tangent plane $T$ to $L$ at a point of $\oC_j$ reduces to a Lagrangian plane $ [T]^{C_j}:=\pi_{C_j}(T)\subset N(C_j)$. If moreover $a\in C_{j-1}\subset \oC_j$  for $C_{j-1}$ a component of $L_{j-1}$ and $T$ a Lagrangian plane at $a$, then the plane $[T]^{C_{j-1}}$ further reduces to $[T]^{C_{j-1}}$. 

Note that if $L$ is tangent to $\wh Z_j$ on $V_j$, $j=1,\dots, k$, then so are all $L_i, i=1,\dots, k$. In particular, if $i<\ell$ we have $L_i\cap V_{i_1}\cap \dots\cap V_{i_\ell}=\varnothing.$

Let $U_n \to \cdots \to U_0$ be the canonical Wc-structure associated to a ridgy Lagrangian $L$.

   \begin{definition}
  An {\it arborealization} of the ridgy Lagrangian $L$ is the structure of a cotangent building $B_m \to \dots \to B_0$ on a  neighborhood of $L$ whose skeleton is arboreal and whose underlying Weinstein manifold is homotopic to that of $U_n \to \cdots \to U_0$.
  \end{definition}

We say that the arborealization is positive when the resulting arboreal Lagrangian is positive, which implies $B_m \to \dots \to B_0$ is a positive complex.   

In the next proposition we continue using the notation and assumptions introduced in~Section \ref{sec:nu-infty-2}.

   \begin{prop}\label{prop:slanted-ridgy-arb2} The ridgy Lagrangian  $L$ can be arborealized to a positive       arboreal Lagrangian $\wh L$    that  is  positive with respect to  $\nu_{-1}$, and moreover, over $V_j'$ is invariant with respect to the negative flow of $\wh Z_j$ and reduced positive with respect to $\nu_j$,
   for $j=1,\dots, k$.
  
   \end{prop}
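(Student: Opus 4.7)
The plan is to proceed by upward induction on the dimension of the ridge stratum in the decomposition $L = L_0\cup L_1\cup\cdots\cup L_n$ supplied by the canonical Wc-structure of Lemma~\ref{lm:ridgy-W-structure}. At the base case $j=0$, each point $a\in L_0$ has a neighborhood in which $L$ coincides with the model ridge $R_{n,n}\subset\sR^{2n}$, which is itself a Liouville cone over a ridgy Legendrian in $S^{2n-1}$; we apply Lemma~\ref{lm:arb-vertex} to arborealize this cone, taking the pair $(\tau,\nu)$ to be supplied by the zero section of the underlying cotangent block and by the distribution $\nu_\infty$ produced from the data of Section~\ref{sec:nu-infty-2} via Lemma~\ref{lm:nu-infty-2}. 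The positivity hypothesis on tangent planes is guaranteed by the positivity of the ambient building and by the reduced positivity clauses of Lemma~\ref{lm:nu-infty-2}, while the bounding clause is vacuous at the highest ridge order. The output is a positive arboreal Lagrangian $\wh L^{\le 0}$, an accompanying homotopy $\lambda_t = \lambda_0+df_t$, and smooth bounding boundary components that will feed the next step.

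For the inductive step, assume that over a neighborhood $W_{<j}$ of $L_0\cup\cdots\cup L_{j-1}$ the ridgy Lagrangian has already been deformed to a positive arboreal Lagrangian $\wh L^{<j}$ with smooth bounding boundary and compatible positive cotangent building structure. For each component $Q$ of $L_j$, Lemma~\ref{lm:ridgy-W-structure} furnishes a neighborhood of $\ol Q$ in $\sT^*M$ as a symplectic fibration; the restriction of $\wh L^{<j}$ over $\partial\ol Q$ is, fibrewise, an asymptotically conical positive arboreal Legendrian with smooth bounding boundary. We then invoke the parametric cone arborealization Lemma~\ref{lm:arb-vertex-param} with $Q$ as parameter space, with polarizations $(\tau_z,\nu_z)$ in each normal fibre determined by the canonical zero section of the stabilized cotangent factor and by the reduction of $\nu_\infty(z)$ along the splitting of Lemma~\ref{lm:ridgy-W-structure}. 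Patching the fibrewise arborealizations and the homotopies $\lambda_t^z$ across $Q$ yields a new positive arboreal Lagrangian $\wh L^{\le j}$ extending $\wh L^{<j}$, together with a Weinstein homotopy; boundedness of the newly-added boundary components is exactly the ``bounding'' clause of Lemma~\ref{lm:arb-vertex-param}, which is what makes the induction self-sustaining.

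To secure the conclusions at the boundary faces of $M$, the whole inductive procedure is performed equivariantly with respect to the negative flow of $\wh Z_j$ on $V_j'$: near each face $P_j$ the local model splits as a product with the collar, so Lemma~\ref{lm:arb-vertex-param} is applied in a translation-invariant manner along the $u_j$-coordinate, producing a skeleton invariant under $\wh Z_j$. The reduced positivity with respect to $\nu_j$ then follows from Lemma~\ref{lm:pos-red-space} combined with the cyclic ordering properties of $\nu_\infty$ built into Lemma~\ref{lm:nu-infty-2}, while the negativity clause in Lemma~\ref{lm:arb-vertex-param}(ii) gives transversality of $\wh L$ to $\nu_{-1}$ (and hence positivity with respect to it after a final adjustment through the positive zone, using that positivity propagates through cones via Lemma~\ref{cor:cone-over-regular}). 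The main obstacle is orchestrating the compatibility between overlapping neighborhoods $V_i'\cap V_j'$: the parametric cone arborealizations carried out with the polarization data at level $i$ and at level $j$ must be interpolated along the common collar without losing transversality to any of the distributions. This is precisely what the telescoping cyclic order of the hypotheses in Section~\ref{sec:nu-infty-2} and the convexity of closed positive zones (Lemma~\ref{lemma:pos reform}) provide, so that convex combinations of the guiding polarizations remain in the appropriate positive zones and the resulting arborealization is globally coherent.
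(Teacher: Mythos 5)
Your induction runs in the wrong direction, and this breaks both the base case and the inductive step. You start at the $0$-dimensional stratum $L_0$ (the order-$n$ ridges) and propose to apply Lemma \ref{lm:arb-vertex} to the cone at a point of $L_0$. But Lemma \ref{lm:arb-vertex} requires the link $\Lambda\subset S^{2n-1}$ to be a \emph{positive arboreal} Legendrian; the link of the model ridge $R_{n,n}$ is the ridgy Legendrian $\p R_{n,n}$, which has ridges of all orders up to $n-1$ and is not arboreal, so the lemma simply does not apply there at the outset. (Your remark that ``the bounding clause is vacuous at the highest ridge order'' does not rescue this: the missing hypothesis is arboreality of the link, not boundedness.) The same problem recurs in your inductive step: the fibrewise link of a component $Q\subset L_j$ meets the \emph{higher}-dimensional strata $L_{j+1},\dots,L_n$ (the ridges of lower order), so having already arborealized $L_0\cup\cdots\cup L_{j-1}$ does nothing to make the restriction of the skeleton over $\p\ol Q$ arboreal, contrary to what you assert before invoking Lemma \ref{lm:arb-vertex-param}. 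The geometry forces the opposite order, which is the one the paper uses: the base case is the smooth locus $L_n$ (trivially arboreal), then one arborealizes the order-$1$ ridge locus $L_{n-1}$, then order $2$, and so on down to $L_0$; at each stage the inductive hypothesis guarantees that the fibrewise link over the current stratum is already a positive arboreal Legendrian with smooth bounding boundary, which is exactly what Lemma \ref{lm:arb-vertex-param} needs, and the bounding boundary components produced at each stage feed the next, deeper stage.

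A second, related gap is the choice of guiding polarizations. Taking $(\tau_z,\nu_z)$ to be ``the zero section of the cotangent factor and the reduction of $\nu_\infty$'' does not by itself guarantee the hypothesis of Lemma \ref{lm:arb-vertex}/\ref{lm:arb-vertex-param} that \emph{every} tangent plane to the cone at the center is positive with respect to $(\tau_z,\nu_z)$ --- in particular for the new smooth and arboreal pieces created at earlier stages of the induction, whose tangent planes are only close to the original ones --- nor does it give compatibility of the cone arborealizations performed at successive depths. The paper handles this by constructing a \emph{frame of arborealization}: after viewing distributions as quadratic forms in the polarization $(\nu_\infty,\nu_{-1})$, it introduces the nested family $\ul s_j=[\eta_{\eps/(j+1)}]^{C_j}$, $\ol s_j=[\zeta_{\eps/(n-j+1)}]^{C_j}$ with $[\ul s_{j-1}]^{C_j}\pprec \ul s_j\pprec \ol s_j\pprec[\ol s_{j-1}]^{C_j}$, and applies Lemma \ref{lm:arb-vertex-param} at level $j$ with $(\tau,\nu)=(\ol s_j,\ul s_j)$. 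This nesting is what makes the stages mutually compatible and what yields, at the end, positivity with respect to $\nu_{-1}$ and reduced positivity with respect to the $\nu_j$ on $V_j'$; convexity of positive zones alone, as you invoke it, does not substitute for this construction.
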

\begin{proof}
Any Lagrangian distribution can be viewed as a field of quadratic forms with respect to the polarization $(\nu_\infty,\nu_{-1})$. Fix a field of   positive definite quadratic forms $Q$. Denote by $\eta_t$ the Lagrangian distribution generated by $-tQ$, and by $\zeta_t$ the distribution generated by $\frac1tQ$. Choose $\eps>0$ so small that the following conditions are satisfied:
\begin{itemize}
\item[-]  $\nu_j\pprec\zeta_\eps\pprec\nu_{-1}$ on $V_j$, $j=1,\dots, k$;
\item[-]   for any point $a\in L\cap V_j $ and any tangent plane $T$ to $L$ at $a$   we have
$$[T|^j\pprec[\eta_\eps]^j\pprec[\nu_{\infty}]^j.$$
  \end{itemize}
 
For each component $C_j$ of $L_j$
 define   Lagrangian distributions in the bundle  $N(C_j)$ over $\oC_j$,  $j=0,\dots, n.$
 \begin{align*}
&\ul s_j:=[\eta_{\frac{\eps}{j+1}}|_{C_j}]^{C_j},\\
& \ol s_j=[ \zeta_{\frac{\eps}{n-j+1}}|_{C_j}]^{C_j}, 
  \end{align*}  
  Note that if $C_{j-1}\subset \oC_j$ then 
  $$[\ul s_{j-1}]^{C_j}\pprec \ul s_{j}|_{C_{j-1}}  \pprec  \ol s_{j}|_{C_{j-1}} \pprec[\ol s_{j-1}]^{C_j}.$$
 We will refer to the system of distributions $\ul s_j,\ol s_j$ as a {\em frame of arborealization} for $L$. 
          \begin{figure}[h]
\includegraphics[scale=0.6]{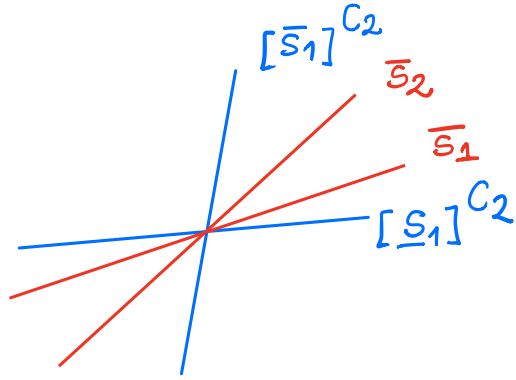}
\caption{A frame of arborealization is nested as illustrated in the figure.}
\label{fig:order}
\end{figure}
  
Consider the Wc-building structure $U_n\to\dots \to U_0$ on a neighborhood of $L$ where  the germ $\sU_j$ is decomposed as $\bigcup_Q \sT^*{Q^{\triangle,j-1}}\times\sR^{2(n-j)}$.  
  The Wc-manifold $W$ is  obtained by successively gluing $U_n$ to  $U_{n-1}$, the result of that to $U_{n-2}$, the result of that to $U_{n-3}$, etc. In this process we get a sequence of Wc-manifolds $W^{>j}= \bigcup_{i> j} U_i$ with skeleta $L^{>j}=L\setminus \bigcup_{i\geq j}{ U_i } $.
    We will be proving the proposition by  induction  for $W^{>n-j}$, $j=1,\dots, n+1$. 
     We will also include into the induction hypothesis the following additional property.   Recall that $W^{>j}$ is attached vertically to $U_j$ along a Wc-hypersurface  $\Sigma$ (which is the nucleus of the attaching face) whose skeleton is fibered over  $L_j^{\triangle,j-1}$ with the fiber which is the link $\p R^{n-j}$ of the ridgy singularity $R^{n-j}$. 
     We will require that the arborealization $\wh L^j$ intersects the attaching face
  in an arboreal skeleton of $\Sigma$ which is fibered  over $L_j^{\triangle,j-1}$, whose fiber  is the arborealization of the  link $\p R^{n-j}$.
  
       The base of induction $j=0$ is trivial because $L_n$ in this case is smooth.
Suppose that we already  arborealized the ridgy Lagrangian  $L^{>j}$ to $\wh L^{>j}$ and deformed correspondingly the Weinstein  structure on $W^{>j}$. i.e. made $\wh L^{>j}$ the skeleton of the new structure so that, by inductive assumption, (i) the arboreal $\wh L^{>j}$ has a smooth boundary $\p\wh L^{>j}$ which bounds an immersed $n$-dimensional manifold $A\subset\wh L^{>j}$, (ii) $\wh L^{>j}$  intersects the nucleus $\Sigma$ in an arborealization of the skeleton of $\Sigma$ and (iii) the skeleton is Legendrian for $U_j$.    We also observe that the condition that each boundary component is bounding is inherited by the skeleton of $\Sigma$. 
   Hence,  using the fiberwise  polarization $(\tau=\ol{s}_j,\nu=\ul{s}_j)$  we can  apply Lemma \ref{lm:arb-vertex-param} to arborealize the   fiberwise Lagrangian cone   over the arboreal Legendrian.
   
   Note that by our construction, the distribution $\nu_{-1}$ is positive for the constructed arboreal $\wh L$, and $\nu_j$ are reduced positive on $V_j'$.  \end{proof}
   
                \begin{figure}[h]
\includegraphics[scale=0.6]{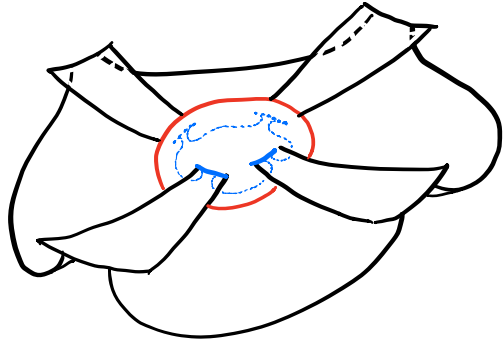}
\caption{In the case $n=2$ the inductive procedure has 2 steps. The figure illustrates the moment between the two steps: one has arborealized the order 1 ridges and it remains to arborealize a radial Liouville cone over an arboreal Legendrian with smooth boundary in a sphere around point corresponding to a order 2-ridge.}
\label{fig:inductivestep}
\end{figure}

 \subsection{Conclusion of the proof}

We are now ready to prove our main result.

\subsubsection{Proof of the main theorem}

\begin{thm}\label{thm:existence-arb-pos}
 Let $(X,\lambda)$ be a Wc-manifold and $\nu\subset TW$ a Lagrangian plane field. Then there exists a Weinstein homotopy $\lambda_t$ of $\lambda_0=\lambda$ and a Wc-hypersurface $A \subset X \setminus \Skel(X,\lambda_1)$ such that the result of converting $A$ to a boundary nucleus yields a Wc-manifold $(X, \lambda_1^A)$ which admits a structure of a positive cotangent building with a minimal distribution $\nu_{-1}$ equal to $\nu$. In particular, $\Skel(X, \lambda_1^A)$ is a positive arboreal Lagrangian (with boundary) transverse to $\nu$. Moreover, we can arrange it so that $\Skel(A,\lambda)$ is smooth, i.e. $\Skel(X, \lambda_1^A)$ has smooth boundary.
 \end{thm}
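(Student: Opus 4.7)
The plan is to proceed by induction on the number of blocks in a cotangent building presentation of $X$. By Proposition~\ref{prop:Wctocot}, fix such a presentation $B_k \to B_{k-1} \to \cdots \to B_1$, with $B_j = \sT^*M_j$. The inductive hypothesis at stage $j$ will be that, after a compactly supported Weinstein homotopy of $\lambda$, the sub-building $W_{\leq j} := B_j \to \cdots \to B_1$ has been replaced by a positive cotangent building $\widetilde W_{\leq j}$ (with possibly different blocks) whose skeleton is a positive arboreal Lagrangian with smooth boundary, whose underlying Weinstein germ is Weinstein homotopic to that of $W_{\leq j}$, and whose minimal positive distribution $\nu_{-1}$ agrees with the restriction of $\nu$. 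The base case $j=1$ is trivial: the block $B_1 = \sT^*M_1$ already has smooth skeleton $M_1$, and after a $C^0$-small adjustment of $\nu|_{M_1}$ by convexity of the space of positive definite quadratic forms on the fibers of $\nu_1$ we can assume $\nu$ is positive for the single-block building.

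For the inductive step I would analyze how $B_{j+1}$ is attached to $W_{\leq j}$. The attaching is along the ribbon of an adapted Legendrian $\Lambda$ in $B_{j+1}$; after applying Lemma~\ref{lm:Leg-adjustment} we may assume $\Lambda$ is compatible with the building structure on $\widetilde W_{\leq j}$, so that its Liouville cone is invariant under all lower Liouville fields. Then I would apply Proposition~\ref{prop:distr-pos} to extend the minimal distribution $\nu_{-1}$ (agreeing with $\nu$) over $B_{j+1}$ as a positive distribution for the enlarged building, then invoke Proposition~\ref{prop:main-ridgy} to deform $M_{j+1} \cup \Cone(\Lambda)$ by a $C^0$-small ridgy isotopy so that the resulting ridgy Lagrangian in $B_{j+1}$ is transverse to $\nu_{-1}$ and reduced transverse to all the lower block distributions $\nu_i$, while keeping the intersection with $\widetilde W_{\leq j}$ adapted. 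This ridgy isotopy lifts to a Weinstein homotopy via the canonical Wc-building structure on a neighborhood of a ridgy Lagrangian from Lemma~\ref{lm:ridgy-W-structure}.

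Next, I would arborealize these ridges using Proposition~\ref{prop:slanted-ridgy-arb2}, applied with the frame of arborealization induced from the distributions $\nu_i$ and $\nu_{-1}=\nu$. This proposition yields a compactly supported Weinstein homotopy producing a positive arboreal Lagrangian as the new skeleton of $B_{j+1}$, which is positive with respect to $\nu_{-1}$ and reduced positive with respect to each $\nu_i$. The vertical gluing of the resulting cotangent block (which in general has the form $\sT^*M'_{j+1}$ for a modified corner structure on $M_{j+1}$) to $\widetilde W_{\leq j}$ produces the required positive cotangent building $\widetilde W_{\leq j+1}$. To maintain the smooth boundary condition of the induction, I would inspect the new boundary strata of the arboreal skeleton introduced by the arborealization process: by construction of Lemma~\ref{lm:arb-vertex-param} and the capping Lemma~\ref{lm:bound-bound}, every non-smooth boundary component can be capped off by a smooth bounding Legendrian while preserving positivity and positivity with respect to $\nu_{-1}$; the ribbons of these capping Legendrians are recorded as future contributions to the Wc-hypersurface $A$.

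After $k$ steps, the entire $X$ has been homotoped to $\widetilde W$, a positive cotangent building with positive arboreal skeleton transverse to $\nu$ and having only smooth bounding boundary components. Collecting the ribbons of the capping Legendrians from each inductive step yields a single Wc-hypersurface $A \subset \widetilde X \setminus \Skel(\widetilde X, \lambda_1)$ whose skeleton is smooth, such that after hypersurface-to-nucleus conversion the Wc-manifold $(X, \lambda_1^A)$ has positive cotangent building structure $\widetilde W$ with minimal distribution $\nu$. I expect the main obstacle to be the simultaneous bookkeeping of the positivity, reduced transversality to all lower $\nu_i$, and the boundedness of new boundary components at the inductive step, since the ridgification and arborealization must be performed while respecting the adaptedness of $\Lambda$ to the already-arborealized sub-building; this is precisely what the extension and relative forms of the ridgification lemmas (Remark~\ref{rmk: extension-form-formal-ridgy}, Lemma~\ref{lemm: holonomic-approx-extension}) and the parametric arborealization of cones (Lemma~\ref{lm:arb-vertex-param}) have been set up to handle.
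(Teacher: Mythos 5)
Your plan follows the paper's proof almost step for step: a cotangent building presentation via Proposition~\ref{prop:Wctocot}, induction over the blocks, Lemma~\ref{lm:Leg-adjustment} to keep the attaching Legendrian compatible with the part already built, ridgification of the new block's core transverse to $\nu$ and reduced transverse to the lower block distributions (Theorem~\ref{thm:ridgification}, in the elaborated form of Proposition~\ref{prop:main-ridgy}), then arborealization of the ridges by Proposition~\ref{prop:slanted-ridgy-arb2}, with the smooth-boundary capping built into Lemma~\ref{lm:arb-vertex-param} and the ribbons of the resulting smooth Legendrians collected into the Wc-hypersurface $A$. So the architecture is the paper's.

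There is, however, a genuine gap in the positivity bookkeeping, concentrated at the two places where you touch the distributions. First, the theorem asks for the minimal distribution to be \emph{equal} to the given $\nu$, so you may not adjust $\nu|_{M_1}$ in the base case, nor ``extend $\nu_{-1}$'' via Proposition~\ref{prop:distr-pos}: that proposition takes a \emph{positive} cotangent building as input (which the enlarged building is not yet, before the new block has been arborealized) and produces \emph{some} positive distribution, not the prescribed $\nu$, which in any case is already globally defined and needs no extension. All adjustments must go the other way, onto the building data: over the first block, $M_0$ being a disk, one is free to choose the block polarization $\nu_0$ so that $\nu_0\in C(TM_0,\nu)$, which is how the paper arranges positivity of $\nu$ at the start. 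Second, after arborealizing $B_{j+1}$ via Proposition~\ref{prop:slanted-ridgy-arb2}, vertical gluing does not by itself produce a positive cotangent building with $\nu$ positive for it: what that proposition delivers is a positive arboreal skeleton which is positive with respect to $\nu$ and reduced positive with respect to the lower $\nu_i$'s, and one must still deform the block polarizations of the glued building, without moving the skeleton, to make the building itself positive and $\nu$ positive for it. This is exactly Proposition~\ref{prop:two-positivities}, which your plan omits but which is needed to restore the inductive hypothesis before the next application of Proposition~\ref{prop:main-ridgy}, since that proposition requires a positive cotangent building together with a positive distribution as input.
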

     
 \begin{proof}  We choose a presentation of $(X,\lambda)$ as a cotangent building  $(B_k\to \dots  \to B_0)$, $B_i=\sT^*M_i$, see Proposition \ref{prop:Wctocot}. Without loss of generality we may assume that the restriction of $\nu$ to $B_0$ serves as $\nu_{-1}$,  i.e. satisfies the condition $\nu_0\in C(TM_0,\nu_{-1})$ over $M_0$. Indeed, $M_0$ is a disk, $B_0=T^*M$ is a ball and we are free to change polarization if desired.
  The cotangent block $B_1$ is attached to $B_0$ along $\sT^*P$, where $P$ is a boundary face of $M_1$.
  
 Consider  the block $B_1$ and the restrictions of the distributions $\nu_0$ and $\nu$  to $B_1$. Applying Theorem \ref{thm:ridgification} we can find a ridgy isotopy of $M_1$ to a ridgy Lagrangian $M_1'$ transverse to $\nu$, reduced transverse to $\nu_0$, invariant with respect to the negative flow of $Z_0$, and such that at each point $a\in M_1'$ and any tangent plane $T$ at $a$  we have $[T]^0\pprec[\nu_0]^0\pprec[\nu_{-1}]^0.$
 
 Moreover, this deformation can be realized by a deformation of skeleta of Weinstein structures of $B_1$. 
  Next,  we apply to  the ridgy Lagrangian $M_1'$ Proposition  \ref{prop:slanted-ridgy-arb2} to  further deform the Weinstein structure on $B_1$ to a structure of a  cotangent building,   which is positive with respect to $\nu_{-1}$   and reduced positive with respect to $\nu_0$. 
    
This yields a  building structure on $B_0\cup B_1$ with positive skeleton, positive with respect to $\nu_{-1}$.
     Using Proposition
    \ref{prop:two-positivities} we can deform the building structure on  this building to make it positive and    positive for $\nu_{-1}$.
    
Before we proceed with the next block, we note that the Lagrangian  $\mM_2\cap  W^{(1)}$ is no longer conical for the constructed Wc-structure of $W^{(1)}$. So we apply Lemma
\ref{lm:Leg-adjustment} to arrange for this property to hold.  Once this is achieved we can proceed as before to produce a Wc-manifold $(W^{(2)}, \lambda^{(2)})$ with a positive W-complex structure for which $\nu$ is a minimal distribution and whose skeleton $L^{(2)}$ is a positive arboreal Lagrangian with smooth boundary. Arguing by induction in the same way over the rest of the attaching blocks we conclude the proof.
 \end{proof}
 
 \subsubsection{Variants of the main theorem}
 
Next we state our main theorem for Weinstein pairs:

\begin{thm}\label{thm:existence-arb-pos-pair}
 Let $(X,\lambda)$ be a Wc-manifold , $A_0 \subset X \setminus \Skel(X,\lambda)$ a Wc-hypersurface and $\nu\subset TW$ a Lagrangian plane field. Then there exists a Weinstein homotopy $\lambda_t$ of $\lambda_0=\lambda$ and a Wc-hypersurface $A_1 \subset X \setminus \Skel(X,\lambda_1)$ disjoint from $A_0$ such that the result of converting $A=A_0 \cup A_1$ to a boundary nucleus yields a Wc-manifold $(X, \lambda_1^A)$ which admits a structure of a positive cotangent building with a minimal distribution $\nu_{-1}$ equal to $\nu$. In particular, $\Skel(X, \lambda_1^A)$ is a positive arboreal Lagrangian (with boundary) transverse to $\nu$. Moreover, we can arrange it so that $\Skel(A_1,\lambda)$ is smooth, i.e. $\Skel(X, \lambda_1^A)$ has smooth boundary away from $A_0$.
 \end{thm}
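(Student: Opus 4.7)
The plan is to reduce to Theorem \ref{thm:existence-arb-pos} by first converting the Wc-hypersurface $A_0$ into a boundary face. First I would perform the hypersurface-to-nucleus conversion on $A_0$ to produce a Wc-manifold $(X^{A_0}, \lambda^{A_0})$, which has the same underlying Weinstein manifold as $(X,\lambda)$ by the cancellation-of-conversions lemma but which now carries one additional boundary face whose nucleus $N_0$ is Wc-isomorphic to $A_0$. By induction on dimension we may assume the statement of the theorem has already been established for $A_0$ itself with the restricted distribution $\nu|_{A_0}$, and so we can fix on $A_0$ (and hence on $N_0$) a presentation as a positive cotangent building whose skeleton is a positive arboreal Lagrangian with smooth boundary transverse to $\nu|_{A_0}$. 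By an inductive application of Lemma \ref{lem:rel cot} (in its evident relative form, analogous to Proposition \ref{prop:Wctocot}) I would then equip $(X^{A_0}, \lambda^{A_0})$ with a cotangent building structure $B_k \to \cdots \to B_0$ whose bottommost blocks $B_0, \ldots, B_{j_0}$ realize the chosen cotangent building presentation of $N_0$, attached in the prescribed order.

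Next I would run the block-by-block inductive procedure from the proof of Theorem \ref{thm:existence-arb-pos}, but starting from the block $B_{j_0+1}$ and keeping the Weinstein structure, skeleton and minimal distribution fixed on $\bigcup_{j \leq j_0} B_j$. Every h-principle used in the proof of Theorem \ref{thm:existence-arb-pos} admits a relative form: the ridgification theorem (Theorem \ref{thm:ridgification}), the formal transversalization theorems (Theorems \ref{prop:formal-ridgy} and \ref{prop:formal-ridgy-aligned} with Remarks \ref{rmk: extension-form-formal-ridgy} and \ref{rmk: extension-form-formal-ridgy-aligned}), the arborealization of ridges (Proposition \ref{prop:slanted-ridgy-arb2}), and the construction and correction of positive distributions (Propositions \ref{prop:distr-pos} and \ref{prop:two-positivities}), all of which follow from parametric convexity of positive definite quadratic forms once one has fixed data on a closed set. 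Before attaching each new block I would also invoke the relative form of Lemma \ref{lm:Leg-adjustment} to move the attaching Legendrian into a position compatible with the already-built cotangent building structure of $A_0$, without disturbing $A_0$. The output is a Weinstein homotopy $\mu_t$ of $\lambda^{A_0}$ on $X^{A_0}$, constant near the $A_0$-face, together with a Wc-hypersurface $A_1' \subset X^{A_0} \setminus \Skel(X^{A_0},\mu_1)$ disjoint from the $A_0$-face, such that converting $A_1'$ to a boundary nucleus yields a positive cotangent building with minimal distribution $\nu$ whose skeleton is a positive arboreal Lagrangian with smooth boundary away from the $A_0$-face.

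Finally, converting the $A_0$-face of $X^{A_0}$ back into a Wc-hypersurface (using cancellation of conversions once more), the homotopy $\mu_t$ descends to a Weinstein homotopy $\lambda_t$ of $\lambda_0 = \lambda$ on $X$ and $A_1'$ descends to a Wc-hypersurface $A_1 \subset X \setminus \Skel(X,\lambda_1)$ disjoint from $A_0$; converting $A = A_0 \cup A_1$ to a boundary nucleus then recovers the positive cotangent building built in the previous step. The main obstacle I expect is not in any individual inductive step but in the bookkeeping needed to guarantee that the cotangent building presentation of $X^{A_0}$ can genuinely be arranged to place $N_0$ at its base while simultaneously allowing the higher attaching Legendrians to be positioned compatibly with $N_0$; this relies on carefully combining Lemma \ref{lem:rel cot} with the relative form of Lemma \ref{lm:Leg-adjustment}, and is the only place where the proof requires care beyond what was already done in Theorem \ref{thm:existence-arb-pos}.
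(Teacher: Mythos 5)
The paper's own proof of this theorem is one line: run the proof of Theorem \ref{thm:existence-arb-pos} verbatim on the cotangent building associated to the pair, i.e.\ on the building of Lemma \ref{lem:rel cot}, in which the blocks of $A_0$, stabilized to $B_i'\times\sT^*[0,1]$, sit at the \emph{top} of the building and get ridgified and arborealized exactly like every other block; nothing is kept fixed near $A_0$, and the statement does not require it. Your route is genuinely different: you freeze $A_0$ after an induction on dimension and then argue relatively. Two things go wrong. First, your bookkeeping claim is backwards: Lemma \ref{lem:rel cot} yields $(B_r'\times\sT^*[0,1])\to\cdots\to(B_1'\times\sT^*[0,1])\to B_k\to\cdots\to B_1$, so the $N_0$-blocks are attached last, not at the base, and a presentation with them at the base is not available in general -- in a building the skeleton of a higher block cones down under the Liouville flow into the lower blocks, and for a pair it is $\Cone(\Skel(A_0,\lambda|_{A_0}))$ that flows into $\Skel(X,\lambda)$, not conversely (the lower zero-section is the root of the arboreal singularities created along the attachment), so the $A_0$-blocks cannot serve as the base of the building.

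Second, and more fundamentally, freezing $A_0$ requires precisely the extra hypotheses of Theorem \ref{thm:existence-arb-pos-pair-rel}: a positive building structure on $A_0$ whose minimal distribution is the \emph{reduction} of $\nu|_{A_0}$, together with transversality of $\nu$ to the Liouville field $Z$ near $A_0$. Your inductive input ``the theorem for $A_0$ with the restricted distribution $\nu|_{A_0}$'' does not supply this: $\nu|_{A_0}$ is not a Lagrangian plane field on $A_0$ at all -- one must reduce along $\Span(Z)$, which only makes sense where $\nu\pitchfork Z$, and that transversality is a genuine condition on the given pair $(\nu,\lambda)$ near $A_0$ which you neither assume nor arrange (arranging it means deforming $\lambda$ near $A_0$ before the dimension induction, in tension with your freezing). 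Without it the frozen part of the skeleton fails: at a point of $\Cone(\Skel(A_0))$ where $Z\in\nu$, the tangent plane to the cone contains $Z$ and hence meets $\nu$ nontrivially, and this failure of transversality is a homotopical obstruction of exactly the kind the paper kills by ridgifying the zero sections $M_i'\times[0,1]$ of the stabilized $A_0$-blocks -- i.e.\ by deforming the structure near $A_0$, which the theorem permits but your scheme forbids. So either drop the freezing and follow the paper's route, or first establish the hypotheses of Theorem \ref{thm:existence-arb-pos-pair-rel} (make $\nu$ transverse to $Z$ near $A_0$ by a preliminary homotopy of $\lambda$, then arborealize $A_0$ positively with respect to the resulting \emph{reduced} field); as written, your proposal assumes rather than proves this step, and it is the crux, not the block-order bookkeeping you flag at the end.
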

 
 \begin{proof} The same proof works applied to the cotangent building structure associated to a Weinstein pair. \end{proof}
 
Finally, we state our main theorem in relative version:

\begin{thm}\label{thm:existence-arb-pos-pair-rel}
 Let $(X,\lambda)$ be a Wc-manifold , $A_0 \subset X \setminus \Skel(X,\lambda)$ a Wc-hypersurface and $\nu\subset TW$ a Lagrangian plane field. Suppose that $A_0$ is endowed with the structure of a positive cotangent building with minimal distribution equal to the reduction of $\nu|_{A_0}$. Assume moreover that $\nu$ is transverse to the Liouville field $Z$ of $X$ near $A_0$. Then there exists a Weinstein homotopy $\lambda_t$ of $\lambda_0=\lambda$, fixed near $A_0$, and a Wc-hypersurface $A_1 \subset X \setminus \Skel(X,\lambda_1)$ disjoint from $A_0$ such that the result of converting $A=A_0 \cup A_1$ to a boundary nucleus yields a Wc-manifold $(X, \lambda_1^A)$ which admits a structure of a positive cotangent building with a minimal distribution $\nu_{-1}$ equal to $\nu$. In particular, $\Skel(X, \lambda_1^A)$ is a positive arboreal Lagrangian (with boundary) transverse to $\nu$ and $\Skel(X, \lambda_1^A) \cap A_0 = \Skel(A_0, \lambda)$. Moreover, we can arrange it so that $\Skel(A_1,\lambda)$ is smooth, i.e. $\Skel(X, \lambda_1^A)$ has smooth boundary away from $A_0$.
 \end{thm}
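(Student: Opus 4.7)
The strategy will be to reduce the relative theorem to an application of Theorem~\ref{thm:existence-arb-pos-pair} in which the induction can be started with the already-arborealized data on $A_0$ serving as the base case. The key point is that the hypothesis ``$\nu$ is transverse to $Z$ near $A_0$'' together with the positive cotangent building structure on $A_0$ gives us exactly the kind of collar around $A_0$ needed to make all the constructions in the proofs of the previous theorems work in extension form.

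First, I would upgrade the positive cotangent building structure on $A_0$ to one on a neighborhood of $A_0$ in $X$. The transversality of $\nu$ to $Z$ near $A_0$ guarantees that if $A_0 = \bigcup_j B_j^0$ with blocks $B_j^0 = \sT^* M_j^0$ and minimal distribution $[\nu|_{A_0}]^{\Span Z}$, then the local product decomposition near $A_0$ provided by Lemma~\ref{lem:embedding extension} (writing $\Op A_0 \simeq A_0 \times U_{1,\eps}$ with $\lambda = \lambda|_{A_0} + u\,dt$) allows us to stabilize each block to $B_j^0 \times \sT^*\cI$, exactly as in the proof of Lemma~\ref{lem:rel cot}. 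The resulting cotangent building $\sB$ on $\Op A_0$ is positive with respect to $\nu$ since the $u$-direction is, by construction, precisely the Liouville direction killed by the reduction, and positivity in the stabilized block follows from positivity in $B_j^0$ combined with $\nu \in C(T M_j^0 \times \cI, \nu_j^0 \times \text{vertical})$, which holds because $\nu$ is transverse to $Z$ in the collar.

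Next, I would choose a cotangent building presentation $(B_k \to \cdots \to B_1)$ of the Wc-germ $(\sX, \lambda)$ such that its restriction to $\Op A_0$ coincides with $\sB$. This is done by first forming the underlying Weinstein manifold $\wh X$ by converting $A_0$ along with all other Wc-hypersurfaces, applying Proposition~\ref{prop:WctoW} to present $\wh X$ as a cotangent building, and then converting $A_0$ back; the relative form of the argument from Proposition~\ref{prop:Wctocot} (extending Lemma~\ref{lem:rel cot}) goes through because we may perform handle decompositions of the underlying Weinstein manifold relative to a prescribed structure on the portion corresponding to $A_0$. Using Lemma~\ref{lm:Leg-adjustment} we ensure that at each successive stage the attaching Legendrian is compatible with the already-constructed building structure, keeping everything fixed near $A_0$.

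Then I would repeat the inductive arborealization argument from the proof of Theorem~\ref{thm:existence-arb-pos}, block by block, but starting the induction with the bottom stratum of $\sB$ already arboreal, positive, and with minimal distribution $\nu$. At each step one uses: (a) the relative form of Theorem~\ref{thm:ridgification} to ridgify the new block while fixing the Lagrangian in $\Op A_0$; (b) the relative/extension forms of the results in Section~\ref{sec: ridgy} (Remarks after Theorems~\ref{prop:formal-ridgy} and \ref{prop:formal-ridgy-aligned}, and Lemma~\ref{lemm: holonomic-approx-extension}), which were stated precisely so as to be applicable in this kind of situation; (c) the relative form of Proposition~\ref{prop:slanted-ridgy-arb2}, whose frame of arborealization can be chosen to agree on $\Op A_0$ with the one induced by the existing positive structure; and (d) Proposition~\ref{prop:two-positivities}, applied relative to the already positive portion, to adjust the resulting cotangent building so it is globally positive with respect to~$\nu$. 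Finally one converts all the boundary nuclei of the top blocks that were introduced during arborealization back into a Wc-hypersurface $A_1$; disjointness from $A_0$ is automatic since the whole deformation is fixed on $\Op A_0$.

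The main obstacle is verifying that every ingredient from Sections~\ref{sec: ridgy}--\ref{sec:corner-arboreal} genuinely admits the relative/extension form needed here. The formal and integrated ridgification theorems and the holonomic approximation lemma~\ref{lemm: holonomic-approx-extension} are stated with explicit extension formulations, and Proposition~\ref{prop:slanted-ridgy-arb2} is proved by an inductive procedure over strata that is amenable to being started with preassigned data; the hypothesis on $\nu$ being transverse to $Z$ near $A_0$ is exactly what ensures that the inductive hypotheses of Section~\ref{sec:nu-infty-2} (in particular the $\pprec$-cyclic ordering of the reduced distributions) are already satisfied on $\Op A_0$ and can therefore serve as the base case of the induction. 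Once these compatibility checks are done the proof assembles from the same steps as Theorem~\ref{thm:existence-arb-pos-pair} without further difficulty.
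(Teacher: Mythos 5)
Your proposal is correct and follows essentially the same route as the paper: the paper's own proof is just the observation that all intermediary steps (the cotangent building presentation adapted to the pair via Lemma~\ref{lem:rel cot}, the relative/extension forms of ridgification and holonomic approximation, and the relative applications of Propositions~\ref{prop:slanted-ridgy-arb2} and~\ref{prop:two-positivities}) hold in relative form, so one runs the proof of Theorem~\ref{thm:existence-arb-pos} keeping everything fixed near $A_0$. You have simply spelled out in more detail the same reduction, including the role of the transversality of $\nu$ to $Z$ in making the stabilized building near $A_0$ positive with minimal distribution $\nu$.
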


 \begin{proof} All the intermediary steps hold in relative form, so one may apply the same proof to the cotangent building structure associated to the Weinstein pair keeping everything fixed near $A_0$ at every stage of the argument. . \end{proof}
 
Theorem \ref{thm:existence-arb-pos} implies our main Theorem \ref{intro:main thm} as stated in the Section \ref{sec:main res}, simply by taking the underlying Weinstein manifold of the Wc-manifold $(X,\lambda_1^A)$, i.e. converting $A$ back to a Wc-hypersurface. Similarly, Theorems \ref{thm:existence-arb-pos-pair} and \ref{thm:existence-arb-pos-pair-rel} imply the variants outlined in the remarks below the statement. For the concordance statement of Theorem \ref{intro:conc} simply apply Theorem \ref{thm:existence-arb-pos-pair-rel} to the Wc-pair obtained from $W \times T^*[0,1]$ by converting the face nuclei $A_0=(W \times 0) \cup (W \times 1)$ to Wc-hypersurfaces, where the Liouville form is $\lambda = \lambda_t + udt$ for $\lambda_t$ the homotopy between $\lambda_0$ and $\lambda_1$. 
 

 \end{document}